\newcommand{\DOIlink}[1]{\href{https://doi.org/#1}{\texttt{doi:#1}}}
\newcommand{\arXivlink}[1]{\href{https://arxiv.org/abs/#1}{\texttt{arXiv:#1}}}
\newcommand*\RR{\mathbb{R}}
\newcommand*\CC{\mathbb{C}}
\newcommand*\NN{\mathbb{N}}
\newcommand*{\Rnon}{\RR_+}
\newcommand*{\Rpos}{\mathring{\RR}_+}
\newcommand*{\loc}{\mathrm{loc}}
\newcommand*{\op}{\mathrm{op}}
\newcommand*\ind{\mathbbm{1}}
\newcommand{\sobolev}[2]{L^{#2}_{#1}}
\newcommand*{\Lip}{\mathrm{Lip}}
\newcommand*{\AC}{\mathrm{AC}}
\newcommand*{\Hom}{\mathrm{Hom}}
\newcommand*{\Imm}{\mathop{\mathrm{Im}}}
\DeclareMathOperator{\spann}{span}
\DeclareMathOperator{\sign}{sign}
\newcommand*\supp{\mathop{\mathrm{supp}}}
\newcommand*\arccosh{\mathop{\mathrm{arccosh}}}
\newcommand*{\lie}[1]{\mathfrak{#1}}
\newcommand*\id{\mathrm{id}}
\newcommand*{\tc}{\mathrel{:}}
\newcommand*{\defeq}{\mathrel{:=}}
\newcommand{\Beta}{\mathrm{B}}
\DeclareSymbolFont{t1letters}{T1}{cmr}{m}{it}
\DeclareMathSymbol{\DD}{0}{t1letters}{208}
\newcommand*{\dist}{\varrho}
\newcommand*\Sz{\mathcal{S}}
\newcommand*\Df{\mathcal{D}}
\newcommand*\fctE{\mathcal{E}}
\newcommand*\fctJ{\mathcal{J}}
\newcommand*\Four{\mathcal{F}}
\newcommand*\bdlE{\mathscr{E}}
\newcommand*\bdlF{\mathscr{F}}
\newcommand*\dd{\mathrm{d}}
\newcommand*\dmu{\dd\mu_\nu}
\newcommand*\dv{\dd v}
\newcommand*\du{\dd u}
\newcommand*\dz{\dd z}
\newcommand*\ddx{\frac{\dd x}{x}}
\newcommand*\ddt{\frac{\dd t}{t}}
\newcommand*\ddlam{\frac{\dd\lambda}{\lambda}}
\newcommand*\Dom{\mathfrak{D}}
\newcommand*\Dmax{\Dom_{\mathrm{max}}}
\newcommand*\Dmin{\Dom_{\mathrm{min}}}
\newcommand*\Dmaxc{\Dom_{\mathrm{max},c}}
\newcommand*\Dmaxmin{\Dom_{\mathrm{mix}}}
\newcommand*\DNeu{\Dom_{\mathrm{Neu}}}
\newcommand*\DDir{\Dom_{\mathrm{Dir}}}
\newcommand*\RRone{\RR^{\vec d_1}}
\newcommand*\RRtwo{\RR^{\vec d_2}}
\newcommand*\al{\alpha}
\newcommand*{\val}{|\alpha|}
\newcommand*\be{\beta}
\newcommand*\te{\theta}
\theoremstyle{plain}
\newtheorem{thm}{Theorem}[section]
\newtheorem{lm}[thm]{Lemma}
\newtheorem{prop}[thm]{Proposition}
\newtheorem{cor}[thm]{Corollary}
\theoremstyle{remark}
\newtheorem{rem}[thm]{Remark}
\numberwithin{equation}{section}
\begin{document}
\title[Multiplier theorem for solvable extensions]{A sharp multiplier theorem for solvable extensions of Heisenberg and related groups}

\author[A. Martini]{Alessio Martini}
\address[A. Martini]{Dipartimento di Scienze Matematiche ``G.L. Lagrange'' \\ Politecnico di Torino \\ Corso Duca degli Abruzzi 24 \\ 10129 Torino \\ Italy}
\email{alessio.martini@polito.it}

\author[P. Plewa]{Pawe\l{} Plewa}
\address[P. Plewa]{Dipartimento di Scienze Matematiche ``G.L. Lagrange'',
	Politecnico di Torino\\ 
	and Department of Pure and Applied Mathematics, 
	Wroc{\l}aw University of Science and Technology}        
\email{pawel.plewa@polito.it}

\begin{abstract}
Let $G$ be the semidirect product $N \rtimes \RR$, where $N$ is a stratified Lie group and $\RR$ acts on $N$ via automorphic dilations. Homogeneous left-invariant sub-Laplacians on $N$ and $\RR$ can be lifted to $G$, and their sum $\Delta$ is a left-invariant sub-Laplacian on $G$. In previous joint work of Ottazzi, Vallarino and the first-named author, a spectral multiplier theorem of Mihlin--H\"ormander type was proved for $\Delta$, showing that an operator of the form $F(\Delta)$ is of weak type $(1,1)$ and bounded on $L^p(G)$ for all $p \in (1,\infty)$ provided $F$ satisfies a scale-invariant smoothness condition of order $s > (Q+1)/2$, where $Q$ is the homogeneous dimension of $N$. Here we show that, if $N$ is a group of Heisenberg type, or more generally a direct product of M\'etivier and abelian groups, then the smoothness condition can be pushed down to the sharp threshold $s>(d+1)/2$, where $d$ is the topological dimension of $N$. The proof is based on lifting to $G$ weighted Plancherel estimates on $N$ and exploits a relation between the functional calculi for $\Delta$ and analogous operators on semidirect extensions of Bessel--Kingman hypergroups.
\end{abstract}

\subjclass[2020]{22E30, 42B15, 42B20, 43A22}

\keywords{Mihlin--H\"ormander multiplier, spectral multiplier, sub-Laplacian, solvable group, hypergroup}

\thanks{The authors gratefully acknowledge the financial support of Compagnia di San Paolo. The first-named author is a member of Gruppo Nazionale per l'Analisi Matematica, la Probabilit\`a e le loro Applicazioni (GNAMPA) of Istituto Nazionale di Alta Matematica (INdAM)}

\maketitle

\section{Introduction}
Let $N$ be a stratified Lie group of step $r$. In other words, $N$ is a connected, simply connected nilpotent Lie group, whose Lie algebra $\lie{n}$ is decomposed as a direct sum $\lie{n}_1 \oplus \dots \oplus \lie{n}_r$ of nontrivial subspaces $\lie{n}_j$, called layers, in such a way that $[\lie{n}_1,\lie{n}_j] = \lie{n}_{j+1}$ for all $j=1,\dots,r$, where $\lie{n}_{r+1} = \{0\}$. As $N$ is nilpotent and simply connected, the exponential map is a diffeomorphism between $\lie{n}$ and $N$, and in exponential coordinates the Lebesgue measure on $\lie{n}$ corresponds to the (left and right) Haar measure on $N$. We denote by $d = \sum_{j=1}^r \dim \lie{n}_j$ and $Q = \sum_{j=1}^r j \dim\lie{n}_j$ the topological and homogeneous dimensions of $N$. Let $\Delta_N$ be a homogeneous left-invariant sub-Laplacian on $N$, that is an operator of the form
\begin{equation}\label{eq:N_sublaplacian}
\Delta_N = -\sum_{j=1}^{\dim\lie{n}_1} X_j^2,
\end{equation}
where the $X_j$ are left-invariant vector fields forming a basis of the first layer $\lie{n}_1$.

The stratified group $N$ is naturally equipped with a one-parameter group of automorphic dilations $(e^{uD})_{u \in \RR}$, where $D$ is the derivation of $\lie{n}$ which is $j$ times the identity on $\lie{n}_j$. We can then form the semidirect product $G = N \rtimes \RR$, where $\RR$ acts on $N$ by dilations; in other words, the product law on $G$ is given by
\[
(z,u) \cdot (\tilde z, \tilde u) = (z\cdot e^{u D} \tilde z, u+\tilde u)
\]
for all $(z,u),(\tilde z,\tilde u) \in G$. In these coordinates, the product measure $\dz \, \du$ of the Haar measure on $N$ and the Lebesgue measure on $\RR$ is the right Haar measure on $G$, while the left Haar measure is given by $e^{-Qu} \,\dz \,\du$; unless otherwise specified, we shall use the right Haar measure when integrating on $G$ and in the definition of the Lebesgue spaces $L^p(G)$.

The vector fields $\partial_u$ on $\RR$ and $X_j$ ($j=1,\dots,\dim \lie{n}_1$) on $N$ lift to left-invariant vector fields on the semidirect product $G$, given by
\begin{equation}\label{eq:liftedvectorfields}
X_j^\sharp = \begin{cases} \partial_u &\text{if } j=0,\\
e^u X_j & \text{if } j=1,\dots,\dim\lie{n}_1,
\end{cases}
\end{equation}
and we can consider the corresponding left-invariant sub-Laplacian
\begin{equation}\label{eq:G_sublaplacian}
\Delta = -\sum_{j=0}^{\dim \lie{n}_1} (X_j^\sharp)^2 = -\partial_u^2 + e^{2u} \Delta_N
\end{equation}
on $G$. This operator is essentially self-adjoint on $L^2(G)$; so, by the spectral theorem, the sub-Laplacian $\Delta$ has a Borel functional calculus, and for any bounded Borel function $F : [0,\infty) \to \CC$ the operator $F(\Delta)$ is bounded on $L^2(G)$.

Here we are interested in investigating what additional conditions are to be required on the spectral multiplier $F$ so that the operator $F(\Delta)$, initially defined on $L^2(G)$, extends to a bounded operator on $L^p(G)$ for some $p \neq 2$. More specifically, we look for relations between $L^p$-boundedness properties of $F(\Delta)$ and size and smoothness properties of $F$, such as Mihlin--H\"ormander type estimates of the form
\begin{equation}\label{eq:mh}
\| F(\Delta) \|_{L^p(G) \to L^p(G)} \lesssim_{p,s} \sup_{t>0} \| F(t \cdot) \chi \|_{\sobolev{s}{2}(\RR)}
\end{equation}
for appropriate values of $p$ and $s$, where $\sobolev{s}{2}(\RR)$ is the $L^2$ Sobolev space of order $s$ on $\RR$ and $\chi \in C^\infty_c(0,\infty)$ is a nontrivial cutoff. These are generalisations of the classical estimates for the Laplace operator on $\RR^n$, for which the analogue of \eqref{eq:mh} holds for any $p \in (1,\infty)$ and $s > n/2$, as a consequence of the Mihlin--H\"ormander theorem for Fourier multipliers \cite{Ho,Mi}.

We point out that $G$ is a solvable Lie group of exponential volume growth. In particular, the ``standard machinery'' providing $L^p$-bounds of Mihlin--H\"ormander type for the functional calculus for sub-Laplacians on Lie groups of polynomial growth and more general doubling metric-measure spaces (see, e.g., \cite{A,CoSi,DOSi,He95}) does not apply here; indeed, if we equip $G$ with the Carnot--Carath\'eodory distance associated with the vector fields \eqref{eq:liftedvectorfields}, then $G$ is locally doubling, but not globally. Nevertheless, somewhat surprisingly, the sub-Laplacian $\Delta$ on $G$ has a differentiable functional calculus on $L^p(G)$ \cite{CGHM,G,He93b,Mus}; this should be contrasted with what happens for other exponential solvable Lie groups and sub-Laplacians, which may be of holomorphic $L^p$-type \cite{ChMu,HeLM}. More recently, by developing and extending previous results and ideas in \cite{He99,HeSt,Va07}, the authors of \cite{MOV} proved a spectral multiplier theorem of Mihlin--H\"ormander type for $\Delta$, which can be stated as follows.

For a fixed cutoff $\chi \in C_c^\infty(0,\infty)$ with $\ind_{[1/2,2]} \leq \chi \leq \ind_{[1/4,4]}$,
we define for $s\geq 0$ and a bounded Borel function $F : [0,\infty) \to \CC$ the quantities
\begin{equation*}
\Vert F\Vert_{0,s} =\sup_{0<t \leq 1} \Vert F(t\cdot)\chi\Vert_{\sobolev{s}{2}(\RR)}, \qquad
\Vert F\Vert_{\infty,s} =\sup_{t\geq 1} \Vert F(t\cdot)\chi\Vert_{\sobolev{s}{2}(\RR)}.
\end{equation*}
These are variants of the scale-invariant Sobolev norm in the right-hand side of \eqref{eq:mh}, focusing on the ``local part'' and the ``part at infinity'' of the function $F$.

\begin{thm}[\cite{HeSt,MOV}]\label{thm:mov}
Suppose that both $s_0,s_\infty>3/2$, and that moreover $s_\infty>(Q+1)/2$. If a bounded Borel function $F : [0,\infty) \to \CC$ satisfies $\Vert F\Vert_{0,s_0}<\infty$ and $\Vert F\Vert_{\infty,s_\infty}<\infty$, then $F(\Delta)$ extends to an operator of weak type $(1,1)$ and bounded on $L^p(G)$ for $p \in (1,\infty)$, bounded from $H^1(G)$ to $L^1(G)$ and from $L^\infty(G)$ to $BMO(G)$.
\end{thm}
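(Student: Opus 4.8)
The plan is to deduce the four claimed bounds from two kernel estimates. Since $\Delta$ is self-adjoint and $F$ is bounded, $F(\Delta)$ is automatically bounded on $L^2(G)$; by Marcinkiewicz interpolation it then suffices to prove that $F(\Delta)$ is of weak type $(1,1)$ and bounded from $H^1(G)$ to $L^1(G)$, with operator norms controlled by $\Vert F\Vert_{0,s_0}+\Vert F\Vert_{\infty,s_\infty}$: the boundedness on $L^p(G)$ for $p\in(1,2)$ follows from the weak type estimate, while the boundedness on $L^p(G)$ for $p\in(2,\infty)$ and from $L^\infty(G)$ to $BMO(G)$ follows by duality, because $F(\Delta)^*=\overline{F}(\Delta)$ satisfies the same hypotheses. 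As $F(\Delta)$ is left-invariant, it acts by right convolution with a kernel $\kappa_F$, and everything reduces to size and regularity bounds on $\kappa_F$. Using a smooth dyadic partition one writes $F=F^0+F^\infty$, with $F^0$ supported near the origin and controlled by $\Vert F\Vert_{0,s_0}$, and $F^\infty$ supported away from it and controlled by $\Vert F\Vert_{\infty,s_\infty}$; the two summands call for genuinely different arguments, responsible for the two smoothness thresholds.

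For $F^\infty$ — the part of the functional calculus at large spectral parameter — only the local geometry of $G$ is relevant. Decomposing $F^\infty$ dyadically and writing each piece $F_\lambda(\Delta)$ ($\lambda\geq1$) through the Fourier inversion formula in terms of the wave propagators $\cos(t\sqrt{\Delta})$ of the fields \eqref{eq:liftedvectorfields}, finite propagation speed confines its kernel, up to rapidly decaying tails, to a Carnot--Carath\'eodory ball of radius $\sim\lambda^{-1/2}$, hence to the unit ball. At such scales $G$ is doubling, and near the identity its sub-Riemannian structure is modelled on a homogeneous group of homogeneous dimension $Q+1$: the first layer is spanned by all the fields in \eqref{eq:liftedvectorfields}, whose iterated brackets reproduce the higher layers of $\lie{n}$. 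Combined with the small-time Gaussian bounds for the heat semigroup generated by $\Delta$, the standard Calder\'on--Zygmund theory of spectral multipliers on doubling spaces then yields the weak type $(1,1)$ and $H^1\to L^1$ bounds for this part under $s_\infty>(Q+1)/2$, the large-scale tails being controlled exactly as in the treatment of $F^0$ below (which is also why $s_\infty>3/2$ is required).

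The term $F^0$, at small spectral parameter, is the hard one: $G$ has exponential volume growth and is not globally doubling, so no global Calder\'on--Zygmund theory applies. Here I would exploit the semidirect structure. Diagonalizing $\Delta_N$ on the $N$-factor exhibits $L^2(G)$ as a direct integral over the spectrum of $\Delta_N$, on whose fibres $\Delta=-\partial_u^2+e^{2u}\Delta_N$ acts, essentially, as the one-dimensional Schr\"odinger operator $\mathcal{L}_\mu=-\partial_u^2+\mu\,e^{2u}$ on $L^2(\RR)$ ($\mu\geq0$); and each $\mathcal{L}_\mu$ with $\mu>0$ is unitarily equivalent, through a translation in the $u$ variable, to the model operator $\mathcal{L}_1=-\partial_u^2+e^{2u}$, whose generalised eigenfunctions are Macdonald functions and whose sharp Mihlin--H\"ormander and endpoint theory — with threshold $3/2$, as in the Hebisch--Steger analysis of the $ax+b$ group, and related to the Bessel--Kingman hypergroups featuring in this paper — is available. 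The strategy is then (i) to establish weak type $(1,1)$ and $H^1\to L^1$ estimates for $F^0(\mathcal{L}_\mu)$ uniformly in $\mu$, quantitatively in terms of $\Vert F\Vert_{0,s_0}$, and (ii) to transfer these, through the direct-integral decomposition, to $F^0(\Delta)$ on $G$, controlling the dependence of $\kappa_{F^0}$ on the $N$-variable by Plancherel estimates on $N$ together with finite propagation speed for $\cos(t\sqrt{\Delta_N})$, which confines that variable to balls of controlled volume.

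The main difficulty is step (ii), the reassembly over the spectrum of $\Delta_N$: one must track, uniformly in $\mu$, the size — in an exponentially weighted $L^2$ norm reflecting the modular function $e^{-Qu}$ of $G$ — and the $u$-support of the one-dimensional kernels of $F^0(\mathcal{L}_\mu)$, and show that their superposition over the Plancherel decomposition of $N$, with the $\mu$-dependent translations, produces a kernel on $G$ satisfying an integral H\"ormander-type condition despite the exponential growth of the Haar measure. The decay of the model kernel in the direction in which that measure grows, inherited from the Bessel structure of $\mathcal{L}_1$, is precisely what makes this possible, and is the origin of the value $3/2$ for the smoothness required at small spectral parameter. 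This interplay between the one-dimensional functional calculus, the geometry of $N$, and the non-unimodularity of $G$ is the heart of the argument.
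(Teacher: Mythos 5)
Your top-level architecture (duality and interpolation down to the weak $(1,1)$ and $H^1\to L^1$ endpoints, splitting $F$ into a part at small and a part at large spectral parameter, and matching the threshold $(Q+1)/2$ to the local doubling dimension and $3/2$ to the large-scale geometry) agrees with the route taken in \cite{HeSt,MOV}, which is the proof the paper points to. But there are two genuine gaps. First, you invoke ``the standard Calder\'on--Zygmund theory of spectral multipliers on doubling spaces'' for $F^\infty$ and ``an integral H\"ormander-type condition'' for $F^0$, as if establishing kernel bounds were the whole problem. On $G$ no such standard theory is available: $G$ has exponential volume growth and is not doubling, so the Calder\'on--Zygmund decomposition of an arbitrary $L^1(G)$ function cannot be performed with balls, and a H\"ormander integral condition alone does not yield weak type $(1,1)$. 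The entire point of \cite{HeSt} is the construction of a bespoke Calder\'on--Zygmund structure on $G$ (decompositions into ``admissible'' sets elongated along the $N$-fibres), which is also what defines the spaces $H^1(G)$ and $BMO(G)$ appearing in the statement; your proposal never engages with this, and without it neither endpoint estimate follows from any kernel bound.

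Second, step (ii) of your treatment of $F^0$ --- transferring weak type $(1,1)$ bounds for the fibre operators $F^0(\mathcal{L}_\mu)$ ``through the direct-integral decomposition'' --- is not a valid operation: weak-type bounds do not superpose over a continuous spectral parameter. Your closing paragraph retreats to superposing kernel estimates, which is the right instinct (the fibering over the spectrum of $\Delta_N$ is genuinely present in \cite{MOV}, in the guise of the relation $K_{F(\Delta)}(z,u)=K_{(\Phi F)_u(\Delta_N)}(z)$), but you do not supply the mechanism that actually produces the $L^1$ kernel bound \eqref{eq:l1l2}. In \cite{HeSt,MOV} this is the Plancherel formula \eqref{eq:plancherel}, whose measure $\lambda^{[3,Q+1]}\,\dd\lambda/\lambda$ has exponent $3$ at small $\lambda$ and is precisely the source of the threshold $3/2$, combined with finite propagation speed and, for $r\geq 1$, a weighted Cauchy--Schwarz inequality in which a radiality property of $K_{F(\sqrt{\Delta})}$ (twisted by the modular function) is used to absorb the weight that kills the exponential volume growth. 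Without either the Plancherel formula or the radiality argument, your plan for $F^0$ does not close.
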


We refer to \cite{MOV,Va} for the precise definitions of the atomic Hardy space $H^1(G)$ and the dual space $BMO(G)$, which are adapted to the Calder\'on--Zygmund structure of $G$ in the sense of \cite{HeSt}.

We point out that the restriction $s_\infty>3/2$ in the above statement is implied by $s_\infty > (Q+1)/2$ except when $Q=1$, in which case $N$ must be abelian.
In general, the above result requires different orders of smoothness $s_0$ and $s_\infty$ on the local part and the part at infinity of the multiplier $F$, related to the ``pseudodimension'' $3$ and the ``local doubling dimension'' $Q+1$ of $G$; here the pseudodimension plays the role that the ``dimension at infinity'' plays in \cite{A} for groups of polynomial growth.
In any case, Theorem \ref{thm:mov} implies the validity of the estimate \eqref{eq:mh} for any $p \in (1,\infty)$ and $s > \max\{3,Q+1\}/2$.

When $N$ is abelian, i.e.\ $N$ has step $1$, then $N \cong \RR^d$ and the above result is already contained in \cite{HeSt}. In that case, $\Delta_N$ is just the Laplacian on $\RR^d$, and $\Delta$ is elliptic; moreover, $\lie{n} = \lie{n}_1$ and $Q = d$. A classical transplantation argument \cite{KST} then shows that the smoothness condition $s_\infty > (Q+1)/2$ is sharp,  in the sense that the threshold $(Q+1)/2$, which in that case equals half the topological dimension $(d+1)/2$ of $G$, cannot be replaced by a smaller quantity.

Instead, when $N$ is not abelian, the operators $\Delta_N$ and $\Delta$ are not elliptic, and $Q>d \geq 3$. Here it is meaningful to ask about the sharpness of the condition $s_\infty > (Q+1)/2$ in the previous theorem. Indeed, by the results of \cite{MMNG}, we know that the threshold $(Q+1)/2$ cannot be replaced by anything smaller than $(d+1)/2$, but a gap remains between the two quantities. The main result of this paper shows that, at least for certain classes of nonabelian stratified groups $N$, the condition can indeed be pushed down to $s_\infty > (d+1)/2$, thus leading to a sharp result, which improves Theorem \ref{thm:mov} in this case.

We recall that the stratified group $N$ is said to be a \emph{M\'etivier group} \cite{Me} if $N$ has step $2$ and, for all $\mu \in \lie{n}_2^* \setminus \{0\}$, the skewsymmetric bilinear form $\mu[\cdot,\cdot] : \lie{n}_1 \times \lie{n}_1 \to \RR$ is nondegenerate. Heisenberg groups and, more generally, groups of Heisenberg type in the sense of Kaplan \cite{Ka} are M\'etivier groups, but there exist also M\'etivier groups which are not of Heisenberg type \cite{MuSe}. Our improvement of Theorem \ref{thm:mov} applies in particular to the case where $N$ is a M\'etivier group.

\begin{thm}\label{thm:1}
Assume that the $2$-step group $N$ is a direct product of finitely many M\'etivier and abelian groups.	Suppose that $s_0>3/2$ and $s_\infty>(d+1)/2$. If a bounded Borel function $F : [0,\infty)\to\CC$ satisfies $\Vert F\Vert_{0,s_0}<\infty$ and $\Vert F\Vert_{\infty,s_\infty}<\infty$, then $F(\Delta)$ extends to an operator of weak type $(1,1)$ and bounded on $L^p(G)$ for $p \in (1,\infty)$, bounded from $H^1(G)$ to $L^1(G)$ and from $L^\infty(G)$ to $BMO(G)$.
\end{thm}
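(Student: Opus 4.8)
The plan is to retain the overall scheme of \cite{HeSt,MOV}. There, by means of a Calder\'on--Zygmund theory adapted to the merely local doubling structure of $G$, the weak type $(1,1)$, the $H^1(G)\to L^1(G)$ and the $L^p(G)$ bounds for $p\in(1,\infty)$ are reduced to kernel estimates for the dyadic pieces of $F(\Delta)$, split into a ``part at infinity'' --- which governs the behaviour of $F(\Delta)$ at spatial infinity on $G$, is dominated by the $\RR$-factor and the pseudodimension $3$, and is insensitive to the finer structure of $N$, so that the hypothesis $s_0>3/2$ and its treatment remain untouched --- and a ``local part'', for which one needs a weighted $L^2$ (weighted Plancherel) estimate for the convolution kernels of the pieces of $F(\Delta)$ localised at small scales, with a weight that is a power $(1+\dist)^{2\gamma}$ of the Carnot--Carath\'eodory distance from the identity (after rescaling to unit scale). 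In \cite{MOV} this estimate is obtained, by crude size bounds, with exponent $\gamma$ arbitrarily close to $Q/2$, whence $s_\infty>(Q+1)/2$; our goal is to reach $\gamma$ arbitrarily close to $d/2$ --- the extra ``$+1$'' being contributed by the genuinely one-dimensional $\RR$-direction --- whence $s_\infty>(d+1)/2$. The $L^\infty(G)\to BMO(G)$ bound and the remaining $L^p(G)$ bounds then follow by duality and interpolation with $L^2$, exactly as in \cite{MOV}.

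The new ingredient is a sharp weighted Plancherel estimate for $\Delta$ on $G$, obtained by lifting to $G$ the corresponding estimates on $N$. The starting observation is that the sub-Laplacian $\Delta_N$, regarded as a differential operator on $G$ acting only in the $N$-variable, commutes with $\Delta$ (because $\Delta=-\partial_u^2+e^{2u}\Delta_N$); so do the operators built from the second layers of the M\'etivier factors of $N$, and one works with the joint functional calculus of $\Delta$ and this commuting system. On the joint spectral subspaces, $\Delta$ acts as the one-parameter family
\begin{equation*}
\Delta\ \longleftrightarrow\ -\partial_u^2+\lambda\, e^{2u}\quad\text{on }L^2(\RR,\du),\qquad \lambda\in[0,\infty),
\end{equation*}
$\lambda$ being the eigenvalue of $\Delta_N$. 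The structural heart of the argument is to recognise this family, together with the multiplicity data of the commuting system, as --- up to unitary equivalence and tensoring with multiplicity spaces --- the functional calculus of the analogous sub-Laplacian on a semidirect extension $\mathcal{B}\rtimes\RR$ of a Bessel--Kingman hypergroup $\mathcal{B}$, whose order is read off from $\dim\lie{n}_1$ and $\dim\lie{n}_2$: this hypergroup extension is a ``rank one'' object, whose relevant kernels are explicit (of Jacobi/Macdonald-function type) and whose effective dimension is $d$ rather than $Q$.

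Through this identification one transfers to $G$ the sharp weighted Plancherel estimates known for sub-Laplacians on M\'etivier groups --- the ones underlying the Mihlin--H\"ormander threshold $d/2$ on $N$ itself --- the contribution of the $\RR$-direction being absorbed by one-dimensional estimates (integration by parts, stationary phase) for the explicit kernels of the model operators. Since $N$ is a direct product of M\'etivier and abelian groups, the commuting system decomposes accordingly and the argument is carried out factor by factor, the abelian factors contributing through the ordinary Euclidean weighted Plancherel estimate and the M\'etivier factors through the mixed first-/second-layer estimates; the weight exponents gained from the individual factors add up to a total exponent on $G$ arbitrarily close to $d/2$, which is precisely what the reduction of the first paragraph needs in order to yield the threshold $s_\infty>(d+1)/2$.

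The hard part will be making the identification with the Bessel--Kingman extension precise and, above all, uniform in the spectral parameters, and then lifting the weighted estimates through it without losing derivatives. For groups of Heisenberg type the underlying representation-theoretic picture is essentially explicit, but for a general M\'etivier factor the ``bundle'' of rescaled harmonic oscillators over $\lie{n}_2^*\setminus\{0\}$ is non-trivial --- the symplectic eigenvalues of $\mu[\cdot,\cdot]$ depend on $\mu$ only continuously, and the diagonalising change of variables is $\mu$-dependent --- so the passage to the hypergroup model and the control of the resulting kernels must be carried out with care and uniformly in $\mu$. A second, more technical, obstacle is to arrange the one-dimensional $\RR$-side estimates so that, combined with the $N$-side weighted estimates, they preserve the sharp relation between weight exponent and Sobolev order; this is where the rank-one nature of the model operators, and the precise asymptotics of their kernels, are essential.
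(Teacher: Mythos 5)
Your outline does land on the paper's distinctive idea --- reduce to a weighted Plancherel estimate on $N$, and lift it to $G$ through a semidirect extension $X_\nu\rtimes\RR$ of a Bessel--Kingman hypergroup of fractional order $\nu$ close to $d$ --- but two of its load-bearing ingredients are either misidentified or left entirely open. First, the weight. The reduction of the threshold from $(Q+1)/2$ to $(d+1)/2$ is \emph{not} achieved by a power $(1+\dist)^{2\gamma}$ of the Carnot--Carath\'eodory distance: a radial weight does not distinguish the degenerate central directions from the first-layer ones, and no weighted Plancherel estimate of the required strength is known (or expected) for it. The weight that actually works is the structural one $w(z)^2=\prod_{j=1}^{\ell}|z_j'|^{\alpha_j}$ with $\vec 0\preccurlyeq\alpha\prec\vec{d_2}$, i.e.\ powers of the first-layer coordinates of the M\'etivier factors, as in Hebisch's estimate and Proposition \ref{prop:3}; its reciprocal integrates over $B_G(0_G,r)$ to $r^{Q-|\alpha|+1}$ (Corollary \ref{cor:3}), and $Q-|\alpha|$ can be taken arbitrarily close to $d$ precisely because $\vec d_2\prec\vec d_1$. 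The factor $(1+t^{-1/2}\dist)^{\varepsilon}$ appearing in the Calder\'on--Zygmund machinery (Proposition \ref{prop:5}) is a separate, small-exponent weight and plays no role in the dimensional reduction. Powers of $|z|_N$ do enter, but only in the large-$r$ regime and in tandem with the structural weight (Corollary \ref{cor:2}, Proposition \ref{prop:4}).

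Second, the two steps you defer as ``the hard part'' are the proof, and the route you sketch for them would not go through as stated. Fibering $\Delta_N$ into rescaled harmonic oscillators over $\lie{n}_2^*\setminus\{0\}$, uniformly in $\mu$, is exactly the difficulty the paper avoids: for a general M\'etivier factor the symplectic spectrum of $J_\mu$ is only continuous in $\mu$, so no representation-theoretic identification ``uniform in the spectral parameters'' is attempted. Instead, the weighted Plancherel estimate on $N$ is proved abstractly via the joint functional calculus of $(\Delta_N,Z'')$, the bound $\||J_{Z''}P|^{2k}f\|_{L^2}\lesssim\|\Delta_N^k f\|_{L^2}$ coming from the Helffer--Nourrigat theorem on $N\times N$ (Lemma \ref{lm:9}), homogeneity of the joint Plancherel measure, and Stein interpolation --- crucially in a version with \emph{no support restriction} on $F$, which is what makes the subsequent lifting possible. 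The lifting itself does not proceed by diagonalising $\Delta$ on joint spectral subspaces: it rests on Gnewuch's heat-kernel transference formula, which yields $K_{F(\Delta)}(z,u)=K_{(\Phi F)_u(\Delta_N)}(z)$ and $K_{F(\Delta_\nu)}(x,u)=K_{(\Phi F)_u(L_\nu)}(x)$ with one and the same $\nu$-independent operator $\Phi$ (Lemma \ref{lm:5}), whose boundedness between the relevant weighted $L^2$ spaces for fractional $\nu$ is again obtained by interpolation from the integer cases (Lemma \ref{lm:4}). On top of this one still needs the basic theory of $\Delta_\nu$ on $G_\nu$ (essential self-adjointness with Neumann conditions, Plancherel measure, radiality of $m^{1/2}K_{F(\Delta_\nu)}$, finite propagation speed) to run the large-$r$ part of the argument. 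As it stands, your proposal is a correct identification of the strategy but not a proof: the decisive estimates are missing, and the one concrete mechanism you do propose (distance-power weights, fibrewise oscillator analysis) would have to be replaced.
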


As a consequence, under the assumptions of this theorem, the estimate \eqref{eq:mh} holds true for all $p \in (1,\infty)$ and $s>(d+1)/2$. We point out once again that the threshold $(d+1)/2$, corresponding to half the topological dimension of $G$, is sharp. In this respect, this result can be considered as part of a programme aimed at determining the sharp threshold in Mihlin--H\"ormander estimates for non-elliptic ``Laplace-like'' operators in a variety of settings; we refer to \cite{CCM,CoSi,DalMa,MM16,MMNG,MuSt} for a more extensive discussion and further references. The relevance of Theorem \ref{thm:1} in this context is that it appears to be the first sharp result of this type where the underlying manifold has exponential volume growth.

By a contraction argument (see, e.g., \cite[Theorem 5.2]{Ma17}), Theorems \ref{thm:mov} and \ref{thm:1} imply corresponding results for the direct product $\tilde G = N \times \RR$, with smoothness conditions $s_0=s_\infty > (Q+1)/2$ and $s_0=s_\infty > (d+1)/2$ respectively. However, these results for $\tilde G$ are already available in the literature. Indeed, the direct product $\tilde G$ is a stratified group itself, and the results deduced by contraction from Theorems \ref{thm:mov} and \ref{thm:1} correspond to the Christ--Mauceri--Meda theorem \cite{Ch,MaMe} for homogeneous sub-Laplacians on stratified groups, and its improvement due to Hebisch \cite{He93} and M\"uller and Stein \cite{MuSt} for Heisenberg and related groups.

It is still an open problem whether the improvement to half the topological dimension in the Christ--Mauceri--Meda theorem is always possible for an arbitrary stratified group (see, e.g., the discussion in \cite{MM14,MM16,MMNG}). As a consequence, the question whether the additional assumption on $N$ in Theorem \ref{thm:1} can be dropped altogether appears to be out of reach at this time, as the analogous and apparently easier question for the direct product $\tilde G$ is still open. Nevertheless, the technique developed in this paper can be used to improve Theorem \ref{thm:mov} for a larger class of nonabelian stratified groups $N$ than the one considered in Theorem \ref{thm:1}, also including some groups of step higher than $2$.

We refer to \cite[Section 3]{Ma12} for the definition of \emph{$h$-capacious} stratified group.  Here we just recall that any stratified group $N$ is $0$-capacious, but, if $N$ is M\'etivier, then it is also $(Q-d)$-capacious. Moreover, if $N$ is the direct product of an $h_1$-capacious and an $h_2$-capacious group, then $N$ is $(h_1+h_2)$-capacious. Furthermore, if $N$ has step $r$ and $\dim\lie{n}_r=1$, then $N$ is $1$-capacious \cite[Proposition 3.9]{Ma12}. So the following result properly extends Theorem \ref{thm:1}.

\begin{thm}\label{thm:capacious}
Assume that $N$ is $h$-capacious for some $h \in \NN$.	Suppose that $s_0,s_\infty>3/2$ and $s_\infty>(Q-h+1)/2$. If a bounded Borel function $F : [0,\infty)\to\CC$ satisfies $\Vert F\Vert_{0,s_0}<\infty$ and $\Vert F\Vert_{\infty,s_\infty}<\infty$, then $F(\Delta)$ extends to an operator of weak type $(1,1)$ and bounded on $L^p(G)$ for $p \in (1,\infty)$, bounded from $H^1(G)$ to $L^1(G)$ and from $L^\infty(G)$ to $BMO(G)$.
\end{thm}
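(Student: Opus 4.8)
The plan is to revisit the proof of Theorem~\ref{thm:mov}, upgrading only those estimates that govern the ``high-frequency'' part of the functional calculus --- the ones controlled by $\Vert F\Vert_{\infty,s_\infty}$ and responsible for the threshold $(Q+1)/2$ --- while keeping verbatim the estimates for the ``low-frequency'' part, which depend on $\Vert F\Vert_{0,s_0}$ and on the pseudodimension $3$ of $G$ alone and already require only $s_0>3/2$. Recall that the argument of \cite{MOV}, built on the Calder\'on--Zygmund theory for $G$ of \cite{HeSt}, reduces the asserted $L^p$, weak type $(1,1)$ and $H^1$--$BMO$ bounds to uniform weighted $L^2$ (Plancherel-type) estimates for the convolution kernels of the dyadic pieces of $F(\Delta)$; for the pieces living at spectral scales $\ge 1$, these estimates reflect the local dimension $Q+1$ of $G$. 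It therefore suffices to show that, for $F$ supported at spectral scale $R^2$ with $R\ge 1$, the kernel of $F(\Delta)$ obeys these estimates with $Q$ systematically replaced by $Q-h$; the remaining ingredients of the assembly, including the low-frequency estimates and the control of the tails via the pseudodimension $3$, are then exactly as in \cite{MOV}.

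The decisive structural input is the $h$-capaciousness of $N$, which by \cite[Section~3]{Ma12} is precisely a strengthening of the weighted Plancherel estimates for the homogeneous sub-Laplacian $\Delta_N$ on $N$: there is a suitable weight $\omega$ on $N$, homogeneous of degree $h$ with respect to the automorphic dilations, that may be inserted into the standard Plancherel estimate at the cost of no extra smoothness, so that for the purposes of multiplier theory $\Delta_N$ behaves as if $N$ had homogeneous dimension $Q-h$. In the M\'etivier case $h=Q-d$ this is the weighted estimate underlying the sharp multiplier theorems of M\"uller--Stein and Hebisch \cite{MuSt,He93} on $N$; as in those works, $\omega$ being small on a set of strictly lower homogeneous dimension forces a separate analysis there, and the $h$-capacious formalism of \cite{Ma12} is tailored to this bookkeeping.

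To transport this gain from $N$ to $G$ I would diagonalise $\Delta=-\partial_u^2+e^{2u}\Delta_N$ along the spectrum of $\Delta_N$: on the part of $L^2(G)$ on which $\Delta_N$ acts as multiplication by $\lambda>0$, the operator $\Delta$ becomes $-\partial_u^2+\lambda e^{2u}$ on $L^2(\RR,\du)$, and after the translation $u\mapsto u-\tfrac12\log\lambda$ this coincides, for \emph{every} $\lambda$, with one and the same one-dimensional Schr\"odinger operator $\mathcal M=-\partial_u^2+e^{2u}$ --- a Bessel-type operator whose spectral resolution is the Kontorovich--Lebedev transform. Hence the kernel of $F(\Delta)$ on $G$ is obtained by integrating the Schwartz kernel of the single operator $F(\mathcal M)$ against the spectral data of $\Delta_N$ and suitable $\lambda$-translations. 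Carrying out the very same diagonalisation for the operator $\mathcal A_\nu=-\partial_u^2+e^{2u}\mathcal B_{\nu-1}$ on the semidirect extension $\Rpos\rtimes\RR$ of the Bessel--Kingman hypergroup of parameter $\nu\defeq Q-h$, with $\mathcal B_{\nu-1}=-\partial_x^2-\tfrac{\nu-1}{x}\partial_x$ on $(\Rpos,x^{\nu-1}\,\dd x)$ replacing $\Delta_N$, reduces the comparison of $F(\Delta)$ on $G$ with $F(\mathcal A_\nu)$ on $\Rpos\rtimes\RR$ to comparing the Plancherel data of $\Delta_N$, weighted by $\omega$, with those of $\mathcal B_{\nu-1}$. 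But the $\omega$-weighted Plancherel measure of $\Delta_N$ is comparable to the Plancherel measure of $\mathcal B_{\nu-1}$ --- this is exactly the $h$-capacious estimate, and it is what makes $\Delta_N$ look $(Q-h)$-dimensional --- so, together with the companion estimates carrying the polynomial weights $(1+\varrho_N)^{2\gamma}$ at the cost of $\gamma$ derivatives, one obtains weighted $L^2$ bounds for the kernel of $F(\Delta)$ on $G$, fibrewise in the $\RR$-coordinate, in terms of the kernel of $F(\mathcal A_\nu)$ on $\Rpos\rtimes\RR$ --- an object of local dimension $\nu+1$ (the radial part of real hyperbolic space $\mathbb H^{\nu+1}$ when $\nu\in\NN$) and pseudodimension $3$. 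The high-frequency kernel bounds for $\mathcal A_\nu$ needed here follow from \cite{HeSt} when $\nu$ is an integer, and in general by analytic interpolation in $\nu$, the hypergroup framework being what makes such interpolation available; feeding them back into the reduction of the first paragraph yields the conclusion of Theorem~\ref{thm:capacious}, with $\max\{3,\nu+1\}/2=\max\{3,Q-h+1\}/2$ in place of $(Q+1)/2$, the $L^p$, weak type $(1,1)$ and $H^1$--$BMO$ statements coming along as in \cite{HeSt,MOV}.

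The heart of the matter, and the step I expect to be most delicate, is the comparison carried out in the third paragraph. Although $\Delta$ and $\mathcal A_\nu$ arise from the same recipe $-\partial_u^2+e^{2u}(\,\cdot\,)$, the factor $e^{2u}$ couples the $N$- (resp.\ $\Rpos$-) direction with the $\RR$-direction in a genuinely non-product, $u$-dependent fashion, so the diagonalisation above glues the copies of $\mathcal M$ in a manner that cannot be unwound by simply freezing $u$; one must show that, once one works with kernels and with the relevant weighted $L^2$ norms, this coupling is harmless --- for instance by representing $F(\Delta)$ through the wave propagator $\cos(\tau\sqrt\Delta)$ and exploiting finite propagation speed for the Carnot--Carath\'eodory distance of $G$, matched against the corresponding propagator for $\mathcal A_\nu$ --- and, crucially, that the degree-$h$ weight $\omega$ supplied by $h$-capaciousness, together with the companion polynomial weights needed to control the tails in the H\"ormander integral conditions, survive the passage from $N$ to $G$ intact, so that none of the $h$ gained derivatives is lost. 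A subsidiary technical point is to set up the Bessel--Kingman extension $\mathcal A_\nu$ and its Calder\'on--Zygmund structure for non-integer $\nu$, which is where the hypergroup language, rather than that of an honest Lie group, becomes indispensable.
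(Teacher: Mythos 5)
Your proposal follows essentially the same route as the paper: Theorem \ref{thm:capacious} is proved there as a direct modification of the proof of Theorem \ref{thm:1}, namely by lifting the $h$-capacious weighted Plancherel estimate on $N$ to $G$ through the semidirect-product Bessel--Kingman hypergroup $X_\nu\rtimes\RR$ of fractional parameter $\nu$, exploiting the $\nu$-independence of the transference kernel $M_{t,u}$ (your Kontorovich--Lebedev diagonalisation of $-\partial_u^2+e^{2u}\Delta_N$), finite propagation speed, and the Calder\'on--Zygmund theory of \cite{HeSt,MOV}. The one adjustment is that the capacious weighted estimate is available only for weights of homogeneity strictly less than $h$, so one must work with $\nu=Q-h+\varepsilon$ rather than $\nu=Q-h$; the strict inequality $s_\infty>(Q-h+1)/2$ leaves exactly the room needed, just as in the passage from $\nu=d$ to $\nu=d+\varepsilon$ in the proof of Theorem \ref{thm:1}.
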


This result can be compared with that in \cite[Corollary 6.1]{Ma12} for homogeneous sub-Laplacians on $h$-capacious stratified groups. The proof of Theorem \ref{thm:capacious} is a relatively straightforward modification of the proof of Theorem \ref{thm:1}, but requires several adjustments and changes of notation. In order not to affect the clarity of the presentation, below we only discuss the details of the proof of Theorem \ref{thm:1}.

\subsection*{Proof strategy}

By duality and interpolation, each of the spectral multiplier theorems for $\Delta$ stated above reduces to endpoint estimates of the form
\begin{equation}\label{eq:endpoint}
\begin{aligned}
\|F(\Delta)\|_{L^1(G) \to L^{1,\infty}(G)} &\lesssim_\varepsilon \|F\|_{0,\varsigma_0+\varepsilon} + \|F\|_{\infty,\varsigma_\infty+\varepsilon}, \\
\|F(\Delta)\|_{H^1(G) \to L^1(G)} &\lesssim_\varepsilon \|F\|_{0,\varsigma_0+\varepsilon} + \|F\|_{\infty,\varsigma_\infty+\varepsilon}
\end{aligned}
\end{equation}
for all $\varepsilon > 0$, where $\varsigma_0,\varsigma_\infty \geq 0$ are appropriate thresholds. As in other works on the subject, by means of Calder\'on--Zygmund theory, the estimates \eqref{eq:endpoint} can essentially be reduced to $L^1$-estimates for the convolution kernels $K_{F(t\Delta)}$ of the operators $F(t\Delta)$, corresponding to rescaled versions of a multiplier $F$ with compact support $\supp F \subseteq [-4,4]$:
\[
\sup_{0 < t \leq 1} \|K_{F(t\Delta)}\|_{L^1(G)} \lesssim_\varepsilon \|F\|_{\sobolev{\varsigma_0+\varepsilon}{2}(\RR)},  \quad \sup_{t \geq 1} \|K_{F(t\Delta)}\|_{L^1(G)} \lesssim_\varepsilon \|F\|_{\sobolev{\varsigma_\infty+\varepsilon}{2}(\RR)}.
\]
In turn, via a frequency decomposition, these can be deduced, at least when $\varsigma_0 \leq \varsigma_\infty$, from an estimate of the form
\begin{equation}\label{eq:l1l2}
\|K_{F(\sqrt{\Delta})}\|_{L^1(G)} \lesssim r^{[\varsigma_\infty,\varsigma_0]} \left( \int_0^\infty |F(\lambda)|^2 \, \lambda^{[2\varsigma_0,2\varsigma_\infty]} \ddlam \right)^{1/2}
\end{equation}
for all $r > 0$ and $F \in \fctE_r$; here $\fctE_r$ is the set of the even Schwartz functions $F : \RR \to \CC$ whose Fourier supports are contained in $[-r,r]$, and we write
\begin{equation}\label{eq:bipower}
\lambda^{[a,b]}= \begin{cases}
\lambda^a &\text{if } \lambda \leq 1,\\
\lambda^b &\text{if } \lambda \geq 1,
\end{cases}
\end{equation}
for any $a,b \in \RR$ and $\lambda >0$.

When $\varsigma_0 = 3/2$ and $\varsigma_\infty = (Q+1)/2$, the estimate \eqref{eq:l1l2} can be proved by combining the Plancherel formula for the functional calculus for $\Delta$, namely
\begin{equation}\label{eq:plancherel}
\|K_{F(\sqrt{\Delta})}\|_{L^2(G)}^2 \simeq \int_0^\infty |F(\lambda)|^2 \, \lambda^{[3,Q+1]} \ddlam,
\end{equation}
and the finite propagation speed property for $\Delta$, namely
\[
\supp K_{F(\sqrt{\Delta})} \subseteq \overline{B_G}(0_G,r)
\]
whenever $F \in \fctE_r$, where $\overline{B_G}(0_G,r)$ is the closed ball of radius $r$ centred at the origin of $G$ with respect to the Carnot--Carath\'eodory distance. Indeed, by finite propagation speed and the Cauchy--Schwarz inequality,
\[
\|K_{F(\sqrt{\Delta})}\|_{L^1(G)} \leq |\overline{B_G}(0_G,r)|^{1/2} \|K_{F(\sqrt{\Delta})}\|_{L^2(G)}
\]
when $F \in \fctE_r$, which in view of the Plancherel formula \eqref{eq:plancherel} gives \eqref{eq:l1l2} when $r \leq 1$, because $|\overline{B_G}(0_G,r)| \simeq r^{Q+1}$ in this case. This argument does not work as it is for $r \geq 1$, since $|\overline{B_G}(0_G,r)| \simeq \exp(Qr)$ for large $r$. However, one can fix this by introducing a suitable weight when applying the Cauchy--Schwarz inequality, in order to kill the exponential volume growth; finite propagation speed and radiality properties of $K_{F(\sqrt{\Delta})}$ can then be used, roughly speaking, to get rid of the extra weight in the $L^2$ norm, and obtain \eqref{eq:l1l2} for large $r$. This is broadly the approach used in \cite{HeSt,MOV} to prove Theorem \ref{thm:mov}.

In order to improve the result, i.e., to push down the threshold $\varsigma_\infty$, here we introduce, also for small $r$, an appropriate weight $w= w(z)$ in the application of the Cauchy--Schwarz inequality:
\[
\|K_{F(\sqrt{\Delta})}\|_{L^1(G)} \leq \left(\int_{\overline{B_G}(0_G,r)} w(z)^{-2} \,dz \,du \right)^{1/2} \|w \, K_{F(\sqrt{\Delta})}\|_{L^2(G)}.
\]
If $w$ is chosen so that $\int_{\overline{B_G}(0_G,r)} w(z)^{-2} \,dz \,du \simeq r^{2\varsigma_\infty}$ for $r \leq 1$, then the problem of obtaining \eqref{eq:l1l2} is essentially reduced, at least for small $r$, to the proof of a ``weighted Plancherel estimate'' of the form
\begin{equation}\label{eq:wplancherel}
\|w \, K_{F(\sqrt{\Delta})}\|_{L^2(G)}^2 \lesssim \int_0^\infty |F(\lambda)|^2 \, \lambda^{[3,2\varsigma_\infty]} \ddlam.
\end{equation}

In the case $\varsigma_\infty = (\nu+1)/2$ for some integer $\nu$, in light of \eqref{eq:plancherel} we can equivalently rewrite the previous inequality as
\begin{equation}\label{eq:wplancherel_inter}
\|w \, K_{F(\sqrt{\Delta})}\|_{L^2(G)} \lesssim \| K_{F(\sqrt{\Delta_\nu})} \|_{L^2(G_\nu)},
\end{equation}
where $G_\nu$ and $\Delta_\nu$ are the analogues of $G$ and $\Delta$ when $N$ is replaced by $\RR^\nu$. As $w$ only depends on the variable $z$, the relation between the functional calculi of $\Delta$ and $\Delta_N$ allows one to reduce \eqref{eq:wplancherel_inter} to a similar estimate on the stratified group $N$:
\begin{equation}\label{eq:Nplancherel_inter}
\|w \, K_{F(\sqrt{\Delta_N})}\|_{L^2(N)} \lesssim \| K_{F(\sqrt{\Delta_{\RR^\nu}})} \|_{L^2(\RR^\nu)},
\end{equation}
or equivalently,
\begin{equation}\label{eq:Nplancherel}
\|w \, K_{F(\sqrt{\Delta_N})}\|_{L^2(N)}^2 \lesssim \int_0^\infty |F(\lambda)|^2 \, \lambda^{\nu} \ddlam.
\end{equation}
As it turns out, when $N$ is a direct product of M\'etivier and abelian groups, the ``weighted Plancherel estimate'' \eqref{eq:Nplancherel} on $N$ holds true for any $\nu \in (d,Q]$, with a suitable weight $w$ depending on $\nu$. Indeed, such an estimate is the fundamental ingredient used in \cite{He93} to sharpen the Christ--Mauceri--Meda theorem for this class of stratified groups. The intermediate steps \eqref{eq:Nplancherel_inter} and \eqref{eq:wplancherel_inter} allow one to lift the weighted estimate to $G$ and deduce \eqref{eq:wplancherel} for $\varsigma_\infty = (\nu+1)/2$, where $\nu \in (d,Q] \cap \NN$.

In order to prove Theorem \ref{thm:1} with the sharp condition $s_\infty > (d+1)/2$, in principle we would like to take $\varsigma_\infty = (d+1)/2$, that is $\nu=d$, but the weighted Plancherel estimate \eqref{eq:Nplancherel} on $N$ fails in that case. As a workaround, we can take instead $\nu = d+\varepsilon$ for arbitrarily small $\varepsilon>0$, as \eqref{eq:Nplancherel} then holds true, but this requires us to work with non-integer $\nu$. The problem then becomes how to make sense of the intermediate steps \eqref{eq:Nplancherel_inter} and \eqref{eq:wplancherel_inter} in the lifting argument leading to \eqref{eq:wplancherel}, as $\RR^\nu$ and $G_\nu = \RR^\nu \rtimes \RR$ are not defined when $\nu$ is fractional.

The solution that we adopt here is to replace the Laplacian $\Delta_{\RR^\nu}$ on $\RR^\nu$ with the Bessel operator $L_\nu = -\partial_x^2 - (\nu-1) x^{-1} \partial_x$ on the half-line $X_\nu = [0,\infty)$ equipped with the measure $x^{\nu-1} \,\dd x$. When $\nu$ is an integer, the Bessel operator $L_\nu$ is just the radial part of the Laplacian on $\RR^\nu$, but of course $L_\nu$ and $X_\nu$ make sense for fractional $\nu$ as well, and provide the following replacement for \eqref{eq:Nplancherel_inter}:
\begin{equation}\label{eq:Nplancherel_inter2}
\|w \, K_{F(\sqrt{\Delta_N})}\|_{L^2(N)} \lesssim \| K_{F(\sqrt{L_\nu})} \|_{L^2(X_\nu)}.
\end{equation}
Similarly, the lifted estimate \eqref{eq:wplancherel_inter} becomes meaningful for fractional $\nu$ by taking as $G_\nu$ the semidirect product $X_\nu \rtimes \RR$, and as $\Delta_\nu$ the operator $-\partial_u^2 + e^{2u} L_\nu$ thereon.

One of the technical problems in implementing this strategy is that the half-line $X_\nu$ is not a Lie group; however, it is a hypergroup \cite{BH,J} (more precisely, $X_\nu$ is known as a Bessel--Kingman hypergroup), so there is a convolution structure in terms of which the convolution kernels $K_{F(\sqrt{L_\nu})}$ are defined. Additionally, $X_\nu$ has a group of automorphic dilations,
so the semidirect product $G_\nu = X_\nu \rtimes \RR$ can be made sense of in terms of hypergroup theory \cite{HeyKa,W}, and again the operators in the functional calculus for $\Delta_\nu = -\partial_u^2 + e^{2u} L_\nu$ are convolution operators on $G_\nu$. Thus, one of the tasks that we undertake here is to develop the theory of the operators $L_\nu$ and $\Delta_\nu$, mirroring the classical one for $\Delta_{\RR^\nu}$ and $-\partial_u^2 + e^{2u} \Delta_{\RR^\nu}$ when $\nu$ is an integer, and establish all the properties that we need in order to run the aforementioned argument when $\nu$ is fractional (e.g., finite propagation speed, explicit formula for the control distance, radiality of convolution kernels in the functional calculus, Plancherel formula...). A fundamental tool for this is the theory developed in \cite{G}, which explicitly relates the heat semigroups generated by $L_\nu$ and $\Delta_\nu$, thus providing the link between their functional calculi that justifies the lifting of \eqref{eq:Nplancherel_inter2} to \eqref{eq:wplancherel_inter} and eventually yields the required estimates for the proof of Theorem \ref{thm:1}.

\subsection*{Structure of the paper}

In Section \ref{s:groups} we recall the main properties of the stratified groups $N$, their solvable extensions $G = N \rtimes \RR$, and the sub-Laplacians thereon. We also state and prove a suitable version of the weighted Plancherel estimate on $N$, extending those available in the literature, and show that the weight appearing in that estimate has the correct integrability properties on $G$.

Sections \ref{s:hypergroupsX} and \ref{s:hypergroupsG} are devoted to the discussion of the hypergroups $X_\nu$ and $G_\nu$, and the corresponding operators $L_\nu$ and $\Delta_\nu$. Of course, there is a vast literature on the Bessel operator $L_\nu$, so we only recall some of the main properties, in connection with the Hankel transform on $X_\nu$. In comparison, not so many results appear to be available for the operator $\Delta_\nu$ on the semidirect product hypergroup $G_\nu$, so we spend some time to derive the required properties. One of the issues we need to deal with is the fact that $X_\nu$ and $G_\nu$ are manifolds with boundary, and indeed boundary conditions play a role (at least for $\nu$ small) in the choice of self-adjoint extensions of $L_\nu$ and $\Delta_\nu$, which in turn determine their functional calculi.

By using the aforementioned hypergroup structures, in Section \ref{s:multipliers} we lift to $G$ the weighted Plancherel estimate on $N$ obtained in Section \ref{s:groups} and prove our main result, Theorem \ref{thm:1}.

Finally, we devote an Appendix (Section \ref{s:appendix}) to recall some terminology and results about self-adjoint extensions of divergence-form differential operators with Dirichlet and Neumann boundary conditions, as well as the finite propagation speed property for the associated wave equation, using the ``first-order approach'' from \cite{McIMo}.

\subsection*{Some remarks and open questions}

The technique developed here hinges on lifting weighted Plancherel estimates on $N$ of the form \eqref{eq:Nplancherel} to the semidirect product $G = N \rtimes \RR$. As already mentioned, similar estimates were originally obtained in \cite{He93,MuSt} as a tool to sharpen the Christ--Mauceri--Meda multiplier theorem on $N$. However, not all stratified groups $N$ are amenable to this approach. More recently \cite{Ma15,MM14} a different approach was developed, which applies to other classes of stratified groups and is based on variants of \eqref{eq:Nplancherel} where, roughly speaking, the right-hand side may also contain derivatives of $F$. This alternative approach can be used on a number of $2$-step stratified groups $N$, other than direct products of M\'etivier and abelian groups, to push down the condition in the Christ--Mauceri--Meda theorem to half the topological dimension. For those stratified groups $N$, one may expect that an analogous improvement of Theorem \ref{thm:mov} can be obtained on the corresponding semidirect product $G = N \rtimes \RR$, but the approach presented here does not directly apply and new ideas would appear to be needed.

Another natural question is the validity of $L^p$ estimates of Miyachi--Peral type \cite{Miy,P} for the wave equation associated with the sub-Laplacian $\Delta$ on $G$; indeed sharp estimates of this type would likely imply via subordination \cite{Mu} a sharp multiplier theorem for $\Delta$. Such estimates on $G$ are known \cite{MuTh} in the case $N$ is abelian (see also the extension in \cite{MuVa} for an elliptic Laplacian on Damek--Ricci spaces), but the case of nonabelian $N$ and nonelliptic $\Delta$ appears to be wide open. For the homogeneous sub-Laplacian $\Delta_N$ on $N$, sharp Miyachi--Peral estimates are known when $N$ is of Heisenberg type \cite{MuSe2}, and one may wonder whether similar estimates also hold for $\Delta$ on the corresponding solvable extension $G$.

\subsection*{Notation}
$\NN$ denotes the set of natural numbers, including $0$.
We write $\Rnon$ and $\Rpos$ for the closed and open half-lines $[0,\infty)$ and $(0,\infty)$.
For two nonnegative quantities $A$ and $B$, we write $A \lesssim B$ to denote that there exists a constant $C \in \Rpos$ such that $A \leq C B$. We also write $A \simeq B$ for the conjunction of $A \lesssim B$ and $B \lesssim A$. Subscripted variants such as $\lesssim_p$ or $\simeq_p$ indicate that the constant may depend on the parameter $p$.
Finally, we write $\ind_S$ for the characteristic function of a set $S$.

\section{Stratified Lie groups and their solvable extensions}\label{s:groups}

\subsection{Weighted Plancherel estimate for products of M\'etivier groups}\label{ss:metivier}

In this section our aim is to justify the weighted Plancherel estimate \eqref{eq:Nplancherel} that is at the core of our argument. Versions of this estimate can be found elsewhere in the literature (see, e.g., \cite[Lemma 1.4]{He93} or \cite[Proposition 3.5]{Ma11}), but they are stated and proved under the assumption that the multiplier $F$ is supported in a fixed compact subset of $\Rpos$. The proof that we present here shows that, in fact, this restriction on the support of $F$ can be dropped for certain homogeneous weights $w$ in the left-hand side of \eqref{eq:Nplancherel}, provided one chooses the appropriate homogeneity degree of the measure in the right-hand side. This version of the estimate without support restrictions turns out to be crucial in our proof of Theorem \ref{thm:1}.

Let $N$ be a $2$-step stratified Lie group. So the Lie algebra $\lie{n}$ of $N$ is the direct sum of two layers, $\lie{n}=\lie{n}_1\oplus\lie{n}_2$. If we denote by $d_1$ and $d_2$ the dimensions of $\lie{n}_1$ and $\lie{n}_2$, then $d=d_1+d_2$ and $Q=d_1+2d_2$ are the topological and homogeneous dimensions of $N$. Through the exponential map we can identify $N$ with $\lie{n}$; via this identification, Lebesgue measure on $\lie{n}$ is a left and right Haar measure on $N$. The same identification allows us to define the Schwartz class $\Sz(N)$ of rapidly decaying smooth functions on $N$.

We choose a system $\{X_j\}_{j=1}^{d_1}$ of left-invariant vector fields on $N$ which form a basis of $\lie{n}_1$,
and let $\Delta_N$ as in \eqref{eq:N_sublaplacian}
be the corresponding homogeneous sub-Laplacian on $N$. 
Since $\Delta_N$ with domain $C^\infty_c(N)$ is essentially self-adjoint on $L^2(N)$, a Borel functional calculus for $\Delta_N$ is defined via the spectral theorem. 
For any bounded Borel function $F : \Rnon \to \CC$, the $L^2$-bounded operator $F(\Delta_N)$ is left-invariant, so, by the Schwartz kernel theorem, there exists a convolution kernel $K_{F(\Delta_N)} \in \Sz'(N)$ such that
\begin{equation*}
	F(\Delta_N)f= f\ast K_{F(\Delta_N)},\qquad f\in \Sz(N).
\end{equation*}
As is well-known (see, e.g., \cite{CM,Mel,Si3} or Proposition \ref{prop:8} below), $\Delta_N$ satisfies the finite propagation speed property:
\begin{equation}\label{eq:fps_N}
\supp K_{\cos(t\sqrt{\Delta})} \subseteq \overline{B_N}(0_N,|t|), \qquad t \in \RR,
\end{equation}
where $\overline{B_N}(z,r)$ is the closed ball of centre $z$ and radius $r$ relative to the Carnot--Carath\'eodory distance associated with $\Delta_N$, and $0_N$ is the identity element of $N$.
Furthermore, a Plancherel formula holds for $\Delta_N$:
\begin{equation}\label{eq:plancherel_deltaN}
\|K_{F(\Delta)}\|_{L^2(N)}^2 \simeq \int_0^\infty |F(\lambda)|^2 \,\lambda^{Q/2} \ddlam
\end{equation}
(see, e.g., \cite{Ch,DeMa,HuJ}). This corresponds to the unweighted version of \eqref{eq:Nplancherel}, and holds without any assumptions on $N$.

Let $\langle\cdot,\cdot\rangle$ be the inner product on $\lie{n}_1$ which makes $\{X_j\}_{j=1}^{d_1}$ an orthonormal basis.
For all $\mu\in \lie{n}_2^\ast$ we define the skewsymmetric linear operator $J_\mu : \lie{n}_1\to\lie{n}_1$ by
\[
\langle J_\mu x,y\rangle = \mu[x,y]  \qquad \forall x,y\in\lie{n}_1.
\]

Let us choose linear coordinates on $\lie{n}_1$ and $\lie{n}_2$. So we shall write $z=(z',z'')\in N\simeq \RR^{d_1}\times \RR^{d_2}$, and also $\mu\in\lie{n}_2^\ast\simeq \RR^{d_2}$. Observe that $|J_\mu z'|^2$ is a polynomial in $(z',\mu)$, separately homogeneous of order $2$ in both $z'$ and $\mu$; the norm on $\lie{n}_1$ appearing in the expression $|J_\mu z'|^2$ is the one induced by the inner product $\langle \cdot,\cdot\rangle$.

Let $Z''=-i\nabla_{z''}=(-i\partial_{z''_1},\ldots,-i\partial_{z''_{d_2}} )$ be the vector of second-layer derivatives in $N$, and $P=(z'_1,\ldots,z'_{d_1})$ be the vector of multiplication operators by $z'_j$. The components of $P$ and $Z''$ together form a strongly commuting system of self-adjoint operators on $L^2(N)$ (cf.\ \cite[pp.~1229-1230]{Ma12}). Since $|J_\mu z'|^2$ is a polynomial, we consider $|J_{Z''} z'|^2 \defeq |J_{Z''} P|^2$ as a differential operator on $N$ of order $2$ with polynomial coefficients.

\begin{lm}\label{lm:9}
Let $f\in\Sz(N)$ be such that $f\ast K_{e^{-t\Delta_N}}= K_{e^{-t\Delta_N}}\ast f$ for all $t>0$. Then, for all $k\in\NN$,
\begin{equation*}
	\Vert |J_{Z''} P|^{2k} f \Vert_{L^2(N)} \lesssim_k \Vert \Delta_N^k f \Vert_{L^2(N)}.
\end{equation*}
\end{lm}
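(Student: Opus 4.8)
The plan is to diagonalise $\Delta_N$ as far as possible by means of the group Fourier transform of $N$, reduce the inequality fibre by fibre to a weighted estimate for a twisted (magnetic) harmonic oscillator, and use the commutation hypothesis precisely to place the fibres of $f$ in the subspace where such an estimate is available. Concretely, I would apply the partial Fourier transform $\Four_2$ in the second-layer variable $z''$: writing $z=(z',z'')\in\RR^{d_1}\times\RR^{d_2}$ and $\widehat f=\Four_2 f$, one gets an isometry (up to a constant) $L^2(N)\simeq\int^\oplus_{\lie{n}_2^\ast}L^2(\RR^{d_1})\,\dd\mu$ under which $\Delta_N$ acts fibrewise as the twisted Laplacian $\Delta_N^{(\mu)}=-\sum_j(\partial_{z'_j}+\tfrac{i}{2}(J_\mu z')_j)^2$, the operator $|J_{Z''}P|^2$ acts as multiplication by $z'\mapsto|J_\mu z'|^2$, and convolution on $N$ becomes fibrewise $\mu$-twisted convolution $\ast_\mu$; in particular $e^{-t\Delta_N^{(\mu)}}g=g\ast_\mu k^{(\mu)}_t$ with $k^{(\mu)}_t=\Four_2 K_{e^{-t\Delta_N}}(\cdot,\mu)$. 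Splitting $\RR^{d_1}=V_0^\mu\oplus V_1^\mu$ with $V_0^\mu=\ker J_\mu$, the twisted convolution factors as ordinary convolution on $V_0^\mu$ tensored with genuine $\mu$-twisted convolution on the symplectic space $V_1^\mu$, and correspondingly $\Delta_N^{(\mu)}=-\Delta_{V_0^\mu}\otimes I+I\otimes H_\mu$, while $|J_\mu z'|^2=|J_\mu z_1'|^2$ depends only on the $V_1^\mu$-component; here $H_\mu$ is a twisted harmonic oscillator on $L^2(V_1^\mu)$ with purely discrete spectrum $\{\sum_i\lambda_i(\mu)(2n_i+1):n_i\in\NN\}$, where the $\pm i\lambda_i(\mu)$, $\lambda_i(\mu)>0$, are the nonzero eigenvalues of $J_\mu$. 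Taking a further Fourier transform in the $V_0^\mu$-variable replaces $-\Delta_{V_0^\mu}$ by multiplication by $|\xi_0|^2$ and leaves $H_\mu$ and $|J_\mu z_1'|^2$ untouched, so that, by two applications of Plancherel, $\Vert|J_{Z''}P|^{2k}f\Vert_{L^2(N)}^2$ and $\Vert\Delta_N^k f\Vert_{L^2(N)}^2$ are, up to the same constant, the integrals over $(\mu,\xi_0)$ of $\Vert|J_\mu z_1'|^{2k}h_{\mu,\xi_0}\Vert_{L^2(V_1^\mu)}^2$ and of $\Vert(|\xi_0|^2+H_\mu)^k h_{\mu,\xi_0}\Vert_{L^2(V_1^\mu)}^2$ respectively, where $h_{\mu,\xi_0}\in L^2(V_1^\mu)$ is the corresponding slice of $f$. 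Since $(|\xi_0|^2+H_\mu)^k\geq H_\mu^k$ as commuting nonnegative operators, it therefore suffices to prove the fibrewise estimate
\[
\Vert|J_\mu z_1'|^{2k}h\Vert_{L^2(V_1^\mu)}\lesssim_{k,d_1}\Vert H_\mu^k h\Vert_{L^2(V_1^\mu)}
\]
for $h=h_{\mu,\xi_0}$, with a constant uniform in $\mu$ and $\xi_0$ (the case $J_\mu=0$ being trivial).

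To this end I would first record what the hypothesis gives: unwinding $f\ast K_{e^{-t\Delta_N}}=K_{e^{-t\Delta_N}}\ast f$ through $\Four_2$ and then through the Fourier transform in $V_0^\mu$ (where $\ast_\mu$ becomes $\mu$-twisted convolution on $V_1^\mu$ tensored with multiplication by the scalar $e^{-t|\xi_0|^2}$) shows that, for a.e.\ $\mu$ and $\xi_0$, the slice $h=h_{\mu,\xi_0}$ commutes under the $\mu$-twisted convolution on $V_1^\mu$ with the heat kernel of $H_\mu$ for all $t$. Via the Weyl transform $W_\mu\colon L^2(V_1^\mu)\to\mathrm{HS}(L^2(\RR^{m_\mu}))$ ($2m_\mu=\dim V_1^\mu$) — unitary up to a constant, intertwining $\mu$-twisted convolution with operator composition and $H_\mu$ with the operator oscillator $\mathcal H_\mu=\sum_i\lambda_i(\mu)(2\mathcal N_i+1)$, so that $W_\mu(H_\mu^k g)=\mathcal H_\mu^k W_\mu(g)$ — this translates into $[\mathcal H_\mu,W_\mu(h)]=0$, i.e.\ $W_\mu(h)$ preserves each eigenspace of $\mathcal H_\mu$. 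Now comes the main point. By the standard symbolic calculus of the Weyl transform, multiplication by $|J_\mu z_1'|^2=\sum_i\lambda_i(\mu)|\zeta_i|^2$ (in suitably normalised complex coordinates on $V_1^\mu$) transfers to the map $B\mapsto\tfrac12(\mathcal A_\mu B+B\mathcal A_\mu+\sum_i\lambda_i(\mu)(a_iBa_i+a_i^\ast Ba_i^\ast))$, where $\mathcal A_\mu=\sum_i\lambda_i(\mu)(\mathcal N_i+1)$ satisfies $0\leq\mathcal A_\mu\leq\mathcal H_\mu$ and $a_i,a_i^\ast$ are the usual annihilation/creation operators; iterating this, $W_\mu(|J_\mu z_1'|^{2k}h)$ is a sum of $O_{k,d_1}(1)$ terms $u\,W_\mu(h)\,v$, with $u,v$ monomials in the $a_i,a_i^\ast$ of total degree $2k$, each carrying a coefficient equal to a product of $k$ of the $\lambda_i(\mu)$. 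When $W_\mu(h)$ is block-diagonal for $\mathcal H_\mu$, the Hilbert--Schmidt norm of each such term is $\lesssim_{k,d_1}\Vert\mathcal H_\mu^k W_\mu(h)\Vert_{\mathrm{HS}}=\Vert W_\mu(H_\mu^k h)\Vert_{\mathrm{HS}}$: on the index pairs $(\alpha,\beta)$ with $\omega_\alpha\defeq\sum_i\lambda_i(\mu)(2\alpha_i+1)=\omega_\beta$ that support $W_\mu(h)$ one has the scale-free bounds $\lambda_i(\mu)(\alpha_i+1)\leq\omega_\alpha$ and $\lambda_i(\mu)\leq\omega_\alpha$ for every $i$, so the products of $\lambda_i(\mu)$'s and index factors appearing in the entries of $u\,W_\mu(h)\,v$ are each dominated by $\omega_\alpha^k$, while $u$ and $v$ shift $\omega_\alpha$ by only $O_{k,d_1}(\lambda_{\max}(\mu))=O_{k,d_1}(\omega_\alpha)$. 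Summing the finitely many terms and using that $W_\mu$ is isometric up to the same constant on both sides yields the fibrewise estimate uniformly in $\mu$ and $\xi_0$, which together with the reduction above proves the lemma. (Equivalently, this last step can be carried out using the three-term recurrences satisfied by the special Hermite/Laguerre functions diagonalising $H_\mu$, much as in \cite{He93}.)

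The whole argument rests on this last step, and the main obstacle lies exactly in the use of the commutation hypothesis there. For an arbitrary $h$ the inequality $\Vert|J_\mu z_1'|^{2k}h\Vert\lesssim\Vert H_\mu^k h\Vert$ is false already for $k=1$: the lowest eigenspace of $H_\mu$ (a ``Landau level'') is infinite-dimensional, while $|J_\mu z_1'|^2$ is unbounded on it. It is precisely the block-diagonality of $W_\mu(h)$ forced by the hypothesis that couples the growth of the weight $|J_\mu z_1'|^2$ to that of $H_\mu$, and which also makes the estimate uniform in $\mu$ — at bottom just the homogeneity of $\Delta_N$ and $|J_{Z''}P|^2$ — including over the (possibly degenerate) skew-forms $J_\mu$ arising for direct products of M\'etivier and abelian groups, as opposed to the uniformly nondegenerate ones of Heisenberg-type groups.
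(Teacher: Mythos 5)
Your argument is correct in substance, but it is a genuinely different proof from the one in the paper. You diagonalise $\Delta_N$ by the partial Fourier transform in $z''$, split off $\ker J_\mu$, and reduce to a weighted estimate for the twisted harmonic oscillator $H_\mu$ on each fibre, using the commutation hypothesis to make the Weyl transform of each slice block-diagonal with respect to the eigenspaces of $H_\mu$; the scale-free bounds $\lambda_i(\mu)(\alpha_i+1)\leq\omega_\alpha$ on the block-diagonal support then give the fibrewise inequality uniformly in $\mu$. This is essentially the original route of Hebisch and M\"uller--Stein via special Hermite expansions, and you correctly isolate both why the estimate fails without the commutation hypothesis (the weight is unbounded on each Landau level) and why degenerate $J_\mu$ cause no trouble. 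The paper instead proceeds algebraically: it uses the identity $|J_{Z''}z'|^2=\sum_j(X_j+X_j^\circ)^2$, lifts the mixture of left- and right-invariant operators to the doubled group $N\times N$ via the representation $\xi(x,y)f(z)=f(y^{-1}zx)$, invokes the Helffer--Nourrigat theorem to compare the resulting $2k$-homogeneous operator with the $k$-th power of the Rockland operator $\tilde A=\frac12(\Delta_N\otimes\id+\id\otimes\Delta_N)$, and uses the commutation hypothesis only at the very end to replace $A^kf$ by $\Delta_N^kf$. Your approach is more explicit and makes the role of the hypothesis transparent, but it is intrinsically a step-$2$ computation; the paper's lifting argument avoids all special-function calculus, yields the fractional-exponent version by abstract interpolation, and adapts to the higher-step, $h$-capacious setting of Theorem \ref{thm:capacious}. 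Two cosmetic points in your write-up: in the eigencoordinates of $J_\mu$ one has $|J_\mu z_1'|^2=\sum_i\lambda_i(\mu)^2|\zeta_i|^2$ (your normalisation absorbs one factor of $\lambda_i$), and the off-diagonal part of the transferred multiplication operator should shift the two Hermite indices in the same direction (terms of the form $a_iBa_i^*$ and $a_i^*Ba_i$ up to convention), not opposite ones; neither affects the estimate, since in either case the coefficients are dominated by $\sqrt{\omega_\alpha\omega_\beta}=\omega_\alpha$ on the block-diagonal support and the shifts change $\omega_\alpha$ by at most a bounded multiple of itself.
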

\begin{proof}
For every smooth differential operator $D$ on $N$ we define $D^\circ$ by
\begin{equation*}
(Df)^\ast = D^\circ f^\ast,
\end{equation*}
where $f^\ast(z)=\overline{f(z^{-1})}$ denotes the involution on $N$.
If $D$ is left-invariant, then $D^\circ$ is right-invariant, and vice versa.
By \cite[Lemma~3.4]{Ma12} we have
\begin{equation}\label{eq:23}
|J_{Z''} z'|^2 =\sum_{j=1}^{d_1} (X_j+X_j^\circ)^2.
\end{equation}
As the above expression contains a mixture of left- and right-invariant differential operators, it is convenient to analyse it by means of a lifting to the direct product $N \times N$; this idea has already been exploited in \cite{HeZi,Ma12}.

The Lie algebra of the direct product $N \times N$ is canonically identified with $\lie{n} \oplus \lie{n}$. In particular, we can think of $N \times N$ as a $2$-step group too, with first and second layers given by $\lie{n}_1 \oplus \lie{n}_1$ and $\lie{n}_2 \oplus \lie{n}_2$.
Let $\xi$ be the unitary representation of $N \times N$ on $L^2(N)$ defined by
\[
\xi(x,y) f(z) = f(y^{-1} z x).
\]
for all $x,y,z \in N$ and $f : N \to \CC$.
If $D$ is a left-invariant differential operator on $L^2(N \times N)$, then $\dd\xi(D)$ is a smooth differential operator on $N$. Moreover,
\begin{equation*}
\dd\xi(D^\bullet)=\dd\xi(D)^\circ
\end{equation*}
(see \cite[p.\ 1228]{Ma12}), where $D\mapsto D^\bullet$ is the correspondence on the algebra of left-invariant differential operators on $N \times N$ defined as the unique conjugate-linear automorphism extending the Lie algebra automorphism $(X,Y)\mapsto(Y,X)$ of $\lie{n}\oplus\lie{n}$.

Now, notice that $X_j =\dd\xi(\tilde{X}_j)$, $X^\circ_j =\dd\xi(\tilde{X}^\bullet_j)$, where $\tilde{X}_j=X_j\otimes\id$ is the lifting of $X_j$ to $N\times N$, and $\tilde{X}_j,\tilde{X}_j^\bullet$ are left-invariant $1$-homogeneous vector fields on the $2$-step group $N\times N$. In particular, by \eqref{eq:23}, for all $k \in \NN$,
\begin{equation}\label{eq:Joperator}
	|J_{Z''} P|^{2k} = \dd\xi(D_k),
\end{equation}
where 
\begin{equation*}
	D_k=\left(\sum_{j=1}^{d_1} (\tilde{X}_j+\tilde{X}^\bullet_j)^2\right)^k
\end{equation*}
is a left-invariant $2k$-homogeneous differential operator on $N\times N$.
	
Define $A=\frac{1}{2}(\Delta_N +\Delta_N^\circ)$. Notice that $A=\dd\xi(\tilde{A})$, where 
\begin{equation*}
	\tilde{A}= \frac{1}{2}(\tilde{\Delta}_N+\tilde{\Delta}_N^\bullet)
	=\frac{1}{2} (\Delta_N\otimes \id + \id\otimes\Delta_N )
\end{equation*}
is a $2$-homogeneous sub-Laplacian on the $2$-step group $N\times N$. In particular,
\begin{equation}\label{eq:Aoperator}
	A^k=\dd\xi(\tilde{A}^k),
\end{equation}
where $\tilde{A}^k$ is a $2k$-homogeneous positive Rockland operator on $N\times N$.
	
Hence, by the Helffer--Nourrigat theorem \cite{HelNo} (see also \cite[Theorem 2.5]{tElRo} and \cite[p.~32]{HeZi}), from \eqref{eq:Joperator} and \eqref{eq:Aoperator} we deduce, for any $k \in \NN$, the estimate
\begin{equation*}
	\Vert |J_{Z''} P|^{2k} f \Vert_{L^2(N)} 
	\lesssim_k \Vert A^k f \Vert_{L^2(N)}, \qquad f\in \Sz(N).
\end{equation*}
	
Finally, by \cite[Lemma~3.2]{Ma12}, for any $f\in\Sz(N)$ commuting with heat kernels associated with $\Delta_N$, we have $A^k f= \Delta_N^k f$.
\end{proof}

Notice that $\Delta_N$ and the components of $Z''$ are also a system of strongly commuting self-adjoint differential operators on $L^2(N)$. Actually, they form a \emph{homogeneous weighted subcoercive system}, or a \emph{Rockland system} (see \cite[Theorem 5.2]{Ma11} and \cite[Theorem 3.5]{Cal}). In particular, by \cite[Corollary 3.3]{Ma11}, $\Delta_N$ and $Z''$ admit a joint functional calculus on $L^2(N)$ and, for any bounded Borel function $H : \RR\times\RR^{d_2}\to\CC$, the $L^2(N)$-bounded operator $H(\Delta_N,Z'')$ is left-invariant. Moreover, there exists a Plancherel measure $\sigma$ for this joint functional calculus (see \cite[Theorem 3.10]{Ma11}), that is, a regular Borel measure on $\RR \times \RR^{d_2}$, supported on the joint spectrum (so in particular $\supp\sigma\subseteq \Rnon\times\RR^{d_2}$), such that
\begin{equation}\label{eq:jointplancherel}
\Vert K_{H(\Delta_N,Z'')} \Vert_{L^2(N)} = \int_{\RR\times\RR^{d_2}} |H(\lambda,\mu)|^2 \,\dd\sigma(\lambda,\mu).
\end{equation}
As $\Delta_N$ and the components of $Z''$ are $2$-homogeneous, \cite[Proposition 5.1]{Ma11} yields that, for all $t>0$,
\begin{equation*}
K_{H(t^2\Delta_N,t^2 Z'')}(z',z'')= t^{-Q}K_{H(\Delta_N,Z'')}(t^{-1}z',t^{-2}z'').
\end{equation*}
Consequently,
\begin{equation}\label{eq:25}
\int_{\RR\times\RR^{d_2}} H(t^2\lambda,t^2\mu)\,\dd\sigma(\lambda,\mu) = t^{-Q} \int_{\RR\times\RR^{d_2}} H(\lambda,\mu)\,\dd\sigma(\lambda,\mu).
\end{equation}

\begin{rem}\label{rem:2}
If $\varphi\in L^1_{\loc}(\RR^{d_2})$ is non-negative, then the push-forward via the projection $(\lambda,\mu)\mapsto\lambda$ of the measure $\varphi(\mu)\,\dd\sigma(\lambda,\mu)$ is a regular Borel measure $\sigma_\varphi(\lambda)$ on $\RR$, supported on $\Rnon$ (see \cite[Lemma 3.7]{Ma11}). Moreover, if $\varphi$ is homogeneous of degree $\omega\in\RR$, then for any $F\in C_c(\RR)$ and $t>0$ we have
\[\begin{split}
	\int_0^\infty F(t^2\lambda)\,\dd\sigma_\varphi(\lambda) 
	&= \int_{\RR\times\RR^{d_2}} F(t^2\lambda) \varphi(\mu)\, \dd\sigma(\lambda,\mu) \\
	&=t^{-Q}  \int_{\RR\times\RR^{d_2}} F(\lambda) \varphi(t^{-2}\mu)\, \dd\sigma(\lambda,\mu)\\
	&=t^{-Q-2\omega}  \int_{0}^\infty F(\lambda) \, \dd\sigma_\varphi(\lambda),
\end{split}\]
where the last equality follows from \eqref{eq:25}. Thus, the measure $\sigma_\varphi$ is homogeneous and there exists a positive constant $C_\varphi$ such that
\begin{equation*}
	\dd\sigma_\varphi(\lambda)=C_\varphi \lambda^{(Q+2\omega)/2}\,\ddlam.
\end{equation*}
In the case $\varphi \equiv 1$ we have $\omega = 0$, showing that \eqref{eq:jointplancherel} is consistent with \eqref{eq:plancherel_deltaN}.
\end{rem}

We remark that, by a multivariate extension of Hulanicki's theorem \cite{Hu} (see, e.g., \cite[Proposition 4.2.1]{MaPhD}), if $H\in\Sz(\RR\times\RR^{d_2})$, then $K_{H(\Delta_N,Z'')}\in \Sz(N)$; so any partial (Euclidean) Fourier transform of $K_{H(\Delta_N,Z'')}$ is also a Schwartz function. 

\begin{lm}\label{lm:10}
If $\gamma\geq 0$ and $H\in\Sz(\RR\times\RR^{d_2})$, then
\begin{equation*}
	\int_{\RR^{d_1}} \int_{\RR^{d_2}} | |J_\mu z'|^{2\gamma} \hat{K}_{H(\Delta_N,Z'')}(z',\mu)|^2 \,\dd z'\,\dd\mu
	\lesssim_\gamma \int_{\RR\times\RR^{d_2}} | H(\lambda,\mu)|^2 |\lambda|^\gamma \,\dd\sigma(\lambda,\mu),
\end{equation*} 
where $\hat{K}_{H(\Delta_N,Z'')}(z',\mu)$ is the partial Fourier transform in $z''$ of $K_{H(\Delta_N,Z'')}(z',z'')$.
\end{lm}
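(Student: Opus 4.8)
The plan is to reduce the claimed weighted $L^2$ estimate to Lemma \ref{lm:9} by means of the joint Plancherel formula \eqref{eq:jointplancherel} and an integration-by-parts identity that converts multiplication by $|J_\mu z'|^{2\gamma}$ on the Fourier side into the differential operator $|J_{Z''}P|^{2\gamma}$ on $N$. First, I would treat the case of integer $\gamma = k$. Since $H \in \Sz(\RR \times \RR^{d_2})$, the multivariate Hulanicki theorem gives $K_{H(\Delta_N,Z'')} \in \Sz(N)$, so its partial Fourier transform $\hat K$ in $z''$ is Schwartz as well, and all the manipulations below are justified. The key point is that, under the partial Fourier transform in $z''$, the operator $|J_{Z''}P|^{2k} = |J_{Z''}P|^{2k}$ acting on $N$ becomes multiplication by $|J_\mu z'|^{2k}$ (this is exactly the meaning of the strongly commuting system $P, Z''$ and the polynomial $|J_\mu z'|^2$ being evaluated at $Z'' = -i\nabla_{z''}$, $P = $ multiplication by $z'$). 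Hence, by the Plancherel theorem for the Euclidean Fourier transform in $z''$,
\[
\int_{\RR^{d_1}}\int_{\RR^{d_2}} \bigl| |J_\mu z'|^{2k}\, \hat K_{H(\Delta_N,Z'')}(z',\mu) \bigr|^2 \,\dd z' \,\dd\mu \simeq \bigl\| |J_{Z''}P|^{2k} K_{H(\Delta_N,Z'')} \bigr\|_{L^2(N)}^2.
\]
Now I would apply Lemma \ref{lm:9} with $f = K_{H(\Delta_N,Z'')}$: this function is a convolution kernel in the joint functional calculus of $\Delta_N$ and $Z''$, and in particular $F(\Delta_N)$-type kernels commute with the heat kernels $K_{e^{-t\Delta_N}}$, so the hypothesis of Lemma \ref{lm:9} is met. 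This yields $\| |J_{Z''}P|^{2k} K_{H(\Delta_N,Z'')}\|_{L^2(N)} \lesssim_k \|\Delta_N^k K_{H(\Delta_N,Z'')}\|_{L^2(N)} = \|K_{(\lambda^k H)(\Delta_N,Z'')}\|_{L^2(N)}$, and by \eqref{eq:jointplancherel} the right-hand side equals $\bigl(\int |H(\lambda,\mu)|^2 |\lambda|^{2k}\,\dd\sigma\bigr)^{1/2}$. Wait — this gives $|\lambda|^{2k}$, not $|\lambda|^k = |\lambda|^\gamma$; so for integer $\gamma$ the estimate as I have run it is stronger than needed in the exponent of $\lambda$ but we actually want the weaker weight $|\lambda|^\gamma$, and indeed $|\lambda|^{2k}$ on the spectral side corresponds to $|J_\mu z'|^{2k}$ with a squaring mismatch. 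Let me recompute: $\||J_{Z''}P|^{2k} f\| \lesssim \|\Delta_N^k f\|$ gives weight $\lambda^{2k}$ on the $\dd\sigma$ side against $|J_\mu z'|^{2k}$ on the left, i.e.\ with $\gamma = 2k$ even, we get weight $\lambda^{2k} = \lambda^\gamma$. Good — so Lemma \ref{lm:9} directly handles the case $\gamma \in 2\NN$.

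For general $\gamma \geq 0$ the plan is complex interpolation. I would set up an analytic family of operators: for $\zeta$ in the strip $0 \le \Re\zeta \le m$ (with $m$ a fixed even integer $\geq \gamma$), consider the map sending $H$ to the function $(z',\mu) \mapsto |J_\mu z'|^{2\zeta}\,\hat K_{H(\Delta_N,Z'')}(z',\mu)$; or, more cleanly, fix $H$ and regard the two sides as analytic in $\zeta$ via the substitution $H \mapsto H_\zeta$ where $H_\zeta(\lambda,\mu) = |\lambda|^{(\zeta - \gamma)/2}\cdot(\text{something})\,H(\lambda,\mu)$ chosen so that the target estimate at $\Re\zeta = \gamma$ is the desired one while at the integer endpoints $\zeta \in \{0, m\}$ it follows from the even-integer case together with the trivial ($\gamma = 0$) Plancherel bound \eqref{eq:jointplancherel}. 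At the endpoint $\zeta$ purely imaginary one uses that $|J_\mu z'|^{2i\tau}$ is unimodular, so multiplication by it is an isometry on the weighted $L^2$ in $(z',\mu)$; similarly on the spectral side $|\lambda|^{i\tau}$ is unimodular. Stein interpolation then bridges the even integers and delivers the claim for all $\gamma \ge 0$ with constant growing at most polynomially (in fact controlled) in $\gamma$.

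The main obstacle I anticipate is the rigorous setup of the analytic interpolation: one must exhibit the family as analytic in a suitable sense (e.g.\ admissible in the sense of Stein) with the right boundedness on vertical lines, and must be careful that the weight $|J_\mu z'|^{2\zeta}$ is not merely a power of $|\mu|$ or $|z'|$ but of the more delicate quantity $|J_\mu z'|^2$, which vanishes on a nontrivial variety when $N$ is not M\'etivier — though here, since we only need an upper bound and $|J_\mu z'|^{2\zeta}$ for $\Re\zeta \ge 0$ is locally bounded away from the singular set and the estimate is scale-invariant, this should cause no real trouble; the homogeneity relation \eqref{eq:25} and Remark \ref{rem:2} ensure all quantities scale consistently. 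An alternative to interpolation, avoiding the analytic family, would be to prove the fractional case directly by writing $|J_\mu z'|^{2\gamma}$ via a subordination/Mellin formula in terms of the integer powers $|J_{Z''}P|^{2k}$ and summing; but I expect the interpolation route to be cleaner, and it mirrors the standard passage from integer to fractional weighted Plancherel estimates in \cite{Ma11,Ma12}.
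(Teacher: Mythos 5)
Your strategy coincides with the paper's: Plancherel in the $z''$-variable converts the left-hand side into $\Vert |J_{Z''}P|^{2k} K_{H(\Delta_N,Z'')}\Vert_{L^2(N)}^2$, Lemma \ref{lm:9} applies because $K_{H(\Delta_N,Z'')}$ commutes with the heat kernels (both operators lie in the joint functional calculus), the joint Plancherel formula \eqref{eq:jointplancherel} converts $\Vert\Delta_N^k K_{H(\Delta_N,Z'')}\Vert_{L^2(N)}^2$ back to a spectral integral, and Stein interpolation handles non-integer exponents. Nothing in your outline is missing relative to the published argument.

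The one place you stumble is the exponent bookkeeping at the end of the integer case, and it is worth sorting out because the discrepancy you noticed is genuine. What the chain of inequalities actually yields, for $k\in\NN$, is
\[
\int_{\RR^{d_1}}\int_{\RR^{d_2}} \bigl| |J_\mu z'|^{2k}\, \hat{K}_{H(\Delta_N,Z'')}(z',\mu)\bigr|^2\,\dd z'\,\dd\mu \lesssim_k \int_{\RR\times\RR^{d_2}} |H(\lambda,\mu)|^2\,|\lambda|^{2k}\,\dd\sigma(\lambda,\mu),
\]
i.e.\ the lemma with $\gamma=k$ on the left but with $|\lambda|^{2\gamma}$ rather than $|\lambda|^{\gamma}$ on the right. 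Your relabelling $\gamma=2k$ does not repair this: it makes the right-hand side read $|\lambda|^{\gamma}$, but then the left-hand side of the lemma would require the weight $|J_\mu z'|^{2\gamma}=|J_\mu z'|^{4k}$ inside the square, while your estimate only carries $|J_\mu z'|^{2k}$ there; you cannot fix one side without breaking the other. The resolution is that the printed statement of Lemma \ref{lm:10} is off by this factor of two: its right-hand side should carry $|\lambda|^{2\gamma}$, which is what the paper's own proof derives and what is actually used in the proof of Proposition \ref{prop:3} (there the weight $\prod_j(|\mu_j||z'_j|)^{\alpha_j}\lesssim |J_\mu z'|^{|\alpha|}$ from \eqref{eq:24} is paired with $|\lambda|^{|\alpha|/2}$ on the spectral side, i.e.\ the corrected lemma with $\gamma=|\alpha|/4$). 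Once you adopt that form, your integer case works for every $k\in\NN$, not only for even $\gamma$, and the fractional case follows by Stein--Weiss interpolation of the single operator $H\mapsto \hat{K}_{H(\Delta_N,Z'')}$ between the weighted spaces $L^2(|\lambda|^{2k}\,\dd\sigma)\to L^2(|J_\mu z'|^{4k}\,\dd z'\,\dd\mu)$ for consecutive integers $k$; the elaborate analytic family you sketch is not needed, though it would also work.
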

\begin{proof}
Clearly, $K_{H(\Delta_N,Z'')}$ commutes with the heat kernels $K_{e^{-t\Delta_N}}$, as the operators $H(\Delta_N,Z'')$ and $e^{-t\Delta_N}$ are in the joint functional calculus of $\Delta_N$ and $Z''$. Thus, assuming firstly that $\gamma\in\NN$, the Plancherel theorem in $\RR^{d_2}$ and Lemma \ref{lm:9} give
\[\begin{split}
	\int_{\RR^{d_1}} \int_{\RR^{d_2}} | |J_\mu z'|^{2\gamma}  \hat{K}_{H(\Delta_N,Z'')}(z',\mu)|^2 \,\dd z'\,\dd\mu
	&\simeq \int_{N} || J_{Z''} z'|^{2\gamma}  K_{H(\Delta_N,Z'')}(z)|^2 \,\dz \\
	&\lesssim_\gamma \int_{N}  |\Delta_N^\gamma K_{H(\Delta_N,Z'')}(z)|^2 \,\dz\\
	& = \int_{\RR\times\RR^{d_2}} |H(\lambda,\mu)|^2 |\lambda|^{2\gamma}\,\dd\sigma(\lambda,\mu).
\end{split}\]
By interpolation \cite{St} we extend the obtained inequality to non-integer $\gamma$.
\end{proof}

From now on we assume that $N$ satisfies the assumption of Theorem \ref{thm:1}, that is, $N$ is the direct product
\begin{equation}\label{eq:product_N}
N=N^{(0)}\times N^{(1)}\times\ldots\times N^{(\ell)}
\end{equation}
of an abelian group $N^{(0)}$ and finitely many M\'etivier groups $N^{(j)}$, $j=1,\ldots,\ell$, for some $\ell\in\NN \setminus \{0\}$. The factor $N^{(j)}$ is a $2$-step group for each $j=1,\dots,\ell$, so $\lie{n}^{(j)} = \lie{n}_1^{(j)} \oplus \lie{n}_2^{(j)}$, and we set $d_i^{(j)} = \dim \lie{n}_i^{(j)}$ for $i=1,2$. Since $N^{(0)}$ is abelian, it is a $1$-step group, so $\lie{n}^{(0)} = \lie{n}^{(0)}_1$, and we just set $d^{(0)} = \dim \lie{n}^{(0)}$.  We emphasise that with a proper interpretation one can allow $d^{(0)}=0$ and consider $N=N^{(1)}\times\ldots\times N^{(\ell)}$.

As $N$ is a direct product, the first and second layers of $\lie{n}$ are given by
\begin{equation}\label{eq:layer_dec}
\lie{n}_1 = \lie{n}^{(0)} \oplus \lie{n}_1^{(1)} \oplus \dots \oplus \lie{n}_1^{(\ell)}, \qquad \lie{n}_2 = \lie{n}_2^{(1)} \oplus \dots \oplus \lie{n}_2^{(\ell)}.
\end{equation}
In particular, if we define $\vec{d_i}=(d_i^{(1)},\ldots,d_i^{(\ell)})$ and $| \vec{d_i}|=\sum_{j=1}^\ell d_i^{(j)}$ for $i=1,2$, then the dimensions of the two layers of $\lie{n}$ are $d_1 = d^{(0)} + |\vec{d_1}|$ and $d_2 =|\vec{d_2}|$. Consequently, the homogeneous and topological dimensions of $N$ are $Q=d^{(0)}+ |\vec{d_1}| +2 |\vec{d_2}|$ and $d = d^{(0)}+ |\vec{d_1}| +|\vec{d_2}|$. 

We choose linear coordinates for all the layers $\lie{n}_i^{(j)}$, so we can identify $N^{(0)} \simeq \RR^{d^{(0)}}$ and $N^{(j)} \simeq \RR^{d_1^{(j)}}\times \RR^{d_2^{(j)}}$ as manifolds, for $1\leq j\leq \ell$, and correspondingly $N \simeq \RR^{d^{(0)}} \times \RR^{\vec d_1} \times \RR^{\vec d_2}$, where $\RR^{\vec{d_i}}\defeq \RR^{d_i^{(1)}}\times\ldots\times \RR^{d_i^{(\ell)}}$.

We emphasise that the vector fields $X_j$ on $N$ that define the sub-Laplacian $\Delta_N$ need not be compatible with the direct product decomposition. In other words, we do not require the sub-Laplacian $\Delta_N$ to be the sum of sub-Laplacians on the factors $N_j$, nor do we require the decomposition of $\lie{n}_1$ in \eqref{eq:layer_dec} to be an orthogonal decomposition with respect to the inner product $\langle \cdot, \cdot \rangle$ associated with $\Delta_N$.

For a multi-index $\al=(\al_1,\ldots,\al_\ell)\in\RR^\ell$ we write $|\al|=\al_1+\ldots+\al_\ell$ for its length. We also use the symbols $\prec,\preccurlyeq,\succ,\succcurlyeq$ to denote componentwise inequalities between multi-indices: for instance we write $\al\preccurlyeq \be$ for $\al,\be\in\RR^\ell$ whenever $\al_j\leq\be_j$ for all $j=1,\ldots,\ell$. Also, we denote $\vec{0}=(0,\ldots,0)\in\RR^\ell$. Unless stated otherwise, all the multi-indices we use have $\ell$ components.

The above structural assumption on $N$ yields the following crucial estimate (cf., e.g., \cite[Proposition 3.9]{Ma12}).

\begin{lm}\label{lm:metivier}
For all $z'=(z'_0,z'_1\ldots,z'_\ell) \in \lie{n}_1$, $\mu=(\mu_1,\ldots,\mu_\ell) \in \lie{n}_2$,
\begin{equation}\label{eq:metivierproduct}
|J_\mu z'|\gtrsim \sum_{j=1}^\ell |\mu_j| |z'_j|.
\end{equation}
In particular, for any $\al \succeq \vec{0}$,
\begin{equation}\label{eq:24}
|J_\mu z'|^{\val}\gtrsim_\al \prod_{j=1}^\ell (|\mu_j||z_j'|)^{\al_j}.
\end{equation}
Moreover,
\begin{equation}\label{eq:metivier_dim}
\vec d_2 \prec \vec d_1.
\end{equation}
\end{lm}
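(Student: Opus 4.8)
The plan is to establish the three assertions separately, after a harmless normalisation. First I would reduce to the case in which the inner product $\langle\cdot,\cdot\rangle$ on $\lie{n}_1$ and the chosen norm on $\lie{n}_2^*$ are compatible with the direct product decomposition \eqref{eq:product_N}, i.e.\ $\lie{n}_1=\lie{n}^{(0)}\oplus\lie{n}_1^{(1)}\oplus\dots\oplus\lie{n}_1^{(\ell)}$ is an orthogonal sum and $|\mu|^2=\sum_{j=1}^\ell|\mu_j|^2$. This is legitimate because \eqref{eq:metivierproduct} and \eqref{eq:24} are inequalities up to multiplicative constants: if $\langle\cdot,\cdot\rangle'$ is another inner product on $\lie{n}_1$, with associated skewsymmetric operators $J'_\mu$, then writing $\langle x,y\rangle'=\langle Bx,y\rangle$ for a fixed positive self-adjoint $B$ one gets $J'_\mu=B^{-1}J_\mu$ and hence $|J'_\mu z'|'\simeq|J_\mu z'|$ uniformly in $\mu,z'$, while all norms on the finite-dimensional spaces $\lie{n}_1$ and $\lie{n}_2^*$ are mutually equivalent.

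Next I would exploit that, $N$ being a direct product, the bracket is block-diagonal: $[\lie{n}^{(0)},\cdot]=0$, $[\lie{n}^{(j)},\lie{n}^{(k)}]=0$ for $j\neq k$, and $[\lie{n}_1^{(j)},\lie{n}_1^{(j)}]\subseteq\lie{n}_2^{(j)}$. Therefore $\mu[x,y]=\sum_{j=1}^\ell\mu_j[x_j,y_j]$, so $J_\mu$ annihilates $\lie{n}^{(0)}$ and acts on each $\lie{n}_1^{(j)}$ as the $J$-operator $J^{(j)}_{\mu_j}$ of the M\'etivier factor $N^{(j)}$, giving $|J_\mu z'|^2=\sum_{j=1}^\ell|J^{(j)}_{\mu_j}z'_j|^2$. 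On each factor the M\'etivier hypothesis makes $\mu_j\mapsto J^{(j)}_{\mu_j}$ send nonzero covectors to invertible operators, so the continuous function $(\mu_j,z'_j)\mapsto|J^{(j)}_{\mu_j}z'_j|$, which is homogeneous of degree $1$ in each variable separately, is bounded below by a positive constant $c_j$ on the compact set $\{|\mu_j|=|z'_j|=1\}$; rescaling yields $|J^{(j)}_{\mu_j}z'_j|\ge c_j|\mu_j||z'_j|$. Summing over $j$ and using $\big(\sum_j a_j\big)^2\le\ell\sum_j a_j^2$ gives $|J_\mu z'|\gtrsim\sum_{j=1}^\ell|\mu_j||z'_j|$, which is \eqref{eq:metivierproduct}; and \eqref{eq:24} follows at once, since then $|\mu_j||z'_j|\lesssim|J_\mu z'|$ for each $j$, so $\prod_j(|\mu_j||z'_j|)^{\al_j}\lesssim_\al|J_\mu z'|^{\sum_j\al_j}=|J_\mu z'|^{\val}$ for $\al\succeq\vec 0$.

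Finally, \eqref{eq:metivier_dim} is a purely algebraic consequence of the M\'etivier condition on each factor, which I would prove directly: fixing $j$ and $v\in\lie{n}_1^{(j)}\setminus\{0\}$, the linear map $\mu_j\mapsto\mu_j[v,\cdot]\in(\lie{n}_1^{(j)})^*$ is injective, since nondegeneracy of $\mu_j[\cdot,\cdot]$ forbids $v$ from lying in its radical when $\mu_j\neq0$; hence $d_2^{(j)}\le d_1^{(j)}$. Were equality to hold, this map would be onto, so some $\mu_0$ would satisfy $\mu_0[v,\cdot]=\phi$ for a functional $\phi$ with $\phi(v)\neq0$, contradicting $\mu_0[v,v]=0$. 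Thus $d_2^{(j)}<d_1^{(j)}$ for all $j$, i.e.\ $\vec d_2\prec\vec d_1$. I expect the only mildly delicate point to be the opening normalisation, which is needed precisely because, as stressed before the lemma, $\Delta_N$ and its inner product need not respect the product structure \eqref{eq:product_N}; everything else reduces to homogeneity and elementary linear algebra.
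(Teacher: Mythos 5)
Your proposal is correct and follows essentially the same route as the paper: reduce to the block structure of the bracket coming from the direct product decomposition, use the M\'etivier nondegeneracy together with bihomogeneity and compactness to get the lower bound $|J^{(j)}_{\mu_j}z'_j|\gtrsim|\mu_j||z'_j|$ on each factor, and deduce $\vec d_2\prec\vec d_1$ from the fact that $[x,\cdot]:\lie{n}_1^{(j)}\to\lie{n}_2^{(j)}$ is surjective but has $x$ in its kernel. The only cosmetic difference is that the paper avoids your opening normalisation by passing to the functional $\mu[x,\cdot]$ and using equivalence of norms on $\lie{n}_1^*$, rather than conjugating the operators $J_\mu$ by the positive operator $B$; both devices handle the possible incompatibility of $\langle\cdot,\cdot\rangle$ with the product structure equally well.
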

\begin{proof}
As $N$ is a direct product,
\[
\langle J_\mu x,y\rangle = \mu[x,y] = \sum_{j=1}^\ell \mu_j [x_j,y_j]
\]
for all $x=(x_0,x_1,\dots,x_\ell),y=(y_0,y_1,\dots,y_\ell) \in \lie{n}_1$, $\mu=(\mu_1,\ldots,\mu_\ell) \in \lie{n}_2^*$. Consequently,
\begin{equation}\label{eq:norm_equivalence}
|J_\mu x| \simeq |\mu[x,\cdot]| \simeq \sum_{j=1}^\ell |\mu_j [x_j,\cdot]| \gtrsim \sum_{j=1}^\ell |\mu_j| |x_j|,
\end{equation}
where $\mu[x,\cdot]$ and $\mu_j [x_j,\cdot]$ are thought of as elements of $\lie{n}_1^*$ and $(\lie{n}^{(j)}_1)^*$ respectively, and in the last inequality of \eqref{eq:norm_equivalence} we used that each of the $N_j$ is M\'etivier. This proves \eqref{eq:metivierproduct}, and \eqref{eq:24} follows immediately.

Finally, the (well-known) inequality $d_2^{(j)} < d_1^{(j)}$ between the dimensions of the layers of a M\'etivier group is due to the fact that, for any nonzero $x \in \lie{n}_1^{(j)}$, the linear map $[x,\cdot] : \lie{n}_1^{(j)} \to \lie{n}_2^{(j)}$ is surjective (due to the M\'etivier condition), but not injective, as its kernel contains $x$.
\end{proof}

By combining Lemmas \ref{lm:10} and \ref{lm:metivier}, we can finally obtain the desired weighted Plancherel estimate on $N$.

\begin{prop}\label{prop:3}
For $\vec{0}\preccurlyeq\al\prec \vec{d_2}$ and all $F\in\Sz(\RR)$,
\begin{equation*}
	\int_N  |K_{F(\Delta_N)}(z)|^2\prod_{j=1}^\ell |z_j'|^{\al_j} \,\dz 
	\lesssim_{\alpha} \int_0^\infty |F(\lambda)|^2 \lambda^{(Q-\val)/2}\, \ddlam. 
\end{equation*}
\end{prop}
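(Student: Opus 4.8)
The plan is to reduce the weighted $L^2$ estimate for $K_{F(\Delta_N)}$ to the joint-functional-calculus estimate of Lemma \ref{lm:10}, using the product structure of $N$ and the pointwise bound \eqref{eq:24} of Lemma \ref{lm:metivier}. First I would write $F(\Delta_N) = H(\Delta_N, Z'')$ with $H(\lambda,\mu) = F(\lambda)$ (i.e.\ $H$ does not depend on $\mu$); since $F \in \Sz(\RR)$, we can take $H \in \Sz(\RR \times \RR^{d_2})$ (after harmlessly inserting a cutoff, or by a density/approximation argument), so that $K_{F(\Delta_N)} = K_{H(\Delta_N,Z'')} \in \Sz(N)$ and all manipulations with partial Fourier transforms are legitimate. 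By the Plancherel theorem in the $z''$ variable,
\[
\int_N |K_{F(\Delta_N)}(z)|^2 \prod_{j=1}^\ell |z_j'|^{\al_j} \,\dz \simeq \int_{\RR^{d_1}}\int_{\RR^{d_2}} \Big|\prod_{j=1}^\ell |z_j'|^{\al_j/2}\,\hat K_{H(\Delta_N,Z'')}(z',\mu)\Big|^2 \,\dd z'\,\dd\mu.
\]
The key point is now to absorb the weight $\prod_j |z_j'|^{\al_j/2}$ into a power of $|J_\mu z'|$ at the cost of negative powers of $|\mu_j|$.

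To carry this out, I would use \eqref{eq:24}: for $\al \succeq \vec 0$ we have $\prod_{j=1}^\ell (|\mu_j| |z_j'|)^{\al_j} \lesssim_\al |J_\mu z'|^{\val}$, hence
\[
\prod_{j=1}^\ell |z_j'|^{\al_j} \lesssim_\al |J_\mu z'|^{\val} \prod_{j=1}^\ell |\mu_j|^{-\al_j}
\]
for $\mu$ with all $\mu_j \neq 0$ (a null set is discarded). Substituting this bound (with $\al$ replaced by $\al/2$, so the exponent becomes $|J_\mu z'|^{\val/2}$) into the integral above and then applying Lemma \ref{lm:10} with $\gamma = \val/2$, the $z'$-integral is controlled by
\[
\int_N \big| |J_{Z''} z'|^{\val/2}\, K_{H(\Delta_N,Z'')}(z) \big|^2 \,\dz \lesssim \int_{\RR\times\RR^{d_2}} |F(\lambda)|^2 |\lambda|^{\val/2}\,\dd\sigma(\lambda,\mu),
\]
except that the weight $\prod_j |\mu_j|^{-\al_j}$ must be carried along. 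Thus the whole expression is bounded by
\[
\int_{\RR\times\RR^{d_2}} |F(\lambda)|^2 |\lambda|^{\val/2}\, \prod_{j=1}^\ell |\mu_j|^{-\al_j}\,\dd\sigma(\lambda,\mu).
\]
Here I should be slightly careful: Lemma \ref{lm:10} is stated with the weight $|J_\mu z'|^{2\gamma}$ appearing \emph{inside} the $L^2$ norm on the left, so the cleanest route is to observe that $|J_\mu z'|^{\val/2} \prod_j |\mu_j|^{-\al_j/2}$ dominates $\prod_j |z_j'|^{\al_j/2}$ pointwise and first apply Cauchy–Schwarz-free pointwise majorization, then invoke Lemma \ref{lm:10} applied to the function $(\lambda,\mu)\mapsto H(\lambda,\mu)\prod_j|\mu_j|^{-\al_j/2}$ — which is still in $\Sz$ on the region $\mu_j\neq 0$, though not globally. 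A small technical adjustment (e.g.\ decomposing dyadically in each $|\mu_j|$, or noting that $\al_j < d_2^{(j)}$ guarantees local integrability of $|\mu_j|^{-\al_j}$ against the relevant measures) handles the mild singularity of $|\mu_j|^{-\al_j/2}$ at $\mu_j = 0$.

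The final step is to evaluate the right-hand side using Remark \ref{rem:2}. The function $\varphi(\mu) = \prod_{j=1}^\ell |\mu_j|^{-\al_j}$ is non-negative, lies in $L^1_\loc(\RR^{d_2})$ precisely because $\al_j < d_2^{(j)}$ for each $j$ (this is where the strict inequality $\al \prec \vec d_2$ is used), and is homogeneous of degree $\omega = -\val$. Hence by Remark \ref{rem:2},
\[
\int_{\RR\times\RR^{d_2}} |F(\lambda)|^2 |\lambda|^{\val/2} \varphi(\mu)\,\dd\sigma(\lambda,\mu) = \int_0^\infty |F(\lambda)|^2 \lambda^{\val/2}\,\dd\sigma_\varphi(\lambda) = C_\varphi \int_0^\infty |F(\lambda)|^2 \lambda^{\val/2}\, \lambda^{(Q - 2\val)/2}\, \ddlam,
\]
and $\val/2 + (Q-2\val)/2 = (Q - \val)/2$, which is exactly the claimed power. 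I expect the main obstacle to be the bookkeeping around the local singularity $|\mu_j|^{-\al_j}$: one must make sure that replacing $F(\lambda)$ by $F(\lambda)\varphi(\mu)^{1/2}$ keeps us within the scope of Lemma \ref{lm:10} (whose proof goes through Hulanicki's theorem and requires Schwartz regularity), and the condition $\al \prec \vec d_2$ is exactly what is needed for $\varphi$ to be integrable enough for $\sigma_\varphi$ to be a well-defined Radon measure. Everything else — the Plancherel step in $z''$, the pointwise inequality from Lemma \ref{lm:metivier}, and the homogeneity computation — is routine.
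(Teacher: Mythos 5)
Your proposal is correct and follows essentially the same route as the paper: partial Plancherel in $z''$, the pointwise domination $\prod_j|z_j'|^{\al_j}\lesssim_\al |J_\mu z'|^{\val}\prod_j|\mu_j|^{-\al_j}$ from Lemma \ref{lm:metivier}, Lemma \ref{lm:10}, and the homogeneity computation of Remark \ref{rem:2} with $\varphi(\mu)=\prod_j|\mu_j|^{-\al_j}$. The technical point you defer is handled in the paper exactly as you suggest, by inserting cutoffs $\chi_j\in C_c^\infty(\RR^{d_2^{(j)}}\setminus\{0\})$ (which simultaneously restores Schwartz regularity and tames the singularity at $\mu_j=0$), obtaining a bound with constant $\Vert\chi\Vert_{L^\infty}^2$, and then removing the cutoff by monotone approximation and Fatou, the one extra observation being that the set $\{\mu \tc \mu_j=0 \text{ for some } j\}$ is null for the Plancherel measure $\sigma$.
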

\begin{proof}
For any $H\in\Sz(\RR\times\RRtwo)$ and $\al\succcurlyeq \vec{0}$, Lemma \ref{lm:10} and \eqref{eq:24} yield
\[\begin{split}
	&\int_{\RR\times\RRtwo} | H(\lambda,\mu)|^2 |\lambda|^{\val/2} \,\dd\sigma(\lambda,\mu)\\
	&\gtrsim_\al \int_{\RR^{d^{(0)}}\times\RRone} \int_{\RRtwo}  |\hat{K}_{H(\Delta_N,Z'')}(z',\mu)|^2  \prod_{j=1}^{\ell }(|\mu_j | |z'_j |)^{\al_j}  \,\dd\mu \,\dd z'\\
	&=\int_{N}  |Z''_1 |^{\al_1}\ldots |Z''_\ell|^{\al_\ell} | K_{H(\Delta_N,Z'')}(z)|^2 \prod_{j=1}^{\ell }|z'_j |^{\al_j}  \,\dz\\
	&=\int_{N}   | K_{|Z''_1 |^{\al_1}\ldots |Z''_\ell|^{\al_\ell} H(\Delta_N,Z'')}(z)|^2 \prod_{j=1}^{\ell } |z'_j |^{\al_j} \,\dz.
\end{split}\]
	
Let $F\in\Sz(\RR)$. We apply the above for $H(\lambda,\mu)=F(\lambda)\chi(\mu) \prod_{j=1}^\ell|\mu_j|^{-\al_j/2}$ where $\chi(\mu)=\prod_{j=1}^\ell\chi_j(\mu_j)$ for some $\chi_j\in C_c^\infty(\RR^{d^{(j)}_2}\setminus\{0\})$, obtaining	
\begin{multline}\label{eq:26}
	\int_{N}    | K_{ F(\Delta_N)\chi(Z'')}(z)|^2 \prod_{j=1}^{\ell } |z'_j |^{\al_j} \,\dz \\
	\lesssim_\alpha \Vert\chi\Vert_{L^\infty}^2 
	\int_{\RR\times\RRtwo} | F(\lambda)|^2 |\lambda|^{\val/2} \prod_{j=1}^{\ell } |\mu_j |^{-\al_j} \,\dd\sigma(\lambda,\mu).
\end{multline}
	
Now we apply Remark \ref{rem:2} to $\varphi(\mu) =\prod_{j=1}^{\ell } |\mu_j |^{-\al_j} $ and denote the corresponding measure $\sigma_\varphi$ by $\sigma_\al$. We restrict $\al\prec \vec{d_2}$, so that $\varphi$ is locally integrable. Since $\varphi$ is homogeneous of degree $-\val$, we obtain $\sigma_\al(\lambda) = C_\al \lambda^{(Q-2\val)/2}\,\ddlam$.
	
Hence, \eqref{eq:26} gives
\begin{equation}\label{eq:27}
	\int_{N}  | K_{ F(\Delta_N)\chi(Z'')}(z)|^2  \prod_{j=1}^{\ell } |z'_j |^{\al_j}\,\dz
	\lesssim_\alpha \Vert\chi\Vert_{L^\infty}^2 \int_0^\infty |F(\lambda)|^2 \lambda^{(Q-\val)/2}\,\ddlam.
\end{equation}
	
Now we choose a sequence $\chi^{(n)} = \chi^{(n)}_1 \otimes \dots \otimes \chi^{(n)}_\ell$ of smooth cutoffs such that $0\leq \chi^{(n)}\leq 1$ and $\chi_n$ converges monotonically to $1$ on $\RRtwo \setminus A$, where
\begin{equation*}
	A\defeq (\{0\}\times \RR^{d_2^{(2)}}\times\ldots\times \RR^{d_2^{(\ell)}})\cup\ldots\cup( \RR^{d_2^{(1)}}\times\ldots\times\RR^{d_2^{(\ell-1)}}\times\{0\}).
\end{equation*}
We claim that $\sigma(\Rnon \times A)=0$. Indeed, for $b>a>0$ we have 
\[
	\sigma( (a,b)\times A) = \Vert K_{\ind_{(a,b)\times A}(\Delta_N,Z'')}\Vert_{L^2(N)} 
	=\Vert \ind_{(a,b)}(\Delta_N)\ind_A(Z'')\Vert_{L^1(N)\to L^2(N)} = 0,
\]
because $\ind_A(Z'')$ vanishes: in the chosen coordinates on $N$, the operator $\ind_A(Z'')$ is just the Euclidean Fourier multiplier operator on $\RR^{d_1} \times \RR^{d_2}$ with symbol $\ind_{\RR^{d_1} \times A}$, which vanishes almost everywhere.
Hence, $F(\lambda)\chi^{(n)}(\mu)\to F(\lambda)$ in $L^2(\dd\sigma)$, by the dominated convergence theorem. Thus $K_{F(\Delta_N)\chi^{(n)}_n(Z'')}\to K_{F(\Delta_N)}$ in $L^2(N)$ by the Plancherel formula \eqref{eq:jointplancherel}, and, up to extracting a subsequence, also almost everywhere on $N$. 
	
Finally, by Fatou's lemma and \eqref{eq:27} we arrive at
\[\begin{split}
	  \int_N |K_{F(\Delta_N)}(z)|^2  \prod_{j=1}^{\ell }|z'_j |^{\al_j} \,\dz 
		&\leq \liminf_{n\to\infty} \int_N  |K_{F(\Delta_N)\chi^{(n)}(Z'')}(z)|^2 \prod_{j=1}^{\ell } |z'_j |^{\al_j}\,\dz \\
	&\lesssim_\alpha \int_0^\infty |F(\lambda)|^2 \lambda^{(Q-\val)/2}\, \ddlam,
\end{split}\]
as desired.
\end{proof}

\subsection{The semidirect product extension}

As in the introduction, we set $G=N \rtimes\RR$, where $\RR$ acts on $N$ via automorphic dilations.
The left-invariant vector fields $X_j$, $j=1\ldots, d_1$, forming a basis of the first layer of $N$, and the standard basis $X_0 = \partial_u$ of the Lie algebra of $\RR$, can be lifted, as in \eqref{eq:liftedvectorfields}, to left-invariant vector fields $\{X^\sharp_j\}_{j=0}^{d_1}$ on $G$, which generate the Lie algebra of $G$. As in \eqref{eq:G_sublaplacian}, let $\Delta$
be the corresponding left-invariant sub-Laplacian on $G$. 
	
The group $G$ and the sub-Laplacian $\Delta$ are among those considered in \cite{MOV}, to which we refer for a more extensive discussion. Here we just recall that, much as in the case of $\Delta_N$ discussed earlier, the operator $\Delta$ on the domain $C^\infty_c(G)$ is essentially self-adjoint on $L^2(G)$, and for any bounded Borel function $F : \Rnon\to\CC$ we define $F(\Delta)$ via the spectral theorem. Since it is left-invariant, by the Schwartz kernel theorem there exists a (possibly distributional) convolution kernel $K_{F(\Delta)}$ such that
\begin{equation*}
	F(\Delta)f= f\ast K_{F(\Delta)},\qquad f\in C^\infty_c(G).
\end{equation*}
Moreover, a Plancherel formula holds for $\Delta$ \cite[Corollary 4.6]{MOV}, which says that
\begin{equation*}
	\Vert K_{F(\Delta)}\Vert_{L^2(G)}^2 \simeq \int_0^\infty |F(\lambda)|^2 \,\lambda^{[3/2,(Q+1)/2]} \,\ddlam,
\end{equation*}
where we use the notation \eqref{eq:bipower}.
Next, if $|z|_N$ denotes the Carnot--Carath\'eodory distance of $z \in N$ from the identity element $0_N$, then, by \cite[Proposition 2.7]{MOV}, the distance of $(z,u) \in G$ from the identity $0_G$ on $G$ is given by
\begin{equation}\label{eq:Gdistance}
|(z,u)|_G=\arccosh\left(\cosh u +\frac{|z|_N^2}{2e^{u}} \right).
\end{equation}
Furthermore, we have finite propagation speed for $\Delta$:
\begin{equation}\label{eq:fps_G}
\supp K_{\cos(t\sqrt{\Delta})} \subseteq \overline{B_G}(0_G,|t|), \qquad t \in \RR,
\end{equation}
where $\overline{B_G}(0_G,r)$ is the closed ball of radius $r$ centred at the identity of $G$ with respect to the Carnot--Carath\'eodory distance.

All the above results hold without any assumptions on the $2$-step group $N$. We now restrict to the case where $N$ is a product of abelian and M\'etivier groups as in \eqref{eq:product_N}.
Recall the notation $z=(z',z'')\in N$, $z'=(z'_0,z'_1,\ldots,z'_\ell)$ from Section \ref{ss:metivier}.
The following results are variations of \cite[Proposition 2.9]{MOV}, where we also include the reciprocals of the weights appearing in Proposition \ref{prop:3}.

\begin{lm}\label{lm:2}
Let $f,g : \Rnon \to \Rnon$ be measurable functions, and $\vec{0}\preccurlyeq\al\prec \vec{d_1}$. Then
\begin{multline*}
	\int_G  f(|(z,u)|_G) g(|z|_N ) \prod_{j=1}^{\ell } |z'_j |^{-\al_j}\, \dz\, \du \\
	= C_{N,\al} \int_\RR\int_0^\infty f( \arccosh(\cosh u + e^{-u}t/2)) g(\sqrt{t}) \, t^{\frac{Q-\val}2} \,\ddt\,\du
\end{multline*}
for a suitable constant $C_{N,\alpha} \in \Rpos$.
\end{lm}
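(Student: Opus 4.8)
The plan is to integrate first in $z \in N$ with $u$ held fixed, thereby reducing the identity to a single ``radial'' computation on $N$, and only afterwards to integrate in $u$; every interchange of integrals below is legitimate by Tonelli's theorem, since $f,g \geq 0$. For fixed $u$, formula \eqref{eq:Gdistance} lets us write $f(|(z,u)|_G)\, g(|z|_N) = h_u(|z|_N^2)$, where $h_u(t) \defeq f\bigl(\arccosh(\cosh u + e^{-u}t/2)\bigr)\, g(\sqrt t)$; thus the inner integrand depends on $z$ only through $|z|_N^2$ and through the weight $\prod_{j=1}^\ell|z_j'|^{-\al_j}$. Denoting by $\dd\mu_\al(z) \defeq \bigl(\prod_{j=1}^\ell|z_j'|^{-\al_j}\bigr)\,\dz$ the resulting weighted measure on $N$ and by $\pi(z) \defeq |z|_N^2$, the inner integral equals $\int_0^\infty h_u(t)\,\dd(\pi_\ast\mu_\al)(t)$, so the whole matter reduces to identifying the pushforward measure $\pi_\ast\mu_\al$ on $\Rpos$.

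To compute $\pi_\ast\mu_\al$ I would use a scaling argument. Recall that $N$ carries the automorphic dilations $\delta_s$, $s>0$, which act as multiplication by $s$ on the first layer $\lie{n}_1$ -- hence on each block $z_j'$ of $z'$ -- and by $s^2$ on $\lie{n}_2$, and which scale the Haar measure by $s^Q$; since $\prod_j|z_j'|^{-\al_j}$ is homogeneous of degree $-\val$, it follows that $\mu_\al(\delta_s A) = s^{Q-\val}\mu_\al(A)$ for every Borel $A \subseteq N$. Moreover the Carnot--Carath\'eodory distance $|\cdot|_N$ attached to $X_1,\dots,X_{d_1}$ is $1$-homogeneous, $|\delta_s z|_N = s|z|_N$, and induces the Euclidean topology on $N \simeq \RR^{d_1}\times\RR^{d_2}$. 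Combining these through the change of variables $z = \delta_{\sqrt t}w$ gives, for every $t>0$,
\[
(\pi_\ast\mu_\al)\bigl((0,t]\bigr) = \mu_\al\bigl(\overline{B_N}(0_N,\sqrt t)\bigr) = c_{N,\al}\,t^{(Q-\val)/2}, \qquad c_{N,\al} \defeq \mu_\al\bigl(\overline{B_N}(0_N,1)\bigr).
\]
The constant $c_{N,\al}$ is finite and strictly positive because $\overline{B_N}(0_N,1)$ is a bounded subset of $N$ and the weight $\prod_j|z_j'|^{-\al_j}$ is locally integrable on $N$ precisely when $\al_j < d_1^{(j)}$ for every $j$ (each factor $|z_j'|^{-\al_j}$ being locally integrable on $\RR^{d_1^{(j)}}$ if and only if $\al_j < d_1^{(j)}$); this is exactly where the hypothesis $\al \prec \vec{d_1}$ is used. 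Since also $Q-\val > 0$ (as $\val < |\vec{d_1}| \leq Q$) and $\mu_\al$ has no atoms, $\pi_\ast\mu_\al$ is a Radon measure on $\Rpos$ whose distribution function is the $C^1$ power $t \mapsto c_{N,\al}t^{(Q-\val)/2}$, and therefore $\dd(\pi_\ast\mu_\al)(t) = C_{N,\al}\, t^{(Q-\val)/2}\,\ddt$ with $C_{N,\al} = \tfrac{Q-\val}{2}c_{N,\al}$.

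Substituting this back into the inner integral and then integrating over $u \in \RR$ yields exactly the claimed identity. The one genuinely delicate point is the finiteness of $c_{N,\al}$ -- i.e.\ the integrability of the weight near the coordinate subspaces $\{z_j' = 0\}$ -- which is what forces the restriction $\al \prec \vec{d_1}$ here, to be contrasted with $\al \prec \vec{d_2}$ in Proposition \ref{prop:3}, where the relevant weight instead lives on the dual of the second layer. The remaining ingredients -- the $1$-homogeneity of $|\cdot|_N$, the $(Q-\val)$-homogeneity of $\mu_\al$, and the fact that the control distance induces the manifold topology -- are standard features of stratified groups, so the argument as a whole is a routine variant of the proof of \cite[Proposition 2.9]{MOV}.
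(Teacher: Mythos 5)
Your proof is correct and takes essentially the same route as the paper's: the paper invokes the Folland--Stein polar decomposition on $N$ and checks that the weight is integrable against the sphere measure $\tau$, while you obtain the same radial density $C_{N,\al}\,t^{(Q-\val)/2}\,\ddt$ by computing the distribution function of $|z|_N$ under the weighted measure via homogeneity. Both arguments rest on the identical ingredients --- the $(Q-\val)$-homogeneity of the weighted measure, the $1$-homogeneity of $|\cdot|_N$, and the local integrability of $\prod_j|z_j'|^{-\al_j}$ under $\al\prec\vec{d_1}$ --- so the difference is only one of packaging.
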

\begin{proof}
Let $S=\{z\in N: |z|_N=1 \}$. Then there exists a Borel measure  $\tau$ on $S$ such that
\begin{equation}\label{eq:polar}
\int_N h(z) \,\dz = \int_0^\infty \int_S h(t\omega) \,t^Q \, \ddt \,\dd\tau(\omega)
\end{equation}
(see, e.g., \cite[Proposition 1.15]{FoSt}).
Hence, by \eqref{eq:Gdistance},
\begin{multline}\label{eq:polar_computation}
	\int_G  f(|(z,u)|_G) g(|z|_N) \prod_{j=1}^{\ell } |z'_j |^{-\al_j}\, \dz\, \du\\
	= C_N \int_\RR\int_S \int_0^\infty f(\arccosh(\cosh u + e^{-u} |t|^2/2)) g(t) t^Q \prod_{j=1}^\ell(t|\omega_j'|)^{-\al_j} \, \ddt \,\dd\tau(\omega) \,\du.
\end{multline} 
Since $\al\prec \vec{d_1}$,  the function $h(z) = \ind_{[1,2]}(|z|_N) \prod_{j=1}^\ell |z_j'|^{-\alpha}$ is integrable on $N$, so from \eqref{eq:polar} we also deduce that $\int_S \prod_{j=1}^\ell |\omega_j'|^{-\al_j}\,\dd\tau(\omega)<\infty$. A simple change of variables in the right-hand side of \eqref{eq:polar_computation} then completes the proof.
\end{proof}

\begin{cor}\label{cor:3}
Let $\vec{0}\preccurlyeq\al\prec \vec{d_1}$. If $r\in(0,1]$, then
\begin{equation*}
	\int_{B_G(0_G,r)} \prod_{j=1}^{\ell } |z'_j |^{-\al_j}\, \dz\, \du \lesssim_\al r^{Q-\val+1}.
\end{equation*}
If $r\geq 1$, then
\begin{equation*}
	\int_{B_G(0_G,r)} (1+|z|_N^{Q-\val})^{-1} \prod_{j=1}^{\ell } |z'_j |^{-\al_j}\, \dz\, \du 
	\lesssim_\al r^{2}.
\end{equation*}
\end{cor}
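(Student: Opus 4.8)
The plan is to reduce both inequalities to the integral identity in Lemma~\ref{lm:2}, applied with the weights $\prod_{j=1}^\ell |z_j'|^{-\al_j}$ and suitable choices of the profile functions $f$ and $g$. In both cases we take $g \equiv 1$ (or, for the second case, $g(\rho) = (1+\rho^{Q-\val})^{-1}$, which is a legitimate measurable function of $|z|_N$), and we take $f = \ind_{[0,r]}$, so that $f(|(z,u)|_G) g(|z|_N)$ is exactly the integrand restricted to the ball $B_G(0_G,r)$. Lemma~\ref{lm:2} then rewrites the left-hand side as
\[
C_{N,\al} \int_\RR \int_0^\infty \ind_{[0,r]}\!\bigl(\arccosh(\cosh u + e^{-u} t/2)\bigr)\, g(\sqrt t)\, t^{(Q-\val)/2}\, \ddt\, \du,
\]
so the whole problem becomes an elementary estimate for this double integral over the region $\cosh u + e^{-u} t /2 \leq \cosh r$, i.e.\ $t \leq 2 e^u(\cosh r - \cosh u)$, which in particular forces $|u| \leq r$.

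For the first estimate, $r \leq 1$ and $g \equiv 1$. On the region $|u|\le r\le 1$ we have $\cosh r - \cosh u \simeq r^2 - u^2$ and $e^u \simeq 1$, so the inner integral in $t$ runs over $t \in [0, c(r^2-u^2)]$ and contributes $\simeq (r^2-u^2)^{(Q-\val+2)/2}$; integrating this in $u$ over $[-r,r]$ gives $\simeq r^{Q-\val+3}$. Wait — this looks off by a factor of $r^2$ compared to the claimed $r^{Q-\val+1}$; the point is that one should be slightly more careful, as $Q-\val$ need not be an integer and $(Q-\val)/2$ could be small, but the scaling is unambiguous: substituting $u = r\tilde u$, $t = r^2 \tilde t$ shows the integral scales as $r^{Q-\val+3}$ times a convergent constant. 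In fact the correct reading is that Lemma~\ref{lm:2} already absorbs the homogeneity of the measure $t^{(Q-\val)/2}\,\ddt$, whose total mass up to scale $r^2$ is $\simeq r^{Q-\val}$ — wait, $\int_0^{r^2} t^{(Q-\val)/2}\,\ddt = \int_0^{r^2} t^{(Q-\val)/2 - 1}\,\dd t \simeq r^{Q-\val}$; combined with the $u$-integral over a set of measure $\simeq r$ this gives $r^{Q-\val+1}$, matching the claim. So the key computation is simply: the $u$-range has length $\simeq r$, and for each such $u$ the $t$-integral of $t^{(Q-\val)/2-1}$ over $[0, c e^u(\cosh r-\cosh u)] \subseteq [0, Cr^2]$ is $\lesssim r^{Q-\val}$ (using $Q - \val > 0$, which holds since $\al \prec \vec{d_1}$ gives $\val < |\vec d_1| \leq Q$).

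For the second estimate, $r \geq 1$ and $g(\sqrt t) = (1 + t^{(Q-\val)/2})^{-1}$. Now the $t$-integral is $\int_0^{2 e^u(\cosh r - \cosh u)} \frac{t^{(Q-\val)/2-1}}{1 + t^{(Q-\val)/2}}\,\dd t$, which is bounded by $\int_0^{T} \min\{t^{-1}, t^{(Q-\val)/2-1}\}\,\dd t \lesssim 1 + \log^+ T$ where $T = 2e^u(\cosh r - \cosh u) \lesssim e^{u} e^{r} \lesssim e^{2r}$; hence the $t$-integral is $\lesssim 1 + r$. Integrating over $|u| \leq r$ (a set of measure $\simeq r$) gives $\lesssim r(1+r) \simeq r^2$ for $r \geq 1$, as claimed. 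The minor point to check is the behaviour of the integrand near $t = 0$: since $Q - \val > 0$, the factor $t^{(Q-\val)/2-1}$ may be singular there, but $\frac{t^{(Q-\val)/2-1}}{1+t^{(Q-\val)/2}} \leq t^{(Q-\val)/2-1}$ is integrable near $0$ over any bounded interval, with $\int_0^1 t^{(Q-\val)/2-1}\,\dd t = \tfrac{2}{Q-\val}$, so the near-$0$ contribution is $O_\al(1)$, uniformly in $u$ and $r$.

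The only genuine obstacle is bookkeeping rather than mathematics: one must verify that $\al \prec \vec d_1$ guarantees both that Lemma~\ref{lm:2} is applicable (which it is, by hypothesis) and that $Q - \val > 0$ so that the measure $t^{(Q-\val)/2-1}\,\dd t$ is integrable near the origin; the latter follows since $\val = \sum_j \al_j < \sum_j d_1^{(j)} = |\vec d_1| \leq d_1 \leq Q$. Everything else is the scaling argument for small $r$ and the logarithmic bound for large $r$ sketched above, so I would present the proof as: apply Lemma~\ref{lm:2} with the indicated $f,g$; for $r \leq 1$, bound the resulting integral by $|\{|u|\le r\}| \cdot \sup_u \int_0^{Cr^2} t^{(Q-\val)/2-1}\,\dd t \lesssim_\al r \cdot r^{Q-\val}$; for $r \geq 1$, bound it by $|\{|u|\le r\}| \cdot \sup_u \int_0^{Ce^{2r}} \frac{t^{(Q-\val)/2-1}}{1+t^{(Q-\val)/2}}\,\dd t \lesssim_\al r \cdot (1+r)$.
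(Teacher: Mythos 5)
Your proof is correct and follows essentially the same route as the paper: both cases reduce to Lemma \ref{lm:2} with $f=\ind_{[0,r]}$ and the same choices of $g$, followed by the observation that the constraint forces $|u|\leq r$ and $t\lesssim r^2$ (resp.\ $t\lesssim e^{2r}$), with the small-$r$ case handled by the homogeneity of $t^{(Q-\val)/2}\,\ddt$ and the large-$r$ case by the $O(1)+O(\log T)$ bound for the damped integrand. The only cosmetic difference is that the paper keeps the factor $(r+u)(r-u)$ and changes variables rather than taking a uniform supremum over $u$, but this does not affect the upper bound.
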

\begin{proof}
In the first case we apply Lemma \ref{lm:2} with $f=\ind_{[0,r]}$ and $g\equiv 1$:
\[\begin{split}
	&\int_{B_G(0_G,r)} \prod_{j=1}^{\ell } |z'_j |^{-\al_j}\, \dz\, \du \\
	&\simeq_\al \int_\RR \int_0^\infty \ind_{[0,r]}( \arccosh(\cosh u + e^{-u}t/2)) \, t^{\frac{Q-\val}2} \ddt\,\du\\
	&= \int_{-r}^r \int_0^{2e^u(\cosh r - \cosh u)} t^{\frac{Q-\val}2} \ddt\,\du\\
	& \leq \int_{-r}^r \int_0^{8e^u (r+u)(r-u)} t^{\frac{Q-\val}2} \ddt\,\du\\
	&\simeq_\alpha \int_{0}^{2r} ((2r-v)v)^{\frac{Q-\val}{2}}\,\dv \simeq_\alpha r^{Q-\val+1},
\end{split}\]
where we used that $|u| \leq r \leq 1$ in the domain of integration.
	
In the second case we again use Lemma \ref{lm:2}, this time with  $f(x)=\ind_{[0,r]}(x)$ and $g(x)= (1+x^{Q-\val})^{-1}$. Thus,
\[\begin{split}
	&\int_{B_G(0_G,r)} \frac{\prod_{j=1}^{\ell } |z'_j |^{-\al_j}}{1+|z|_N^{Q-\val}} \, \dz\, \du \\
	&\simeq_\alpha \int_\RR \int_0^\infty \ind_{[0,r]}(\arccosh(\cosh u + e^{-u}t/2))  \frac{t^{\frac{Q-\val}2}}{1+t^{\frac{Q-\val}2}}\, \ddt\,\du \\
	&\leq \int_{-r}^r \int_0^{2e^{u+r}} \frac{t^{\frac{Q-\val}2}}{1+t^{\frac{Q-\val}2}}\,\ddt\,\du \\
	&\leq \int_{-r}^r \int_0^{1} t^{\frac{Q-\val}2}\,\ddt\,\du + \int_{-r}^r \int_{1}^{2e^{u+r}} \,\ddt\,\du \\
	&\simeq_\alpha \int_{-r}^{r} (1+u+r) \,\du \simeq r^2,
\end{split}\]
as $r \geq 1$.
\end{proof}

\section{The Bessel--Kingman hypergroup}\label{s:hypergroupsX}

\subsection{Bessel--Kingman hypergroup and Hankel transform}

We gather here some known facts about the Hankel convolution and the Hankel transform. 
Further details can be found, e.g., in 
\cite{GoSt,Guy,Hai,Hir,T}
and references therein.

Let $\nu\geq 1$ and denote $X_\nu=(\Rnon,\mu_\nu)$, where $\dmu(x) = x^{\nu-1} \,\dd x$. The \emph{Hankel convolution} of suitable functions $f,g : X_\nu\to\CC$ is given by
\begin{equation}\label{eq:7}
f\ast_\nu g(x)=\int_0^\infty \tau_\nu^{[x]} f(y) \, g(y)\,\dmu(y) 
= \int_0^\infty  f(y) \, \tau_\nu^{[x]} g(y)\,\dmu(y),
\end{equation}
where $\tau_\nu^{[x]}f$ is the \emph{Hankel translation} of $f$,
\begin{multline*}
\tau_\nu^{[x]}f(y) = \tau_\nu^{[y]}f(x) \\
=\begin{cases}
\Beta(\frac{\nu-1}{2},\frac{1}{2})^{-1} \int_0^\pi f(\sqrt{x^2+y^2-2xy\cos\omega}) \sin^{\nu-2}\omega\,\dd\omega &\text{if } \nu > 1,\\
\frac{1}{2} [f(x+y)+f(|x-y|)] &\text{if } \nu = 1,
\end{cases}
\end{multline*}
for all $x,y \in X_\nu$, and $\Beta$ is the Beta function. The Hankel translations are normalised so that if $f\equiv 1$, then $\tau_\nu^{[x]} f\equiv 1$. Moreover, the $\tau_\nu^{[x]}$ commute pairwise, are self-adjoint on $L^2(X_\nu)$ and contractions on $L^p(X_\nu)$, $p\in[1,\infty]$, for all $x \geq 0$.

If $\nu$ is an integer, then the Hankel convolution $*_\nu$ corresponds to the restriction of the Euclidean convolution $*$ on $\RR^\nu$ to radial functions; more precisely, if $C_\nu=2\pi^{\nu/2}/\Gamma(\nu/2)$ denotes the surface measure of the unit sphere in $\RR^\nu$, then
\begin{equation}\label{eq:conv_conv}
f\ast_\nu g(|z|) = C_\nu^{-1} F\ast G(z), 
\end{equation}
where $F,G : \RR^\nu\to\CC$ are radial and $F(z)=f(|z|)$, $G(z)=g(|z|)$.  Nonetheless, the convolution $\ast_\nu$ is well defined for non-integer $\nu$ as well.

For an arbitrary $\nu \geq 1$, the convolution $\ast_\nu$ is commutative, associative and satisfies Young's inequality, that is
\begin{equation}\label{eq:young_Xnu}
	\Vert f\ast_\nu g\Vert_{L^r(X_\nu)} \leq \Vert f\Vert_{L^p(X_\nu)} \Vert g\Vert_{L^q(X_\nu)},
\end{equation}
where $p,q,r \in [1,\infty]$ and $1+1/r=1/p+1/q$ \cite[Theorem 2b]{Hir}.

The Hankel convolution can be extended to measures; as usual, here we identify $f \in L^1(X_\nu)$ with the measure $f \dmu$. In particular, if $\delta_x$ denotes the Dirac delta measure at $x \in X_\nu$, then $\delta_x *_\nu \delta_y$ is the probability measure on $X_\nu$ given by
\begin{equation}\label{eq:hyperdelta}
\delta_x \ast_\nu \delta_y(A) 
= \begin{cases}
\Beta(\frac{\nu-1}{2},\frac{1}{2})^{-1} \int_0^\pi \ind_A(\sqrt{x^2+y^2-2xy\cos\omega}) \sin^{\nu-2}\omega\,\dd\omega &\text{if } \nu > 1,\\
\frac{1}{2}(\delta_{x+y}+\delta_{|x-y|})(A) &\text{if } \nu =1,
\end{cases}
\end{equation}
for all Borel subsets $A$ of $X_\nu$, and has support in $[|x-y|,x+y]$. Moreover, Hankel translations can be expressed as convolution operators:
\[
\tau^{[x]}_\nu f = \delta_x *_\nu f.
\]
Equipped with this convolution structure, $X_\nu$ is a \emph{hypergroup} \cite{BH,J}, known as a \emph{Bessel--Kingman hypergroup} \cite{Ki}. It is a commutative hypergroup, whose unit element is $\delta_0$, whereas the involution is the identity (i.e.\ $X_\nu$ is a hermitian hypergroup) and the Haar measure is $\mu_\nu$. The hypergroup $X_\nu$ is a particular case of a \emph{Sturm--Liouville hypergroup} \cite{Zeu}, and more specifically a \emph{Ch\'ebli--Trim\`eche hypergroup} \cite{AcTr,Che}, associated with the Bessel operator $L_\nu$ we discuss in Section \ref{ss:bessel_op}.

The \emph{(modified) Hankel transform} $H_\nu$ on $X_\nu$ is given by
\begin{equation}\label{eq:hankeltr}
H_\nu f(x)=\int_0^\infty f(y) \, j^\nu_x (y)\,\dmu(y),\qquad x\geq 0, \qquad f\in L^1(X_\nu),
\end{equation}
where
\begin{equation}\label{eq:bessel}
j_x^\nu(y)=j_y^\nu(x) \defeq j^\nu(xy), \qquad j^\nu(t) \defeq \kappa_\nu \frac{J_{\nu/2 -1}(t)}{t^{\nu/2 -1}}, \qquad \kappa_\nu \defeq 2^{\nu/2 -1}\Gamma(\nu/2)
\end{equation}
and $J_s$ denotes the Bessel function of the first kind and order $s$. We point out that $j^\nu$ is an even analytic function, satisfying
\begin{equation}\label{eq:bessel_norm}
|j^\nu(t)| \leq 1 \qquad\forall t \in \RR,
\end{equation}
as $\nu \geq 1$ \cite[eq.\ (10.14.4)]{DLMF}.

For integer $\nu$ the Hankel transform is related to the Fourier transform: indeed, if $\nu \in \NN$, then, for radial functions $F:\RR^\nu\to\CC$, denoting $F(z)=f(|z|)$, we have
\begin{equation*}
H_\nu f(|\xi|)=  C_\nu^{-1} \Four F(\xi),
\end{equation*}
where $C_\nu$ is as in \eqref{eq:conv_conv} 
and $\Four F$ stands for the Fourier transform of $F$ in $\RR^\nu$,
\begin{equation*}
\Four F(\xi)=\int_{\RR^\nu} F(z) \, e^{-iz\cdot\xi}\,\dz.
\end{equation*}
We point out that many works in the literature use the order $\nu/2-1$ of the Bessel function in \eqref{eq:hankeltr} in place of $\nu$ as a parameter for the corresponding Hankel transform and Bessel--Kingman hypergroup; as in \cite{GaSe}, we prefer our choice of the parameter as it is immediately linked to the dimension of the related Euclidean space when $\nu$ is integer.

For an arbitrary $\nu \geq 1$, by \eqref{eq:bessel_norm} it follows that the Hankel transform maps $L^1(X_\nu)$ into $C \cap L^\infty(X_\nu)$. Moreover, the functions $j_x^\nu$ satisfy
\begin{equation}\label{eq:8}
\tau_\nu^{[y]} j_x^\nu (z)= j_x^\nu(y) \, j_x^\nu(z)
\end{equation}
(cf.\ \cite[Section 11.41]{Wa}), whence one deduces that
\begin{equation}\label{eq:trans_modul}
H_\nu(\tau_\nu^{[z]} f) = j_z^\nu H_\nu f, \qquad H_\nu (j_z^\nu f) = \tau_\nu^{[z]} H_\nu f.
\end{equation}
In other words, the Hankel transform intertwines Hankel translations with certain multiplication operators, which we can think of as analogues of modulations.

Let $\Sz_e(\Rnon)$ be the space of restrictions of even Schwartz functions on $\RR$ to $\RR_+$. Then $\Sz_e(\Rnon)$ is closed under Hankel translations and convolution, and the Hankel transform maps $\Sz_e(\Rnon)$ onto itself (cf.\ \cite{CCTV,Stk,T}).
Moreover, the Hankel transform extends to an isomorphism $H_\nu : L^2(X_\nu)\to L^2(X_\nu)$ satisfying 
\begin{equation}\label{eq:hankel_inv}
H_\nu^{-1}= \kappa_\nu^{-2} H_\nu
\end{equation}
(cf.\ \cite[Section 14.4]{Wa}) and such that Parseval's theorem holds, namely
\begin{equation*}
	\int_0^\infty H_\nu f(x) \, \overline{H_\nu g(x)}\,\dmu(x) = \kappa_\nu^2 \int_0^\infty f(y) \, \overline{g(y)}\,\dmu(y)
\end{equation*}
for all $f,g\in L^2(X_\nu)$ (cf.\ \cite{Mac}).
In particular, 
\begin{equation}\label{eq:16}
\Vert H_\nu f\Vert_{L^2(X_\nu)} = \kappa_\nu \Vert f\Vert_{L^2(X_\nu)}.
\end{equation}
By interpolating this fact with the boundedness of $H_\nu : L^1(X_\nu)\to L^\infty(X_\nu)$ we get the Hausdorff--Young inequality
\begin{equation}\label{eq:hankel_hy}
	\Vert H_\nu f\Vert_{L^{p'}(X_\nu)}\lesssim_{\nu,p} \Vert f\Vert_{L^{p}(X_\nu)},\qquad f\in L^{p}(X_\nu),
\end{equation}
where $p\in [1,2]$ and $1/p+1/p'=1$.

Observe that, by \eqref{eq:8},
\begin{equation}\label{eq:conv_prod}
H_\nu( f\ast_\nu g) = H_\nu f \cdot H_\nu g
\end{equation}
for $f,g\in L^1(X_\nu)$ \cite[Theorem 2d]{Hir}. In fact, by \eqref{eq:young_Xnu} and \eqref{eq:hankel_hy} this can be extended to the case where $f\in L^p(X_\nu)$ and $g\in L^q(X_\nu)$, provided $p,q\in [1,2]$ satisfy $1/p+1/q\in[3/2,2]$.

\subsection{The Bessel operator}\label{ss:bessel_op}

Let $\mathring{X}_\nu=(\Rpos,\mu_\nu)$ be the interior of $X_\nu$. We now introduce the \emph{Bessel operator} $L_\nu$, that is the second order differential operator with smooth coefficients on $\mathring{X}_\nu$ given by
\begin{equation*}
L_\nu = -\partial_x^2 - \frac{\nu-1}{x} \partial_x =D_\nu^+ D_\nu,
\end{equation*}
where $D_\nu=\partial_x$ and $D_\nu^+$ is its formal adjoint with respect to $\mu_\nu$. Notice that, although $D_\nu$ does not depend on $\nu$, the adjoint does.  If $\nu\in\NN$, then $L_\nu$ is the radial part of the standard Laplacian in $\RR^\nu$.

As the Bessel operator $L_\nu = D_\nu^+ D_\nu$ is a smooth divergence-form second-order differential operator on the manifold $\mathring{X}_\nu$, we can discuss its self-adjoint extensions on $L^2(\mathring{X}_\nu)$ with the language introduced in the Appendix, to which we refer for the definitions of Dirichlet and Neumann domains used here.
On the other hand, as $X_\nu$ and $\mathring{X}_\nu$ differ by a null set, we shall identify $L^2(\mathring{X}_\nu) = L^2(X_\nu)$, and consider $L_\nu$ as an unbounded operator on $L^2(X_\nu)$.

Observe that $L_\nu$ is symmetric and positive on the domain $\Sz_e(\RR_+)$, which $L_\nu$ maps into itself. Moreover, it can be checked \cite[eq.\ (10.13.4)]{DLMF} that 
\[
L_\nu j^\nu_x = x^2 j_x^\nu.
\]
Thus, for $f\in \Sz_e(\RR_+)$,
\begin{equation}\label{eq:1}
H_\nu (L_\nu f)(x) = x^2 H_\nu f(x).
\end{equation}
This easily allows us to define a self-adjoint extension of $L_\nu$ that is compatible with the Hankel transform.

\begin{lm}\label{lm:11}
The operator $L_\nu$ is essentially self-adjoint on $\Sz_e(\RR_+)$, and the identity \eqref{eq:1} holds for any $f$ in the domain of the self-adjoint extension and almost all $x \in X_\nu$. Moreover, this self-adjoint extension is equal to the operator $D_\nu^+ D_\nu$ with Neumann domain.
\end{lm}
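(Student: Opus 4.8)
The plan is to read off the self-adjoint extension of $L_\nu$ from the Hankel transform, and then match it with the Neumann realization of $D_\nu^+ D_\nu$ using uniqueness of self-adjoint extensions. First I would observe that multiplication by $x^2$, call it $M$, is a self-adjoint nonnegative operator on $L^2(X_\nu)$ with maximal domain, for which $\Sz_e(\Rnon)$ is a core: the functions $x^2\pm i$ and their reciprocals are even and smooth with derivatives of at most polynomial growth, so multiplication by any of them preserves $\Sz_e(\Rnon)$, whence $(M\pm i)\Sz_e(\Rnon)=\Sz_e(\Rnon)$, which is dense in $L^2(X_\nu)$ (it contains $C^\infty_c(\Rpos)$), and the standard criterion for a core applies. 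Now let $U\defeq\kappa_\nu^{-1}H_\nu$, which by \eqref{eq:16} is unitary on $L^2(X_\nu)$, maps $\Sz_e(\Rnon)$ onto itself, and by \eqref{eq:1} satisfies $UL_\nu f=MUf$ for $f\in\Sz_e(\Rnon)$. Hence $L_\nu$ on the domain $\Sz_e(\Rnon)$ is unitarily equivalent, through $U$, to $M$ on the domain $\Sz_e(\Rnon)$; in particular it is essentially self-adjoint, and its closure $\overline{L_\nu}$ equals $U^{-1}MU$. Applying $U$ once more shows that, for every $f$ in the domain of $\overline{L_\nu}$, we have $Uf$ in the domain of $M$ and $H_\nu(\overline{L_\nu}f)=\kappa_\nu MUf=x^2H_\nu f$ in $L^2(X_\nu)$, which is the extension of \eqref{eq:1} to the full domain. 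This proves the first two assertions.

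For the third assertion, since $\overline{L_\nu}$ is the \emph{unique} self-adjoint extension of $L_\nu|_{\Sz_e(\Rnon)}$, it suffices to show that the Neumann realization $A$ of $D_\nu^+ D_\nu$ is an extension of $L_\nu|_{\Sz_e(\Rnon)}$; recall from the Appendix that $A$ is the self-adjoint operator associated with the closed nonnegative form $\mathfrak q(f,g)=\int_0^\infty f'\,\overline{g'}\,\dmu$ on its maximal domain $\mathcal V=\{f\in L^2(X_\nu):f'\in L^2(X_\nu)\}$ (weak derivative), equivalently $A=D_\nu^+ D_\nu$ with $D_\nu=\partial_x$ realized on $\mathcal V$. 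So I would fix $f\in\Sz_e(\Rnon)$; then $f'$ is the restriction of an odd Schwartz function, so $f'\in L^2(X_\nu)$ and $f\in\mathcal V$. For arbitrary $g\in\mathcal V$, using the absolutely continuous representative of $g$ on $\Rpos$ and integrating by parts on $[a,b]\subset\Rpos$ — the resulting integrals converging absolutely as $a\to 0^+$, $b\to\infty$, since $\int_0^\infty |f'|\,|g'|\,\dmu$ and $\int_0^\infty |L_\nu f|\,|g|\,\dmu$ are finite by Cauchy--Schwarz (recall $L_\nu f\in\Sz_e(\Rnon)$) — I would obtain
\[
\mathfrak q(f,g)=-\int_0^\infty \partial_x\bigl(f'(x)\,x^{\nu-1}\bigr)\,\overline{g(x)}\,\dd x+\lim_{x\to\infty} f'(x)\,x^{\nu-1}\,\overline{g(x)}-\lim_{x\to 0^+} f'(x)\,x^{\nu-1}\,\overline{g(x)}.
\]
Since $-\partial_x(f'(x)\,x^{\nu-1})=(L_\nu f)(x)\,x^{\nu-1}$, once the two boundary limits are shown to vanish this gives $\mathfrak q(f,g)=\langle L_\nu f,g\rangle_{L^2(X_\nu)}$, hence $f$ lies in the domain of $A$ with $Af=L_\nu f$; uniqueness of the self-adjoint extension then forces $A=\overline{L_\nu}$.

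The main obstacle is the boundary-term analysis. At $x=0$ one has $f'(x)=O(x)$, so $f'(x)\,x^{\nu-1}=O(x^\nu)$, while from $g\in\mathcal V$ one only gets $|g(x)|\le|g(1)|+\|g'\|_{L^2(X_\nu)}\bigl(\int_x^1 t^{1-\nu}\,\dd t\bigr)^{1/2}$, which for $x\in(0,1]$ grows at worst like a negative power of $x$ — this is precisely where $\nu\ge1$ is used, to control $\int_x^1 t^{1-\nu}\,\dd t$ — so the product tends to $0$. At $x=\infty$ the key point is that $x\mapsto|g(x)|^2 x^{\nu-1}$ is absolutely continuous on $[1,\infty)$ with a.e.\ derivative $2\operatorname{Re}(\overline{g}\,g')\,x^{\nu-1}+(\nu-1)|g|^2 x^{\nu-2}$, which lies in $L^1([1,\infty))$ because each summand is dominated there by $|g|^2 x^{\nu-1}+|g'|^2 x^{\nu-1}$, integrable as $g,g'\in L^2(X_\nu)$; hence $|g(x)|^2 x^{\nu-1}$ has a finite limit at infinity, which must be $0$ since $|g|^2 x^{\nu-1}\in L^1([1,\infty))$, so $|g(x)|\,x^{(\nu-1)/2}\to0$, and then rapid decrease of $f'$ gives $f'(x)\,x^{\nu-1}\,\overline{g(x)}\to0$. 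It is these endpoint estimates — rather than the algebra of the integration by parts — that require care, since membership of $g$ in the form domain $\mathcal V$ yields no boundary regularity or decay for free.
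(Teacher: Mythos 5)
Your proof is correct. The first half (essential self-adjointness and the extension of \eqref{eq:1} to the full domain via conjugation with the unitary $\kappa_\nu^{-1}H_\nu$) is essentially the paper's argument, with the core property of $\Sz_e(\Rnon)$ for the multiplication operator made explicit through the ranges of $M\pm i$. The second half takes a genuinely different route. The paper verifies directly from the definition \eqref{eq:Neu} that $\Sz_e(\Rnon)\subseteq\DNeu(D_\nu^+D_\nu)$, i.e.\ that $D_\nu f=f'\in\Dmin(D_\nu^+)$ for $f\in\Sz_e(\Rnon)$, by approximating the odd Schwartz function $f'$ in the graph norm of $D_\nu^+$ with $f'\psi_n\phi_n$ for explicit cutoffs near $0$ and near $\infty$. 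You instead exploit the fact that the Neumann realization is $T^*T$ with $T=D_\nu|_{\Dmax(D_\nu)}$ --- which follows from \eqref{eq:10} and the definition of the adjoint --- so that membership of $f\in\Sz_e(\Rnon)$ in its domain reduces to the identity $\int_0^\infty f'\,\overline{g'}\,\dmu=\langle L_\nu f,g\rangle$ for every $g$ in the maximal domain; you prove this by integration by parts, with the boundary terms controlled by the endpoint estimates you describe (the decay of $|g(x)|^2x^{\nu-1}$ at infinity obtained from integrability of its derivative, and the growth bound near $0$ from Cauchy--Schwarz). Both routes then conclude by uniqueness of the self-adjoint extension. The paper's cutoff construction has the side benefit of being reused verbatim in the proof of Lemma \ref{lm:12} for $\Delta_\nu$, whereas your form argument avoids approximating sequences at the cost of the boundary analysis for arbitrary elements of the form domain, which you handle correctly. (A very minor remark: the vanishing of the boundary term at $0$ does not really hinge on $\nu\geq 1$ --- for $\nu>2$ your bounds give $O(x^{\nu/2+1})$, which tends to $0$ regardless --- but the estimates you state are correct in all cases.)
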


This result is certainly known to experts. We include some details of the proof as they will be useful in Section \ref{ss:liftedbessel} below, when discussing the ``lifting'' of the Bessel operator to the semidirect product hypergroup.

\begin{proof}
The formula \eqref{eq:1} shows that $H_\nu$ intertwines $L_\nu$ on the domain $\Sz_e(\Rnon)$ with the multiplication operator by $M(x) = x^2$. The latter is self-adjoint on the natural domain
\[
	\{ f \in L^2(X_\nu) \tc M f \in L^2(X_\nu)\},
\]
and $\Sz_e(\Rnon)$ is clearly dense in this domain with respect to the graph norm. By the invariance of $\Sz_e(\RR_+)$ with respect to $H_\nu$, we conclude that $L_\nu$ is essentially self-adjoint on $\Sz_e(\RR_+)$, and that the intertwining relation \eqref{eq:1} remains true on the domain of the self-adjoint extension.
	
In light of the definition \eqref{eq:Neu} of the Neumann domain $\DNeu(D_\nu^+ D_\nu)$, in order to justify the remaining part of the statement, it suffices to verify that, if $f\in \Sz_e(\RR_+)$, then $D_\nu f \in \Dmin(D_\nu^+)$. As $D_\nu f = f' \in \Sz_o(\RR_+)$, the space of restrictions of odd Schwartz functions on $\RR$ to $\RR_+$, it is enough to show that any element of $g \in \Sz_o(\RR_+)$ can be approximated in the graph norm of $D_\nu^+$ by a sequence of functions in $C^\infty_c(\Rpos)$.

Let $g\in \Sz_o(\Rnon)$. Then $D_\nu^+g(x) = -g'(x)-(\nu-1)g(x)/x$, and both $g'(x)$ and $g(x)/x$ are in $\Sz_e(\Rnon)$.
Take smooth functions $\psi,\phi:\Rnon \to[0,1]$ such that $\psi(x) = 1$  for $x\geq 2$, $\psi(x) = 0$ for $x \leq 1$, $\phi(x) = 1$ for $x\leq 2$, and $\phi(x) = 0$ for $x\geq 4$.
We set $\psi_n(x)=\psi(nx)$, $\phi_n(x)=\phi(x/n)$, and define the approximating sequence $g_n= g\psi_n\phi_n \in C^\infty_c(\Rpos)$.

Clearly, $g_n\to g$ and $g_n(x)/x \to g(x)/x$ in $L^2(X_\nu)$, by dominated convergence. Moreover,
\[
g_n'(x) = \psi(nx) \phi(x/n) g'(x) +  [nx \psi'(nx)] \phi(x/n) g(x)/x + \psi(nx) \phi'(x/n)  g(x)/n.
\]
Again, the first summand tends to $g'$ in $L^2(X_\nu)$ by dominated convergence, while the second and third summands tend to $0$; for the second summand, we use that $g(x)/x$ is in $L^2(X_\nu)$, while the multiplying factor is in $L^\infty(X_\nu)$ uniformly in $n$ and vanishes outside $[1/n,2/n]$. Thus $g_n' \to g'$ in $L^2(X_\nu)$, which combined with the previous observations gives that $D_\nu^+ g_n \to D_\nu^+ g$ in $L^2(X_\nu)$ and $g_n \to g$ in $\Dmax(D_\nu^+)$.
\end{proof}

\begin{rem}
Lemma \ref{lm:11} remains true if $\Sz_e(\RR_+)$ is replaced by the space $\Df_e(\RR_+)$ of	the restrictions to $\Rnon$ of even, smooth, and compactly supported functions on $\RR$; indeed, by using appropriate cutoffs, $\Df_e(\RR_+)$ is easily seen to be dense in $\Sz_e(\RR_+)$ in the Fr\'echet structure of the latter, so also in the graph norm of $L_\nu$. On the other hand, on the domain $C_c^\infty(\Rpos)$ the operator $L_\nu$ is essentially self-adjoint if and only if $\nu\geq 4$, see \cite[p.~161]{RS2}; mind that, in contrast to those of $\Df_e(\RR_+)$, elements of $C_c^\infty(\Rpos)$ must vanish near $0$. See also the discussion in \cite[Section 2]{HaSi}.
\end{rem}

From now on, we write $L_\nu$ for the self-adjoint extension of the Bessel operator discussed in Lemma \ref{lm:11}.
From \eqref{eq:trans_modul} and \eqref{eq:1} it follows immediately that $L_\nu$ commutes with Hankel translations:
\begin{equation}\label{eq:L_trinv}
\tau_\nu^{[y]} L_\nu f = L_\nu \tau_\nu^{[y]} f
\end{equation}
for any $f$ in the domain of $L_\nu$, i.e., $L_\nu$ is translation-invariant on $X_\nu$.

As $L_\nu$ is self-adjoint and nonnegative, by the spectral theorem we can define, for any bounded Borel function $F : \Rnon\to \CC$, the bounded operator $F(L_\nu)$ on $L^2(X_\nu)$, which, in light of \eqref{eq:1}, satisfies
\begin{equation*}
H_\nu (F(L_\nu)g)(x)=F(x^2)H_\nu g(x)
\end{equation*}
for all $g \in L^2(X_\nu)$.
Again, from \eqref{eq:trans_modul} we deduce that
\begin{equation}\label{eq:2}
\tau_\nu^{[y]} F(L_\nu)=F(L_\nu) \tau_\nu^{[y]},
\end{equation}
so $F(L_\nu)$ is translation-invariant on $X_\nu$.

If additionally $x \mapsto F(x^2)$ is in $L^2(X_\nu)$, equivalently $F\in L^2(\Rnon, \lambda^{\nu/2}\,\ddlam)$, then, by \eqref{eq:conv_prod}, $F(L_\nu)$ is a $\ast_\nu$-convolution operator,
\begin{equation}\label{eq:4}
F(L_\nu)g=g \ast_\nu K_{F(L_\nu)}, \qquad K_{F(L_\nu)}= H^{-1}_\nu(x \mapsto F(x^2)).
\end{equation}
Notice that \eqref{eq:16} implies a Plancherel formula for these kernels:
\begin{equation}\label{eq:17}
\Vert K_{F(L_\nu)}\Vert^2_{L^2(X_\nu)}= \frac{1}{2\kappa_\nu^2} \int_0^\infty |F(\lambda)|^2 \lambda^{\nu/2}\,\ddlam. 
\end{equation}

Now we consider the heat semigroup $\{e^{-t L_\nu} \}_{t>0}$ associated with $L_\nu$. In the following statement we record a few basic facts about it.

\begin{lm}\label{lm:1}
The heat semigroup associated with $L_\nu$ is a family of $\ast_\nu$-convolution operators, where the corresponding kernels are given by
\begin{equation}\label{eq:13}
	K_{e^{-t L_\nu}}(x) = \frac{2}{\Gamma(\nu/2) (4 t)^{\nu/2}} \exp\left( -\frac{x^2}{4t}\right).
\end{equation}
Moreover, $\Vert K_{e^{-tL_\nu}}\Vert_{L^1(X_\nu)}=1$ for all $t>0$. Consequently, the operators $e^{-t\Delta_\nu}$ are contractions on $L^p(X_\nu)$, $p\in[1,\infty]$, for all $t>0$. 
\end{lm}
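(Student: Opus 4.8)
The plan is to verify the heat kernel formula \eqref{eq:13} directly via the Hankel transform, and then deduce the $L^1$-normalisation and the $L^p$-contractivity as consequences. First I would invoke \eqref{eq:4}: since the function $\lambda \mapsto e^{-t\lambda}$ lies in $L^2(\Rnon,\lambda^{\nu/2}\,\ddlam)$ for every $t>0$, the operator $e^{-tL_\nu}$ is a $\ast_\nu$-convolution operator with kernel $K_{e^{-tL_\nu}} = H_\nu^{-1}(x\mapsto e^{-tx^2})$. Thus the task reduces to computing the inverse Hankel transform of the Gaussian $x\mapsto e^{-tx^2}$. Using \eqref{eq:hankel_inv}, this amounts to evaluating $\kappa_\nu^{-2} H_\nu(e^{-t(\cdot)^2})$, which in turn reduces, by the integral representation \eqref{eq:hankeltr}--\eqref{eq:bessel}, to a classical Weber--Schafheitlin type integral of the form $\int_0^\infty e^{-ty^2} J_{\nu/2-1}(xy)\, y^{\nu/2}\,\dd y$; this is a standard identity (see, e.g., \cite[eq.\ (10.22.51)]{DLMF} or \cite[Section 13.3]{Wa}) yielding again a Gaussian in $x$, and a short bookkeeping of the constants $\kappa_\nu$ and $\mu_\nu$ produces exactly \eqref{eq:13}. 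Alternatively, and perhaps more cleanly, one can simply \emph{verify} \eqref{eq:13}: set $h_t(x)$ equal to the right-hand side of \eqref{eq:13}, check by a direct computation that $\partial_t h_t = -L_\nu h_t$ on $\mathring X_\nu$ and that $h_t \to \delta_0$ as $t\to 0^+$ in the appropriate (weak-$*$, or $L^2$-convolution) sense, and appeal to uniqueness of the heat semigroup for the self-adjoint operator $L_\nu$ of Lemma \ref{lm:11}; when $\nu$ is an integer this is just the familiar radial Euclidean heat kernel via \eqref{eq:conv_conv}.

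Next I would establish $\Vert K_{e^{-tL_\nu}}\Vert_{L^1(X_\nu)} = 1$. This is the computation
\[
\int_0^\infty K_{e^{-tL_\nu}}(x)\,\dmu(x) = \frac{2}{\Gamma(\nu/2)(4t)^{\nu/2}} \int_0^\infty e^{-x^2/(4t)}\, x^{\nu-1}\,\dd x,
\]
and the substitution $x = 2\sqrt{t}\,s$ turns the integral into $\frac{2}{\Gamma(\nu/2)(4t)^{\nu/2}}\cdot (4t)^{\nu/2} \cdot \tfrac12 \int_0^\infty e^{-s^2/\,}\, \cdots$; carrying this out gives $\frac{2}{\Gamma(\nu/2)} \cdot \frac12\Gamma(\nu/2) = 1$ after recognising the resulting integral as $\tfrac12\Gamma(\nu/2)$ via the change of variable $s^2 = r$. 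Since $K_{e^{-tL_\nu}}\geq 0$, the $L^1$-norm equals this integral, so it is exactly $1$. (One can also see this without computation: $H_\nu K_{e^{-tL_\nu}}(0) = e^{0}=1$, and since $j^\nu(0)=1$, evaluating \eqref{eq:hankeltr} at $x=0$ gives $\int_0^\infty K_{e^{-tL_\nu}}\,\dmu = 1$; combined with positivity this yields the $L^1$-norm.)

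Finally, the $L^p$-contractivity follows from Young's inequality \eqref{eq:young_Xnu} with $q=1$, $r=p$: for $f\in L^p(X_\nu)$,
\[
\Vert e^{-tL_\nu}f\Vert_{L^p(X_\nu)} = \Vert f \ast_\nu K_{e^{-tL_\nu}}\Vert_{L^p(X_\nu)} \leq \Vert f\Vert_{L^p(X_\nu)}\,\Vert K_{e^{-tL_\nu}}\Vert_{L^1(X_\nu)} = \Vert f\Vert_{L^p(X_\nu)}.
\]
I do not expect any of these steps to be a genuine obstacle; the only mildly delicate point is the first one, namely pinning down the exact normalising constant in \eqref{eq:13}, which requires correctly tracking the constants $\kappa_\nu = 2^{\nu/2-1}\Gamma(\nu/2)$ from \eqref{eq:bessel}, the factor $\kappa_\nu^{-2}$ from \eqref{eq:hankel_inv}, and the density $x^{\nu-1}$ of $\mu_\nu$ through the Weber--Schafheitlin integral — but this is routine once the right special-function identity is cited. (Note also the typo-level remark that the statement writes $e^{-t\Delta_\nu}$ where it means $e^{-tL_\nu}$ in the last sentence.)
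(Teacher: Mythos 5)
Your proposal is correct and follows essentially the same route as the paper: the kernel formula via \eqref{eq:4}, \eqref{eq:hankel_inv} and the Gaussian--Bessel integral \cite[eq.\ (10.22.51)]{DLMF}, the $L^1$-norm by direct integration, and contractivity via Young's inequality \eqref{eq:young_Xnu}. Your observation that $e^{-t\Delta_\nu}$ in the last sentence of the statement should read $e^{-tL_\nu}$ is also accurate.
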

\begin{proof}
The formula \eqref{eq:13} follows from \eqref{eq:4}, \eqref{eq:hankel_inv} and \cite[eq.\ (10.22.51)]{DLMF}.
Integration of this formula immediately gives  $\Vert K_{e^{-tL_\nu}}\Vert_{L^1(X_\nu)}=1$, and the $L^p$-contraction property then follows by Young's inequality \eqref{eq:young_Xnu} for $*_\nu$.
\end{proof}

We conclude with a few remarks on the dilation structure on $X_\nu$ and the related homogeneity of $L_\nu$.
Denote $\delta_\al g(x)=g(\al x)$ for $\al>0$. As $\dmu(x) = x^{\nu-1} \,\dd x$, these dilations are multiples of isometries on $L^p(X_\nu)$ for any $p \in [1,\infty]$. It is straightforward to verify that the Bessel operator $L_\nu$ is $2$-homogeneous, i.e.,
\begin{equation}\label{eq:L_homog}
L_\nu (\delta_\al g)= \al^2\delta_\al(L_\nu g), \qquad \alpha>0,
\end{equation}
for all $g$ in the domain of $L_\nu$. Thus, by spectral calculus, for any bounded Borel functions $F : \Rnon \to\CC$,
\begin{equation}\label{eq:3}
F(L_\nu) \delta_\al= \delta_\al F(\al^2 L_\nu), \qquad \alpha>0.
\end{equation}

\section{The semidirect product hypergroup}\label{s:hypergroupsG}

\subsection{Extension of the Bessel--Kingman hypergroup}
For any $u \in \RR$, the dilation $\gamma_u : x \mapsto e^u x$ is an automorphism of the Bessel--Kingman hypergroup $X_\nu$, in the sense that, by \eqref{eq:hyperdelta},
\[
\delta_{\gamma_u(x)} *_\nu \delta_{\gamma_u}(y) = \gamma_u(\delta_x *_\nu \delta_y)
\]
for all $x,y \in X_\nu$, and the right-hand side is the push-forward of the measure $\delta_x *_\nu \delta_y$ via $\gamma_u$. In addition, $\gamma_{u+u'} = \gamma_u \gamma_{u'}$ and $\gamma_0$ is the identity; in other words, $u \mapsto \gamma_u$ is an action of the group $\RR$ by automorphisms on the hypergroup $X_\nu$. Through this action we can define the \emph{semidirect product} hypergroup $G_\nu\defeq X_\nu \rtimes\RR$ (see \cite[Proposition 4.1 and Section 4.1, Example 4]{HeyKa} and also \cite[Definition 3.1]{W}).

Convolution of measures on $G_\nu$ is defined by setting
\[
\delta_{(x,u)} \diamond_\nu \delta_{(y,v)} = (\delta_{x} *_\nu \delta_{e^u y}) \otimes \delta_{u+v}
\]
for all $(x,u),(y,v) \in G_\nu$. The semidirect product $G_\nu$ is a noncommutative hypergroup, with unit element $(0,0)$ and involution given by $(x,u)^- = (e^{-u}x,-u)$.
The measures $e^{-\nu u}\,\dmu(x)\,\du$ and $\dmu(x) \,\du$ are the left and right Haar measures, and the modular function is
\begin{equation}\label{eq:modular}
m(x,u)\defeq e^{-\nu u};
\end{equation}
notice that the left and right Haar measures are one the push-forward of the other via the involution on $G_\nu$.

Unless otherwise specified, we shall use Lebesgue spaces $L^p(G_\nu)$ on $G_\nu$ defined in terms of the right Haar measure. This must be kept in mind when comparing the results below with the literature on hypergroups, as most references define Lebesgue spaces with respect to the left Haar measure.

We define left and right translations of functions $f$ on $G_\nu$ by
\begin{equation}\label{eq:35}
	\ell_{(x,u)} f(y,v) = r_{(y,v)}f(x,u) = \int_{G_\nu} f \,\dd (\delta_{(x,u)} \diamond_\nu \delta_{(y,v)})
\end{equation}
for all $(x,u),(y,v) \in G_\nu$; more explicitly,
\begin{equation}\label{eq:Gnu_transl}
\begin{aligned}
\ell_{(x,u)} f(y,v)&=\tau_\nu^{[x]}f(e^u y,u+v),\\
 r_{(x,u)}f(y,v) &=\tau_\nu^{[e^{v}x]}f(y,u+v),
\end{aligned}
\end{equation}
where the Hankel translation acts on the first variable of $f$. 
With these definitions, convolution of functions on $G_\nu$ is given by
\begin{equation}\label{eq:11}
\begin{split}
f\diamond_\nu g(x,u)&=\int_{G_\nu}   f((y,v)^{-})r_{(x,u)}g(y,v)\,\dmu(y)\, \dv \\
&=\int_{G_\nu}   \ell_{(x,u)}f((y,v)^-) g(y,v) \,\dmu(y)\, \dv.
\end{split}
\end{equation}

From the theory of hypergroups we deduce a number of properties of $\diamond_\nu$, such as Young's inequality.

\begin{lm}\label{lm:young}
For all $p,q,r\in[1,\infty]$ satisfying $1+\frac{1}{r}=\frac{1}{p}+\frac{1}{q}$,
\begin{equation}\label{eq:young}
	\Vert (f m^{-1/q'}) \diamond_\nu g \Vert_{L^r(G_\nu)} \leq \Vert f\Vert_{L^p(G_\nu)} \Vert g\Vert_{L^q(G_\nu)},
\end{equation}
where $1/q+1/q'=1$. In particular, for any $p\in[1,\infty]$,
\begin{equation}\label{eq:22}
	\Vert f\diamond_\nu g\Vert_{L^p(G_\nu)} \leq \Vert f\Vert_{L^p(G_\nu)} \Vert g\Vert_{L^1(G_\nu)}.
\end{equation}
\end{lm}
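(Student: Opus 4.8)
The plan is to deduce the general inequality \eqref{eq:young} from its two endpoints by Riesz--Thorin interpolation. Fix $g$ and $q \in [1,\infty]$, so that $q'$ is also fixed, and consider the linear operator $T$ given on a suitable dense class of functions by $T f = (f m^{-1/q'}) \diamond_\nu g$. I would show that $T$ is bounded $L^{q'}(G_\nu) \to L^\infty(G_\nu)$ and $L^1(G_\nu) \to L^q(G_\nu)$, in each case with operator norm at most $\Vert g \Vert_{L^q(G_\nu)}$. Since $G_\nu$ with its right Haar measure $\dmu(x)\,\du$ is $\sigma$-finite, Riesz--Thorin interpolation then gives that $T$ is bounded $L^{p_\theta}(G_\nu) \to L^{r_\theta}(G_\nu)$ with the same bound, where $1/p_\theta = (1-\theta)/q' + \theta$ and $1/r_\theta = \theta/q$ for $\theta \in [0,1]$; a direct computation yields $1 + 1/r_\theta = 1/p_\theta + 1/q$, and as $\theta$ runs over $[0,1]$ the pair $(p_\theta,r_\theta)$ exhausts all pairs $(p,r)$ with $1 + 1/r = 1/p + 1/q$, since the requirement $r \geq 1$ forces $p \leq q'$. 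The particular case \eqref{eq:22} is then the instance $q = 1$, for which $q' = \infty$, the factor $m^{-1/q'}$ is trivial, and $r = p$. Throughout, it is enough to argue for $f,g$ in a dense subclass for which all integrals converge absolutely (e.g.\ finite sums of products $\varphi(x)\psi(u)$ with $\varphi \in \Sz_e(\Rnon)$ and $\psi \in C_c(\RR)$) and then pass to the limit.

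For the endpoint $L^{q'} \to L^\infty$ I would use the second expression for $\diamond_\nu$ in \eqref{eq:11}, namely $f \diamond_\nu g(x,u) = \int_{G_\nu} \ell_{(x,u)} f((y,v)^-)\, g(y,v)\,\dmu(y)\,\dv$, and estimate the integral by Hölder's inequality with exponents $(q',q)$. The crucial point is the uniform bound $\Vert \ell_{(x,u)} f((\cdot)^-) \Vert_{L^{q'}(G_\nu)} \leq \Vert f m^{1/q'} \Vert_{L^{q'}(G_\nu)}$: substituting the explicit formula \eqref{eq:Gnu_transl} for $\ell_{(x,u)}$ and then changing variables by $w = u - v$ in the $\RR$-factor and $y \mapsto e^{w} y$ in the $X_\nu$-factor turns $\dmu(y)\,\dv$ into $m(y,w)\,\dmu(y)\,\dd w$ and reduces the inner integral to the $L^{q'}(X_\nu)$-contractivity of the Hankel translations recalled in Section \ref{s:hypergroupsX}. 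Since this $m^{1/q'}$ cancels the factor $m^{-1/q'}$ in the definition of $T$, one obtains $\Vert T f \Vert_{L^\infty(G_\nu)} \leq \Vert f \Vert_{L^{q'}(G_\nu)} \Vert g \Vert_{L^q(G_\nu)}$.

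For the endpoint $L^1 \to L^q$ I would instead use the first expression in \eqref{eq:11}, $f \diamond_\nu g(x,u) = \int_{G_\nu} f((y,v)^-)\, r_{(x,u)} g(y,v)\,\dmu(y)\,\dv$, and apply Minkowski's integral inequality in the variable $(x,u)$. Here \eqref{eq:Gnu_transl} gives $r_{(x,u)} g(y,v) = \tau_\nu^{[e^v x]} g(y,u+v)$; the substitutions $u+v \mapsto u$ and $e^v x \mapsto x$, the symmetry $\tau_\nu^{[a]} h(b) = \tau_\nu^{[b]} h(a)$ of Hankel translations, and again their $L^q(X_\nu)$-contractivity give $\Vert r_{(\cdot)} g(y,v) \Vert_{L^q(G_\nu)} \leq m(y,v)^{1/q} \Vert g \Vert_{L^q(G_\nu)}$, a bound \emph{independent of $y$}. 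Plugging this in and using that the left Haar measure $m\,\dmu(x)\,\du$ is the push-forward of the right Haar measure under the involution, a change of variables gives $\int_{G_\nu} |f((y,v)^-)|\, m(y,v)^{1/q}\,\dmu(y)\,\dv = \Vert f m^{1/q'} \Vert_{L^1(G_\nu)}$, whence $\Vert T f \Vert_{L^q(G_\nu)} \leq \Vert f \Vert_{L^1(G_\nu)} \Vert g \Vert_{L^q(G_\nu)}$.

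The changes of variables and the interpolation step are routine; the part to be careful about is the bookkeeping of the modular function together with the non-commutativity of $G_\nu$. One endpoint naturally involves left translations and the other right translations, and the powers of $m$ produced by the dilations appearing in the two changes of variables must combine so that, after absorbing the factor $m^{-1/q'}$ carried by $T$, both endpoint constants equal exactly $\Vert g \Vert_{L^q(G_\nu)}$; this is precisely what makes the two estimates estimates for the \emph{same} operator, as interpolation requires. Alternatively, one could quote the general Young inequality for hypergroups with respect to the left Haar measure and then transfer it to the right Haar measure via $m$; the argument above is essentially that transfer written out, relying only on the $L^p$-contractivity of Hankel translations already established in Section \ref{s:hypergroupsX}.
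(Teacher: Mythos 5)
Your proposal is correct and follows essentially the same route as the paper: the paper's proof establishes exactly the two endpoint bounds you identify, namely $\Vert (fm^{-1/q'}) \diamond_\nu g \Vert_{L^\infty(G_\nu)} \leq \Vert f \Vert_{L^{q'}(G_\nu)} \Vert g \Vert_{L^{q}(G_\nu)}$ and $\Vert (f m^{-1/q'}) \diamond_\nu g \Vert_{L^q(G_\nu)} \leq \Vert f \Vert_{L^1(G_\nu)} \Vert g \Vert_{L^q(G_\nu)}$, and then interpolates, modelled on the group case in \cite{KlRu}. The only difference is that the paper obtains these endpoints by quoting Jewett's general hypergroup inequalities \cite[Theorems 5.3C, 6.2C and 6.2E]{J} (stated for the left Haar measure) and rewriting them via the modular function, whereas you derive them directly from \eqref{eq:11}, \eqref{eq:Gnu_transl} and the $L^p$-contractivity of Hankel translations; your bookkeeping of the powers of $m$ is consistent with the paper's.
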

\begin{proof}
We follow the proof of the analogous result for groups (see, e.g, \cite[Lemma 2.1]{KlRu}).
From \cite[Theorem 6.2E]{J} we deduce
\[
\|f \diamond_\nu g\|_{L^\infty(G_\nu)} \leq \| f^- \|_{L^{q'}(G_\nu)} \| g \|_{L^{q}(G_\nu)} = \| f m^{1/q'} \|_{L^{q'}(G_\nu)} \| g \|_{L^{q}(G_\nu)},
\]
where $f^-(x,u) \defeq f((x,u)^{-})$; this inequality can be rewritten as
\begin{equation}\label{eq:first_young}
\|(fm^{-1/q'}) \diamond_\nu g\|_{L^\infty(G_\nu)} \leq \| f \|_{L^{q'}(G_\nu)} \| g \|_{L^{q}(G_\nu)}.
\end{equation}
From \cite[Theorems 5.3C and 6.2C]{J} we also deduce
\[
\| (fm^{1/q}) \diamond_\nu (g m^{1/q}) \|_{L^q(G_\nu)} =  \| (f \diamond_\nu g) m^{1/q} \|_{L^q(G_\nu)} \leq \| f m \|_{L^1(G_\nu)} \| g m^{1/q} \|_{L^q(G_\nu)},
\]
which can be equivalently rewritten as
\begin{equation}\label{eq:second_young}
\| (f m^{-1/q'}) \diamond_\nu g \|_{L^q(G_\nu)} \leq \| f \|_{L^1(G_\nu)} \| g \|_{L^q(G_\nu)}.
\end{equation}
Interpolation of \eqref{eq:first_young} and \eqref{eq:second_young} gives \eqref{eq:young}, and taking $q=1$ gives \eqref{eq:22}.
\end{proof}

We also have the following mapping properties of translations on $L^p$ spaces.

\begin{lm}\label{lm:transl_Gnu}
Let $p\in[1,\infty]$ and $f\in L^p(G_\nu)$. Then:
\begin{enumerate}[label=(\roman*)]
	\item\label{en:tr_bd_l} $\Vert  r_{(x,u)}f\Vert_{L^p(G_\nu)} \leq \Vert f\Vert_{L^p(G_\nu)}$ for any $(x,u)\in G_\nu$;
	\item\label{en:tr_bd_r} $\Vert  \ell_{(x,u)}f\Vert_{L^p(G_\nu)} \leq m(x,u)^{1/p} \Vert f\Vert_{L^p(G_\nu)}$ for any $(x,u)\in G_\nu$.
\end{enumerate}
In addition, if $p<\infty$, or if $p=\infty$ and $f \in C_0(G_\nu)$, then:
\begin{enumerate}[label=(\roman*),resume]
	\item\label{en:tr_cn_l} $\Vert r_{(x,u)}f - r_{(\bar x,\bar u)} f \Vert_{L^p(G_\nu)} \to 0$ as $(x,u) \to (\bar x,\bar u)$ in $G_\nu$;
	\item\label{en:tr_cn_r} $\Vert \ell_{(x,u)}f - \ell_{(\bar x,\bar u)} f \Vert_{L^p(G_\nu)} \to 0$ as $(x,u) \to (\bar x,\bar u)$ in $G_\nu$.
\end{enumerate}
\end{lm}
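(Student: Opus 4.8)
The plan is to read off all four assertions from the explicit formulas \eqref{eq:Gnu_transl} for the translations, using two elementary inputs: the Hankel translations $\tau_\nu^{[x]}$ are contractions on every $L^p(X_\nu)$, $p\in[1,\infty]$, and, in the right Haar measure $\dmu\,\dv$ of $G_\nu$, the Lebesgue measure in the $\RR$-variable is translation-invariant while the dilation $y\mapsto e^u y$ scales $\dmu$ by $e^{-\nu u}=m(x,u)$, in the sense that $\int_0^\infty g(e^u y)\,\dmu(y)=e^{-\nu u}\int_0^\infty g(w)\,\dmu(w)$.

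For the norm bounds \ref{en:tr_bd_l} and \ref{en:tr_bd_r} I would argue slicewise in the $\RR$-variable. Since $r_{(x,u)}f(y,v)=\tau_\nu^{[e^v x]}f(y,u+v)$, for each fixed $v$ the $L^p(X_\nu)$-norm in $y$ of $r_{(x,u)}f(\cdot,v)$ is at most that of $f(\cdot,u+v)$ by contractivity of $\tau_\nu^{[e^v x]}$; raising to the $p$-th power, integrating in $v$, and substituting $v\mapsto v-u$ gives \ref{en:tr_bd_l}, with the evident modification for $p=\infty$. For \ref{en:tr_bd_r} one has instead $\ell_{(x,u)}f(y,v)=\tau_\nu^{[x]}f(e^u y,u+v)$, so the same slicewise contraction estimate followed by the change of variables $w=e^u y$ in $\int_0^\infty|g(e^u y)|^p\,\dmu(y)=e^{-\nu u}\int_0^\infty|g(w)|^p\,\dmu(w)$ produces exactly the factor $m(x,u)=e^{-\nu u}$; taking $p$-th roots and integrating in $v$ yields \ref{en:tr_bd_r}.

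For the continuity statements \ref{en:tr_cn_l} and \ref{en:tr_cn_r} I would use the standard density-plus-uniform-bound scheme. First one treats $f$ in the dense subspace $C_c(G_\nu)$, which is dense in $L^p(G_\nu)$ for $p<\infty$ and in $C_0(G_\nu)$ for the uniform norm. For such $f$, the integral representation \eqref{eq:hyperdelta} of the Hankel translation shows that $(x,u,y,v)\mapsto r_{(x,u)}f(y,v)$ and $(x,u,y,v)\mapsto \ell_{(x,u)}f(y,v)$ are jointly continuous, and, since the measure $\delta_x*_\nu\delta_y$ is supported in $[|x-y|,x+y]$, the translated functions $r_{(x,u)}f$ and $\ell_{(x,u)}f$ are all supported in one fixed compact subset of $G_\nu$ as $(x,u)$ ranges over a neighbourhood of a given point. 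Dominated convergence then gives convergence in $L^p$ for $p<\infty$, while joint uniform continuity on that fixed compact set gives convergence in the uniform norm. For general $f$ one approximates by $\tilde f\in C_c(G_\nu)$ and splits $\Vert r_{(x,u)}f-r_{(\bar x,\bar u)}f\Vert_{L^p(G_\nu)}$ (and the analogous expression with $\ell$) into three terms: the two terms containing $f-\tilde f$ are bounded uniformly for $(x,u)$ near $(\bar x,\bar u)$ using \ref{en:tr_bd_l} (respectively \ref{en:tr_bd_r}, noting that $m(x,u)^{1/p}$ is locally bounded), and the middle term tends to $0$ by the case already handled.

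The main obstacle is the continuity step for nice $f$: because $\tau_\nu^{[x]}$ is an averaging operator rather than a genuine translation, one must extract from the explicit kernel \eqref{eq:hyperdelta} both the joint continuity and the uniform compactness of the supports of $r_{(x,u)}f$ and $\ell_{(x,u)}f$; once these are in hand the rest is routine. One could alternatively invoke from hypergroup theory the strong continuity of $x\mapsto\tau_\nu^{[x]}g$ on $L^p(X_\nu)$ and on $C_0(X_\nu)$, combined with continuity of dilations and translations on $\RR$, but the bookkeeping with the semidirect-product structure and the right Haar measure makes the direct approach cleaner.
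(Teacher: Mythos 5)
Your argument is correct, but it takes a genuinely different route from the paper's. The paper handles all four assertions by identifying $\ell_{(x,u)}f=\delta_{(x,u)^-}\diamond_\nu f$ and $r_{(x,u)}f=f\diamond_\nu\delta_{(x,u)^-}$ and then quoting the general hypergroup theory of Jewett \cite{J}: part \ref{en:tr_bd_l} is a cited theorem there; part \ref{en:tr_bd_r} is \emph{deduced} from \ref{en:tr_bd_l} via the involution identity $\ell_{(x,u)}f=(r_{(x,u)^-}f^-)^-$ combined with the modular-function relation $r_{(x,u)}(m^\alpha f)=m(x,u)^\alpha\, m^\alpha\, r_{(x,u)}f$; and the continuity statements come from further cited lemmas plus an involution/density argument. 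You instead verify everything directly from the explicit formulas \eqref{eq:Gnu_transl}: the slicewise contraction estimate for $\tau_\nu^{[\cdot]}$ together with the Jacobian computation $\int_0^\infty g(e^u y)\,\dmu(y)=e^{-\nu u}\int_0^\infty g(w)\,\dmu(w)$ correctly produces the factor $m(x,u)^{1/p}$ in \ref{en:tr_bd_r}, and your density-plus-uniform-bound scheme for \ref{en:tr_cn_l}--\ref{en:tr_cn_r} is sound: for $f\in C_c(G_\nu)$ the kernel formula \eqref{eq:hyperdelta} gives joint continuity, and the support of $\delta_x*_\nu\delta_y$ in $[|x-y|,x+y]$ does confine the supports of $r_{(x,u)}f$ and $\ell_{(x,u)}f$ to a fixed compact set as $(x,u)$ varies over a bounded neighbourhood, so dominated convergence (resp.\ uniform continuity on a compact set) applies. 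What your approach buys is self-containedness --- no need to chase the hypergroup references --- at the cost of some bookkeeping; what the paper's approach buys is brevity and the structural point that \ref{en:tr_bd_r} is forced by \ref{en:tr_bd_l} through the involution and the modular function, a mechanism valid on any hypergroup rather than specific to $G_\nu$.
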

\begin{proof}
Notice that $\ell_{(x,u)} f = \delta_{(x,u)^-} \diamond_\nu f$ and $r_{(x,u)} f = f \diamond_\nu \delta_{(x,u)^-}$ \cite[Section 4.2]{J}.

The bounds \ref{en:tr_bd_l} and \ref{en:tr_bd_r} are consequences of the invariance properties of the right Haar measure under translations; specifically, \ref{en:tr_bd_l} can be found in \cite[Theorem 6.2B]{J}, while \ref{en:tr_bd_r} can be deduced from the former by using the fact that $\ell_{(x,u)} f = (r_{(x,u)^-} f^-)^-$ \cite[Lemma 4.2K]{J}, and also $r_{(x,u)} (m^\alpha f) = m(x,u)^{\alpha} \, m^{\alpha} \, r_{(x,u)} f$ for any $\alpha \in \CC$, due to the properties of the modular function \cite[Theorem 5.3C]{J}.

Finally, the continuity property \ref{en:tr_cn_l} follows from \cite[Lemmas 2.2B, 4.2F and 5.4H]{J}, and by involution we also deduce the case $p=\infty$ of \ref{en:tr_cn_r}; the case $p<\infty$ of \ref{en:tr_cn_r} then follows by a density argument, as in the proof of \cite[Lemma 5.4H]{J}.
\end{proof}

\subsection{Lifting of the Bessel operator}\label{ss:liftedbessel}

Write $\mathring{G}_\nu=(\Rpos\times\RR,\dmu(x)\,\du )$ for the interior of $G_\nu$. On the manifold $\mathring{G}_\nu$ we introduce the following differential operator with smooth coefficients:
\begin{equation}\label{eq:Deltanu}
\Delta_\nu = -\partial_u^2+e^{2u}L_\nu=\nabla_\nu^+\nabla_\nu,
\end{equation}
where 
\begin{equation}\label{eq:28}
\nabla_\nu=\begin{pmatrix}
\partial_u\\
e^u \partial_x
\end{pmatrix}
\end{equation}
and $\nabla_\nu^+$ denotes the formal adjoint of $\nabla_\nu$ on $\mathring{G}_\nu$.
Much as in Section \ref{ss:bessel_op}, the operator $\nabla_\nu$ does not depend on $\nu$, but its formal adjoint $\nabla_\nu^+$ does.
As $\Delta_\nu$ is a smooth divergence-form second-order differential operator on the manifold $\mathring{G}_\nu$, the discussion of the Appendix applies to it as well.
Furthermore, since $G_\nu$ and $\mathring{G}_\nu$ differ by a set of measure zero, we can think of $\Delta_\nu$ as an unbounded operator on $L^2(G_\nu)$.
We define $\Delta_\nu$ initially on $\Sz_e(\Rnon)\otimes C_c^\infty(\RR)$, i.e.\ the space of finite linear combinations of tensor products $f\otimes g$, where $f\in \Sz_e(\Rnon)$, $g\in C_c^\infty(\RR)$.

\begin{lm}\label{lm:12}
The operator $\Delta_\nu$ is essentially self-adjoint on $\Sz_e(\Rnon)\otimes C_c^\infty(\RR)$. Moreover, the self-adjoint extension is equal to $\nabla_\nu^+ \nabla_\nu$ with Neumann domain.
\end{lm}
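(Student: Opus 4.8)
The plan is to imitate the proof of Lemma~\ref{lm:11}, using the partial Hankel transform $H_\nu$ acting in the $x$-variable to convert $\Delta_\nu$ into a manageable family of one-dimensional operators in the $u$-variable, and then to identify the resulting self-adjoint extension with the Neumann realisation of $\nabla_\nu^+\nabla_\nu$. First I would apply $H_\nu$ in the $x$-variable: by \eqref{eq:1}, for $f\otimes g \in \Sz_e(\Rnon)\otimes C_c^\infty(\RR)$ the operator $\Delta_\nu = -\partial_u^2 + e^{2u} L_\nu$ is intertwined by $H_\nu\otimes\id$ with the operator $-\partial_u^2 + e^{2u} x^2$, acting on $L^2(X_\nu \times \RR, \dmu(x)\,\du)$, where now $x^2$ is simply a multiplication operator. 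For each fixed $x > 0$ this is the Schrödinger-type operator $S_x = -\partial_u^2 + x^2 e^{2u}$ on $L^2(\RR, \du)$, with a potential that grows exponentially as $u\to+\infty$ and decays to $0$ as $u\to-\infty$; such operators are classically essentially self-adjoint on $C_c^\infty(\RR)$ (limit-point at both ends — at $+\infty$ by the growing potential, at $-\infty$ because the operator is essentially $-\partial_u^2$ there, which is limit-point). One then assembles these fibrewise extensions into a self-adjoint operator on the direct integral $\int_{X_\nu}^\oplus L^2(\RR)\,\dmu(x)$, and pulls back via $H_\nu\otimes\id$; since $\Sz_e(\Rnon)\otimes C_c^\infty(\RR)$ is invariant under $H_\nu\otimes\id$ (as $H_\nu$ preserves $\Sz_e(\Rnon)$) and is a core for the assembled operator, this gives essential self-adjointness of $\Delta_\nu$ on that domain.

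For the identification with the Neumann realisation, I would argue exactly as in the proof of Lemma~\ref{lm:11}: in light of the definition \eqref{eq:Neu} of the Neumann domain, since $\Delta_\nu = \nabla_\nu^+\nabla_\nu$ it suffices to show that for $h = f\otimes g \in \Sz_e(\Rnon)\otimes C_c^\infty(\RR)$ one has $\nabla_\nu h \in \Dmin(\nabla_\nu^+)$, i.e.\ that $\nabla_\nu h$ can be approximated in the graph norm of $\nabla_\nu^+$ by vector fields in $C_c^\infty(\mathring G_\nu)^2$. Writing out \eqref{eq:28}, $\nabla_\nu h = (g' \otimes f,\ e^u g \otimes f')^{\mathrm T}$, where $f' = D_\nu f \in \Sz_o(\Rnon)$. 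The first component is already compactly supported in $u$ and, in $x$, lies in $\Sz_e(\Rnon)$; the second component has $f' \in \Sz_o(\Rnon)$. In both cases the $u$-dependence is harmless (already compactly supported, and the factor $e^u$ is smooth and bounded on the support of $g$), so the only genuine issue is approximating the $x$-profiles near the boundary $x=0$. For the $\Sz_e$-profile this is immediate; for the $\Sz_o$-profile this is precisely the one-dimensional approximation already carried out in the proof of Lemma~\ref{lm:11} — one uses the cutoffs $\psi_n,\phi_n$ there and the fact that $g(x)/x \in \Sz_e(\Rnon)$ when $g \in \Sz_o(\Rnon)$ to show convergence in the graph norm of $D_\nu^+$, hence (tensoring with the fixed factor in $u$ and using that $\nabla_\nu^+$ acts as a first-order operator with smooth coefficients in the two variables separately) convergence in the graph norm of $\nabla_\nu^+$. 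Thus $\nabla_\nu h \in \Dmin(\nabla_\nu^+)$ and the essentially self-adjoint extension is the Neumann one.

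The main obstacle I anticipate is the first half — making the direct-integral decomposition rigorous and transferring essential self-adjointness through it. One must check that the fibre operators $S_x$ are essentially self-adjoint on $C_c^\infty(\RR)$ uniformly enough in $x$ that the measurable field assembles to a self-adjoint operator, and that $\Sz_e(\Rnon)\otimes C_c^\infty(\RR)$ is a core; the potential subtlety is the behaviour as $x\to 0$, where $S_x$ degenerates to $-\partial_u^2$ but the fibrewise self-adjoint extension stays continuous in $x$. An alternative that sidesteps the direct-integral bookkeeping, and which is probably cleaner to write, is to avoid diagonalising in $u$ altogether and instead argue essential self-adjointness of $\Delta_\nu = \nabla_\nu^+\nabla_\nu$ directly: after the partial Hankel transform one has $-\partial_u^2 + e^{2u}M(x)$ with $M(x)=x^2 \geq 0$, and one can invoke a standard criterion (e.g.\ a Kato–Rellich or commutator/quadratic-form argument, or the fact that $-\partial_u^2 + V$ is essentially self-adjoint on $C_c^\infty$ whenever $V\geq 0$ is locally bounded, applied on the product) to conclude without decomposing. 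Either way, once essential self-adjointness is in hand, the Neumann identification is the same boundary-approximation argument as in Lemma~\ref{lm:11}, tensored with the $u$-variable, and presents no new difficulty.
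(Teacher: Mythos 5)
Your second half (the identification with the Neumann domain) is correct and is exactly the paper's argument: one reduces to showing that $\nabla_\nu$ maps $\Sz_e(\Rnon)\otimes C_c^\infty(\RR)$ into $\Dmin(\nabla_\nu^+)$, and since $\nabla_\nu^+ = \begin{pmatrix} -1\otimes\partial_u & D_\nu^+\otimes e^u\end{pmatrix}$ with the $u$-factors preserving $C^\infty_c(\RR)$, everything comes down to the one-variable approximations of $\Sz_e(\Rnon)$ and $\Sz_o(\Rnon)$ profiles already carried out in Lemma \ref{lm:11}. Your first half also has the right skeleton --- partial Hankel transform, then the fibrewise Schr\"odinger operators $T_x=-\partial_u^2+x^2e^{2u}$, which are essentially self-adjoint on $C_c^\infty(\RR)$ --- and this matches the ingredients the paper uses.

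However, there is a genuine gap at the decisive step: the claim that $\Sz_e(\Rnon)\otimes C_c^\infty(\RR)$ ``is a core for the assembled operator'' is precisely what has to be proved, and the direct-integral formalism does not give it to you; it only produces \emph{a} self-adjoint extension, with no information about which dense subspaces are cores for it. The paper closes exactly this gap by verifying $\Imm\bigl((\Delta_\nu+i)|_{\Sz_e(\Rnon)\otimes C_c^\infty(\RR)}\bigr)^\perp=\{0\}$ in two stages: first, for each fixed $\varphi_2\in C_c^\infty(\RR)$, integrating out $u$ and using that $(i+x^2)\,\Sz_e(\Rnon)$ is dense in $L^2(X_\nu)$ (via the essential self-adjointness of $L_\nu$ on $\Sz_e(\Rnon)$ from Lemma \ref{lm:11}); second, for almost every fixed $x$, using the essential self-adjointness of $T_x$ on $C_c^\infty(\RR)$, where a countable dense family of test functions $\varphi_2$ is needed to produce a single exceptional null set in $x$. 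Some argument of this type is unavoidable. Your proposed shortcut does not repair this: after the partial Hankel transform the operator is $-\partial_u^2+e^{2u}x^2$, which contains \emph{no} derivative in $x$, so it is not a Schr\"odinger operator on the two-dimensional product, and the standard criteria for essential self-adjointness of $-\Delta+V$ with $V\ge 0$ locally bounded on $C_c^\infty(\RR^n)$ do not apply (nor is the relevant domain $C_c^\infty$ of the open quadrant). If you replace the unproved core assertion by the two-stage orthogonality computation above, your proof becomes the paper's.
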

\begin{proof}
To justify the essential self-adjointness,
we proceed in a way similar to \cite[Section 3.2]{DalMa}.
By \cite[pp.~256-257]{RS1} the operator $\Delta_\nu$ with domain $\Sz_e(\RR_+)\otimes C_c^\infty(\RR)$ is essentially self-adjoint if and only if $\Imm((\Delta_\nu + i)|_{\Sz_e(\RR_+)\otimes C_c^\infty(\RR)})^\perp=\{0\}$. In other words, it is sufficient to show that, if $f\in L^2(G_\nu)$ satisfies
\begin{equation*}
	\int_{G_\nu} f(x,u) \, \overline{ (\Delta_\nu +i) (\varphi_1 \otimes \varphi_2)(x,u)}\, \dmu(x)\,\du=0 
\end{equation*}
for all $\varphi_1\in\Sz_e(\RR_+)$ and $\varphi_2\in C_c^\infty(\RR)$, then $f=0$ almost everywhere.
	
Take $f$ as above, and define $g(x,u)=H_\nu (f(\cdot,u))(x)$. By \eqref{eq:1} and since the Hankel transform is, up to a constant, an isometry on $L^2(X_\nu)$, and also a bijection on $\Sz_e(\RR_+)$, the function $g$ is in $L^2(G_\nu)$ and satisfies
\begin{equation}\label{eq:33}
	\int_{G_\nu} g(x,u) \, \overline{ (-\partial^2_u+e^{2u}x^2 +i)\widetilde{\varphi_1}(x)\varphi_2(u)}\, \dmu(x)\,\du=0
\end{equation}
for all $\widetilde{\varphi_1}\in\Sz_e(\RR_+)$, $\varphi_2\in C_c^\infty(\RR)$ (here $\widetilde{\varphi_1}=H_\nu \varphi_1$). The task is to show that $g=0$ almost everywhere.
	
Fix $\varphi_2 \in C^\infty_c(\RR)$, and define
\begin{equation*}
	h(x)=(1+x^2)^{-1}\int_\RR g(x,u) \, \overline{ (-\partial^2_u+e^{2u}x^2 +i)\varphi_2(u)}\,\du, \qquad x \in \RR_+.
\end{equation*}
Since $g \in L^2(G_\nu)$ and $\varphi_2 \in C^\infty_c(\RR)$, it is easily checked that $h \in L^2(X_\nu)$. Moreover, by \eqref{eq:33},
\begin{equation*}
	\int_0^\infty h(x) \, \overline{(i+x^2)\widetilde{\varphi_1}(x)}\,\dmu(x)=0,\qquad \widetilde{\varphi_1}\in\Sz_e(\RR_+).
\end{equation*}
Since $(i+x^2)\widetilde{\varphi_1}(x) = H_\nu((i+L_\nu)\varphi_1)(x)$ and $H_\nu$ is a multiple of an isometry on $L^2(X_\nu)$, the latter condition can be rewritten as
\[
\langle H_\nu h, (i+L_\nu) \varphi_1 \rangle_{L^2(X_\nu)} = 0, \qquad \varphi_1\in\Sz_e(\RR_+).
\]
The essential self-adjointness of $L_\nu$ on $\Sz_e(\RR_+)$, discussed in Lemma \ref{lm:11}, then implies that $H_\nu h=0$ almost everywhere. Consequently, $h=0$ almost everywhere.
	
As $\varphi_2$ in the definition of $h$ was arbitrary, this means that, for all $\varphi_2\in C_c^\infty(\RR)$,
\begin{equation}\label{eq:34}
	\int_\RR g(x,u) \, \overline{(i+T_x)\varphi_2(u)}\,\du=0
\end{equation}
for almost all $x \in \RR_+$, where $T_x \defeq -\partial_u^2 +x^2e^{2u}$ is a Schr\"odinger operator with nonnegative smooth potential on $\RR$. It is known that, as an operator on $L^2(\RR)$, the Schr\"odinger operator $T_x$ is essentially self-adjoint on $C_c^\infty(\RR)$.
	
Let $M \subseteq C_c^\infty(\RR)$ be countable and dense, and set $V = \spann M$. Thus, \eqref{eq:34} holds for all $\varphi_2\in V$ and $x\in\RR_+\setminus E$, where $E$ is a Lebesgue null set independent of $\varphi_2$. 
So, for all $x \in \RR_+ \setminus E$, we have $g(x,\cdot) \in \Imm((i+T_x)|_{V})^\perp$.
Since each $T_x$ is essentially self-adjoint on $V$, we conclude that, for all $x\in\RR_+\setminus E$, we have $g(x,\cdot)=0$ almost everywhere on $\RR$. This simply means that $g$ vanishes almost everywhere on $G_\nu$, thus proving the essential self-adjointness of $\Delta_\nu$.

It remains to prove that the self-adjoint extension coincides with the operator $\nabla_\nu^+ \nabla_\nu$ with Neumann domain, in the sense of \eqref{eq:Neu}. As $\nabla_\nu$ maps $\Sz_e(\Rnon)\otimes C_c^\infty(\RR)$ into $(\Sz_e(\Rnon)\otimes C_c^\infty(\RR))\oplus (\Sz_o(\Rnon)\otimes C_c^\infty(\RR))$, it is sufficient to show that 
\[
(\Sz_e(\Rnon)\otimes C_c^\infty(\RR))\oplus (\Sz_o(\Rnon)\otimes C_c^\infty(\RR)) \subseteq \Dmin(\nabla_\nu^+).
\]
On the other hand, as 
\[
\nabla_\nu^+ = \begin{pmatrix} -\partial_u & e^u D_\nu^+ \end{pmatrix} = \begin{pmatrix} -1 \otimes \partial_u & D_\nu^+ \otimes e^u \end{pmatrix},
\]
and both $\partial_u$ and $e^u$ preserve $C^\infty_c(\RR)$, this reduces to showing that
any element of $\Sz_e(\Rnon)$ can be approximated in $L^2(X_\nu)$ by a sequence in $C^\infty_c(\Rpos)$, and also that any element of $\Sz_o(\RR_+)$ can be approximated by a sequence in $C^\infty_c(\Rpos)$ in the graph norm of $D_\nu^+$. The first fact is a triviality, while the second one is discussed in the proof of Lemma \ref{lm:11}.
\end{proof}

From now on, when writing $\Delta_\nu$ we always mean the self-adjoint extension. Notice that \eqref{eq:L_trinv} and \eqref{eq:L_homog}, together with \eqref{eq:Gnu_transl} and \eqref{eq:Deltanu}, imply that $\Delta_\nu$ is left-invariant, namely
\begin{equation}\label{eq:Deltanu_left}
\ell_{(y,v)} \Delta_\nu f  = \Delta_\nu \ell_{(y,v)} f  \qquad\forall (y,v) \in G_\nu,
\end{equation}
for all $f$ in the domain of $\Delta_\nu$; this is easily checked first for $f \in \Sz_e(\Rnon) \otimes C^\infty_c(\RR)$, and then arguing by density.
By the spectral theorem, from \eqref{eq:Deltanu_left} we deduce the left-invariance of any operator in the functional calculus:
\begin{equation}\label{eq:FDeltanu_left}
\ell_{(y,v)} F(\Delta_\nu)  = F(\Delta_\nu) \ell_{(y,v)}   \qquad\forall (y,v) \in G_\nu,
\end{equation}
for all bounded Borel functions $F : \Rnon \to \CC$; this should be compared with \eqref{eq:2} and \eqref{eq:3}.
As we shall see, the $F(\Delta_\nu)$ are actually right $\diamond_\nu$-convolution operators.

\subsection{Heat semigroup and Plancherel formula}

We consider the heat semigroup $\{e^{-t\Delta_\nu}\}_{t>0}$ associated with $\Delta_\nu$. We shall make use of \cite[Theorem 2.1]{G}, where the relation between heat kernels associated with certain operators, in our case $L_\nu$, and their lifted versions, in our case $\Delta_\nu$, is established. 

\begin{prop}\label{prop:6}
The heat semigroup associated with $\Delta_\nu$ is given by a family of $\diamond_\nu$-convolution operators, namely
\begin{equation}\label{eq:deltanu_heat_conv}
	e^{-t\Delta_\nu}f= f\diamond_\nu K_{e^{-t\Delta_\nu}}
\end{equation}
for all $t>0$, where the kernels are given by
\begin{equation}\label{eq:6}
	K_{e^{-t\Delta_\nu}}(x,u)=\frac{2}{\Gamma(\nu/2) } \int_0^\infty \Psi_t(\xi) \, (2\xi e^u)^{-\nu/2} \exp\left(-\frac{\cosh u}\xi-\frac{x^2}{2\xi e^u}\right)\,\dd\xi,
\end{equation}
and
\begin{equation}\label{eq:psi}
	\Psi_t(\xi)
	=\frac{e^{\frac{\pi^2}{4t}}}{\xi^2 \sqrt{4\pi^3 t}} \int_0^\infty \sinh(\te) \sin\left(\frac{\pi\te}{2t}\right) \exp\left(-\frac{\te^2}{4t}-\frac{\cosh \te}{\xi}\right)\,\dd\te,\qquad \xi>0.
\end{equation}
	Moreover, for all $t>0$, $K_{e^{-t\Delta_\nu}}\geq 0$ on $G_\nu$, and $\Vert K_{e^{-t\Delta_\nu}}\Vert_{L^1(G_\nu)}=1$. Thus, the $e^{-t\Delta_\nu}$ are positivity-preserving contractions on $L^p(G_\nu)$, $p\in[1,\infty]$, for all $t>0$.
\end{prop}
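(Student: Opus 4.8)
The plan is to read off every assertion from \cite[Theorem 2.1]{G}, which expresses the heat kernel of a ``lifted'' operator of the form $-\partial_u^2+e^{2u}L$ in terms of the heat kernel of $L$ and a universal subordination kernel, applied here with $L=L_\nu$, so that the lifted operator is precisely $\Delta_\nu$. The inputs needed are already in place: by Lemma \ref{lm:12} the self-adjoint realisation $\Delta_\nu=\nabla_\nu^+\nabla_\nu$ (with Neumann domain, essentially self-adjoint on the core $\Sz_e(\Rnon)\otimes C_c^\infty(\RR)$) is exactly the operator to which that construction refers, and by Lemma \ref{lm:1} the semigroup $\{e^{-tL_\nu}\}_{t>0}$ acts by $\ast_\nu$-convolution with the nonnegative, $L^1(X_\nu)$-normalised Gaussian kernel \eqref{eq:13}.

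First I would invoke \cite[Theorem 2.1]{G} to obtain a subordination formula of the form
\[
	K_{e^{-t\Delta_\nu}}(x,u)=\int_0^\infty \Psi_t(\xi)\, e^{-\cosh u/\xi}\, K_{e^{-\xi e^u L_\nu/2}}(x)\,\dd\xi,
\]
with $\Psi_t$ as in \eqref{eq:psi}; substituting the explicit Gaussian \eqref{eq:13}, evaluated at time $\xi e^u/2$, turns the right-hand side into \eqref{eq:6}. Combined with the left-invariance \eqref{eq:FDeltanu_left} of $e^{-t\Delta_\nu}$ and the convolution structure of $G_\nu$, this identifies $e^{-t\Delta_\nu}$ with the right $\diamond_\nu$-convolution operator \eqref{eq:deltanu_heat_conv}: one checks that the integral kernel supplied by \cite{G} is of the convolution type dictated by \eqref{eq:11}, the left-invariance ensuring that it depends only on the relative position of its two arguments (some bookkeeping with the modular function \eqref{eq:modular} of the non-unimodular hypergroup $G_\nu$ is needed here). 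The nonnegativity $K_{e^{-t\Delta_\nu}}\geq 0$ then follows from \eqref{eq:6} once one knows $\Psi_t\geq 0$ (as established in \cite{G}); alternatively, it is a consequence of $\Delta_\nu=\nabla_\nu^+\nabla_\nu$ being associated with the Markovian quadratic form $f\mapsto\|\nabla_\nu f\|_{L^2(G_\nu)}^2$, so that $e^{-t\Delta_\nu}$ is positivity-preserving.

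Next I would compute the $L^1(G_\nu)$-norm of the kernel. Integrating \eqref{eq:6} against $\dmu(x)\,\du$ (Tonelli applies, all integrands being nonnegative) and using $\int_0^\infty K_{e^{-\xi e^u L_\nu/2}}(x)\,\dmu(x)=1$ from Lemma \ref{lm:1}, one is left with
\[
	\|K_{e^{-t\Delta_\nu}}\|_{L^1(G_\nu)}=\int_\RR\int_0^\infty \Psi_t(\xi)\, e^{-\cosh u/\xi}\,\dd\xi\,\du=2\int_0^\infty \Psi_t(\xi)\, K_0(1/\xi)\,\dd\xi,
\]
using $\int_\RR e^{-z\cosh u}\,\du=2K_0(z)$ for the Macdonald function $K_0$. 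A short computation---inserting \eqref{eq:psi}, interchanging the order of integration, and applying the classical identities $\int_0^\infty K_0(s)\,e^{-s\cosh\theta}\,\dd s=\theta/\sinh\theta$ and $\int_0^\infty \theta\sin(\pi\theta/(2t))\,e^{-\theta^2/(4t)}\,\dd\theta=\pi^{3/2}\sqrt t\, e^{-\pi^2/(4t)}$---then shows that this equals $1$. (Equivalently, $\|K_{e^{-t\Delta_\nu}}\|_{L^1(G_\nu)}=1$ expresses the conservativeness of the sub-Markovian semigroup generated by $\Delta_\nu$, and may already be contained in \cite{G}.)

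The remaining claims are then immediate. Convolution with the nonnegative kernel $K_{e^{-t\Delta_\nu}}$ is positivity-preserving, since the translations \eqref{eq:Gnu_transl} are integrations against probability measures; and Young's inequality \eqref{eq:22}, together with $\|K_{e^{-t\Delta_\nu}}\|_{L^1(G_\nu)}=1$, gives that $e^{-t\Delta_\nu}$ is a contraction on $L^p(G_\nu)$ for every $p\in[1,\infty]$. With \cite[Theorem 2.1]{G} available as a black box, no step is deep; the points demanding the most care are the passage of the second paragraph---matching the heat-kernel formula of \cite{G} with the $\diamond_\nu$-convolution description \eqref{eq:deltanu_heat_conv}, keeping track of the conventions for $\diamond_\nu$ and the modular function on the non-unimodular $G_\nu$---and the elementary but slightly involved verification of the normalisation in the third paragraph.
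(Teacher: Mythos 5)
Your argument follows the paper's proof in all essentials: both derive \eqref{eq:6} from \cite[Theorem 2.1]{G} (made applicable by Lemma \ref{lm:12}) combined with the explicit $L_\nu$-heat kernel of Lemma \ref{lm:1}, both identify the resulting integral kernel as a right $\diamond_\nu$-convolution via \eqref{eq:Gnu_transl} and \eqref{eq:11}, and your normalisation computation is the paper's up to the order of integration (the paper integrates in $\xi$ first, obtaining $(\cosh\theta+\cosh u)^{-1}$ and then $2\theta/\sinh\theta$; you integrate in $u$ first, obtaining $2K_0(1/\xi)$ and then $\theta/\sinh\theta$ — both reduce to the same Gaussian integral).

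The one point to correct is the first of your two routes to $K_{e^{-t\Delta_\nu}}\geq 0$: the nonnegativity of $\Psi_t$ is not something you can take from \cite{G} as stated — the paper imports from there only the bound $|\Psi_t(\xi)|\lesssim_t \xi^{-2}$, and \eqref{eq:psi} contains the oscillating factor $\sin(\pi\theta/(2t))$, so pointwise positivity of $\Psi_t$ is not evident and is nowhere established in the paper. Your second route is the one actually taken, but as phrased it hides the two steps that need work: verifying the Beurling--Deny criterion \cite[Theorem 1.3.2]{Da} for the Neumann form, which requires showing $\nabla_\nu|f|=\sign(f)\,\nabla_\nu f$ for real $f\in\Dmax(\nabla_\nu)$ (via \cite[Lemma 7.6]{GiTr}); and then passing from positivity-preservation of the operator (a.e.\ nonnegativity of the two-variable integral kernel) to nonnegativity of the convolution kernel itself, which the paper does by letting the second argument of the integral kernel tend to the identity and using the $L^1$-continuity of translations from Lemma \ref{lm:transl_Gnu}.
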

\begin{proof}
To justify the formula \eqref{eq:6}, we want to apply the theory of \cite{G}. Specifically, the operator $\Delta_\nu$ here corresponds to the self-adjoint operator $T$ of \cite[Section 2]{G}, where $L$ in \cite{G} is taken to be our $L_\nu$; with this choice, the domain of $T$ in \cite{G} clearly contains $\Sz_e(\Rnon) \otimes C^\infty_c(\RR)$, and as the latter is a core for $\Delta_\nu$, by Lemma \ref{lm:12}, the two operators are indeed the same.
So, by \cite[Theorem 2.1]{G},
\begin{equation}\label{eq:12}
		e^{-t\Delta_\nu}f(x,u)=\left(\int_\RR  \int_0^\infty \Psi_t(\xi) \, \exp\left(-\frac{\cosh(u-v)}{\xi}\right) e^{-\xi e^{u+v} L_\nu/2} f_v \,\dd\xi\,\dv\right)(x),
\end{equation}
for all $f \in L^1 \cap L^2(G_\nu)$ and almost all $(x,u) \in G_\nu$, where we use the notation $f_v(\cdot)=f(\cdot,v)$. Notice that, for any $t>0$, the function $\Psi_t$ satisfies the bound
\begin{equation}\label{eq:15}
		|\Psi_t(\xi)|\lesssim_t \xi^{-2} \qquad\forall \xi > 0,
\end{equation}
see \cite[Theorem 2.1]{G}. By \eqref{eq:15} and \eqref{eq:4} we can rewrite \eqref{eq:12} as
\begin{equation}\label{eq:first_heat_exp}
\begin{split}
	&e^{-t\Delta_\nu}f(x,u) \\
	&= \int_\RR  \int_0^\infty \Psi_t(\xi) \, \exp\left(-\frac{\cosh(u-v)}{\xi}\right) \left( f_v\ast_\nu K_{e^{-\xi e^{u+v} L_\nu/2}} \right) (x) \,\dd\xi \,\dv\\
	&= \int_\RR  f_v \ast_\nu \left( \int_0^\infty \Psi_t(\xi) \, \exp\left(-\frac{\cosh(u-v)}{\xi}\right)  K_{e^{-\xi e^{u+v} L_\nu/2}}(\cdot)\,\dd\xi  \right) (x) \,\dv.
\end{split}
\end{equation}
By Lemma \ref{lm:1} the inner integral is
\[\begin{split}
 &\int_0^\infty \Psi_t(\xi) \, \exp\left(-\frac{\cosh(u-v)}{\xi}\right)  K_{e^{-\xi e^{u+v} L_\nu/2}}(y) \,\dd\xi \\
	&=\frac{2}{\Gamma(\nu/2) } \int_0^\infty \Psi_t(\xi) \, (2\xi e^{u+v})^{-\nu/2} \exp\left(-\frac{\cosh(u-v)}{\xi} -\frac{y^2}{2\xi e^{u+v}}\right)  \,\dd\xi  \\
	&= e^{-v\nu} K_{e^{-t\Delta_\nu}}(e^{-v}y,u-v) ,
\end{split}\]
where $K_{e^{-t\Delta_\nu}}$ is the function defined in \eqref{eq:6}.
Plugging this into \eqref{eq:first_heat_exp} and expanding the Hankel convolution as in \eqref{eq:7} gives
\[\begin{split}
	e^{-t\Delta_\nu}f(x,u)&= \int_{G_\nu}  \tau_\nu^{[x]}f(y,v)  K_{e^{-t\Delta_\nu}}(e^{-v}y,u-v) e^{-v\nu}\,\dmu(y)\,\dv\\
	&=  \int_{G_\nu}  \tau_\nu^{[x]}f(e^{u-v} y,u-v) K_{e^{-t\Delta_\nu}}(y,v) \,\dmu(y)\,\dv\\
	&= \int_{G_\nu}  \ell_{(x,u)} f((y,v)^-) K_{e^{-t\Delta_\nu}}(y,v) \,\dmu(y)\,\dv \\
	&= f\diamond_\nu K_{e^{-t\Delta_\nu}}(x,u),
\end{split}\]
by \eqref{eq:Gnu_transl} and \eqref{eq:11}. This confirms that $K_{e^{-t\Delta_\nu}}$ given by \eqref{eq:6} is indeed the $\diamond_\nu$-convolution kernel of $e^{-t\Delta_\nu}$.

Now we move on to justifying that $K_{e^{-t\Delta_\nu}} \geq 0$ for all $t>0$.
By \eqref{eq:15} it is easily checked that the integral in \eqref{eq:6} is absolutely convergent and $K_{e^{-t\Delta_\nu}} \in C \cap L^1(G_\nu)$ for any $t>0$. Moreover, from \eqref{eq:deltanu_heat_conv}, \eqref{eq:11} and \eqref{eq:35} we deduce that  $e^{-t\Delta_\nu}$ is an integral operator on $G_\nu$,
\[
e^{-t\Delta_\nu} f(x,u) = \int_{G_\nu} K_t((x,u),(y,v)) \, f(y,v) \,\dmu(y) \,\dv,
\]
with integral kernel $K_t$ given by
\begin{equation}\label{eq:heat_conv_int}
\begin{split}
K_t((x,u),(y,v)) 
&= e^{-\nu v} r_{(x,u)} K_{e^{-t\Delta_\nu}}((y,v)^-) \\	
&= e^{-\nu v} \ell_{(y,v)^-} K_{e^{-t\Delta_\nu}}(x,u)
\end{split}
\end{equation}
(see also \cite[Theorem 2.3]{G}).
In particular, by Lemma \ref{lm:transl_Gnu}, $K_t(\cdot,(y,u)) \to K_{e^{-t\Delta_\nu}}$ in $L^1(G_\nu)$ as $(y,u) \to (0,0)$; thus, in order to prove that $K_{e^{-t\Delta_\nu}} \geq 0$, it is enough to show that $K_t \geq 0$ almost everywhere on $G_\nu \times G_\nu$, that is, that $e^{-t\Delta_\nu}$ is positivity-preserving.

Now, by \cite[Theorem 1.3.2]{Da}, the fact that the heat semigroup $e^{-t\Delta_\nu}$ is positivity-preserving is equivalent to the property that, for any real-valued $f$ in the domain $\Dom(\sqrt{\Delta_\nu})$ of $\sqrt{\Delta_\nu}$, there holds $|f|\in\Dom(\sqrt{\Delta_\nu})$ and
\begin{equation}\label{eq:29}
	\langle \sqrt{\Delta_\nu}|f|,\sqrt{\Delta_\nu}|f|\rangle 
	\leq \langle \sqrt{\Delta_\nu} f,\sqrt{\Delta_\nu} f\rangle.
\end{equation}
By Lemma \ref{lm:12}, here $\Delta_\nu$ is the operator $\nabla_\nu^+ \nabla_\nu$ with Neumann domain (see \eqref{eq:Neu}), that is, $\Delta_\nu = \nabla_\nu^* \nabla_\nu$ where $\nabla_\nu$ is given the maximal domain. Thus, 
\[
\Dom(\sqrt{\Delta_\nu}) = \Dmax(\nabla_\nu) = \{ f \in L^2(G_\nu) \tc |\nabla_\nu f| \in L^2(G_\nu) \}
\]
and, for all $f\in\Dom(\sqrt{\Delta_\nu})$,
\[
\langle \nabla_\nu f,\nabla_\nu f\rangle = \langle \sqrt{\Delta_\nu}f,\sqrt{\Delta_\nu}f\rangle
\]
(cf.\ the proof of \cite[Theorem X.25]{RS2}). Moreover, if $f \in \Dom(\sqrt{\Delta_\nu})$ is real-valued, then $f,|\nabla_\nu f|\in L^1_{\loc}(G_\nu)$, so by applying \cite[Lemma 7.6]{GiTr} we obtain that $\nabla_\nu |f| = \sign(f) \nabla_\nu f$. Hence, $|f| \in \Dom(\sqrt{\Delta_\nu})$ and \eqref{eq:29} is satisfied.
	
We are left with calculating the $L^1(G_\nu)$ norm of the heat kernels. Observe that
\[\begin{split}
	&\Vert K_{e^{-t\Delta_\nu}}\Vert_{L^1(G_\nu)}\\
	&=\int_\RR \int_0^\infty \Psi_t(\xi) \, \exp\left(-\frac{\cosh u}\xi\right)\frac{2}{\Gamma(\nu/2) } \int_0^\infty  \left(\frac{x^2}{2\xi e^u}\right)^{\nu/2} \exp\left(-\frac{x^2}{2\xi e^u}\right)\,\ddx\,\dd\xi\,\du\\
	&= \int_\RR  \int_0^\infty \Psi_t(\xi) \, \exp\left(-\frac{\cosh u}\xi\right)\,\dd\xi\,\du,
\end{split}\]
since the heat kernels $K_{e^{-sL_\nu}}$ have $L^1(X_\nu)$-norm equal to $1$ for any $s>0$. Furthermore, by \eqref{eq:psi}, the above expression can be rewritten as
\begin{equation*}
	 \frac{e^{\frac{\pi^2}{4t}}}{\sqrt{4\pi^3 t}} \int_\RR \int_0^\infty \sinh(\te) \sin\left(\frac{\pi\te}{2t}\right) e^{-\frac{\te^2}{4t}} \int_0^\infty \xi^{-2}\exp\left(-\frac{\cosh \te+\cosh u}{\xi}\right)\,\dd\xi\,\dd\te\,\du.
\end{equation*} 
Notice that the integral over $\xi$ equals  $(\cosh\te+\cosh u)^{-1}$. Moreover,
\begin{equation*}
	\int_\RR \frac{\du}{\cosh\te + \cosh u} = \frac{2\te}{\sinh\te}.
\end{equation*}
Hence, combining the above we arrive at
\[\begin{split}
	\int_\RR \int_0^\infty K_{e^{-t\Delta_\nu}}(x,u)\,\dmu(x)\,\du 
	&= \frac{e^{\frac{\pi^2}{4t}}}{\sqrt{\pi^3 t}}  \int_0^\infty  \te \sin\left(\frac{\pi\te}{2t}\right) e^{-\frac{\te^2}{4t}}\,\dd\te\\
	&= \frac{e^{\frac{\pi^2}{4t}}}{\sqrt{4\pi t}}  \int_\RR \cos\left(\frac{\pi\te}{2t}\right) e^{-\frac{\te^2}{4t}}\,\dd\te,
\end{split}\]
which is equal to $1$ by the formula for the Fourier transform of a Gaussian.
	
Therefore, $\Vert K_{e^{-t\Delta_\nu}}\Vert_{L^1(G_\nu)}=1$ for all $t>0$. Consequently, by Young's convolution inequality \eqref{eq:22} we obtain that the $e^{-t\Delta_\nu}$ are contractions on $L^p(G_\nu)$ for any $p\in[1,\infty]$. 
\end{proof}

The heat kernel formula of Proposition \ref{prop:6} is our starting point to derive a number of properties of the functional calculus for $\Delta_\nu$, including the fact that it is given by convolution operators, and the validity of a Plancherel formula relating the $L^2$ norm of the convolution kernel with a suitable $L^2$ norm of the corresponding spectral multiplier.

We start with establishing a basic property, which is related to the left-invariance \eqref{eq:FDeltanu_left} of the operators in the functional calculus (cf.\ \cite[Theorem 5]{Pav}). Recall that $m$ is the modular function on $G_\nu$, given by \eqref{eq:modular}.

\begin{lm}\label{lm:8}
Let $F : \RR_+\to\CC$ be a bounded Borel function and $f\in L^1(G_\nu)$ be such that $fm^{1/2}\in L^1(G_\nu)$. Then
\begin{equation*}
	F(\Delta_\nu) (f\diamond_\nu g)= f\diamond_\nu F(\Delta_\nu)g,\qquad g\in L^2(G_\nu).
\end{equation*}
\end{lm}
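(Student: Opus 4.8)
The plan is to realise left $\diamond_\nu$-convolution by $f$ as a weak (Bochner) integral of left translates, and then invoke the left-invariance of the functional calculus of $\Delta_\nu$ recorded in \eqref{eq:FDeltanu_left}. The first and main step will be to prove that, for every $g\in L^2(G_\nu)$,
\[
f\diamond_\nu g = \int_{G_\nu} f(y,v)\,\bigl(\delta_{(y,v)}\diamond_\nu g\bigr)\,m(y,v)\,\dmu(y)\,\dv,
\]
the right-hand side being an absolutely convergent Bochner integral in $L^2(G_\nu)$. To control it I would use that $\delta_{(y,v)}\diamond_\nu g=\ell_{(y,v)^-}g$ (as noted in the proof of Lemma \ref{lm:transl_Gnu}); then Lemma \ref{lm:transl_Gnu}\ref{en:tr_bd_r} together with $m\bigl((y,v)^-\bigr)=m(y,v)^{-1}$ gives $\bigl\|\delta_{(y,v)}\diamond_\nu g\bigr\|_{L^2(G_\nu)}\le m(y,v)^{-1/2}\|g\|_{L^2(G_\nu)}$, so the $L^2$-norm of the integrand is dominated by $|f(y,v)|\,m(y,v)^{1/2}\,\|g\|_{L^2(G_\nu)}$, which is integrable precisely because $fm^{1/2}\in L^1(G_\nu)$; Bochner measurability follows from the continuity of $(y,v)\mapsto\ell_{(y,v)^-}g$ guaranteed by Lemma \ref{lm:transl_Gnu}\ref{en:tr_cn_r}.

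To identify this Bochner integral with $f\diamond_\nu g$ I would first treat $f,g\in C_c(\mathring{G}_\nu)$, where formula \eqref{eq:11}, the symmetry relation $r_{(x,u)}g(y,v)=\ell_{(y,v)}g(x,u)$ contained in \eqref{eq:35}, and the substitution $(y,v)\mapsto(y,v)^-$ — under which the right Haar measure $\dmu(y)\,\dv$ picks up the factor $m(y,v)$, since the involution pushes the right Haar measure of $G_\nu$ onto the left one — reduce the claim to a routine application of Fubini's theorem; then I would pass to general $f$ and $g$ by density, exploiting the bound $\|f\diamond_\nu g\|_{L^2(G_\nu)}\le\|fm^{1/2}\|_{L^1(G_\nu)}\|g\|_{L^2(G_\nu)}$ that follows from Young's inequality \eqref{eq:young} (with $p=1$, $q=r=2$, applied to $fm^{1/2}$), the Bochner integral obeying the same estimate.

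Granting the representation formula, the rest is immediate. Since $F(\Delta_\nu)$ is bounded on $L^2(G_\nu)$ it commutes with the Bochner integral, so $F(\Delta_\nu)(f\diamond_\nu g)=\int_{G_\nu} f(y,v)\,F(\Delta_\nu)\bigl(\delta_{(y,v)}\diamond_\nu g\bigr)\,m(y,v)\,\dmu(y)\,\dv$; and by \eqref{eq:FDeltanu_left}, applied with the element $(y,v)^-$, together with $\delta_{(y,v)}\diamond_\nu h=\ell_{(y,v)^-}h$, one has $F(\Delta_\nu)\bigl(\delta_{(y,v)}\diamond_\nu g\bigr)=\ell_{(y,v)^-}F(\Delta_\nu)g=\delta_{(y,v)}\diamond_\nu\bigl(F(\Delta_\nu)g\bigr)$. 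Substituting this and applying the representation formula once more — now with the $L^2$ function $F(\Delta_\nu)g$ in place of $g$, the hypothesis on $f$ being untouched — yields $F(\Delta_\nu)(f\diamond_\nu g)=f\diamond_\nu F(\Delta_\nu)g$.

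I expect all the real work to sit in the first two paragraphs, i.e.\ in the bookkeeping for the representation formula: keeping the modular function and the two distinct Haar measures of the nonunimodular hypergroup $G_\nu$ straight, and justifying the Fubini interchange and the density step; the conceptual input is merely \eqref{eq:FDeltanu_left}. An alternative route, trading this bookkeeping for a different set of technicalities, would be to verify the identity first for $F(\lambda)=e^{-t\lambda}$ — where, by Proposition \ref{prop:6}, $F(\Delta_\nu)$ is the right $\diamond_\nu$-convolution operator with the $L^1$ kernel $K_{e^{-t\Delta_\nu}}$, so the claim is nothing but associativity of $\diamond_\nu$ — then for finite linear combinations of such $F$, then for $F\in C_0(\Rnon)$ by Stone--Weierstrass and operator-norm approximation, and finally for arbitrary bounded Borel $F$ via strong limits on $L^2$; I would nonetheless favour the first approach as more self-contained.
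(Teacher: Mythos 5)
Your argument is correct, but it takes a genuinely different route from the paper's. The paper proves the lemma in three short steps: it first extends associativity of $\diamond_\nu$ to $(f\diamond_\nu g)\diamond_\nu h=f\diamond_\nu(g\diamond_\nu h)$ for $g\in L^2(G_\nu)$ and $h\in L^1(G_\nu)$ via Young's inequality and density; it then takes $h=K_{e^{-\Delta_\nu}}$ (which lies in $L^1(G_\nu)$ by Proposition \ref{prop:6}) to get $e^{-\Delta_\nu}(f\diamond_\nu g)=f\diamond_\nu e^{-\Delta_\nu}g$; and finally it observes that the $L^2$-bounded operator $g\mapsto f\diamond_\nu g$, commuting with the bounded self-adjoint operator $e^{-\Delta_\nu}$, commutes with $G(e^{-\Delta_\nu})$ for every bounded Borel $G$, and concludes by writing $F(\Delta_\nu)=G(e^{-\Delta_\nu})$ with $G(\lambda)=F(-\log\lambda)$. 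This is essentially your ``alternative route,'' except that the spectral-commutant trick lets the paper get from the single multiplier $e^{-\lambda}$ to all bounded Borel $F$ in one stroke, with no need for Stone--Weierstrass, operator-norm approximation, or a further strong-limit passage. Your preferred argument instead disintegrates $f\diamond_\nu g$ as a Bochner integral of left translates $f(y,v)\,\ell_{(y,v)^-}g\,m(y,v)\,\dmu(y)\,\dv$ and invokes the left-invariance \eqref{eq:FDeltanu_left}; the bookkeeping you describe (the bound $\|\ell_{(y,v)^-}g\|_{L^2}\leq m(y,v)^{-1/2}\|g\|_{L^2}$, the Jacobian $m(y,v)$ from the involution, the continuity from Lemma \ref{lm:transl_Gnu}, and the density step via $\|f\diamond_\nu g\|_{L^2}\leq\|fm^{1/2}\|_{L^1}\|g\|_{L^2}$) all checks out, and in particular it makes transparent where the hypothesis $fm^{1/2}\in L^1(G_\nu)$ enters. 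What your route buys is independence from the heat-kernel machinery of Proposition \ref{prop:6} and a conceptual explanation (left convolution commutes with any left-invariant operator); what the paper's route buys is brevity, since it needs no Bochner integrals and no handling of the modular function beyond Young's inequality.
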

\begin{proof}
Fix $f$ as above.	Associativity of $\diamond_\nu$ on $L^1(G_\nu)$ and Young's inequality (Lemma \ref{lm:young}), together with a density argument, show that
\[
(f \diamond_\nu g) \diamond_\nu h = f \diamond_\nu (g \diamond_\nu h)
\]
for all $g \in L^2(G_\nu)$ and all $h \in L^1(G_\nu)$.
By Proposition \ref{prop:6} we can apply this to $h = K_{e^{-\Delta_\nu}}$ and obtain
\begin{equation*}
	e^{-\Delta_\nu} (f\diamond_\nu g)=  f\diamond_\nu e^{-\Delta_\nu}g,\qquad g\in L^2(G_\nu).
\end{equation*}
As the left-convolution operator $g \mapsto f \diamond_\nu g$ is $L^2$-bounded, 
by the spectral calculus for $e^{-\Delta_\nu}$ we get that, for all bounded Borel functions $G :\RR_+\to\CC$,
\begin{equation*}
	G(e^{-\Delta_\nu})(f\diamond_\nu g)= f\diamond_\nu G(e^{-\Delta_\nu}) g, \qquad g\in L^2(G_\nu).
\end{equation*}
Finally, taking $G(\lambda)=F(-\log \lambda)$ finishes the proof.
\end{proof}

For $t>0$ and $u\in\RR$ we define the function $M_{t,u} : \RR_+\to\RR$ by
\begin{equation}\label{eq:def_M}
M_{t}(\lambda,u)\defeq M_{t,u}(\lambda) = \int_0^\infty \Psi_t(\xi) \exp\left( -\frac{\cosh u}\xi - \frac{\xi\lambda e^u}2\right)\, \dd\xi
\end{equation}
for all $\lambda > 0$, where $\Psi_t$ is defined in \eqref{eq:psi}.

We emphasize that $M_{t,u}$ is independent of $\nu$, which is of paramount importance. This fact was already exploited in the literature, for instance in \cite{He99,MOV}.

\begin{lm}\label{lm:6}
	For all $t>0$ and $u\in\RR$, the function $M_{t,u}$ is bounded. Moreover, $M_{t,u}\in L^2(\RR_+, \lambda^{\alpha/2}\,\ddlam)$ for all $\alpha>0$.
\end{lm}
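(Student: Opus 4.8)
The plan is to estimate $M_{t,u}$ directly from its definition \eqref{eq:def_M}, using as the only input the pointwise bound $|\Psi_t(\xi)| \lesssim_t \xi^{-2}$ recorded in \eqref{eq:15}. Note first that this bound already guarantees absolute convergence of the integral in \eqref{eq:def_M}: near $\xi = \infty$ the integrand is controlled by $\xi^{-2}$ (with additional exponential gain from the factor $\exp(-\xi\lambda e^u/2)$), while near $\xi = 0$ the factor $\exp(-\cosh u/\xi)$ decays faster than any power of $\xi$ and dominates $\xi^{-2}$.

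For the boundedness, I would simply discard the factor $\exp(-\xi\lambda e^u/2) \le 1$ and observe that
\[
|M_{t,u}(\lambda)| \le \int_0^\infty |\Psi_t(\xi)| \exp\!\left(-\frac{\cosh u}{\xi}\right)\dd\xi \lesssim_t \int_0^\infty \xi^{-2}\exp\!\left(-\frac{\cosh u}{\xi}\right)\dd\xi.
\]
The substitution $\eta = 1/\xi$ turns the last integral into $\int_0^\infty \exp(-\eta\cosh u)\,\dd\eta = 1/\cosh u \le 1$, whence $\sup_{\lambda>0} |M_{t,u}(\lambda)| \lesssim_t 1$ (in fact uniformly in $u$).

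For the membership in $L^2(\RR_+,\lambda^{\alpha/2}\,\ddlam)$, which amounts to the finiteness of $\int_0^\infty |M_{t,u}(\lambda)|^2 \lambda^{\alpha/2 - 1}\,\dd\lambda$, I would split the integral at $\lambda = 1$. On $(0,1]$ the weight $\lambda^{\alpha/2 - 1}$ is integrable precisely because $\alpha > 0$, and $M_{t,u}$ is bounded there by the previous step, so this part is finite. On $[1,\infty)$ I would extract exponential decay of $M_{t,u}$ in $\sqrt\lambda$: writing $a = \cosh u \ge 1$ and $b = \lambda e^u/2$, the elementary splitting $a/\xi + b\xi \ge \tfrac12(a/\xi + b\xi) + \sqrt{ab} \ge a/(2\xi) + \sqrt{ab}$ (the second step by AM–GM) gives
\[
|M_{t,u}(\lambda)| \lesssim_t e^{-\sqrt{ab}}\int_0^\infty \xi^{-2}\, e^{-a/(2\xi)}\,\dd\xi = \frac{2}{a}\, e^{-\sqrt{ab}} \le 2\, e^{-c_u\sqrt{\lambda}},
\]
where $c_u = \sqrt{e^u/2} > 0$ (using $\cosh u \ge 1$ so that $\sqrt{ab} \ge \sqrt{e^u/2}\,\sqrt\lambda$). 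This exponential decay more than compensates the polynomial weight $\lambda^{\alpha/2-1}$, so the integral over $[1,\infty)$ converges as well.

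There is no genuine obstacle here: the statement follows from the single bound \eqref{eq:15}, the substitution $\eta = 1/\xi$, and the AM–GM split that produces decay at infinity. The only place where the hypothesis $\alpha > 0$ is actually used — and is sharp for this argument — is the integrability of $\lambda^{\alpha/2-1}$ near the origin.
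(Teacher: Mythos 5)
Your proof is correct, and its skeleton coincides with the paper's: boundedness is obtained exactly as you do (drop the factor $\exp(-\xi\lambda e^u/2)$, use $\cosh u\geq 1$ and \eqref{eq:15}), and the $L^2$ estimate is likewise handled by splitting at $\lambda=1$ and using boundedness together with $\alpha>0$ on $(0,1]$. The only place where you depart from the paper is the regime $\lambda\geq 1$: the paper trades the factor $\exp(-\xi\lambda e^u/2)$ for a polynomial decay $(\xi\lambda e^u)^{-(\alpha+2)/4}$ calibrated to the weight and then substitutes $\eta=\cosh u/\xi$, whereas you use the AM--GM splitting $a/\xi+b\xi\geq a/(2\xi)+\sqrt{ab}$ to extract genuine exponential decay $|M_{t,u}(\lambda)|\lesssim_t e^{-c_u\sqrt{\lambda}}$. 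Both routes rest on the same single input \eqref{eq:15}; yours yields a decay rate independent of $\alpha$ (so all weights are handled at once, and in fact $M_{t,u}$ decays exponentially in $\sqrt{\lambda}$), while the paper's is the minimal polynomial gain needed for the stated conclusion. Either argument is complete and correct.
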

\begin{proof}
By \eqref{eq:15} we have
\begin{equation*}
		\|M_{t,u}\|_\infty \lesssim_t \int_0^\infty \xi^{-2} e^{-1/\xi}\, \dd\xi = 1,
\end{equation*}
and also
\[\begin{split}
		\Vert M_{t,u}\Vert^2_{L^2(\RR_+, \lambda^{\alpha/2}\,\ddlam)} 
		&\lesssim \int_0^\infty \left(\int_0^\infty \xi^{-2}  \exp\left( -\frac{\cosh u}\xi - \frac{\xi\lambda e^u}2\right)\,\dd\xi \right)^2 \lambda^{\alpha/2}\,\ddlam.
\end{split}\]
We split the integration over $\lambda$ onto the intervals $(0,1)$ and $(1,\infty)$. In the former case we have
\[
	\int_0^1 \left(\int_0^\infty \xi^{-2}  \exp\left( -\frac{\cosh u}\xi - \frac{\xi\lambda e^u}2\right)\,\dd\xi \right)^2 \lambda^{\alpha/2}\,\ddlam 
	\lesssim_\alpha  \left(\int_0^\infty \xi^{-2} e^{-1/\xi}\,\dd\xi \right)^2 = 1.
\]
On the other hand, for the remaining interval we have
\[\begin{split}
	\int_1^\infty& \left(\int_0^\infty \xi^{-2}  \exp\left( -\frac{\cosh u}\xi - \frac{\xi\lambda e^u}2\right)\,\dd\xi \right)^2 \lambda^{\alpha/2}\,\ddlam\\
	&\lesssim_\alpha \int_1^\infty \left(\int_0^\infty \xi^{-2} (\xi\lambda e^u)^{-(\alpha+2)/4}  \exp\left( -\frac{\cosh u}\xi\right)\,\dd\xi \right)^2 \lambda^{\alpha/2}\,\ddlam\\
	& \lesssim  \left(\int_0^\infty  \left(\frac{\eta}{e^u \cosh u }\right)^{(\alpha+2)/4} e^{-\eta} (\cosh u)^{-1}\,\dd\eta \right)^2\lesssim 1.
\end{split}\]
Combining the above finishes the proof.
\end{proof}

The functions $M_{t,u}$ encode the relation between the heat propagators on $X_\nu$ and $L_\nu$, already exploited in the proof of Proposition \ref{prop:6}.

\begin{lm}\label{lm:3}
For any $t>0$ and $u\in\RR$ there holds
\begin{equation*}
	K_{e^{-t\Delta_\nu}}(x,u) = K_{M_{t,u}(L_\nu)}(x) \qquad \qquad\text{for a.a. } x \in \Rnon.
\end{equation*}
\end{lm}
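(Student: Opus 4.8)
The plan is to apply the Hankel transform in the space variable $x$ to both sides and to exploit the explicit Gaussian form of the Bessel heat kernel recorded in Lemma \ref{lm:1}.

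First I would recast the heat kernel formula \eqref{eq:6} as a superposition of Bessel heat kernels. Substituting $s=\xi e^u/2$ in the formula \eqref{eq:13} for $K_{e^{-sL_\nu}}$ and comparing with \eqref{eq:6} yields
\[
K_{e^{-t\Delta_\nu}}(x,u)=\int_0^\infty \Psi_t(\xi)\, e^{-\cosh u/\xi}\, K_{e^{-(\xi e^u/2)L_\nu}}(x)\,\dd\xi .
\]
This integral is absolutely convergent in $L^1(X_\nu)\cap L^2(X_\nu)$ for each fixed $u\in\RR$: by the bound \eqref{eq:15} on $\Psi_t$, together with $\|K_{e^{-sL_\nu}}\|_{L^1(X_\nu)}=1$ (Lemma \ref{lm:1}) and the Plancherel identity \eqref{eq:17}, a change of variables $\eta=1/\xi$ shows that $\int_0^\infty |\Psi_t(\xi)|\, e^{-\cosh u/\xi}\,\dd\xi<\infty$ and $\int_0^\infty |\Psi_t(\xi)|\, e^{-\cosh u/\xi}\,\|K_{e^{-(\xi e^u/2)L_\nu}}\|_{L^2(X_\nu)}\,\dd\xi<\infty$, whence the claim by Minkowski's integral inequality.

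Next I would apply the Hankel transform $H_\nu$. Since $|j^\nu|\le 1$ by \eqref{eq:bessel_norm}, the integrability just noted and Fubini's theorem allow one to bring $H_\nu$ inside the integral in $\xi$; using that $H_\nu K_{e^{-sL_\nu}}(y)=e^{-sy^2}$, which follows at once from \eqref{eq:4} applied to $F(\lambda)=e^{-s\lambda}$, we get
\[
H_\nu\bigl(K_{e^{-t\Delta_\nu}}(\cdot,u)\bigr)(y)=\int_0^\infty \Psi_t(\xi)\, e^{-\cosh u/\xi}\, e^{-\xi e^u y^2/2}\,\dd\xi=M_{t,u}(y^2),
\]
the last equality being the definition \eqref{eq:def_M} of $M_{t,u}$. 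On the other hand, Lemma \ref{lm:6} gives $M_{t,u}\in L^2(\RR_+,\lambda^{\nu/2}\,\ddlam)$, so $x\mapsto M_{t,u}(x^2)$ lies in $L^2(X_\nu)$, \eqref{eq:4} applies to $F=M_{t,u}$, and hence $H_\nu K_{M_{t,u}(L_\nu)}(y)=M_{t,u}(y^2)$ as well.

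Finally, both $K_{e^{-t\Delta_\nu}}(\cdot,u)$ and $K_{M_{t,u}(L_\nu)}$ belong to $L^2(X_\nu)$ and have the same Hankel transform, so they coincide for almost all $x$ by the injectivity of $H_\nu$ on $L^2(X_\nu)$ (see \eqref{eq:16}); this is exactly the assertion of the lemma. I do not expect a genuine obstacle here: the only point requiring a little care is the Fubini bookkeeping, i.e.\ verifying that the $\xi$-integral converges in $L^1\cap L^2(X_\nu)$ and may be interchanged with $H_\nu$, and this is handled by the elementary bound \eqref{eq:15} on $\Psi_t$ and the Gaussian decay of the Bessel heat kernels from Lemma \ref{lm:1}.
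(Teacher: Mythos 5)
Your argument is correct and is essentially the paper's proof run in the opposite direction: the paper starts from $K_{M_{t,u}(L_\nu)}=H_\nu^{-1}(M_{t,u}(\cdot^2))$, applies Fubini (justified by \eqref{eq:15}) and computes the inner inverse Hankel transform of the Gaussian via Lemma \ref{lm:1} to land on \eqref{eq:6}, whereas you apply the forward transform to \eqref{eq:6} and finish by injectivity of $H_\nu$ on $L^2(X_\nu)$. The ingredients (the bound \eqref{eq:15}, the Gaussian formula \eqref{eq:13}, the relation \eqref{eq:4}, and Lemma \ref{lm:6}) are the same, so this is not a genuinely different route.
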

\begin{proof}
Fix $t>0$ and $u\in\RR$. By \eqref{eq:4} and Lemma \ref{lm:6} we have
\begin{equation*}
	K_{M_{t,u}(L_\nu)}(x)
	= \kappa_\nu^{-2} \int_0^\infty \int_0^\infty \Psi_t(\xi) \exp\left( -\frac{\cosh u}{\xi} - \frac{\xi e^u y^2}{2}\right) j_x^\nu(y) \,\dd\xi\,\dmu(y).
\end{equation*}
Because of \eqref{eq:15} we can apply Fubini's theorem and Lemma \ref{lm:1} to obtain
\[\begin{split}
	K_{M_{t,u}(L_\nu)}(x)
	&= \int_0^\infty  \Psi_t(\xi) \exp\left( -\frac{\cosh u}{\xi} \right) H_\nu^{-1}\left( y \mapsto e^{-\xi e^u y^2/2}\right)(x) \,\dd\xi\\
	&= \frac{2}{\Gamma(\nu/2)} \int_0^\infty \Psi_t(\xi) (2\xi e^u)^{-\nu/2} \exp\left( -\frac{\cosh u}{\xi} \right) \exp\left(-\frac{x^2}{2\xi e^u} \right) \,\dd\xi ,
\end{split}\]
and this is equal to $K_{e^{-t\Delta_\nu}}(x,u)$, see \eqref{eq:6}.
\end{proof}

\begin{rem}\label{rem:1}
Lemma \ref{lm:3} establishes in the context of the hypergroup $G_\nu$ the analogue of the relation between the heat kernels on a stratified group $N$ and the semidirect product group $G = N \rtimes \RR$, which is already known in the literature. In particular, for $n \in \NN \setminus \{0\}$, let $\Delta_{\RR^n}$ be the classical Laplacian on $\RR^n$ and $\widetilde{\Delta}_{\RR^n}= -\partial_u^2+e^{2u}\Delta_{\RR^n}$ the corresponding left-invariant Laplacian on $\RR^n \rtimes\RR$. By \cite[Proposition 4.3]{MOV},	
\begin{equation}\label{eq:heat_rel_eucl}
	K_{e^{-t\widetilde{\Delta}_{\RR^n}}}(x,u) = K_{M_{t,u}(\Delta_{\RR^n})}(x).
\end{equation}
Moreover, the Plancherel measures associated with $\Delta_{\RR^n}$ and $\widetilde{\Delta}_{\RR^n}$ can be explicitly calculated (cf.\ \cite[pp.~388-389]{MOV}) and, for all bounded Borel functions $F : \RR_+\to\CC$,
\[\begin{split}
	\int_{\RR^n} | K_{F(\Delta_{\RR^n})}(z) |^2\,\dz &\simeq_n \int_0^\infty  | F(\lambda) |^2 \lambda^{n/2}\,\ddlam,\\
	\int_\RR \int_{\RR^n} | K_{F (\widetilde{\Delta}_{\RR^n})}(z,u) |^2 \,\dz\,\du &\simeq_n \int_0^\infty | F(\lambda) |^2 \lambda^{[3/2,(n+1)/2]}\,\ddlam,
\end{split}\]
where we use the notation \eqref{eq:bipower}.
Of course, the first formula is analogous to the Plancherel formula \eqref{eq:17} for $L_\nu$. As we shall see below, an analogue of the second formula holds for $\Delta_\nu$.
\end{rem}

Let $\fctJ $ be the set of all finite linear combinations of decaying exponentials $\lambda\mapsto e^{-t\lambda}$, $\lambda\in \Rnon$, for some $t>0$, as in \cite[p.~389]{MOV}. The Stone--Weierstrass theorem yields that $\fctJ$ is uniformly dense in $C_0(\Rnon)$.

Define the linear operator $\Phi : \fctJ \to \bigcap_{\alpha>0} L^2(\Rnon\times\RR,\lambda^\alpha \ddlam\,\du)$ by setting
\[
\Phi (e^{-t\cdot}) = M_{t}
\]
for all $t > 0$, where $M_t$ is given in \eqref{eq:def_M}. We denote $(\Phi F)_u (\lambda)=\Phi F(\lambda,u)$.

\begin{lm}\label{lm:4}
For all $\alpha\in [1,\infty)$, the operator $\Phi$ extends to a bounded operator from $L^2(\RR_+,\lambda^{[3/2,(\alpha+1)/2]}\,\ddlam)$ to $L^2(\RR_+\times\RR, \,\lambda^{\alpha/2}\,\ddlam\,\du)$.
\end{lm}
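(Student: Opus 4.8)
The plan is to reduce the asserted weighted $L^2$ bound for $\Phi$ to the classical Plancherel formulae for the Euclidean Laplacian and its lifted version recalled in Remark~\ref{rem:1}, and then to obtain the general, possibly fractional, $\alpha$ by interpolation.

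First I would treat the case $\alpha = n$ with $n \in \NN \setminus \{0\}$. Fix such an $n$ and let $F = \sum_j c_j e^{-t_j \cdot} \in \fctJ$. By Lemma~\ref{lm:6}, for every $u \in \RR$ the function $(\Phi F)_u = \sum_j c_j M_{t_j,u}$ is bounded and belongs to $L^2(\RR_+, \lambda^{n/2}\,\ddlam)$; hence $K_{(\Phi F)_u(\Delta_{\RR^n})}$ is a well-defined radial $L^2(\RR^n)$-function depending linearly on $F$. Combining this linearity with the heat-kernel identity \eqref{eq:heat_rel_eucl} and the definition $\Phi(e^{-t\cdot}) = M_t$, one obtains
\[
K_{F(\widetilde{\Delta}_{\RR^n})}(x,u) = K_{(\Phi F)_u(\Delta_{\RR^n})}(x)
\]
for every $u \in \RR$ and almost every $x$. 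Then I would chain the two Plancherel formulae of Remark~\ref{rem:1}: applying the one for $\Delta_{\RR^n}$ to the right-hand side and integrating in $u$, and the one for $\widetilde{\Delta}_{\RR^n}$ to the left-hand side, gives
\[
\int_\RR \int_0^\infty |(\Phi F)_u(\lambda)|^2 \,\lambda^{n/2}\,\ddlam\,\du \simeq_n \Vert K_{F(\widetilde{\Delta}_{\RR^n})} \Vert_{L^2(\RR^n \rtimes \RR)}^2 \simeq_n \int_0^\infty |F(\lambda)|^2 \,\lambda^{[3/2,(n+1)/2]}\,\ddlam,
\]
which is exactly the claimed estimate for $\alpha = n$, for all $F \in \fctJ$; the left-hand side is finite because $F \in \fctJ$, and the Plancherel formulae apply since the heat kernels involved are square-integrable.

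Next I would pass to arbitrary $\alpha \in [1,\infty)$ by complex interpolation in $\alpha$. First one notes that $\fctJ$ is dense in $L^2(\RR_+, \lambda^{[3/2,(\alpha+1)/2]}\,\ddlam)$ for every $\alpha > 0$ (e.g.\ by injectivity of the Laplace transform applied to the locally integrable function $\lambda \mapsto g(\lambda)\,\lambda^{[3/2,(\alpha+1)/2]}/\lambda$). Hence, by the previous step, for each integer $n \geq 1$ the operator $\Phi$ extends to a bounded operator from $L^2(\RR_+, \lambda^{[3/2,(n+1)/2]}\,\ddlam)$ to $L^2(\RR_+\times\RR, \lambda^{n/2}\,\ddlam\,\du)$; since all these extensions agree on $\fctJ$, which is dense in the intersection of the source spaces for two consecutive values of $n$, the Stein--Weiss interpolation theorem with change of weights (see, e.g., \cite{St}) applies and shows that $\Phi$ is bounded between the corresponding complex interpolation spaces with parameter $\theta \in [0,1]$. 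A direct computation of geometric means of the piecewise-monomial weights identifies these interpolation spaces, for $\alpha \defeq n + \theta$, with $L^2(\RR_+, \lambda^{[3/2,(\alpha+1)/2]}\,\ddlam)$ and $L^2(\RR_+\times\RR, \lambda^{\alpha/2}\,\ddlam\,\du)$ respectively; letting $n$ and $\theta$ vary, $\alpha = n+\theta$ exhausts $[1,\infty)$, which would give the conclusion.

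There is no deep obstacle here: the argument assembles the classical Euclidean Plancherel formulae with a routine interpolation. The one step demanding care is the last one, where one must verify that complex interpolation of the weighted $L^2$ spaces reproduces exactly the bipower weight $\lambda^{[3/2,(\alpha+1)/2]}$ on the source side and the monomial weight $\lambda^{\alpha/2}$ on the target side --- which works because geometric means of weights of the form $\lambda^{[a_0,b_0]}$ and $\lambda^{[a_1,b_1]}$ are again of the form $\lambda^{[a,b]}$ with linearly interpolated exponents --- and that the various extensions of $\Phi$ are mutually consistent, so that a single operator is being interpolated.
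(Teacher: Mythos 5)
Your proposal is correct and follows essentially the same route as the paper: the integer case is obtained by chaining the two Euclidean Plancherel formulae of Remark \ref{rem:1} through the heat-kernel identity \eqref{eq:heat_rel_eucl} and linearity on $\fctJ$, and the fractional case then follows by Stein's interpolation theorem for weighted $L^2$ spaces. The paper states the interpolation step in one line, whereas you spell out the density of $\fctJ$, the consistency of the extensions, and the computation of the interpolated weights; these details are consistent with what the paper relies on.
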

\begin{proof}
This is already known for $\alpha\in\NN$, and was implicitly used in \cite{MOV}. For the reader's convenience we provide a short justification. 
	
Fix $\alpha\in\NN$, $\alpha\geq 1$. By Remark \ref{rem:1}, using \eqref{eq:heat_rel_eucl} and linearity, we deduce that, for all $F\in\fctJ$,
\[
	K_{F(\widetilde{\Delta}_{\RR^\alpha})}(z,u) = K_{(\Phi F)_u(\Delta_{\RR^\alpha})}(z),
\]
and therefore
\[\begin{split}
	\int_{\RR}\int_0^\infty  | \Phi F(\lambda,u)|^2 \lambda^{\alpha/2}\,\ddlam\,\du
	& \simeq \int_\RR \int_{\RR^\alpha} | K_{(\Phi F)_u (\Delta_{\RR^\alpha})}(z)|^2\,\dz\,\du\\
	&= \int_\RR \int_{\RR^\alpha} | K_{F (\widetilde{\Delta}_{\RR^\alpha})}(z)|^2\,\dz\,\du\\
	&\simeq \int_0^\infty | F(\lambda)|^2 \lambda^{[3/2,(\alpha+1)/2]}\,\ddlam.
\end{split}\]
Hence, if $\alpha$ is an integer, then $\Phi$ extends to a bounded operator between the spaces $L^2(\RR_+,\lambda^{[3/2,(\alpha+1)/2]}\,\ddlam)$ and $L^2(\RR_+\times\RR,\lambda^{\alpha/2}\,\ddlam \,\du)$.
	
The result for fractional $\alpha$ then follows by Stein's interpolation theorem for weighted $L^2$ spaces \cite{St}.
\end{proof}

\begin{cor}\label{cor:5}
If $F\in\fctJ$, then $F(\Delta_\nu)$ is a right $\diamond_\nu$-convolution operator, with convolution kernel $K_{F(\Delta_\nu)}\in L^1 \cap L^2(G_\nu)$. Moreover,
\begin{equation*}
	\Vert K_{F(\Delta_\nu)}\Vert_{L^2(G_\nu)}\lesssim \Vert F\Vert_{L^2(\RR_+,\lambda^{[3/2,(\nu+1)/2]}\,\ddlam)}.
\end{equation*}
In particular, the heat kernels associated with $\Delta_\nu$ belong to $L^2(G_\nu)$.
\end{cor}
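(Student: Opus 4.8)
The plan is to reduce the statement to the heat kernel identity of Lemma~\ref{lm:3} and the mapping property of $\Phi$ from Lemma~\ref{lm:4}, using linearity throughout. First I would write $F = \sum_{k=1}^{m} c_k e^{-t_k \cdot}$ with $c_k \in \CC$ and $t_k > 0$. By the spectral theorem $F(\Delta_\nu) = \sum_{k} c_k e^{-t_k\Delta_\nu}$ on $L^2(G_\nu)$, and by Proposition~\ref{prop:6} each $e^{-t_k\Delta_\nu}$ is right $\diamond_\nu$-convolution by a kernel $K_{e^{-t_k\Delta_\nu}} \in C \cap L^1(G_\nu)$. Setting $K_{F(\Delta_\nu)} \defeq \sum_k c_k K_{e^{-t_k\Delta_\nu}} \in L^1(G_\nu)$, it follows that $F(\Delta_\nu) f = f \diamond_\nu K_{F(\Delta_\nu)}$ for all $f \in L^2(G_\nu)$. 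This yields the convolution structure and the $L^1$ membership; it remains to establish $K_{F(\Delta_\nu)} \in L^2(G_\nu)$ together with the quantitative bound.

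Next I would identify $K_{F(\Delta_\nu)}$ fibrewise in $u$. For fixed $u \in \RR$ each $M_{t_k,u}$ lies in $L^2(\Rnon, \lambda^{\nu/2}\,\ddlam)$ by Lemma~\ref{lm:6} (applied with $\alpha = \nu \geq 1$), so by~\eqref{eq:4} the assignment $G \mapsto K_{G(L_\nu)} = H_\nu^{-1}(\lambda \mapsto G(\lambda^2))$ is defined and linear on the linear span of these functions. Combining this with Lemma~\ref{lm:3} and the identity $\Phi F = \sum_k c_k M_{t_k}$, I obtain, for every $u \in \RR$,
\[
K_{F(\Delta_\nu)}(\cdot,u) = K_{(\Phi F)_u(L_\nu)} \qquad \text{a.e. on } \Rnon.
\]
Since the right Haar measure on $G_\nu$ is $\dmu(x)\,\du$, Tonelli's theorem and the Plancherel formula~\eqref{eq:17} for $L_\nu$ give
\[
\begin{split}
\|K_{F(\Delta_\nu)}\|_{L^2(G_\nu)}^2
&= \int_\RR \| K_{(\Phi F)_u(L_\nu)} \|_{L^2(X_\nu)}^2 \,\du \\
&= \frac{1}{2\kappa_\nu^2} \int_\RR \int_0^\infty |\Phi F(\lambda,u)|^2\, \lambda^{\nu/2}\,\ddlam\,\du \\
&= \frac{1}{2\kappa_\nu^2}\, \|\Phi F\|_{L^2(\Rnon\times\RR,\, \lambda^{\nu/2}\,\ddlam\,\du)}^2 .
\end{split}
\]

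Finally, Lemma~\ref{lm:4} applied with $\alpha = \nu$ bounds the right-hand side by a constant multiple of $\|F\|_{L^2(\Rnon,\, \lambda^{[3/2,(\nu+1)/2]}\,\ddlam)}^2$, which proves $K_{F(\Delta_\nu)} \in L^2(G_\nu)$ and the claimed inequality; the ``in particular'' assertion is the special case $F = e^{-t\cdot} \in \fctJ$, for which $K_{F(\Delta_\nu)} = K_{e^{-t\Delta_\nu}}$. There is no genuine obstacle here: all the analytic content has already been isolated in Lemmas~\ref{lm:3}, \ref{lm:4} and~\ref{lm:6}, and the only points requiring a little care are the termwise use of Lemma~\ref{lm:3} (legitimate because $\Phi F$ is produced from $F$ by the very same finite linear combination, and because $G \mapsto K_{G(L_\nu)}$ is linear on the relevant $L^2$ space) and the verification that each fibre $(\Phi F)_u$ lies in $L^2(\Rnon, \lambda^{\nu/2}\,\ddlam)$ so that~\eqref{eq:17} applies.
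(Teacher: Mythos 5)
Your proof is correct and follows essentially the same route as the paper's: decompose $F\in\fctJ$ into decaying exponentials, use Proposition \ref{prop:6} and linearity for the convolution structure and $L^1$ membership, identify $K_{F(\Delta_\nu)}(\cdot,u)=K_{(\Phi F)_u(L_\nu)}$ via Lemma \ref{lm:3}, and then combine the Plancherel formula \eqref{eq:17} with Lemma \ref{lm:4}. The extra care you take in checking that each fibre $(\Phi F)_u$ lies in $L^2(\Rnon,\lambda^{\nu/2}\,\ddlam)$ (via Lemma \ref{lm:6}) and that $G\mapsto K_{G(L_\nu)}$ is linear is a welcome, if implicit in the paper, elaboration.
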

\begin{proof}
By Proposition \ref{prop:6} and linearity it is clear that, for all $F \in \fctJ$, the operator $F(\Delta_\nu)$ is a convolution operator, with convolution kernel $K_{F(\Delta_\nu)} \in L^1(G_\nu)$. Moreover, again by linearity and Lemma \ref{lm:3}, we deduce that
\begin{equation}\label{eq:Delta_kernels}
K_{F(\Delta_\nu)}(x,u) = K_{(\Phi F)_u(L_\nu)}(x).
\end{equation}
So, by \eqref{eq:17}, and Lemma \ref{lm:4},
\[\begin{split}
	\int_{G_\nu} | K_{F(\Delta_\nu)}(x,u)|^2 \,\dmu(x)\,\du 
	&= \int_\RR \int_0^\infty | K_{(\Phi F)_u(L_\nu)}(x)|^2 \,\dmu(x)\,\du\\
	& \simeq \int_\RR \int_0^\infty | (\Phi F)_u(\lambda)|^2 \lambda^{\nu/2}\,\ddlam\,\du\\
	&\lesssim \int_0^\infty |F(\lambda)|^2 \lambda^{[3/2,(\nu+1)/2]}\,\ddlam,
\end{split}\]
as desired.
\end{proof}

We now want to extend Corollary \ref{cor:5} to a larger class of multipliers $F$. To this purpose, it is useful to establish some density properties of the class $\fctJ$.

\begin{lm}\label{lm:7}
Let $F\in L^2(\RR_+,\lambda^{[3/2,(\nu+1)/2]}\,\ddlam)$ be bounded.
\begin{enumerate}[label=(\roman*)]
	\item\label{lm:7(i)} There exists a uniformly bounded sequence $F_n\in C_c^\infty(\Rpos)$ converging to $F$ almost everywhere and in $L^2(\RR_+,\lambda^{[3/2,(\nu+1)/2]}\,\ddlam)$.
	\item\label{lm:7(ii)} If additionally $F\in C_0(\Rnon)$, then there exists a sequence $F_n\in\fctJ$ converging to $F$ uniformly and in $L^2(\RR_+,\lambda^{[3/2,(\nu+1)/2]}\,\ddlam)$.  
\end{enumerate}
\end{lm}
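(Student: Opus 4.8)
The plan is to prove both parts by a two-step truncation-and-smoothing argument, exploiting the concrete structure of the weight $\lambda^{[3/2,(\nu+1)/2]}$ and the density of $\fctJ$ in $C_0(\Rnon)$.

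\emph{Part (i).} First I would handle the behaviour at $0$ and at $\infty$. Since $F$ is bounded and the weight $\lambda^{[3/2,(\nu+1)/2]}$ behaves like $\lambda^{3/2}$ near $0$ (hence is integrable there) and like $\lambda^{(\nu+1)/2}$ at infinity, the tails $\int_0^\varepsilon |F|^2 \lambda^{[3/2,(\nu+1)/2]}\,\ddlam$ and $\int_{1/\varepsilon}^\infty |F|^2 \lambda^{[3/2,(\nu+1)/2]}\,\ddlam$ are small for small $\varepsilon$; so I can first replace $F$ by $F \ind_{[\varepsilon,1/\varepsilon]}$, losing an arbitrarily small amount in the weighted $L^2$ norm while keeping the uniform bound. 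Next, on the fixed compact interval $[\varepsilon/2, 2/\varepsilon] \subseteq \Rpos$, the weight is comparable to a constant, so weighted $L^2$ convergence there is equivalent to ordinary $L^2$ convergence; mollifying $F \ind_{[\varepsilon,1/\varepsilon]}$ with a standard approximate identity supported in a tiny neighbourhood of $0$ produces functions in $C^\infty_c(\Rpos)$ that converge in $L^2$ of that interval, are supported away from $0$, and — by the usual properties of mollification — are uniformly bounded by $\|F\|_\infty$ (or a fixed multiple thereof, after a truncation). A diagonal argument over $\varepsilon = 1/n$ then yields a uniformly bounded sequence $F_n \in C^\infty_c(\Rpos)$ with $F_n \to F$ in $L^2(\RR_+,\lambda^{[3/2,(\nu+1)/2]}\,\ddlam)$, and passing to a subsequence gives almost everywhere convergence as well.

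\emph{Part (ii).} Here $F \in C_0(\Rnon)$ additionally. I would first reduce, using the uniform density of $\fctJ$ in $C_0(\Rnon)$ (stated just before the lemma) together with a cutoff near $0$ and near $\infty$ as in Part (i), to approximating $F$ in the \emph{uniform} norm by elements of $\fctJ$; the point is that uniform convergence $F_n \to F$ with a common sup bound, combined with the domination $|F_n - F|^2 \lambda^{[3/2,(\nu+1)/2]} \lesssim \lambda^{[3/2,(\nu+1)/2]}\cdot\ind_{\text{small }\lambda} + (\text{uniformly small})\cdot\ind_{\text{bounded }\lambda} + \dots$, does \emph{not} immediately give $L^2$ convergence against the weight because the weight is not integrable at infinity. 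So the genuine issue is the tail at infinity: I must first multiply $F$ by a smooth cutoff $\eta_R$ equal to $1$ on $[0,R]$ and $0$ on $[2R,\infty)$; since $F \in C_0$, $\|F - \eta_R F\|_\infty \to 0$, but more importantly $\eta_R F$ is supported on a compact set, so once it is uniformly approximated by $\fctJ$ functions the weighted $L^2$ norm is controlled by $\sup$-norm times a finite constant. Then I approximate $\eta_R F \in C_0(\Rnon)$ (still $C_0$, in fact compactly supported) uniformly by $G_n \in \fctJ$; on the support $[0,2R]$ the weight is bounded, so $G_n \to \eta_R F$ in the weighted $L^2$ space as well. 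A diagonal argument over $R = n$ completes the proof. Note one must also check that the tail $\int_R^\infty |F|^2\lambda^{(\nu+1)/2}\,\ddlam \to 0$ as $R \to \infty$; this holds because $F \in C_0$ does not by itself guarantee it, but $F \in L^2(\lambda^{[3/2,(\nu+1)/2]}\,\ddlam)$ does, by dominated convergence.

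\emph{Main obstacle.} The routine parts (mollification, uniform density of $\fctJ$) are standard; the one point requiring care is the non-integrability of the weight $\lambda^{[3/2,(\nu+1)/2]}$ at infinity, which means that uniform approximation alone is insufficient and one is forced to combine the uniform density of $\fctJ$ with a compact-support truncation justified by $F \in L^2$ against the weight. Keeping the approximating sequences uniformly bounded (for part (i)) while simultaneously achieving weighted $L^2$ convergence and almost-everywhere convergence is the other mild technical nuisance, handled by truncating the mollified functions at height $\|F\|_\infty + 1$ and extracting a subsequence.
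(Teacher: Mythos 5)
Part (i) of your proposal is essentially the paper's argument (truncate to $[1/k,k]$, mollify, diagonalise, extract an a.e.\ convergent subsequence), and it is fine.

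Part (ii) has a genuine gap. You correctly identify the obstacle — the weight $\lambda^{[3/2,(\nu+1)/2]}\,\ddlam$ is not integrable at infinity, so uniform approximation alone does not give weighted $L^2$ convergence — but your proposed fix does not resolve it. Truncating $F$ to $\eta_R F$ with support in $[0,2R]$ controls the \emph{target}, not the \emph{approximants}: the elements $G_n$ of $\fctJ$ are finite linear combinations of decaying exponentials $e^{-t\lambda}$ and are therefore supported on all of $\Rnon$. Knowing $\|G_n-\eta_R F\|_\infty\to 0$ tells you nothing useful about $\int_{2R}^\infty |G_n(\lambda)|^2\,\lambda^{(\nu-1)/2}\,\dd\lambda$; a function that is uniformly small but merely bounded at infinity pairs with the non-integrable weight to give an arbitrarily large (indeed a priori infinite) contribution. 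So the step ``on the support $[0,2R]$ the weight is bounded, so $G_n\to\eta_R F$ in the weighted $L^2$ space as well'' is not justified, and this is exactly where the difficulty you flagged resurfaces.

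The paper closes this gap with a small but essential trick: given $G\in C_c(\Rnon)$, set $g(x)=G(x)e^{x}\in C_c(\Rnon)$, approximate $g$ uniformly by $g_k\in\fctJ$ (Stone--Weierstrass), and put $G_k(x)=g_k(x)e^{-x}$, which is again in $\fctJ$ because multiplying a combination of $e^{-t\lambda}$ by $e^{-\lambda}$ shifts $t$ to $t+1$. Then $|G_k(x)-G(x)|\leq e^{-x}\|g_k-g\|_{L^\infty}$, so the error is not just uniformly small but dominated pointwise by a fixed multiple of $e^{-x}$, which \emph{is} integrable against the polynomial weight; dominated convergence then gives convergence in $L^2(\RR_+,\lambda^{[3/2,(\nu+1)/2]}\,\ddlam)$ as well as uniformly. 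You would need this (or some equivalent device that forces the $\fctJ$-approximants to have a summable tail) to complete part (ii).
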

\begin{proof}
We start with \ref{lm:7(i)}. Given any $F\in L^2(\RR_+,\lambda^{[3/2,(\nu+1)/2]}\,\ddlam)$, we can approximate it first by the functions $F \ind_{[1/k,k]}$, $k \in \NN \setminus \{0\}$, which have compact support in $\Rpos$ and are dominated by $|F|$; in turn, each of these compactly supported functions can be approximated via mollifiers by $C_c^\infty(\Rpos)$ functions. Then a diagonal argument gives a sequence in $C_c^\infty(\Rpos)$ converging to $F$ in $L^2(\RR_+,\lambda^{[3/2,(\nu+1)/2]}\,\ddlam)$ and, up to extracting a subsequence, also almost everywhere. If additionally $F$ is bounded, then each approximant is also bounded by $\Vert F\Vert_{L^\infty}$.
		
For \ref{lm:7(ii)} it suffices to justify that $\fctJ$ is dense in the Banach space 
\begin{gather*}
	W=C_0(\Rnon)\cap  L^2\left(\RR_+,\lambda^{[3/2,(\nu+1)/2]}\,\ddlam\right) ,\\
	\Vert \cdot\Vert_W = \Vert\cdot \Vert_{L^\infty} +\Vert \cdot\Vert_{L^2(\RR_+,\lambda^{[3/2,(\nu+1)/2]}\,\ddlam)}.
\end{gather*}

Clearly, $C_c(\Rnon)$ is dense in $W$; this is easily seen by using compactly supported cutoffs and dominated convergence.
Thus, to conclude, it is enough to verify that the closure of $\fctJ$ in $W$ contains $C_c(\Rnon)$. Much as in the proof of \cite[Lemma 3.13]{Ma11}, let $G\in C_c(\Rnon)$ and set $g(x)\defeq G(x)e^{x}\in C_c(\Rnon)$. By the Stone--Weierstrass theorem we can find $g_k\in\fctJ$ converging uniformly to $g$. Thus, $G_k(x)\defeq g_k(x) e^{-x}\in\fctJ$ satisfy
\begin{equation*}
	| G_k(x)- G(x)| = e^{-x} |g_k(x) - g(x) |\leq e^{-x} \Vert g_k-g\Vert_{L^\infty}
\end{equation*}  
so $G_k$ converges uniformly to $G$. Moreover, since $|G_k(x)|\lesssim e^{-x}$, Lebesgue's dominated convergence theorem implies that $G_k$ converges to $G$ also in $ L^2(\RR_+,(\lambda^{3/2}+\lambda^{(\nu+1)/2})\,\ddlam)$. Hence, $C_c(\Rnon)$ is contained in the closure of $\fctJ$ in $W$.
\end{proof}

We can finally obtain the existence of the convolution kernel for a larger class of operators in the calculus for $\Delta_\nu$, and establish the existence of the associated Plancherel measure.

\begin{prop}\label{prop:2}
For all bounded Borel functions $F : \RR_+\to\CC$ that belong to $L^2(\RR_+,\lambda^{[3/2,(\nu+1)/2]}\,\ddlam)$, the operator $F(\Delta_\nu)$ is a $\diamond_\nu$-convolution operator and the corresponding kernel $K_{F(\Delta_\nu)}$ is in $L^2(G_\nu)$. Moreover, there exists a regular Borel measure $\sigma_\nu$, called the \emph{Plancherel measure} associated with $\Delta_\nu$, such that
\begin{equation}
\begin{split}\label{eq:19}
	\int_\RR \int_0^\infty | K_{F(\Delta_\nu)}(x,u)|^2\,\dmu(x) \,\du
	&= \int_0^\infty |F(\lambda)|^2\, \dd\sigma_\nu(\lambda)\\
	&\lesssim \int_0^\infty |F(\lambda)|^2 \,\lambda^{[3/2,(\nu+1)/2]}\, \ddlam
\end{split}
\end{equation}
for all such $F$.
Additionally, the null sets for $\sigma_\nu$ and for the spectral measure associated with $\Delta_\nu$ are the same.
\end{prop}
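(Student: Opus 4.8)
The plan is to push the correspondence $F\mapsto K_{F(\Delta_\nu)}$ of Corollary~\ref{cor:5} outward from $\fctJ$ to successively larger classes of multipliers, manufacturing the Plancherel measure along the way. \emph{Step 1 (the class $C_0(\Rnon)\cap L^2(\Rnon,\lambda^{[3/2,(\nu+1)/2]}\ddlam)$).} Given such an $F$, use Lemma~\ref{lm:7}\ref{lm:7(ii)} to choose $F_n\in\fctJ$ with $F_n\to F$ uniformly and in $L^2(\lambda^{[3/2,(\nu+1)/2]}\ddlam)$; by Corollary~\ref{cor:5} the kernels $K_{F_n(\Delta_\nu)}$ are Cauchy in $L^2(G_\nu)$, with limit $K$. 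Since $F_n\to F$ uniformly and boundedly, $F_n(\Delta_\nu)\to F(\Delta_\nu)$ strongly on $L^2(G_\nu)$ by the spectral theorem, whereas for $g\in C_c(G_\nu)$ one has $F_n(\Delta_\nu)g=g\diamond_\nu K_{F_n(\Delta_\nu)}\to g\diamond_\nu K$ in $L^2(G_\nu)$, because Young's inequality (Lemma~\ref{lm:young}) gives $\|g\diamond_\nu\psi\|_{L^2(G_\nu)}\le\|g\,m^{1/2}\|_{L^1(G_\nu)}\|\psi\|_{L^2(G_\nu)}$ and $g\,m^{1/2}\in L^1(G_\nu)$. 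Hence $F(\Delta_\nu)g=g\diamond_\nu K$ for all $g\in C_c(G_\nu)$, so $F(\Delta_\nu)$ is a $\diamond_\nu$-convolution operator with kernel $K_{F(\Delta_\nu)}:=K\in L^2(G_\nu)$ and $\|K_{F(\Delta_\nu)}\|_{L^2(G_\nu)}\lesssim\|F\|_{L^2(\lambda^{[3/2,(\nu+1)/2]}\ddlam)}$. Passing to the limit in the identity $K_{(e^{-s\cdot}F_n)(\Delta_\nu)}=e^{-s\Delta_\nu}K_{F_n(\Delta_\nu)}$ (valid on $\fctJ$ by Proposition~\ref{prop:6} and associativity of $\diamond_\nu$) also records, on this class, the intertwining $K_{(e^{-s\cdot}F)(\Delta_\nu)}=e^{-s\Delta_\nu}K_{F(\Delta_\nu)}$.

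\emph{Step 2 (the measure $\sigma_\nu$).} As $C_0(\Rnon)\cap L^2(\lambda^{[3/2,(\nu+1)/2]}\ddlam)$ is dense in $L^2(\lambda^{[3/2,(\nu+1)/2]}\ddlam)$, the map $F\mapsto K_{F(\Delta_\nu)}$ extends to a bounded operator $\mathcal{K}\colon L^2(\lambda^{[3/2,(\nu+1)/2]}\ddlam)\to L^2(G_\nu)$, and the relation $\mathcal{K}(e^{-s\cdot}F)=e^{-s\Delta_\nu}\mathcal{K}F$ persists. The form $Q(F)=\|\mathcal{K}F\|_{L^2(G_\nu)}^2$ is bounded and nonnegative, so $Q(F)=\langle TF,F\rangle$ for a bounded nonnegative self-adjoint $T$ on $L^2(\lambda^{[3/2,(\nu+1)/2]}\ddlam)$; polarising and using that $e^{-s\Delta_\nu}$ is self-adjoint on $L^2(G_\nu)$ yields $\langle Te^{-s\cdot}F,G\rangle=\langle\mathcal{K}(e^{-s\cdot}F),\mathcal{K}G\rangle=\langle\mathcal{K}F,\mathcal{K}(e^{-s\cdot}G)\rangle=\langle TF,e^{-s\cdot}G\rangle$. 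Thus $T$ commutes with multiplication by every $e^{-s\cdot}$, hence (Stone--Weierstrass) with multiplication by every $\phi\in C_0(\Rnon)$, hence with the whole maximal abelian algebra of multiplication operators, so $T$ is itself multiplication by some $g\in L^\infty(\Rnon)$ with $0\le g\le\|T\|$. Setting $\dd\sigma_\nu=g(\lambda)\,\lambda^{[3/2,(\nu+1)/2]}\ddlam$ gives a regular Borel measure with $Q(F)=\int_0^\infty|F|^2\dd\sigma_\nu$ and $\sigma_\nu\le C\,\lambda^{[3/2,(\nu+1)/2]}\ddlam$; this is \eqref{eq:19} for $F$ in (and, by density, in the $L^2$-closure of) the class of Step~1. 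I expect this step to be the main obstacle: the point is that the \emph{a priori} bound of Corollary~\ref{cor:5} must be upgraded to an exact Plancherel identity governed by a genuine measure, i.e.\ the form $F\mapsto\|\mathcal{K}F\|_{L^2(G_\nu)}^2$ must be shown to depend only on $|F|^2$, and the mechanism for this is precisely the self-adjointness of the heat operators together with the intertwining $\mathcal{K}(e^{-s\cdot}F)=e^{-s\Delta_\nu}\mathcal{K}F$.

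\emph{Step 3 (arbitrary bounded Borel $F$, and null sets).} It remains to identify $F(\Delta_\nu)$ with $\diamond_\nu$-convolution by $\mathcal{K}F$. Fix $t>0$ and compare $\Lambda_t(F):=F(\Delta_\nu)K_{e^{-t\Delta_\nu}}$ with $\Lambda_t'(F):=\mathcal{K}(Fe^{-t\cdot})$ on bounded Borel functions $F$ (note $Fe^{-t\cdot}\in L^2(\lambda^{[3/2,(\nu+1)/2]}\ddlam)$ always). Both are linear, both are continuous under uniformly bounded pointwise limits of $F$ — $\Lambda_t$ by the spectral theorem and $\Lambda_t'$ by dominated convergence in $L^2(\lambda^{[3/2,(\nu+1)/2]}\ddlam)$ and boundedness of $\mathcal{K}$ — and they coincide on $C_0(\Rnon)\cap L^2(\lambda^{[3/2,(\nu+1)/2]}\ddlam)$ by Step~1, since there $F(\Delta_\nu)K_{e^{-t\Delta_\nu}}=K_{(Fe^{-t\cdot})(\Delta_\nu)}$. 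As $C_c^\infty(\Rpos)$ is a multiplicative family generating the Borel $\sigma$-algebra of $\Rpos$ and contains a sequence increasing to $1$, the functional monotone class theorem forces $\Lambda_t=\Lambda_t'$ on all bounded Borel functions (the value at $\lambda=0$ plays no role, since $\Delta_\nu$ has trivial kernel, so its spectral measure $E$ satisfies $E(\{0\})=0$). Consequently, for $g\in C_c(G_\nu)$, Lemma~\ref{lm:8} gives
\[
F(\Delta_\nu)\bigl(e^{-t\Delta_\nu}g\bigr)=g\diamond_\nu F(\Delta_\nu)K_{e^{-t\Delta_\nu}}=g\diamond_\nu\mathcal{K}(Fe^{-t\cdot})=g\diamond_\nu\bigl((\mathcal{K}F)\diamond_\nu K_{e^{-t\Delta_\nu}}\bigr)=e^{-t\Delta_\nu}\bigl(g\diamond_\nu\mathcal{K}F\bigr),
\]
and letting $t\to0^+$, using that the heat semigroup converges strongly to the identity, we obtain $F(\Delta_\nu)g=g\diamond_\nu\mathcal{K}F$ whenever $F\in L^2(\lambda^{[3/2,(\nu+1)/2]}\ddlam)$ is bounded Borel. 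Hence $F(\Delta_\nu)$ is $\diamond_\nu$-convolution by $K_{F(\Delta_\nu)}:=\mathcal{K}F\in L^2(G_\nu)$, and with Step~2 this gives \eqref{eq:19}. Finally, the equality of null sets follows by applying \eqref{eq:19} to $\ind_A e^{-\cdot}$ and to $\ind_A$ (the latter for $A$ of finite measure): if $E(A)=0$ then $(\ind_A e^{-\cdot})(\Delta_\nu)=0$, so $\int_A e^{-2\lambda}\dd\sigma_\nu=0$ and $\sigma_\nu(A)=0$; conversely if $\sigma_\nu(A)=0$ then $K_{\ind_A(\Delta_\nu)}=0$, so $\ind_A(\Delta_\nu)$ vanishes on the dense subspace $C_c(G_\nu)$, whence $E(A)=0$. (The one subtle point in this step is that the convolution-kernel description has to be propagated from continuous to arbitrary bounded Borel multipliers \emph{before} knowing that the spectral measure of $\Delta_\nu$ is absolutely continuous; this is exactly what the monotone class bootstrap, combined with the heat kernel as an approximate identity, achieves.)
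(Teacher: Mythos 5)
Your proof is correct, but it departs from the paper's argument in two substantive ways, both in the harder half of the proposition. For the construction of $\sigma_\nu$, the paper follows Sikora's device: it derives the identity $e^{-\Delta_\nu}K_{F(\Delta_\nu)}=F(\Delta_\nu)K_{e^{-\Delta_\nu}}$ from Lemma \ref{lm:8} and reads off the explicit formula $\dd\sigma_\nu(\lambda)=e^{2\lambda}\langle \dd E_\nu(\lambda)K_{e^{-\Delta_\nu}},K_{e^{-\Delta_\nu}}\rangle$, deducing absolute continuity only afterwards from the inequality in \eqref{eq:19}. You instead encode $\|\mathcal{K}F\|_{L^2(G_\nu)}^2$ as $\langle TF,F\rangle$ with $T=\mathcal{K}^*\mathcal{K}$ and use the same heat-semigroup intertwining to place $T$ in the commutant of the maximal abelian algebra of multiplication operators; this is less explicit but delivers $\dd\sigma_\nu=g(\lambda)\lambda^{[3/2,(\nu+1)/2]}\,\ddlam$ with $g\in L^\infty$, hence absolute continuity and the density bound, in one stroke. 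For the passage to arbitrary bounded Borel $F$, the paper first proves the equality of null sets of $\sigma_\nu$ and $E_\nu$ by a regularity/Urysohn argument with nested compact and open sets, and only then approximates $F$ by the uniformly bounded sequence of Lemma \ref{lm:7}\ref{lm:7(i)}, using the null-set equivalence to upgrade Lebesgue-a.e.\ convergence to strong convergence of $F_n(\Delta_\nu)$; you instead run a monotone-class bootstrap on the regularized kernels $F(\Delta_\nu)K_{e^{-t\Delta_\nu}}=\mathcal{K}(Fe^{-t\cdot})$ and remove the regularization via the strong continuity of the heat semigroup at $t=0^+$, after which the null-set statement falls out as a corollary rather than being an input. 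Your route avoids the measure-theoretic regularity argument entirely, at the cost of relying on the heat semigroup as an approximate identity and on the associativity facts underlying Lemma \ref{lm:8}. Two small points deserve a word more than you give them: the claim $E_\nu(\{0\})=0$ should be justified by noting that $\ker\Delta_\nu=\ker\nabla_\nu$ consists of constants, none of which is in $L^2(G_\nu)$; and in the converse direction of the null-set argument, a general Borel set $A$ with $\sigma_\nu(A)=0$ must be exhausted by sets $A\cap[1/k,k]$ of finite weighted measure before \eqref{eq:19} can be applied to $\ind_{A\cap[1/k,k]}$ — you flag this parenthetically, and it is routine, but it is needed.
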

\begin{proof}	
Firstly, let $F\in L^2(\RR_+,\lambda^{[3/2,(\nu+1)/2]}\,\ddlam)$ be also in $C_0(\Rnon)$. By Lemma \ref{lm:7} \ref{lm:7(ii)} we find a sequence $F_n\in \fctJ$ converging to $F$ in $L^2(\RR_+,\lambda^{[3/2,(\nu+1)/2]}\,\ddlam)$ and uniformly. By the spectral theorem, $F_n(\Delta_\nu)\to F(\Delta_\nu)$ in the operator norm on $L^2(G_\nu)$.
	On the other hand, by Corollary \ref{cor:5}, the operator mapping $G\in\fctJ$ to $K_{G(\Delta_\nu)} \in L^2(G_\nu)$ extends to a bounded operator 
\[
T : L^2\left(\RR_+,\lambda^{[3/2,(\nu+1)/2]}\,\ddlam\right)\to L^2(G_\nu).
\]
Thus, $K_{F_n(\Delta_\nu)} \to TF$ in $L^2(G_\nu)$.
Combining the above, for all $f\in C_c(G_\nu)$, we see that,  $F_n(\Delta_\nu)f$ converges to $F(\Delta_\nu)f$ in $L^2(G_\nu)$, and also, by Young's inequality, $F_n(\Delta_\nu)f$ converges to $f\diamond_\nu TF$ in $L^2(G_\nu)$. This shows that $F(\Delta_\nu)$ is indeed a $\diamond_\nu$-convolution operator with kernel $K_{F(\Delta_\nu)}\defeq TF \in L^2(G_\nu)$.
	
In order to construct the Plancherel measure we follow the approach in the proof of \cite[Lemma 1]{Si2}. Let $E_\nu$ 
be the spectral measure associated with $\Delta_\nu$, by the spectral theorem. For any Borel function $G : \Rnon \to \CC$ the operator
\begin{equation*}
G(\Delta_\nu)=\int_0^\infty G(\lambda)\, \dd E_\nu(\lambda)
\end{equation*}
is a closed operator on $L^2(G_\nu)$, which is bounded whenever $G$ is bounded. Moreover, $\langle \dd E_\nu f,f\rangle$ is a regular Borel measure on $\Rnon$ for any $f\in L^2(G_\nu)$, and
\begin{equation}\label{eq:spectralcalculus}
\| G(\Delta_\nu) f\|_{L^2(G_\nu)}^2 = \int_0^\infty |G(\lambda)|^2 \langle \dd E_\nu(\lambda)f,f\rangle,
\end{equation}
where the right-hand side is finite if and only if $f$ is in the domain of $G(\Delta_\nu)$.
	
Now take $F\in L^2(\RR_+,\lambda^{[3/2,(\nu+1)/2]}\,\ddlam) \cap C_0(\Rnon)$.
Notice that, for any $f\in C_c(G_\nu)$, Lemma \ref{lm:8} gives
\[
	e^{-\Delta_\nu} F(\Delta_\nu)f = e^{-\Delta_\nu} ( f\diamond_\nu  K_{F(\Delta_\nu)}) =f\diamond_\nu ( e^{-\Delta_\nu} K_{F(\Delta_\nu)}).
\]
Clearly, $e^{-\Delta_\nu}$ and $F(\Delta_\nu)$ commute, so we also get
\begin{equation*}
	e^{-\Delta_\nu} F(\Delta_\nu)f = F(\Delta_\nu)  e^{-\Delta_\nu} f =f\diamond_\nu ( F(\Delta_\nu) K_{e^{-\Delta_\nu}} ).
\end{equation*}
Thus,
\begin{equation}\label{eq:associativity}
	e^{-\Delta_\nu} K_{F(\Delta_\nu)} = F(\Delta_\nu) K_{e^{-\Delta_\nu}}.
\end{equation}
Combining \eqref{eq:spectralcalculus} and \eqref{eq:associativity}, we deduce that
\[\begin{split}
	\int_{G_\nu} | K_{F(\Delta_\nu)}(x,u) |^2 \,\dmu(x)\,\du 
	&= \int_0^\infty e^{2\lambda} \langle \dd E_\nu(\lambda) e^{-\Delta_\nu} K_{F(\Delta_\nu)},e^{-\Delta_\nu} K_{F(\Delta_\nu)} \rangle \\
	&= \int_0^\infty e^{2\lambda} \langle \dd E_\nu(\lambda) F(\Delta_\nu) K_{e^{-\Delta_\nu}},F(\Delta_\nu) K_{e^{-\Delta_\nu}} \rangle \\
	& = \int_0^\infty |F(\lambda)|^2 e^{2\lambda}  \langle \dd E_\nu(\lambda)  K_{e^{-\Delta_\nu}}, K_{e^{-\Delta_\nu}} \rangle.
\end{split}\]
This gives the formula for the Plancherel measure:
\begin{equation}\label{eq:plancherel_def}
	\dd\sigma_\nu (\lambda)= e^{2\lambda} \langle \dd E_\nu(\lambda)K_{e^{-\Delta_\nu}},K_{e^{-\Delta_\nu}}\rangle.
\end{equation}
With this definition of the measure $\sigma_\nu$, the equality in \eqref{eq:19} is proved under the additional assumption that $F\in C_0(\Rnon)$.
	
Now we establish the inequality in \eqref{eq:19}. For any $G\in\fctJ$, by Corollary \ref{cor:5}, 
\[\begin{split}
	\int_0^\infty |G(\lambda)|^2\, \dd\sigma_\nu(\lambda)
	&=\int_\RR \int_0^\infty | K_{G(\Delta_\nu)}(x,u)|^2\,\dmu(x) \,\du\\
	& \lesssim  \int_0^\infty |G(\lambda)|^2 \,\lambda^{[3/2,(\nu+1)/2]}\,\ddlam. 
\end{split}\]
Since $\fctJ$ is dense in $L^2(\RR_+,\lambda^{[3/2,(\nu+1)/2]}\,\ddlam)$ we obtain the inequality in \eqref{eq:19} for all $G\in L^2(\RR_+,\lambda^{[3/2,(\nu+1)/2]}\,\ddlam)$. In particular, we deduce that $\sigma_\nu$ is absolutely continuous with respect to Lebesgue measure on $\RR_+$ and its density is bounded by a multiple of $\lambda^{[1/2,(\nu-1)/2]}$.
	
Now we prove that the null sets for $\sigma_\nu$ and the spectral measure are the same. By the definition \eqref{eq:plancherel_def} of $\sigma_\nu$, the only non-trivial implication is that if for $A\subseteq\RR_+$ we have $\sigma_\nu(A)=0$, then also $\ind_A(\Delta_\nu)=E_\nu(A)=0$.
	
Fix such a set $A$. Let $D$ be a countable dense subset of $L^2(G_\nu)$. Since $\sigma_\nu$ and the measures $\langle \dd E_\nu f,f\rangle$ for $f\in D$ are all regular Borel measures, we can find a sequence of compact sets $K_n$ and open sets $U_n$ such that $K_n\subseteq K_{n+1}\subseteq\ldots\subseteq A\subseteq\ldots\subseteq U_{n+1} \subseteq U_n$ and also $\sigma_\nu(U_n\setminus K_n)\to 0$ and $\langle E_\nu(U_n\setminus K_n)f,f\rangle\to0$, $f\in D$. Let $\varphi_n\in C_c(\RR)$ be such that $0\leq \varphi_n\leq 1$, $\supp\varphi_n\subseteq U_n$ and ${\varphi_n}_{|_{K_n}}\equiv 1$. Then, $\varphi_n\to \ind_A$ almost everywhere with respect to $\sigma_\nu$ and each $\langle \dd E_\nu  f,f\rangle$, $f\in D$.
	 
Since $0 \leq \varphi_n\leq \ind_{U_n}$ and $\sigma_\nu(U_n)$ tends to $0$, we also have that $\varphi_n\to 0$ in $L^2(\dd \sigma_\nu)$. As $\varphi_n\in C_c(\Rnon)$, we can apply \eqref{eq:19} and deduce that $K_{\varphi_n(\Delta_\nu)}\to 0$ in $L^2(G_\nu)$; since the $\varphi_n$ are uniformly bounded, by Young's convolution inequality (Lemma \ref{lm:young}) and a density argument we obtain that $\varphi_n(\Delta_\nu) \to 0$ in the strong operator topology on $L^2(G_\nu)$. On the other hand, for any $f\in D$,
\begin{equation*}
	 \Vert \varphi_n(\Delta_\nu) f- \ind_A (\Delta_\nu)f \Vert_{L^2(G_\nu)}
	 = \int_0^\infty |\varphi_n(\lambda)-\ind_A(\lambda)|^2 \langle \dd E_\nu(\lambda)f,f\rangle\to 0,
\end{equation*}
by the dominated convergence theorem, because $\varphi_n\to\ind_A $ almost everywhere with respect to $\langle \dd E_\nu f,f\rangle$, and moreover $|\varphi_n-\ind_A|\leq 1$ and $\langle \dd E_\nu f,f\rangle$ is a finite measure. From the density of $D$ we deduce that $\varphi_n(\Delta_\nu)$ converges to $\ind_A(\Delta_\nu)$ in the strong operator topology. Hence, by the uniqueness of the limit, $\ind_A(\Delta_\nu)=0$ as an operator, so $A$ is a null set for the spectral measure.
	 
It remains to prove the existence of the convolution kernel and the equality in \eqref{eq:19} for general $F$, namely for a bounded function $F\in L^2(\RR_+,\lambda^{[3/2,(\nu+1)/2]}\,\ddlam)$. We can apply Lemma \ref{lm:7} \ref{lm:7(i)} to find a uniformly bounded sequence of functions $F_n\in C_c^\infty(\Rpos)$ converging to $F$ almost everywhere (with respect to Lebesgue measure) and in $L^2(\RR_+,\lambda^{[3/2,(\nu+1)/2]}\,\ddlam)$. Since $\sigma_\nu$ is absolutely continuous with respect to Lebesgue measure, and the spectral measure has the same null sets, we obtain that $F_n(\Delta_\nu)$ converges to $F(\Delta_\nu)$ in the strong operator topology. Much as above, the kernels $K_{F_n(\Delta_\nu)}$ converge in $L^2(G_\nu)$, and the limit is the $\diamond_\nu$-convolution kernel of $F(\Delta_\nu)$. Finally, a density argument proves the equality in \eqref{eq:19} for $F$.	
\end{proof}

The next result is a generalization of Lemma \ref{lm:3}.

\begin{lm}\label{lm:5}
Let $F$ be bounded and in $L^2(\RR_+,\lambda^{[3/2,(\nu+1)/2]}\,\ddlam)$. Then
\begin{equation*}
	K_{F(\Delta_\nu)}(x,u)= K_{(\Phi F)_u(L_\nu)}(x)
\end{equation*}
for almost all $(x,u) \in G_\nu$.
\end{lm}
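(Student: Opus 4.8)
The plan is to recognise both sides of the claimed identity as the images of $F$ under two bounded linear operators from $L^2(\RR_+,\lambda^{[3/2,(\nu+1)/2]}\,\ddlam)$ to $L^2(G_\nu)$, and then to reduce everything to the case $F\in\fctJ$ already settled in Corollary \ref{cor:5}, by density.

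First I would set up the right-hand side as an operator. By Lemma \ref{lm:4} applied with $\alpha=\nu$ (which is admissible since $\nu\geq 1$), the map $\Phi$ is bounded from $L^2(\RR_+,\lambda^{[3/2,(\nu+1)/2]}\,\ddlam)$ into $L^2(\RR_+\times\RR,\lambda^{\nu/2}\,\ddlam\,\du)$. Hence, by Tonelli's theorem, for any $F$ in the source space the slice $(\Phi F)_u$ lies in $L^2(\RR_+,\lambda^{\nu/2}\,\ddlam)$ for almost every $u\in\RR$, so that, by \eqref{eq:4}, the Bessel kernel $K_{(\Phi F)_u(L_\nu)}\in L^2(X_\nu)$ is well defined for a.a.\ $u$; integrating the Plancherel formula \eqref{eq:17} over $u$ gives
\[
\int_\RR\!\int_0^\infty |K_{(\Phi F)_u(L_\nu)}(x)|^2\,\dmu(x)\,\du
=\frac{1}{2\kappa_\nu^2}\int_\RR\!\int_0^\infty |(\Phi F)_u(\lambda)|^2\,\lambda^{\nu/2}\,\ddlam\,\du
\lesssim \|F\|_{L^2(\RR_+,\lambda^{[3/2,(\nu+1)/2]}\,\ddlam)}^2,
\]
so $(x,u)\mapsto K_{(\Phi F)_u(L_\nu)}(x)$ defines an element $\Lambda F\in L^2(G_\nu)$, and $\Lambda$ is bounded and linear. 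On the left-hand side I would invoke the bounded operator $T:L^2(\RR_+,\lambda^{[3/2,(\nu+1)/2]}\,\ddlam)\to L^2(G_\nu)$ from the proof of Proposition \ref{prop:2}, which extends $G\mapsto K_{G(\Delta_\nu)}$ from $\fctJ$ and satisfies $TF=K_{F(\Delta_\nu)}$ whenever $F$ is bounded and in $L^2(\RR_+,\lambda^{[3/2,(\nu+1)/2]}\,\ddlam)$. For $F\in\fctJ$, identity \eqref{eq:Delta_kernels} of Corollary \ref{cor:5} gives $TF(x,u)=K_{F(\Delta_\nu)}(x,u)=K_{(\Phi F)_u(L_\nu)}(x)=\Lambda F(x,u)$ for a.a.\ $(x,u)$, so $T$ and $\Lambda$ agree on $\fctJ$. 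Since $\fctJ$ is dense in $L^2(\RR_+,\lambda^{[3/2,(\nu+1)/2]}\,\ddlam)$ (as recalled in the proof of Proposition \ref{prop:2}, via Lemma \ref{lm:7}) and both operators are bounded, $T=\Lambda$. Specialising to $F$ bounded and in $L^2(\RR_+,\lambda^{[3/2,(\nu+1)/2]}\,\ddlam)$ then yields $K_{F(\Delta_\nu)}=TF=\Lambda F$ as elements of $L^2(G_\nu)$, which is precisely the asserted almost-everywhere identity.

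The only step that calls for genuine care is the construction of $\Lambda$, i.e.\ checking that the fibrewise Bessel kernels $K_{(\Phi F)_u(L_\nu)}$ assemble into an honest element of $L^2(G_\nu)$ with norm controlled by that of $F$; but this is exactly what the combination of Lemma \ref{lm:4} (with $\alpha=\nu$), Tonelli's theorem and the $L_\nu$-Plancherel identity \eqref{eq:17} provides, and everything else is a soft density argument building on Corollary \ref{cor:5} and Proposition \ref{prop:2}. I therefore do not expect a substantive obstacle here.
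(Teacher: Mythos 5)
Your proposal is correct and follows essentially the same route as the paper: the paper's proof likewise defines the bounded operator $H\mapsto\bigl((x,u)\mapsto K_{(\Phi H)_u(L_\nu)}(x)\bigr)$ via Lemma \ref{lm:4} and the Plancherel formula \eqref{eq:17}, approximates $F$ by a sequence in $\fctJ$, and passes to the limit on both sides using \eqref{eq:Delta_kernels} together with the $L^2(G_\nu)$-continuity of $F\mapsto K_{F(\Delta_\nu)}$ furnished by Proposition \ref{prop:2}. Your phrasing of the density step as ``two bounded operators agreeing on the dense subspace $\fctJ$'' is just a repackaging of the paper's a.e.\ convergent subsequence argument, so there is no substantive difference.
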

\begin{proof}
By \eqref{eq:17} and Lemma \ref{lm:4}, for any $H \in L^2(\RR_+,\lambda^{[3/2,(\nu+1)/2]}\,\ddlam)$,
\begin{multline*}
	\int_{G_\nu} | K_{(\Phi H)_u(L_\nu)}(x)|^2 \,\dmu(x)\,\du \\
	\simeq \int_\RR \int_0^\infty |(\Phi H)_u(\lambda)|^2 \lambda^{\nu/2}\,\ddlam \,\du
	\lesssim \Vert H\Vert^2_{L^2(\RR_+,\lambda^{[3/2,(\nu+1)/2]}\,\ddlam)}
\end{multline*}
So, the correspondence $T$ mapping $H$ to the function $(x,u) \mapsto K_{(\Phi H)_u(L_\nu)}(x)$ is a bounded operator $T : L^2(\RR_+,\lambda^{[3/2,(\nu+1)/2]}\,\ddlam) \to L^2(G_\nu)$.

Fix $F$ as in the statement. Let $F_n\in\fctJ$ be a sequence of functions converging to $F$ in $L^2(\RR_+,\lambda^{[3/2,(\nu+1)/2]}\,\ddlam)$. Proposition \ref{prop:2} implies that $K_{F_n(\Delta_\nu)}$ converges to $K_{F(\Delta_\nu)}$ in $L^2(G_\nu)$. At the same time, the boundedness of $T$ gives that $K_{(\Phi F_n)_u(L_\nu)}(x)$ converges to $K_{(\Phi F)_u(L_\nu)}(x)$ in $L^2(G_\nu)$. Hence, up to extracting a subsequence,
\begin{equation}\label{eq:20}
	K_{F_{n}(\Delta_\nu)}(x,u)\to K_{F(\Delta_\nu)}(x,u)\quad \text{and}\quad K_{(\Phi F_{n})_u(L_\nu)}(x) \to K_{(\Phi F)_u(L_\nu)}(x)
\end{equation}
almost everywhere on $G_\nu$. As $K_{F_{n_k}(\Delta_\nu)}(x,u)= K_{(\Phi F_{n_k})_u(L_\nu)}(x)$ by \eqref{eq:Delta_kernels}, we conclude that $K_{ F(\Delta_\nu)}(x,u)=K_{(\Phi F)_u(L_\nu)}(x)$ almost everywhere.
\end{proof}

\subsection{Riemannian distance and finite propagation speed}

Recall that $\Delta_\nu=\nabla_\nu^+\nabla_\nu$, where $\nabla_\nu$ is given in \eqref{eq:28}.
We equip the manifold $\mathring{G}_\nu$ with the Riemannian structure which makes the vector fields $\partial_u$ and $e^u \partial_x$ an orthonormal frame.
As these are the vector fields appearing in \eqref{eq:28} as the components of $\nabla_\nu$, the Riemannian distance $\dist$ on $\mathring{G}_\nu$ is the control distance associated with $\nabla_\nu$ in the sense of the Appendix (see Remark \ref{rem:distance}).

We emphasise that $\nabla_\nu$ does not depend on $\nu$, so also the Riemannian structure on $G_\nu$ and the corresponding distance do not. This is confirmed by the following statement, where we obtain an explicit formula for the Riemannian distance on $\mathring{G}_\nu$, analogous to the formula \eqref{eq:Gdistance} for the sub-Riemannian distance on the group $G$.

\begin{prop}\label{prop:Gnu_dist}
For any $\nu \geq 1$, the Riemannian distance on $\mathring{G}_\nu$ is given by
\begin{equation}\label{eq:dist}
	\dist((x,u),(x',u'))=\arccosh\left(\cosh(u-u')+\frac{|x-x'|^2}{2e^{u+u'}} \right).
\end{equation}
The same expression also gives a distance on the whole $G_\nu$.
\end{prop}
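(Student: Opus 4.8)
The plan is to recognise $(\mathring G_\nu,\dist)$ as an open piece of the hyperbolic plane and to read off \eqref{eq:dist} from the classical hyperbolic distance formula; the same identification will give the last assertion essentially for free. Note that the metric involved does not depend on $\nu$, in accordance with the remark preceding the statement.

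\emph{Step 1: the metric.} Since $\{\partial_u,\,e^u\partial_x\}$ is an orthonormal frame, the dual coframe is $\{du,\,e^{-u}\,dx\}$, so the Riemannian metric on $\mathring G_\nu=\Rpos\times\RR$ is $du^2+e^{-2u}\,dx^2$. I would then introduce the diffeomorphism $\phi\colon\mathring G_\nu\to Q$, $\phi(x,u)=(x,e^u)$, onto the open quarter-plane $Q=\{(x,y):x>0,\ y>0\}$. Writing $y=e^u$, so that $du=dy/y$, a direct computation shows that $\phi$ pulls the hyperbolic metric $y^{-2}(dx^2+dy^2)$ of the upper half-plane $\mathbb{H}^2=\{(x,y):y>0\}$ back to $du^2+e^{-2u}\,dx^2$. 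Hence $\phi$ is a Riemannian isometry of $\mathring G_\nu$ onto $Q$ equipped with the restriction of the metric of $\mathbb{H}^2$.

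\emph{Step 2: intrinsic distance of $Q$, and the formula.} Let $d_{\mathbb{H}}$ be the hyperbolic distance on $\mathbb{H}^2$ and $d_Q$ the intrinsic length distance of $Q$. The inequality $d_Q\geq d_{\mathbb{H}}|_{Q\times Q}$ is trivial, as every curve in $Q$ is a curve in $\mathbb{H}^2$. For the reverse, I recall that the geodesics of $\mathbb{H}^2$ in the upper half-plane model are the vertical half-lines $\{x=x_0\}$ and the semicircles centred on the real axis, and that along each of these the $x$-coordinate is monotone; hence the (unique) geodesic segment of $\mathbb{H}^2$ joining two points of $Q$ has $x$-coordinate lying between the two positive endpoint values, and so it stays in $Q$. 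Thus this segment realises $d_{\mathbb{H}}$ while remaining in $Q$, giving $d_Q=d_{\mathbb{H}}|_{Q\times Q}$. Combining this with the isometry $\phi$ of Step 1, for $(x,u),(x',u')\in\mathring G_\nu$ we get $\dist((x,u),(x',u'))=d_{\mathbb{H}}((x,e^u),(x',e^{u'}))$; plugging into the classical identity $\cosh d_{\mathbb{H}}((x,y),(x',y'))=1+\bigl((x-x')^2+(y-y')^2\bigr)/(2yy')$, substituting $y=e^u$, $y'=e^{u'}$, and using $(e^{2u}-2e^{u+u'}+e^{2u'})/(2e^{u+u'})=\cosh(u-u')-1$, I obtain exactly the right-hand side of \eqref{eq:dist}.

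\emph{Step 3: extension to $G_\nu$.} For the last sentence of the statement, I would extend $\phi$ to the map $\bar\phi\colon G_\nu=\Rnon\times\RR\to\Rnon\times\Rpos\subset\mathbb{H}^2$, $\bar\phi(x,u)=(x,e^u)$, which is still injective. The right-hand side of \eqref{eq:dist} equals $d_{\mathbb{H}}(\bar\phi(x,u),\bar\phi(x',u'))$, and the pullback of a metric along an injective map is again a metric; hence \eqref{eq:dist} defines a distance on all of $G_\nu$. The only point requiring genuine care is the equality $d_Q=d_{\mathbb{H}}|_{Q\times Q}$ in Step 2, i.e.\ the geodesic convexity of the quarter-plane inside $\mathbb{H}^2$; the explicit description of hyperbolic geodesics makes it elementary, though one could instead invoke the general fact that a half-space of $\mathbb{H}^2$ bounded by a complete geodesic is geodesically convex. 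Everything else is routine computation.
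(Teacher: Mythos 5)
Your proof is correct and follows essentially the same route as the paper: both identify $\mathring G_\nu$ isometrically with the open quadrant of the hyperbolic plane in the half-plane model, reduce the claim to the geodesic convexity of that quadrant via the explicit description of hyperbolic geodesics, and then read off \eqref{eq:dist} from the hyperbolic distance formula. The only cosmetic difference is that you derive the distance formula directly from the classical identity $\cosh d_{\mathbb{H}}=1+\bigl((x-x')^2+(y-y')^2\bigr)/(2yy')$, whereas the paper cites the known formula for $\RR^d\rtimes\RR$ from \cite{MOV}.
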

\begin{proof}
When $N = \RR^d$ is abelian and $\Delta_N$ is the standard Laplacian on $\RR^d$, then the semidirect product $G = \RR^d \rtimes \RR$ discussed in the introduction, equipped with the Riemannian metric which makes the vector fields \eqref{eq:liftedvectorfields} an orthonormal frame, is a realisation of the real hyperbolic space of dimension $d+1$. It is well known (see, e.g., \cite[Proposition 2.7]{MOV}) that the 
Riemannian distance between two points $(x,u),(x',u') \in \RR^d \rtimes \RR$ is given by the right-hand side of \eqref{eq:dist}.

Take now $d=1$. Then $\mathring{G}_\nu$ and $G_\nu$ can be thought of as an open subset and a closed subset of the hyperbolic plane $\RR \rtimes \RR$. So, certainly the expression \eqref{eq:dist} defines a distance on each of $G_\nu$ and $\mathring{G}_\nu$.
Moreover, the vector fields $\partial_u$ and $e^u \partial_x$ on $\mathring{G}_\nu$ are the restrictions to $\mathring{G}_\nu$ of the vector fields \eqref{eq:liftedvectorfields} on $\RR \rtimes \RR$.
Thus, the Riemannian metric tensor on $\mathring{G}_\nu$ is just the restriction of the corresponding tensor on the hyperbolic plane, and the length of a curve $\gamma$ in the Riemannian manifold $\mathring{G}_\nu$ is the same as the length of $\gamma$ thought of as a curve in the hyperbolic plane.
	
Recall that the hyperbolic plane is a complete Riemannian manifold, so the Riemannian distance between two points of the plane is the length of the unique length-minimising curve joining those points. Thus, in order to conclude that the formula \eqref{eq:dist} also gives the Riemannian distance on $\mathring{G}_\nu$, it is enough to show that $\mathring{G}_\nu$ is geodesically convex in the hyperbolic plane.

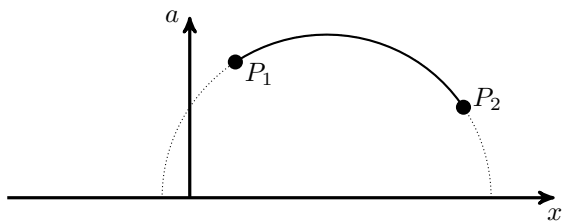
\begin{figure}[ht]
	\begin{tikzpicture}[scale=0.6,
	axis/.style={very thick, ->, >=stealth'}, 	
	important line/.style={thick}, 
	dashed line/.style={dashed, thin}, 
	]
	\draw[axis] (0,0) -- (12,0) node[below] {$x$};
	\draw[axis] (4,0) -- (4,4) node[left] {$a$} ;
	
	\draw[densely dotted] (7+sqrt{13},0) arc
	[
	start angle=0,
	end angle=180,
	x radius=sqrt{13},
	y radius =sqrt{13}
	] ;
	\draw [thick,domain=33.69:123.69] plot ({7+sqrt{13}*cos(\x)}, {sqrt{13}*sin(\x)});
	\node[label={[right] $P_2$}, style={circle,fill,inner sep=2pt}] at (10,2) {};
	\node[label={[below right] $P_1$}, style={circle,fill,inner sep=2pt}] at (5,3) {};
	\end{tikzpicture}
	\caption{The geodesic between $P_1$ and $P_2$.}
	\label{fig:1}
\end{figure}

This convexity property is easily seen if one works with the half-plane model $\{ (x,a) \tc x \in \RR, \, a \in \Rpos \}$ of the hyperbolic plane, corresponding to the change of variables $a = e^u$. Via this change of variables, $\mathring{G}_\nu$ corresponds to the open quadrant $\{(x,a) \tc x,a \in \Rpos \}$. As geodesics in the half-plane model are just segments of lines or circles perpendicular to the boundary $\{a = 0\}$ (see, e.g., \cite[Theorem 9.3]{CFKP}), it is clear that geodesics joining two points in $\mathring{G}_\nu$ are entirely contained in $\mathring{G}_\nu$  (see Figure \ref{fig:1}).
\end{proof}

From now on, we equip $G_\nu$ with the distance $\dist$ of Proposition \ref{prop:Gnu_dist}. Moreover, for all $(x,u) \in G_\nu$, we set
\[
|(x,u)|_{G_\nu} \defeq \dist((0,0),(x,u)) = \arccosh(\cosh(u)+e^{-u} x^2/2),
\]
and denote $B_{G_\nu}(0,r)=\{(x,u)\in G_\nu \tc |(x,u)|_{G_\nu}<r \}$ for all $r>0$. A function on $G_\nu$ will be called \emph{radial} if it has the form $f(|\cdot|_{G_\nu})$, i.e., if it depends only on the distance of its argument from the origin $(0,0)$ of $G_\nu$.

The following proposition concerning integration of radial functions on $G_\nu$ is a simple modification of \cite[Propositions 2.8 and 2.9]{MOV}.

\begin{prop}
For any Borel function $f:\Rnon\to \Rnon$ there holds
\begin{multline}\label{eq:radial_int}
	\int_{G_\nu} f(|(x,u)|_{G_\nu} )\,\dmu(x)\,\du \\
	= \int_{G_\nu} f(|(x,u)|_{G_\nu} )\,e^{-\nu u}\,\dmu(x)\,\du 
	= c_\nu \int_0^\infty f(r) (\sinh t)^\nu\,\dd r,
\end{multline}
where $c_\nu = 2^{\nu-1} \Beta(\nu/2,\nu/2)$. Moreover,
\begin{equation}\label{eq:5}
	\int_{G_\nu} f(|(x,u)|_{G_\nu}) e^{-u\nu} x^{\nu}\,\dmu(x)\,\du 
	\lesssim_\nu \int_{G_\nu} f(|(x,u)|_{G\nu} ) \, |(x,u)|_{G_\nu} \,\dmu(x)\,\du.
\end{equation}	
\end{prop}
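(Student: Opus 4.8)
The plan is to reduce both assertions to one-dimensional integrals in the radial variable by the same explicit change of variables used in \cite[Propositions 2.8 and 2.9]{MOV}, and then to isolate a single pointwise estimate that will constitute the only real work.

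For the first equality in \eqref{eq:radial_int} I would simply note that the involution $(x,u)^- = (e^{-u}x,-u)$ preserves the distance from the origin, since $\arccosh(\cosh(-u) + e^{u}(e^{-u}x)^2/2) = \arccosh(\cosh u + e^{-u}x^2/2)$; as the left Haar measure is the push-forward of the right Haar measure via this involution, the integral of a radial function against one measure coincides with that against the other. For the explicit value, I would compute the right-Haar integral directly: fixing $u$ and substituting $s = e^{-u}x^2/2$ turns $x^{\nu-1}\,dx$ into $2^{\nu/2-1}e^{u\nu/2}s^{\nu/2-1}\,ds$, so the integral equals $2^{\nu/2-1}\int_\RR e^{u\nu/2}\int_0^\infty f(\arccosh(\cosh u + s))\,s^{\nu/2-1}\,ds\,du$. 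The further substitution $r = \arccosh(\cosh u + s)$, i.e. $s = \cosh r - \cosh u$, $ds = \sinh r\,dr$ with $r$ ranging over $(|u|,\infty)$, followed by Fubini (so $r$ ranges over $(0,\infty)$ and $u$ over $(-r,r)$), brings everything to $2^{\nu/2-1}\int_0^\infty f(r)\,\sinh r\; J(r)\,dr$, where $J(r) = \int_{-r}^r e^{u\nu/2}(\cosh r - \cosh u)^{\nu/2-1}\,du$. Writing $p = e^u$ and using $\cosh r - \cosh u = (2p)^{-1}(e^r - p)(p - e^{-r})$ for $p \in (e^{-r},e^r)$, this collapses to a Beta integral, $J(r) = 2^{1-\nu/2}\int_{e^{-r}}^{e^r}\big((e^r-p)(p-e^{-r})\big)^{\nu/2-1}\,dp = 2^{\nu/2}\,\Beta(\nu/2,\nu/2)\,(\sinh r)^{\nu-1}$, and substituting back yields \eqref{eq:radial_int} with $c_\nu = 2^{\nu-1}\Beta(\nu/2,\nu/2)$.

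For \eqref{eq:5} the same two substitutions apply, with the gain that $e^{-\nu u}x^\nu\,\dmu(x) = e^{-\nu u}x^{2\nu-1}\,dx$ becomes simply $2^{\nu-1}s^{\nu-1}\,ds$ under $s = e^{-u}x^2/2$; the left-hand side of \eqref{eq:5} thus equals $2^{\nu-1}\int_0^\infty f(r)\,\sinh r\,\big(\int_{-r}^r(\cosh r - \cosh u)^{\nu-1}\,du\big)\,dr$. Comparing with \eqref{eq:radial_int} applied to the (nonnegative Borel) function $g(r) = r f(r)$, the estimate \eqref{eq:5} reduces to the pointwise bound
\[
\int_{-r}^r (\cosh r - \cosh u)^{\nu-1}\,du \lesssim_\nu r\,(\sinh r)^{\nu-1}, \qquad r > 0.
\]

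This last inequality is the main obstacle, and I would treat it by the substitution $a = (r+u)/2$, which gives $\int_{-r}^r(\cosh r - \cosh u)^{\nu-1}\,du = 2^\nu\int_0^r(\sinh a\,\sinh(r-a))^{\nu-1}\,da$, followed by a case split. For $r \le 1$ one has $\sinh a\,\sinh(r-a) \lesssim a(r-a)$, so the integral is $\lesssim_\nu r^{2\nu-1}\Beta(\nu,\nu) \le r^\nu \lesssim r(\sinh r)^{\nu-1}$, using $\nu \ge 1$ and $\sinh r \ge r$. For $r \ge 1$ the identity $2\sinh a\,\sinh(r-a) = \cosh r - \cosh(2a-r) \le \cosh r - 1$ together with $\cosh r = (\coth r)\sinh r \le (\coth 1)\sinh r$ gives $\sinh a\,\sinh(r-a) \lesssim \sinh r$ uniformly in $a \in (0,r)$, whence the integral is $\lesssim_\nu r(\sinh r)^{\nu-1}$. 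Combining the two ranges proves the bound for all $r>0$ and hence completes the proof; the only delicate point is ensuring the exponents line up correctly in the small-$r$ regime, which is exactly where the hypothesis $\nu \ge 1$ is used.
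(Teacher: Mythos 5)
Your proof is correct: the change of variables $s=e^{-u}x^2/2$ followed by $r=\arccosh(\cosh u+s)$, the Beta-integral evaluation of $J(r)$, and the pointwise bound $\int_{-r}^r(\cosh r-\cosh u)^{\nu-1}\,\du\lesssim_\nu r(\sinh r)^{\nu-1}$ (with the small-$r$/large-$r$ split) all check out, and the constant $c_\nu=2^{\nu-1}\Beta(\nu/2,\nu/2)$ comes out right. The paper omits the proof, referring to \cite[Propositions 2.8 and 2.9]{MOV}, and your argument is exactly the intended "simple modification" — the same polar-type reduction and case split that the paper itself carries out for $G$ in Lemma \ref{lm:2} and Corollary \ref{cor:3}.
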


Radial functions arise naturally when considering operators in the functional calculus for $\Delta_\nu$, up to a twist with the modular function $m$ of \eqref{eq:modular}.

\begin{cor}\label{cor:4}
For all bounded functions $F \in L^2(\RR_+,\lambda^{[3/2,(\nu+1)/2]}\,\ddlam)$, the function $m^{1/2} K_{F(\Delta_\nu)}$ is radial on $G_\nu$. Moreover, for all $r > 0$,
\begin{equation*}
	\int_{B_{G_\nu}(0,r)} | K_{F(\sqrt{\Delta_\nu})}(x,u)|^2 x^{\nu}\,\dmu(x)\,\du 
	\lesssim_\nu r \int_{B_{G_\nu}(0,r)} | K_{F(\sqrt{\Delta_\nu})}(x,u)|^2 \,\dmu(x)\,\du.
\end{equation*}
\end{cor}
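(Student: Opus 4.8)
The plan is to reduce the whole statement to the explicit heat kernel formula \eqref{eq:6} of Proposition~\ref{prop:6} together with the two radial-integration identities \eqref{eq:radial_int} and \eqref{eq:5}; only the latter carries analytic weight, and everything else is bookkeeping.

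\emph{Radiality.} For $F=e^{-t\cdot}$ I would start from \eqref{eq:6} and use the identity $\cosh u+x^2/(2e^u)=\cosh|(x,u)|_{G_\nu}$, which turns the exponent in \eqref{eq:6} into $-\cosh|(x,u)|_{G_\nu}/\xi$, together with the factorisation $(2\xi e^u)^{-\nu/2}=(2\xi)^{-\nu/2}e^{-\nu u/2}$; these show that $K_{e^{-t\Delta_\nu}}(x,u)=e^{-\nu u/2}\,h_t(|(x,u)|_{G_\nu})$ for an explicit function $h_t$ on $\RR_+$, i.e.\ the heat kernel is an explicit power of the modular function $m$ times a radial function. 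By linearity, and using Proposition~\ref{prop:6} and Corollary~\ref{cor:5} to ensure these kernels are genuine $L^2(G_\nu)$ functions, the same structure holds for $K_{F(\Delta_\nu)}$ whenever $F\in\fctJ$. To pass to an arbitrary bounded $F\in L^2(\RR_+,\lambda^{[3/2,(\nu+1)/2]}\,\ddlam)$ I would observe that, by \eqref{eq:radial_int}, the $L^2(G_\nu)$ functions of the form $e^{-\nu u/2}\cdot(\text{a function of }|(x,u)|_{G_\nu})$ form a closed subspace, and invoke Proposition~\ref{prop:2} (with the density statements of Lemma~\ref{lm:7}) to express $K_{F(\Delta_\nu)}$ as the $L^2(G_\nu)$-limit of kernels $K_{F_n(\Delta_\nu)}$ with $F_n\in\fctJ$; the radiality then passes to the limit.

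\emph{The weighted estimate.} Applying the first part to $\lambda\mapsto F(\sqrt\lambda)$, I obtain a measurable $g:\RR_+\to\CC$ with $K_{F(\sqrt{\Delta_\nu})}(x,u)=e^{-\nu u/2}g(|(x,u)|_{G_\nu})$ for a.e.\ $(x,u)$, so that $|K_{F(\sqrt{\Delta_\nu})}(x,u)|^2=e^{-\nu u}|g(|(x,u)|_{G_\nu})|^2$. Since $\ind_{B_{G_\nu}(0,r)}=\ind_{[0,r)}(|\cdot|_{G_\nu})$, applying \eqref{eq:5} with $f=|g|^2\ind_{[0,r)}$ gives
\[
\int_{B_{G_\nu}(0,r)}|K_{F(\sqrt{\Delta_\nu})}(x,u)|^2\,x^\nu\,\dmu(x)\,\du\lesssim_\nu\int_{B_{G_\nu}(0,r)}|g(|(x,u)|_{G_\nu})|^2\,|(x,u)|_{G_\nu}\,\dmu(x)\,\du,
\]
and since $|(x,u)|_{G_\nu}<r$ on $B_{G_\nu}(0,r)$, the right-hand side is at most $r\int_{B_{G_\nu}(0,r)}|g(|(x,u)|_{G_\nu})|^2\,\dmu(x)\,\du$. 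Finally, since $|g(|(x,u)|_{G_\nu})|^2\ind_{[0,r)}(|(x,u)|_{G_\nu})$ is radial, \eqref{eq:radial_int} lets me reinstate the modular weight, $\int_{B_{G_\nu}(0,r)}|g(|(x,u)|_{G_\nu})|^2\,\dmu(x)\,\du=\int_{B_{G_\nu}(0,r)}e^{-\nu u}|g(|(x,u)|_{G_\nu})|^2\,\dmu(x)\,\du=\int_{B_{G_\nu}(0,r)}|K_{F(\sqrt{\Delta_\nu})}(x,u)|^2\,\dmu(x)\,\du$, which is exactly the right-hand side of the claimed bound.

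The analytically substantial point, the comparison \eqref{eq:5} between the $x^\nu$-weighted and the unweighted integrals of a radial-type function, is already in hand, so I expect no real difficulty there. The two places that need care are: matching the explicit modular twist read off from \eqref{eq:6} with the precise weight $e^{-u\nu}x^\nu$ appearing in \eqref{eq:5}; and the limiting argument extending the radiality from $\fctJ$ to all admissible $F$, which I expect to be the only genuine (though routine) effort and which relies entirely on the $L^2(G_\nu)$-convergence of convolution kernels established in Proposition~\ref{prop:2}. One should also record that $\lambda\mapsto F(\sqrt\lambda)$ lies in the relevant weighted $L^2$ class in the cases where the corollary is applied.
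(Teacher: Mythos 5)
Your proof is correct and follows essentially the same route as the paper's: the radiality of the modular-twisted kernel is read off from the explicit heat-kernel formula \eqref{eq:6}, extended by linearity to $\fctJ$ and then by an $L^2(G_\nu)$-density argument, and the weighted bound is exactly the combination of \eqref{eq:5}, the trivial estimate $|(x,u)|_{G_\nu}<r$ on the ball, and \eqref{eq:radial_int} that the paper uses (applied to the radial function $e^{\nu u}|K_{F(\sqrt{\Delta_\nu})}|^2\ind_{B_{G_\nu}(0,r)}$). The only cosmetic difference is that you package the limiting step as closedness of the subspace of modular-twisted radial functions in $L^2(G_\nu)$, whereas the paper passes to an a.e.-convergent subsequence as in \eqref{eq:20}; both are fine.
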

\begin{proof}
Notice that we can rewrite \eqref{eq:6} as
\begin{equation*}
e^{\nu u/2} K_{e^{-t\Delta_\nu}}(x,u)
= \frac{2}{\Gamma(\nu/2)} \int_0^\infty \Psi_t(\xi) (2\xi)^{-\nu/2} \exp\left(-\frac{\cosh|(x,u)|_{G_\nu}}{\xi}\right)\,\dd\xi.
\end{equation*}
This means that $m^{1/2} K_{e^{-t\Delta_\nu}}$ is radial on $G_\nu$ for all $t > 0$. By linearity, this implies the radiality of $m^{1/2} K_{F(\Delta_\nu)}$ for all $F \in \fctJ$, and a density argument, as in \eqref{eq:20}, extends the result to all bounded $F \in L^2(\RR_+,\lambda^{[3/2,(\nu+1)/2]}\,\ddlam)$.

Now, let $r \geq 0$. By applying \eqref{eq:5} and \eqref{eq:radial_int} to the radial function
\[
	f_r(x,u)=e^{\nu u} \, |K_{F(\Delta_\nu)}(x,u)|^2 \, \ind_{B_{G_\nu}(0,r)}(x,u)
\]
we get
\[\begin{split}
	\int_{B_{G_\nu}(0,r)} | K_{F(\sqrt{\Delta_\nu})}(x,u)|^2 x^{\nu}\,\dmu(x)\,\du 
	&=\int_{G_\nu} f_r(x,u) e^{-\nu u} x^{\nu} \,\dmu(x)\,\du \\
	&\lesssim_\nu \int_{G_\nu} f_r(x,u) \, |(x,u)|_{G_\nu} \,\dmu(x)\,\du\\
	&\leq r \int_{G_\nu} f_r(x,u) \,\dmu(x)\,\du\\
	&= r \int_{G_\nu} f_r(x,u) \, e^{-\nu u}\,\dmu(x)\,\du\\
	&= r \int_{B_{G_\nu}(0,r)} | K_{F(\sqrt{\Delta_\nu})}(x,y)|^2 \,\dmu(x)\,\du,
\end{split}\]
as desired.
\end{proof}

Now we justify that $\Delta_\nu$ has the finite propagation speed property. As in the introduction, for any $r>0$, we set
\begin{equation}\label{eq:fctE}
\fctE_r = \{ F \in \Sz(\RR) \tc F \text{ even},\ \supp \hat F \subseteq [-r,r] \}.
\end{equation}

\begin{lm}\label{lm:13}
The family of operators $\{\cos(t\sqrt{\Delta_\nu})\}_{t\in\RR}$ has finite propagation speed with respect to the distance $\dist$, that is,
\begin{equation}\label{eq:fps}
	\supp (\cos(t\sqrt{\Delta_\nu}) f) \subseteq \{ (x,u) \in G_\nu \tc \dist((x,u),\supp f) \leq |t| \} \qquad\forall t \in \RR
\end{equation}
for all $f \in L^2(G_\nu)$.
Moreover, for all $r>0$,
\[
	\supp K_{F(\Delta_\nu)}\subseteq \overline{B_{G_\nu}}(0,r) \qquad \forall F \in \fctE_r.
\]
\end{lm}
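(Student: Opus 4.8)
The strategy is to deduce finite propagation speed for $\Delta_\nu = \nabla_\nu^+ \nabla_\nu$ from the general result in the Appendix (the ``first-order approach'' of \cite{McIMo}) for divergence-form operators associated with first-order operators $\nabla_\nu$ with a Dirichlet or Neumann self-adjoint extension. By Lemma \ref{lm:12}, the self-adjoint operator $\Delta_\nu$ is exactly $\nabla_\nu^+ \nabla_\nu$ with Neumann domain on the manifold $\mathring{G}_\nu$, and by Remark \ref{rem:distance} the control distance associated with $\nabla_\nu$ on $\mathring{G}_\nu$ is the Riemannian distance, which by Proposition \ref{prop:Gnu_dist} is given by the explicit formula \eqref{eq:dist} and extends to a distance on all of $G_\nu$. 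Hence the Appendix's finite propagation speed statement applies and yields \eqref{eq:fps} for the abstract control distance on $\mathring{G}_\nu$; the point is that this control distance agrees with $\dist$ of \eqref{eq:dist}, so \eqref{eq:fps} holds for $\dist$. One subtlety to address is that $G_\nu$ is a manifold with boundary, so we must verify that $\dist$ restricted to the interior $\mathring{G}_\nu$ genuinely coincides with the control distance there, and that the support condition on the whole $G_\nu$ (which differs from $\mathring G_\nu$ by a null set) follows; this is exactly what Proposition \ref{prop:Gnu_dist} provides, since it asserts the same formula gives a distance on $G_\nu$ too.

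The second assertion, about convolution kernels, is then obtained by a standard transference argument. First I would note that for $F \in \fctE_r$ the multiplier $F$ is bounded and (being Schwartz and even, hence in $\Sz_e(\Rnon)$) lies in $L^2(\RR_+, \lambda^{[3/2,(\nu+1)/2]}\,\ddlam)$, so by Proposition \ref{prop:2} the operator $F(\Delta_\nu)$ is a $\diamond_\nu$-convolution operator with kernel $K_{F(\Delta_\nu)} \in L^2(G_\nu)$. Next, by Fourier inversion and the fact that $\supp \hat F \subseteq [-r,r]$, one writes
\[
F(\sqrt{\Delta_\nu}) = \frac{1}{2\pi} \int_{-r}^{r} \hat F(t) \cos(t \sqrt{\Delta_\nu}) \, \dd t,
\]
the integral converging in the strong operator topology on $L^2(G_\nu)$. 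Applying this to an approximate identity: take $f_\varepsilon \in C_c(G_\nu)$ supported in a small ball $B_{G_\nu}(0,\varepsilon)$ with $\int f_\varepsilon = 1$, so that $f_\varepsilon \diamond_\nu K_{F(\Delta_\nu)} = F(\Delta_\nu) f_\varepsilon \to K_{F(\Delta_\nu)}$ in $L^2(G_\nu)$ as $\varepsilon \to 0$ (using Lemma \ref{lm:transl_Gnu} and the continuity of translations, plus Young's inequality from Lemma \ref{lm:young}). By \eqref{eq:fps}, each $F(\sqrt{\Delta_\nu}) f_\varepsilon$ is supported in the set of points at distance $\leq r$ from $\supp f_\varepsilon \subseteq \overline{B_{G_\nu}}(0,\varepsilon)$, hence in $\overline{B_{G_\nu}}(0, r+\varepsilon)$. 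Passing to the limit $\varepsilon \to 0$ forces $\supp K_{F(\Delta_\nu)} \subseteq \overline{B_{G_\nu}}(0,r)$.

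The main obstacle I anticipate is the first step: correctly matching the abstract control distance from the Appendix's first-order framework with the concrete Riemannian distance $\dist$, and ensuring the Neumann boundary condition is handled properly — since for small $\nu$ the self-adjoint extension is genuinely sensitive to boundary conditions, one must be sure the finite-propagation result in the Appendix is stated for the Neumann (not just Dirichlet) extension and applies to manifolds with boundary. Once that identification is in place, the wave-equation finite speed of propagation and the transference to the convolution kernel are routine. A minor additional point is to confirm that for $F \in \fctE_r$ one may pass from the operator identity via $\cos(t\sqrt{\Delta_\nu})$ to the kernel statement without integrability issues; this is controlled by the $L^2$ Plancherel bound \eqref{eq:19} together with the rapid decay of $\hat F$.
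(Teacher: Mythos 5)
Your proposal is correct and follows essentially the same route as the paper: the first assertion is obtained by applying the Appendix's finite propagation speed result (Proposition \ref{prop:8}) to $\nabla_\nu^+\nabla_\nu$ with Neumann domain via Lemma \ref{lm:12}, identifying the control distance with $\dist$ and handling the boundary through the full-measure inclusion $\mathring{G}_\nu \subseteq G_\nu$; the second follows from the Fourier inversion formula for $F(\sqrt{\Delta_\nu})$ and the support bound \eqref{eq:fps}. The only (immaterial) difference is in the last step: you let an approximate identity $f_\varepsilon \diamond_\nu K_{F(\Delta_\nu)} \to K_{F(\Delta_\nu)}$, whereas the paper lets the sections $K(\cdot,(y,v))$ of the integral kernel converge to $K_{F(\sqrt{\Delta_\nu})}$ as $(y,v)\to(0,0)$ — both rest on the continuity of translations in Lemma \ref{lm:transl_Gnu}.
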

\begin{proof}
In light of Lemma \ref{lm:12}, we can apply Proposition \ref{prop:8} to the operator $\Delta_\nu = \nabla^+ \nabla$ with Neumann domain on the manifold $\mathring{G}_\nu$, and obtain the finite propagation speed property \eqref{eq:fps}. To be precise, Proposition \ref{prop:8} gives the analogue of \eqref{eq:fps} with $\mathring{G}_\nu$ in place of $G_\nu$, and supports interpreted accordingly. However, $\mathring{G}_\nu$ is a dense open subset with full measure in $G_\nu$; so, for any $f \in L^2(G_\nu) = L^2(\mathring{G}_\nu)$, the support of $f$ in $G_\nu$ is the closure in $G_\nu$ of its support in $\mathring{G}_\nu$, and \eqref{eq:fps} follows as stated.
	
Now fix an $F \in \fctE_r(\RR)$ for some $r>0$. By the Fourier inversion formula (cf.\ \cite[Lemma 2.1]{CoSi}),
\begin{equation}\label{eq:fps_subordination}
	F(\sqrt{\Delta_\nu}) = \frac1{2\pi}\int_{-r}^r \hat{F}(t) \cos( t\sqrt{\Delta_\nu})\, \dd t,
\end{equation}
whence, by \eqref{eq:fps},
\begin{equation}\label{eq:36}
	\supp F(\sqrt{\Delta_\nu}) f\subseteq \{z\in G_\nu : \dist(z,\supp f) \leq r \},\qquad f\in L^2(G_\nu).
\end{equation}
	
Since $F(\sqrt{\Delta_\nu})$ is a $\diamond_\nu$-convolution operator, it is also an integral operator on $G_\nu$,
whose integral kernel $K$, much as in \eqref{eq:heat_conv_int}, is given by
\begin{equation}\label{eq:int_kernel}
K((x,u),(y,v))
= e^{-\nu v} \ell_{(y,v)^-} K_{F(\sqrt{\Delta_\nu})}(x,u).
\end{equation}
Now observe that \eqref{eq:36} can be equivalently restated as
\begin{equation*}
	\supp K\subseteq \{((x,u),(y,v))\in G_\nu\times G_\nu \tc \dist((x,u),(y,v))\leq r \}.
\end{equation*}
Let us write $K_{(y,v)}(x,u)=K((x,u),(y,v) )$. Thus, for any fixed $\varepsilon>0$,
\[
\supp K_{(y,v)} \subseteq \overline{B_{G_\nu}}(0,r+\varepsilon) \qquad \text{for almost all } (y,v) \in B_{G_\nu}(0,\varepsilon). 
\]
Now, by \eqref{eq:int_kernel} and Lemma \ref{lm:transl_Gnu}, it easily follows that
\[
K_{(y,v)} \to K_{(0,0)} = K_{F(\sqrt{\Delta_\nu})} \text{ in } L^2(G_\nu) \text{ as } (y,v) \to (0,0),
\]
whence we deduce that $\supp K_{(0,0)} \subseteq \overline{B_{G_\nu}}(0,r+\varepsilon)$ for any $\varepsilon > 0$, and therefore $\supp K_{(0,0)} \subseteq \overline{B_{G_\nu}}(0,r)$, as desired.
\end{proof}

\section{The multiplier theorem}\label{s:multipliers}

Here we revert to the setting of Section \ref{s:groups}. So $N$ is a $2$-step stratified group as in \eqref{eq:product_N}, that is, a direct product of an abelian group $N^{(0)}$ and M\'etivier groups $N^{(1)},\dots,N^{(\ell)}$, and $G = N \rtimes \RR$ is its semidirect product extension. Recall that $\vec d_1$ and $\vec d_2$ are the vectors of dimensions of the first layers and second layers of the M\'etivier groups $N^{(j)}$, $j=1,\dots,\ell$, while $d^{(0)}$ is the dimension of $N^{(0)}$; in particular, $d = d^{(0)} + |\vec d_1| + |\vec d_2|$ and $Q = d^{(0)} + |\vec d_1| + 2|\vec d_2|$ are the topological and homogeneous dimensions of $N$.

We aim at ``lifting'' the weighted estimate on $N$ contained in Proposition \ref{prop:3} to the semidirect product $G$. To this purpose, we first compare weighted norms for convolution kernels in the calculus for $\Delta_N$ to analogous norms in the calculus of the Bessel operator $L_\nu$, for appropriate choices of $\nu$.

\begin{prop}\label{prop:7}
Let $\vec{0}\preccurlyeq\al\prec \vec{d_2}$ and $\be\geq 0$. For any $F\in\Sz(\RR)$,
\begin{equation*}
	\int_N | K_{F(\Delta_N)}(z)|^2  |z|_N^\be \prod_{j=1}^{\ell } |z'_j |^{\al_j} \,\dz
	\lesssim_\alpha \int_0^\infty |K_{F(L_{Q-\val})}(x)|^2 x^{\be}\, \dd\mu_{Q-\val}(x). 
\end{equation*}
\end{prop}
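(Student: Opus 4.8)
The plan is to reduce the weighted $L^2$ norm on $N$ to a weighted $L^2$ norm of a kernel on the Bessel--Kingman hypergroup $X_{Q-|\alpha|}$, by exploiting the Plancherel formula \eqref{eq:17} for $L_\nu$ together with the weighted estimate on $N$ from Proposition \ref{prop:3}. Set $\nu = Q - |\alpha|$ throughout. First I would dispense with the weight $|z|_N^\beta$: since $|z|_N$ is comparable to the homogeneous (Koranyi-type) norm on $N$, and more to the point, since the Bessel kernel side already carries a factor $x^\beta$, I expect that both sides can be handled simultaneously by inserting the extra power of the radial variable and keeping track of homogeneity. Concretely, by the Plancherel formula \eqref{eq:17} for $L_\nu$ one has, for $F \in \Sz_e(\Rnon)$,
\[
\int_0^\infty |K_{F(L_\nu)}(x)|^2 x^\beta \, \dd\mu_\nu(x) \simeq_{\nu,\beta} \int_0^\infty |\mathcal{H}_\nu^{-1}(\text{something})|^2 \cdots,
\]
but more usefully: the key is that $\int_0^\infty |K_{F(L_\nu)}(x)|^2 x^\beta \,\dd\mu_\nu(x)$ is, up to constants, a Hankel-side quantity that by the standard Hankel--Plancherel identity \eqref{eq:16} and the intertwining of $L_\nu$ with multiplication by $x^2$ can be rewritten in terms of $\int_0^\infty |G(\lambda)|^2 \lambda^{?} \,\ddlam$ applied to derivatives (if $\beta$ is even/integer) or, for general $\beta \geq 0$, via the analytic interpolation argument already used in Lemma \ref{lm:10} and Lemma \ref{lm:4}. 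So the genuine content is the case $\beta = 0$, where the claim reduces exactly to Proposition \ref{prop:3} combined with \eqref{eq:17}: indeed for $\beta=0$,
\[
\int_N |K_{F(\Delta_N)}(z)|^2 \prod_{j=1}^\ell |z_j'|^{\al_j}\,\dz \lesssim_\alpha \int_0^\infty |F(\lambda)|^2 \lambda^{(Q-|\al|)/2}\,\ddlam = \int_0^\infty |F(\lambda)|^2 \lambda^{\nu/2}\,\ddlam \simeq_\nu \|K_{F(L_\nu)}\|_{L^2(X_\nu)}^2,
\]
using \eqref{eq:17} in the last step; this is precisely the asserted inequality for $\beta=0$.

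For general $\beta \geq 0$, I would bring in the weight $|z|_N^\beta$ on the left and $x^\beta$ on the right as follows. The point is that multiplication by $|z|_N^\beta$ can be absorbed into a fractional power of $\Delta_N$ only up to the finite propagation speed issue, so instead I would argue by homogeneity and a dyadic/Mellin decomposition: write $F = \sum_{k \in \ZZ} F_k$ with $F_k$ supported (in a Fourier sense, as in $\fctE$) at scale $2^k$, apply the $\beta=0$ estimate to each $F(2^k\cdot)$-type rescaling of $\Delta_N$ and of $L_\nu$ using the respective dilation identities (the scaling $K_{F(t^2\Delta_N)}$ from Section \ref{ss:metivier} and \eqref{eq:3} for $L_\nu$), and note that the weights $|z|_N^\beta$ and $x^\beta$ scale the same way under the respective dilations because the homogeneity degree of the measure is matched by the choice $\nu = Q - |\al|$ — this is exactly the bookkeeping that makes the homogeneous degrees on the two sides agree. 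Summing the dyadic pieces and using almost-orthogonality (Cauchy--Schwarz in $k$ with a small loss absorbed by Schwartz decay, or finite propagation speed to make the pieces genuinely disjointly supported after truncation) would then give the weighted inequality. Alternatively, and probably cleaner, I would follow the interpolation route: prove the estimate for $\beta \in 2\NN$ directly (where $|z|_N^\beta$ relates to an honest differential operator and one can mimic the proof of Lemma \ref{lm:10}), then interpolate analytically in $\beta$ using Stein's theorem on the weighted $L^2$ spaces, exactly as was done at the end of Lemma \ref{lm:10} and Lemma \ref{lm:4}.

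The main obstacle I anticipate is handling the weight $|z|_N^\beta$ for non-integer, and even for integer but non-even, $\beta$: unlike the weights $\prod_j |z_j'|^{\al_j}$ treated in Proposition \ref{prop:3} (which come from second-layer derivatives via the $J_\mu$ machinery and the explicit Metivier estimate \eqref{eq:24}), the Carnot--Caratheodory-type norm $|z|_N$ is not a polynomial and does not directly correspond to an operator in the joint functional calculus of $\Delta_N$ and $Z''$. The cleanest fix is to replace $|z|_N$ by a comparable smooth homogeneous norm and then to note that a power $|z|_N^\beta$ of such a norm, while not polynomial, still satisfies good pointwise bounds in terms of the coordinate functions, so that on the left-hand side one can estimate $|z|_N^\beta \lesssim (\sum_j |z_j'|^2 + \sum_j |z_j''|)^{\beta/2}$ and expand; but this introduces second-layer coordinates $z_j''$ with weights, which would require extending Proposition \ref{prop:3} to also allow weights in $z''$. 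I expect that the actual proof in the paper sidesteps this by using the integration formula on $G$ from Lemma \ref{lm:2} / the polar-coordinate decomposition \eqref{eq:polar} to reduce the $|z|_N^\beta$-weighted norm on $N$ directly to a one-dimensional radial integral against $x^{Q-|\al|-1+\beta}\,\dd x$, which is then matched term-by-term with the radial integral defining $\|K_{F(L_\nu)}(\cdot)\,x^{\beta/2}\|_{L^2(X_\nu)}$ via the Plancherel formula \eqref{eq:17}; in other words, the weight $|z|_N^\beta$ is genuinely radial and hence passes through the polar decomposition transparently, and only the anisotropic weights $\prod_j|z_j'|^{\al_j}$ require the hard work already done in Proposition \ref{prop:3}. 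Carrying out that reduction carefully, and verifying that the homogeneity exponent $(Q-|\al|)/2$ coming out of Proposition \ref{prop:3} lines up with $\nu/2 = (Q-|\al|)/2$ in \eqref{eq:17} after the $x^\beta$ twist, is the step I would spend the most care on.
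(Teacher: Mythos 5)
Your reduction of the case $\be=0$ to Proposition \ref{prop:3} combined with \eqref{eq:17} is exactly what the paper does, and you have correctly located the real difficulty in the radial weight $|z|_N^\be$. However, none of the three routes you sketch for $\be>0$ closes as described, and the key idea of the paper's proof (taken from Sikora's argument in \cite{Si}) is missing. The paper writes $|z|_N^\be=\int_0^\infty \be r^{\be-1}\ind_{\{|z|_N>r\}}\,\dd r$ and, for each $r$, splits the multiplier on the \emph{Bessel} side: with $G(\lambda)=F(\lambda^2)$ and $\nu=Q-\val$, it sets $G_r=H_\nu(\ind_{[0,r)}H_\nu^{-1}G)$ and $G^r=G-G_r$, so that $K_{G^r(\sqrt{L_\nu})}=K_{G(\sqrt{L_\nu})}\ind_{[r,\infty)}$ exactly. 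The crucial observation is that the Euclidean Fourier transform of $j^\nu_x$ is supported in $[-x,x]$ (this uses $\nu>3$, guaranteed since $\nu\geq d\geq 3$ here), hence the even extension of $G_r$ has Fourier support in $[-r,r]$, and finite propagation speed \eqref{eq:fps_N} forces $\supp K_{G_r(\sqrt{\Delta_N})}\subseteq \overline{B_N}(0_N,r)$. Therefore $K_{G(\sqrt{\Delta_N})}=K_{G^r(\sqrt{\Delta_N})}$ on $\{|z|_N>r\}$; applying Proposition \ref{prop:3} and \eqref{eq:17} to $G^r$ and integrating $\be r^{\be-1}\,\dd r$ reassembles precisely the weight $x^\be$ on the Bessel side. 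The point you miss is that one must transfer a \emph{sharp spatial truncation of the kernel} from $X_\nu$ to $N$, and this is possible because truncating $K_{G(\sqrt{L_\nu})}$ to $[0,r)$ produces a multiplier whose $N$-kernel is supported in the $r$-ball.

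Concretely, your dyadic variant does not close because, after applying the $\be=0$ estimate to a frequency-localized piece $F_k$, the resulting quantity $2^{k\be}\Vert K_{F_k(\sqrt{L_\nu})}\Vert_{L^2(X_\nu)}^2$ cannot be dominated by $\int|K_{F(\sqrt{L_\nu})}|^2x^\be\,\dmu$: the Bessel kernel of $F_k$ is supported in $[0,2^k]$ but is not concentrated near $x\simeq 2^k$, so the factor $2^{k\be}$ is not matched by the weight. (A decomposition according to the \emph{support of the Bessel kernel}, rather than rescalings of $F$, can be made to work, but that is just a discretization of the layer-cake argument above, and you would still need the support-transference fact to control the $N$-side pieces.) Your interpolation route fails at the base case: $|z|_N^2$ is not a polynomial, so the machinery of Lemmas \ref{lm:9} and \ref{lm:10} (the operator $|J_{Z''}P|^2$ and the Helffer--Nourrigat estimate) does not apply to it, and the right-hand side here is a kernel-weighted norm rather than a power-weighted norm of $F$. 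Finally, your guess that the paper reduces to a one-dimensional radial integral via polar coordinates cannot work, because $K_{F(\Delta_N)}$ is not a radial function of $|z|_N$, so the angular integral does not factor out.
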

\begin{proof}
The proof strongly relies on ideas from \cite{Si}.
	
Fix $\vec{0}\preccurlyeq\al\prec \vec{d_2}$, $\be \geq 0$ and $F\in\Sz(\RR)$. In view of \eqref{eq:17}, the case $\be=0$ is already covered by Proposition \ref{prop:3}, so we may assume $\be>0$. Denote $\nu=Q-\val$; observe that $\nu > Q-|\vec d_2| = d \geq 3$, as $N$ is a nonabelian $2$-step group.
	
Define $G \in \Sz_e(\Rnon)$ by $G(\lambda)=F(\lambda^2)$, and, for $r>0$,
\begin{equation*}
G_r \defeq H_{\nu} ( \ind_{[0,r)} H_{\nu}^{-1} G), \qquad G^r \defeq H_{\nu}( \ind_{[r,\infty)} H_{\nu}^{-1} G ).
\end{equation*}
Thus, $G=G_r+G^r$. Moreover, by \eqref{eq:4},
\begin{equation*}
K_{G_r(\sqrt{L_{\nu}})} =K_{G(\sqrt{L_\nu})} \ind_{[0,r)}, \qquad  K_{G^r(\sqrt{L_{\nu}})} =K_{G(\sqrt{L_\nu})} \ind_{[r,\infty)}.
\end{equation*}

By \eqref{eq:bessel} and 
\cite[eq.\ (10.9.4)]{DLMF},
\begin{equation*}
j_x^\nu(y) = \frac{1}{\sqrt{\pi}}  \int_{-1}^1 (1-\xi^2)^{\nu/2-3/2} e^{ixy\xi} \,\dd\xi, \qquad x,y \in \RR.
\end{equation*}
Hence, if $\Four$ denotes the Euclidean Fourier transform on $\RR$,
\begin{equation}\label{eq:21}
\Four(j_x^\nu)(\xi) = \frac{2\sqrt{\pi}}{x} \left(1- \left|\frac{\xi}{x}\right|^2\right)_+^{(\nu-3)/2} .
\end{equation}

Denote by $\widetilde{G}_r$ the even extension of $G_r$ to $\RR$; namely, by \eqref{eq:hankeltr},
\[
\widetilde{G}_r(y) = \int_0^r H_{\nu}^{-1} G(x) \, j^\nu_{x} (y)\,\dmu(x)
\]
for all $y \in \RR$ (recall that $j^\nu_x$ is even).
As $\nu > 3$, by \eqref{eq:21} we get
\begin{equation*}
\Four(\widetilde{G}_r)(\xi) = 2\sqrt{\pi} \int_0^r H_\nu^{-1} G(x) \left(1- \left|\frac{\xi}{x}\right|^2\right)_+^{(\nu-3)/2} \,x^{\nu-2} \,\dd x;
\end{equation*}
notice that $|\xi| \leq x \leq r$ in the above integral, otherwise the integrand vanishes. Thus, $\supp\Four(\widetilde{G}_r)\subseteq[-r,r]$.

Recall from \eqref{eq:fps_N} that $\Delta_N$ has finite propagation speed. Much as in \eqref{eq:fps_subordination}, from $\supp\Four(\widetilde{G}_r)\subseteq[-r,r]$ we then deduce that
\begin{equation*}
\supp K_{G_r(\sqrt{\Delta_N})}\subseteq\{z\in N \tc |z|_N \leq r\}.
\end{equation*}
Consequently, for $|z|_N>r$ we have $K_{G^r(\sqrt{\Delta_N})}(z)=K_{G(\sqrt{\Delta_N})}(z)$. Hence,
\[\begin{split}
	&\int_N | K_{G(\sqrt{\Delta_N})}(z)|^2  |z|_N^\be \prod_{j=1}^{\ell } |z'_j |^{\al_j} \,\dz \\
	&= \int_0^\infty \be r^{\be-1} \int_{|z|_N>r} | K_{G(\sqrt{\Delta_N})}(z)|^2 \prod_{j=1}^{\ell } |z'_j |^{\al_j}\,\dz\,\dd r\\
	&\leq \int_0^\infty \be r^{\be-1} \int_{N} | K_{G^r(\sqrt{\Delta_N})}(z)|^2 \prod_{j=1}^{\ell } |z'_j |^{\al_j}\,\dz\,\dd r.
\end{split}\]
Recall that $\nu = Q-\val$. We apply Proposition \ref{prop:3} and \eqref{eq:17} to obtain
\[\begin{split}
&\int_0^\infty \be r^{\be-1} \int_{N} | K_{G^r(\sqrt{\Delta_N})}(z)|^2 \prod_{j=1}^{\ell } |z'_j |^{\al_j}\,\dz\,\dd r \\
&\lesssim \int_0^\infty \be r^{\be-1} \int_{0}^\infty | K_{G^r(\sqrt{L_\nu})}(x)|^2 \,\dmu(x)\,\dd r\\
&=\int_0^\infty \be r^{\be-1} \int_{r}^\infty | K_{G(\sqrt{L_\nu})}(x)|^2 \,\dmu(x)\,\dd r\\
&= \int_{0}^\infty | K_{G(\sqrt{L_\nu})}(x)|^2 x^\be\,\dmu(x).
\end{split}\]
Combining the above finishes the proof.
\end{proof}

We can now lift the previous inequality to $G$ and $G_\nu$.

\begin{cor}\label{cor:2}
Let $\vec{0}\preccurlyeq\al\prec \vec{d_2}$ and $\be\geq 0$. For any $F\in\Sz(\RR)$,
\begin{multline}\label{eq:weighted_lifted}
	\int_G | K_{F(\Delta)}(z,u)|^2  |z|_N^\be \prod_{j=1}^{\ell } |z'_j |^{\al_j} \,\dz\,\du \\
	\lesssim \int_{G_{Q-\val}} |K_{F(\Delta_{Q-\val})}(x,u)|^2 x^{\be}\,\dd\mu_{Q-\val}(x)\,\du. 
\end{multline}
\end{cor}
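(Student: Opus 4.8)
The plan is to deduce the inequality \eqref{eq:weighted_lifted} from Proposition \ref{prop:7} by "lifting" both sides from $N$ to $G$ and from $X_\nu$ to $G_\nu$, using the fact that the functions $M_{t,u}$ encode the relation between the heat propagators on both sides and, crucially, are independent of $\nu$. Since both $K_{F(\Delta)}$ and $K_{F(\Delta_\nu)}$ are obtained, via the correspondence $\Phi$, from the operators $F(\Delta_N)$ and $F(L_\nu)$ respectively — the former by the Euclidean analogue \eqref{eq:heat_rel_eucl} of Lemma \ref{lm:3}/Lemma \ref{lm:5}, the latter by Lemma \ref{lm:5} itself — the idea is to integrate the inequality of Proposition \ref{prop:7} against a suitable kernel in the $u$-variable. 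Concretely, since the weight $w(z) = |z|_N^\beta \prod_j |z'_j|^{\alpha_j}$ and the weight $x^\beta$ on $G_\nu$ depend only on the "$N$-variable" (respectively the "$X_\nu$-variable") and not on $u$, one can work fibrewise in $u$.

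First I would record, using Proposition \ref{prop:6}, Lemma \ref{lm:3} and \eqref{eq:heat_rel_eucl} together with linearity and the density of $\fctJ$, that for $F \in \fctJ$ one has the fibrewise identities
\[
K_{F(\Delta)}(z,u) = K_{(\Phi F)_u(\Delta_N)}(z), \qquad K_{F(\Delta_\nu)}(x,u) = K_{(\Phi F)_u(L_\nu)}(x).
\]
(The first of these is exactly the lifting relation used in \cite{MOV}; the second is Lemma \ref{lm:5}.) Then, for each fixed $u \in \RR$, Proposition \ref{prop:7} applied to the Schwartz function $(\Phi F)_u(\sqrt{\cdot})$ — more precisely, writing $G_u(\lambda) = (\Phi F)_u(\lambda)$ and noting $G_u \in \Sz_e(\Rnon)$ by Lemma \ref{lm:6} and the construction of $\Phi$ — gives
\[
\int_N |K_{G_u(\Delta_N)}(z)|^2 |z|_N^\beta \prod_{j=1}^\ell |z'_j|^{\alpha_j}\,\dz
\lesssim_\alpha \int_0^\infty |K_{G_u(L_{Q-\val})}(x)|^2 x^\beta \,\dd\mu_{Q-\val}(x).
\]
Here one must be slightly careful: Proposition \ref{prop:7} is stated for $F \in \Sz(\RR)$ with $K_{F(\Delta_N)} = K_{F(\Delta_N)}$, and what I need is the version in terms of $\sqrt{\Delta_N}$; but replacing $F$ by $\lambda \mapsto F(\lambda^2)$ (as is in fact done inside the proof of Proposition \ref{prop:7}) bridges this gap, and $(\Phi F)_u$ evaluated at $\sqrt{L_\nu}$ and $\sqrt{\Delta_N}$ is exactly what appears in the fibrewise kernels.

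The final step is to integrate the displayed fibrewise inequality over $u \in \RR$ with respect to $\du$. On the left this reproduces $\int_G |K_{F(\Delta)}(z,u)|^2 |z|_N^\beta \prod_j |z'_j|^{\alpha_j}\,\dz\,\du$ by the fibrewise identity above, and on the right it reproduces $\int_{G_{Q-\val}} |K_{F(\Delta_{Q-\val})}(x,u)|^2 x^\beta \,\dd\mu_{Q-\val}(x)\,\du$, giving \eqref{eq:weighted_lifted} for $F \in \fctJ$. To pass to general $F \in \Sz(\RR)$ one uses the density of $\fctJ$ together with the Plancherel formulas: by \eqref{eq:19} (for $\Delta_\nu$) and \cite[Corollary 4.6]{MOV} (for $\Delta$), convergence $F_n \to F$ in $L^2(\Rnon,\lambda^{[3/2,(\nu+1)/2]}\,\ddlam)$ implies $L^2$-convergence of the corresponding kernels, hence (along a subsequence) almost-everywhere convergence, and Fatou's lemma on the left combined with the Plancherel-controlled right-hand side closes the argument — provided the right-hand side of \eqref{eq:weighted_lifted} is finite for $F \in \Sz(\RR)$, which follows by the same integration argument from the $\be = 0$ case, i.e. Corollary \ref{cor:5}/Proposition \ref{prop:2}, together with Corollary \ref{cor:4} to absorb the $x^\nu$-type weight (or, for general $\beta$, a direct estimate).

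The main obstacle I expect is the justification of passing the integration over $u$ through the fibrewise inequality rigorously, i.e.\ the measurability and integrability bookkeeping: one needs that $u \mapsto K_{(\Phi F)_u(\Delta_N)}(z)$ and $u \mapsto K_{(\Phi F)_u(L_\nu)}(x)$ are jointly measurable (this comes from Lemma \ref{lm:5} and its Euclidean analogue, which identify them with the genuine convolution kernels $K_{F(\Delta_\nu)}$, $K_{F(\Delta)}$ of $L^2$ operators), and that the implied constant in Proposition \ref{prop:7} is uniform in $u$ (it is — it depends only on $\alpha$ and on the dimension data, not on the multiplier). A secondary subtlety is ensuring $(\Phi F)_u \in \Sz_e(\Rnon)$ for a.e.\ $u$, which needs a little care for $F \in \Sz(\RR)$ rather than $F \in \fctJ$; this is most cleanly handled by first proving the corollary for $F \in \fctJ$, where $(\Phi F)_u = M_{t,u}$ (a linear combination thereof) is manifestly well-behaved by Lemma \ref{lm:6}, and only then running the density/Fatou argument.
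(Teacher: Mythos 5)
Your proposal is correct and follows essentially the same route as the paper: both rest on the fibrewise identities $K_{F(\Delta)}(z,u)=K_{(\Phi F)_u(\Delta_N)}(z)$ and $K_{F(\Delta_\nu)}(x,u)=K_{(\Phi F)_u(L_\nu)}(x)$ (Lemma \ref{lm:5} and its analogue for $\Delta$), followed by a fibrewise application of Proposition \ref{prop:7} and integration in $u$. The paper avoids your $\fctJ$-then-density detour simply because Lemma \ref{lm:5} is already stated for all bounded $F$ in the weighted $L^2$ space, which covers $F\in\Sz(\RR)$ directly.
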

\begin{proof}
Observe that Lemma \ref{lm:5} implies
\begin{multline}\label{eq:fubini_Gnu}
	\int_{G_{Q-\val}} |K_{F(\Delta_{Q-\val})}(x,u)|^2 x^{\be}\,\dd\mu_{Q-\val}(x)\,\du\\
	 =\int_\RR \int_0^\infty | K_{(\Phi F)_u(L_{Q-\val})}(x)|^2 x^\be \,\dd\mu_{Q-\val}(x)\,\du.
\end{multline}
Moreover, an analogue of Lemma \ref{lm:5} is true for $\Delta$ and $\Delta_N$ in place of $\Delta_\nu$ and $L_\nu$:
\[
	K_{ F(\Delta)}(z,u)=K_{(\Phi F)_u(\Delta_N)}(z)
\]
for almost all $(z,u) \in G$; this can be proved much in the same way as in Lemma \ref{lm:5}, and is implicitly used in \cite[Corollary 4.5]{MOV}. 
This gives
\begin{multline}\label{eq:fubini_G}
	\int_{G_{Q}} |K_{F(\Delta)}(z,u)|^2  |z|_N^{\be}\prod_{j=1}^{\ell } |z'_j |^{\al_j} \,\dz\,\du \\
	=\int_\RR \int_N | K_{(\Phi F)_u(\Delta_N)}(z)|^2 |z|_N^\be \prod_{j=1}^{\ell } |z'_j |^{\al_j} \,\dz\,\du.
\end{multline}
Now, Proposition \ref{prop:7} allows us to compare the right-hand sides of \eqref{eq:fubini_Gnu} and \eqref{eq:fubini_G} and deduce the desired inequality.
\end{proof}

In the case $\beta = 0$, the right-hand side of \eqref{eq:weighted_lifted} can be turned into a weighted $L^2$-norm of $F$ by Proposition \ref{prop:2}, thus yielding the weighted Plancherel estimate \eqref{eq:wplancherel} on $G$ discussed in the introduction.

We now combine the previous weighted estimate on $G$ together with finite propagation speed to deduce the following $L^1 \to L^2$ bound, corresponding to \eqref{eq:l1l2}. This improves \cite[Proposition 5.1]{MOV}, where the case $\nu=Q$ of the following bound is proved; of course, this improvement depends on our assumptions on $N$, which are more restrictive than those in \cite{MOV}.

Recall from \eqref{eq:fctE} the definition of $\fctE_r$.

\begin{prop}\label{prop:4}
Let $\nu \in (d,Q]$. Let $F \in \fctE_r$ for some $r>0$. 
Then,
\begin{equation*}
	\int_{G} | K_{F(\sqrt{\Delta})}(z,u)| \,\dz\,\du \lesssim_\nu r^{[(\nu+1)/2,3/2]} \left(\int_0^\infty |F(\lambda)|^2 \,\lambda^{[3,\nu+1]}\,\ddlam\right)^{1/2}.
\end{equation*}
\end{prop}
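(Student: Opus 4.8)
The plan is to combine a weighted Cauchy--Schwarz inequality with finite propagation speed and the weighted Plancherel estimate on $G$ obtained from Corollary~\ref{cor:2}. Given $\nu\in(d,Q]$, fix a multi-index $\al$ with $\vec{0}\preccurlyeq\al\prec\vec d_2$ and $\val=Q-\nu$; such an $\al$ exists precisely because $\nu\in(Q-|\vec d_2|,Q]=(d,Q]$, and then $\al\prec\vec d_2\prec\vec d_1$ by \eqref{eq:metivier_dim}, so the hypotheses of Corollaries~\ref{cor:3} and~\ref{cor:2} are satisfied. Put $w_0(z)=\prod_{j=1}^\ell|z'_j|^{\al_j/2}$. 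Since $F$ is even and Schwartz, the function $H(\lambda)=F(\sqrt\lambda)$ is bounded and is the restriction to $\Rnon$ of a Schwartz function on $\RR$, and the change of variable $\lambda\mapsto\lambda^2$ gives $\int_0^\infty|H(\lambda)|^2\lambda^{[3/2,(\nu+1)/2]}\,\ddlam\simeq\int_0^\infty|F(\lambda)|^2\lambda^{[3,\nu+1]}\,\ddlam$; since $F(\sqrt\Delta)=H(\Delta)$ and $F(\sqrt{\Delta_\nu})=H(\Delta_\nu)$, all the estimates established above are available for these kernels. Finally, as $F\in\fctE_r$, finite propagation speed for $\Delta$ (see \eqref{eq:fps_G}) gives $\supp K_{F(\sqrt\Delta)}\subseteq\overline{B_G}(0_G,r)$, so for any positive weight $w$ on $G$ depending only on $z$,
\[
\|K_{F(\sqrt\Delta)}\|_{L^1(G)}\le\left(\int_{\overline{B_G}(0_G,r)}w^{-2}\,\dz\,\du\right)^{1/2}\left(\int_G w^2\,|K_{F(\sqrt\Delta)}|^2\,\dz\,\du\right)^{1/2}.
\]

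For $r\le1$ I would take $w=w_0$. By Corollary~\ref{cor:3}, $\int_{\overline{B_G}(0_G,r)}w_0^{-2}\,\dz\,\du\lesssim_\al r^{\nu+1}$, while Corollary~\ref{cor:2} with $\be=0$ followed by Proposition~\ref{prop:2} gives
\[
\int_G w_0^2\,|K_{F(\sqrt\Delta)}|^2\,\dz\,\du\lesssim\int_{G_\nu}|K_{F(\sqrt{\Delta_\nu})}(x,u)|^2\,\dmu(x)\,\du\lesssim\int_0^\infty|F(\lambda)|^2\lambda^{[3,\nu+1]}\,\ddlam.
\]
Inserting these into the Cauchy--Schwarz inequality yields the claim, since $r^{(\nu+1)/2}=r^{[(\nu+1)/2,3/2]}$ for $r\le1$.

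For $r\ge1$ the function $w_0^{-2}$ is no longer integrable over $\overline{B_G}(0_G,r)$, because the ball has exponential volume, so I would take instead $w^2=(1+|z|_N^{\nu})\,w_0^2$. By Corollary~\ref{cor:3} (case $r\ge1$, with $Q-\val=\nu$), $\int_{\overline{B_G}(0_G,r)}w^{-2}\,\dz\,\du\lesssim_\al r^2$. To bound the weighted $L^2$ norm I would split $w^2=w_0^2+|z|_N^{\nu}\,w_0^2$: the first summand is handled exactly as for $r\le1$, and for the second, Corollary~\ref{cor:2} with $\be=\nu$ gives
\[
\int_G|z|_N^{\nu}\,w_0^2\,|K_{F(\sqrt\Delta)}|^2\,\dz\,\du\lesssim\int_{G_\nu}|K_{F(\sqrt{\Delta_\nu})}(x,u)|^2\,x^{\nu}\,\dmu(x)\,\du.
\]
Here I use that $F\in\fctE_r$: by Lemma~\ref{lm:13} the kernel $K_{F(\sqrt{\Delta_\nu})}$ is supported in $\overline{B_{G_\nu}}(0,r)$, so Corollary~\ref{cor:4} lets me trade the weight $x^{\nu}$ for a factor $r$, and Proposition~\ref{prop:2} then gives
\[
\int_{G_\nu}|K_{F(\sqrt{\Delta_\nu})}|^2\,x^{\nu}\,\dmu(x)\,\du\lesssim_\nu r\int_{G_\nu}|K_{F(\sqrt{\Delta_\nu})}|^2\,\dmu(x)\,\du\lesssim_\nu r\int_0^\infty|F(\lambda)|^2\lambda^{[3,\nu+1]}\,\ddlam.
\]
As $r\ge1$, combining the two summands yields $\int_G w^2\,|K_{F(\sqrt\Delta)}|^2\,\dz\,\du\lesssim_\nu r\int_0^\infty|F(\lambda)|^2\lambda^{[3,\nu+1]}\,\ddlam$, and the Cauchy--Schwarz inequality produces the factor $r\cdot r^{1/2}=r^{3/2}=r^{[(\nu+1)/2,3/2]}$, as required.

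I expect the regime $r\ge1$ to be the main obstacle. There the exponential volume growth of $G$ forces the damping factor $(1+|z|_N^{\nu})$ into the weight, and controlling the resulting weighted $L^2$ norm of $K_{F(\sqrt\Delta)}$ relies precisely on the interplay between finite propagation speed on the auxiliary hypergroup $G_\nu$ (Lemma~\ref{lm:13}) and the radiality-based integration estimate of Corollary~\ref{cor:4} --- that is, on the fractional-dimension Bessel--Kingman machinery developed in Sections~\ref{s:hypergroupsX} and~\ref{s:hypergroupsG}.
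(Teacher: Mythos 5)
Your proposal is correct and follows essentially the same route as the paper's proof: the same choice of $\al$ with $\val=Q-\nu$, the same weighted Cauchy--Schwarz argument with the weight $\prod_j|z'_j|^{-\al_j}$ for $r\le 1$ and the extra damping factor $(1+|z|_N^{Q-\val})^{-1}$ for $r\ge 1$, and the same chain Corollary~\ref{cor:2} (with $\be=\nu$), Lemma~\ref{lm:13}, Corollary~\ref{cor:4}, Proposition~\ref{prop:2} to control the $|z|_N^{\nu}$-weighted term. No gaps.
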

\begin{proof}
As $\nu \in (d,Q]$, we can choose $\al$ so that $\vec{0}\preccurlyeq\al\prec \vec{d_2}$ and $\nu = Q-\val$. Recall from \eqref{eq:fps_G} that $\Delta$ satisfies finite propagation speed, thus $\supp K_{F(\Delta)}\subseteq \overline{B_G}(0_G,r)$. 
	
We begin with the case $r\leq 1$. By the Cauchy--Schwarz inequality,
\begin{multline*}
	\Vert K_{F(\sqrt{\Delta})}\Vert_{L^1(G)} \\
	\leq \left(\int_{B_G(0_G,r)} \prod_{j=1}^{\ell } |z'_j |^{-\al_j} \,\dz\,\du\right)^{1/2} 
	\left(\int_G |K_{F(\Delta)}(z,u)|^2 \prod_{j=1}^{\ell } |z'_j |^{\al_j}\,\dz\,\du\right)^{1/2}.
\end{multline*}
Recall that $\al\prec \vec{d_2}\prec \vec{d_1}$ by \eqref{eq:metivier_dim}. Hence, Corollary \ref{cor:3}, Corollary \ref{cor:2} with $\be=0$, and Proposition \ref{prop:2} with $\nu=Q-\val$ yield
\begin{equation*}
	\Vert K_{F(\sqrt{\Delta})}\Vert_{L^1(G)} 
	\lesssim_\nu r^{(\nu+1)/2} \left(\int_0^\infty |F(\lambda)|^2 \,\lambda^{[3,\nu+1]}\,\ddlam\right)^{1/2}.
\end{equation*}
	
For $r>1$, notice that, by the Cauchy--Schwarz inequality,
\[\begin{split}
	\left(\int_{G} | K_{F(\sqrt{\Delta})}(z,u)| \,\dz\,\du\right)^2
	&\leq  \int_{B_G(0_G,r)} (1+|z|_N^{Q-\val})^{-1} \prod_{j=1}^{\ell } |z'_j |^{-\al_j} \,\dz\,\du\\
	&\quad\times\Biggl(\int_G |K_{F(\sqrt{\Delta})}(z,u)|^2 \prod_{j=1}^{\ell } |z'_j |^{\al_j} \,\dz\,\du\\
	&\quad\quad+\int_G |K_{F(\sqrt{\Delta})}(z,u)|^2 |z|^{Q-\val}_N \prod_{j=1}^{\ell } |z'_j |^{\al_j} \,\dz\,\du\Biggr).
\end{split}\]
Now, as above,
\begin{equation*}
	\int_G |K_{F(\sqrt{\Delta})}(z,u)|^2 \prod_{j=1}^{\ell } |z'_j |^{\al_j} \,\dz\,\du
	\lesssim_\nu \int_0^\infty |F(\lambda)|^2 \,\lambda^{[3,\nu+1]}\,\ddlam.
\end{equation*}
Moreover, by Corollary \ref{cor:3},
\begin{equation*}
	\int_{B_G(0,r)} (1+|z|_N^{Q-\val})^{-1} \prod_{j=1}^{\ell } |z'_j |^{-\al_j} \,\dz\,\du
	\lesssim_\nu r^2.
\end{equation*}
Further, by Corollary \ref{cor:2} with $\be=Q-\val=\nu$,
\[
	\int_G |K_{F(\sqrt{\Delta})}(z,u)|^2 |z|^{Q-\val}_N \prod_{j=1}^{\ell } |z'_j |^{\al_j} \,\dz\,\du
	\lesssim_\nu \int_{G_{\nu}} |K_{F(\Delta_{\nu})}(x,u)|^2 x^{\nu}\,\dmu(x)\,\du.
\]
By Lemma \ref{lm:13} we have $\supp K_{F(\Delta_{\nu})}\subseteq \overline{B_{G_{\nu}}}(0,r)$. Thus, Corollary \ref{cor:4} and Proposition \ref{prop:2} imply
\begin{equation*}
	\int_G |K_{F(\sqrt{\Delta})}(z,u)|^2 |z|^{Q-|\alpha|}_N \prod_{j=1}^{\ell } |z'_j |^{\al_j} \,\dz\,\du
	\lesssim_\nu r \int_0^\infty |F(\lambda)|^2 \,\lambda^{[3,\nu+1]}\,\ddlam,
\end{equation*}
and the desired estimate follows.
\end{proof}

We can now obtain the following improvement of \cite[Propositions 5.3 and 5.5]{MOV}.

\begin{prop}\label{prop:5}
Let $F\in L^2(\RR)$ be supported in $[-4,4]$. Then
\begin{equation*}
	\sup_{z\in G} \int_G | K_{F(t\Delta)}(y,z)| \, (1 + t^{-1/2}\dist(y,z))^\varepsilon \,\dd y
	\lesssim_{s,\varepsilon} \Vert F\Vert_{\sobolev{s}{2}}
\end{equation*}
for all $\varepsilon\geq 0$, and $s,t>0$ satisfying one of the following conditions:
\begin{itemize}
	\item $t\geq 1$ and $s>3/2+\varepsilon$;
	\item $t\leq 1$ and $s>d/2+\varepsilon$.
\end{itemize}
Moreover,
\begin{equation*}
	\int_G | K_{F(t\Delta)}(x,y)-K_{F(t\Delta)}(x,z)| \,\dd x 
	\lesssim_s t^{-1/2}\dist(y,z)\Vert F\Vert_{\sobolev{s}{2}} 
\end{equation*}
for all $y,z\in G$ and $s,t> 0$ satisfying the above condition with $\varepsilon=0$.
\end{prop}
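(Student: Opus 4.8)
The plan is to follow the scheme of \cite[Propositions~5.3 and~5.5]{MOV}, using our sharpened single-scale estimate, Proposition~\ref{prop:4}, in place of its $\nu=Q$ version \cite[Proposition~5.1]{MOV}; the point is that here $\nu$ may be taken in $(d,Q]$, arbitrarily close to $d$, which is what lowers the threshold. By a routine density argument one reduces to $F\in C^\infty_c(\RR)$ with $\supp F\subseteq[-4,4]$; then $\widetilde F(\lambda)\defeq F(\lambda^2)$ is an even function in $C^\infty_c([-2,2])$ with $\Vert\widetilde F\Vert_{\sobolev{s}{2}(\RR)}\lesssim_s\Vert F\Vert_{\sobolev{s}{2}(\RR)}$ for $s>1/2$ (composition with $\lambda\mapsto\lambda^2$ preserving local $\sobolev{s}{2}$ regularity in that range), and $F(t\Delta)=\widetilde F_{\sqrt t}(\sqrt\Delta)$, where $\widetilde F_{\sqrt t}(\lambda)\defeq\widetilde F(\sqrt t\,\lambda)$. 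Since $F(t\Delta)$ is a convolution operator on $G$ and $m^{1/2}K_{F(t\Delta)}$ is radial (the analogue for $G$ of Corollary~\ref{cor:4}, established in \cite{MOV}), a computation as in \cite[Section~5]{MOV}, using also the $L^1$-continuity of translations, reduces the first assertion to the convolution-kernel bound
\begin{equation*}
	\int_G|K_{F(t\Delta)}(z,u)|\,(1+t^{-1/2}|(z,u)|_G)^\varepsilon\,\dz\,\du\lesssim_{s,\varepsilon}\Vert F\Vert_{\sobolev{s}{2}(\RR)},
\end{equation*}
and the oscillation bound to the analogous estimate for the increment of $K_{F(t\Delta)}$ under a translation moving $0_G$ a distance $\dist(y,z)$.

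For the displayed bound, decompose $\widetilde F$ dyadically in its Fourier variable: with even $\phi_j\in C^\infty_c(\RR)$, $j\in\NN$, such that $\sum_j\phi_j\equiv1$, $\supp\phi_0\subseteq[-1,1]$ and $\supp\phi_j\subseteq\{2^{j-1}\le|\xi|\le2^{j+1}\}$ for $j\ge1$, let $\widetilde F_j$ be defined by $\Four\widetilde F_j=(\Four\widetilde F)\,\phi_j$, so that each $\widetilde F_j$ is even, Schwartz, and $\widetilde F_j\in\fctE_{2^{j+1}}$; hence $(\widetilde F_j)_{\sqrt t}\in\fctE_{2^{j+1}\sqrt t}$ and, by finite propagation speed for $\Delta$ (see \eqref{eq:fps_G}), $\supp K_{(\widetilde F_j)_{\sqrt t}(\sqrt\Delta)}\subseteq\overline{B_G}(0_G,2^{j+1}\sqrt t)$. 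On this ball the weight is $\lesssim2^{j\varepsilon}$, so the left-hand side above is $\lesssim\sum_{j\ge0}2^{j\varepsilon}\Vert K_{(\widetilde F_j)_{\sqrt t}(\sqrt\Delta)}\Vert_{L^1(G)}$. Applying Proposition~\ref{prop:4} with a fixed $\nu\in(d,Q]$ close to $d$ to each summand and performing the change of variables $\mu=\sqrt t\,\lambda$ in the resulting weighted $L^2$ norm, the powers of $t$ produced by the prefactor $(2^{j+1}\sqrt t)^{[(\nu+1)/2,3/2]}$ and by the weight $\lambda^{[3,\nu+1]}$ cancel, leaving a bound $2^{j\varepsilon}\Vert K_{(\widetilde F_j)_{\sqrt t}(\sqrt\Delta)}\Vert_{L^1(G)}\lesssim_\nu2^{ja}\Vert\widetilde F_j\Vert_{L^2(\RR)}$ with an exponent $a$ that one may take to be $3/2+\varepsilon$ when $t\ge1$ and (after the refinement mentioned below, and $\nu$ close enough to $d$) any quantity exceeding $d/2+\varepsilon$ when $t\le1$; summing over $j$ by the Cauchy--Schwarz inequality against $\sum_j2^{2js}\Vert\widetilde F_j\Vert_{L^2(\RR)}^2\simeq\Vert\widetilde F\Vert_{\sobolev{s}{2}(\RR)}^2$ then converges whenever $s>a$, giving the two stated thresholds.

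The oscillation estimate is proved the same way: expressing the increment of each $K_{(\widetilde F_j)_{\sqrt t}(\sqrt\Delta)}$ by integration along a curve realising the control distance, one bounds its $L^1(G)$ norm by $t^{-1/2}\dist(y,z)$ times the $L^1(G)$ norms of the derivatives $X_i^\sharp K_{(\widetilde F_j)_{\sqrt t}(\sqrt\Delta)}$, $i=0,\dots,d_1$ --- which are again supported in $\overline{B_G}(0_G,2^{j+1}\sqrt t)$, the $X_i^\sharp$ being differential operators --- when $\dist(y,z)\lesssim2^j\sqrt t$, and by twice $\Vert K_{(\widetilde F_j)_{\sqrt t}(\sqrt\Delta)}\Vert_{L^1(G)}$, absorbing the harmless factor $\dist(y,z)/(2^j\sqrt t)\gtrsim1$, otherwise; the required $L^1(G)$ bounds for the $X_i^\sharp K_{(\widetilde F_j)_{\sqrt t}(\sqrt\Delta)}$ are obtained exactly as in \cite[proof of Proposition~5.5]{MOV}, and summation over $j$ yields the assertion with $\varepsilon=0$.

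The main obstacle is the bookkeeping of the scaling described above: one must check that after the substitution $\mu=\sqrt t\,\lambda$ all negative powers of $t$ cancel against $(2^{j+1}\sqrt t)^{[(\nu+1)/2,3/2]}$ in each of the regimes $t\le1$ and $t\ge1$ --- this is where the precise exponents $3$ and $\nu+1$, the ``pseudodimension'' and the ``local doubling dimension'' of $G$, together with the possibility of letting $\nu\to d$, come in --- and, more delicately, that for $t\le1$ the low-frequency blocks (those with $2^j\sqrt t\le1$) are estimated so as to reach the threshold $d/2+\varepsilon$ rather than the naive $(d+1)/2+\varepsilon$ that a crude application of Proposition~\ref{prop:4} would yield; this last refinement is precisely the one carried out in \cite[proof of Proposition~5.5]{MOV}, and is the only point where extra care beyond the present sketch is needed. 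A further, purely technical, point is the reduction of the two-variable kernel estimates to convolution-kernel estimates, which rests on the radiality of $m^{1/2}K_{F(t\Delta)}$ together with the $L^1$-continuity of translations on $G$ (the $G$-analogues of Lemma~\ref{lm:transl_Gnu}).
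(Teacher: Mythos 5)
Your proposal takes essentially the same route as the paper: the paper's proof consists precisely of noting that the case $t\geq 1$ is already in \cite{MOV}, and that for $t\leq 1$ one picks $\nu\in(d,Q]$ with $s>\nu/2+\varepsilon$ and reruns \cite[Propositions 5.3 and 5.5]{MOV} verbatim with Proposition \ref{prop:4} substituted for \cite[Proposition 5.1]{MOV}, together with the observation that the frequency blocks of $F(\lambda^2)$ are Schwartz (so that Proposition \ref{prop:4} applies to them), which your reduction to $F\in C^\infty_c$ handles equivalently. The delicate points you flag --- the low-frequency refinement for $t\leq 1$ and the passage from two-variable to convolution kernels --- are exactly the ones the paper also delegates to \cite{MOV}, so your sketch matches the paper's argument.
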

\begin{proof}
The case $t \geq 1$ is already contained in \cite{MOV}. For $t \leq 1$, we choose $\nu \in (d,Q]$ so that $s > \nu/2+\varepsilon$; then the proof runs exactly in the same way as in \cite{MOV}, with Proposition \ref{prop:4} applied in place of \cite[Proposition 5.1]{MOV}.

We point out that the functions $f_\ell$ resulting from the frequency decomposition of $f(\lambda) = F(\lambda^2)$ given by \cite[Lemma 5.2]{MOV} are in the Schwartz class: indeed, from the original proof in \cite[Lemma (1.3)]{He99} we see that $\hat f_\ell = \chi_\ell \hat f$ for appropriate cutoffs $\chi_\ell \in C^\infty_c(\RR)$, and $\hat f$ is analytic because $f$ is compactly supported. So the Schwartz class assumption, implicit in the definition of $\fctE_r$, on the spectral multiplier in Proposition \ref{prop:4} here is not an obstacle.
\end{proof}

Based on the above we can conclude the proof of our main result.

\begin{proof}[Proof of Theorem \ref{thm:1}]
	The proof goes just as in \cite[proof of Theorem 1.1]{MOV}, using Proposition \ref{prop:5} in place of \cite[Propositions 5.3 and 5.5]{MOV}, and relies on the Calder\'on--Zygmund theory developed in \cite{HeSt,MOV}.
\end{proof}

\section{Appendix: Boundary conditions and finite propagation speed}\label{s:appendix}

In this section we recall some basic terminology and results related to self-adjoint extensions of smooth second-order differential operators in divergence form on smooth manifolds, which are used throughout the paper. While our main application is to second-order operators acting on scalar functions, their expression in divergence form naturally leads to considering first-order operators, such as gradient and divergence, which act on vector-valued functions, or more general sections of vector bundles. We find it therefore more natural to work directly in the setup of operators between spaces of sections of vector bundles.

As we shall see, this allows us to discuss in a unified and relatively simple way a number of results about self-adjointness and domains, as well as present a general derivation of the finite propagation speed property under Dirichlet or Neumann conditions based on the first-order approach of \cite{McIMo}.

We introduce some of the setup and notation from \cite{CM}, to which we refer for additional details. Let $M$ be a smooth manifold (without boundary) equipped with a smooth measure. Fix two smooth vector bundles $\bdlE,\bdlF$ on $M$ equipped with fibre inner products. We use notation such as $C^\infty(\bdlE), C^\infty_c(\bdlE), L^2(\bdlE)$ to denote the spaces of sections of $\bdlE$ which are smooth, smooth and compactly supported, square integrable. Let $\nabla : C^\infty(\bdlE)\to C^\infty(\bdlF)$ be a first-order differential operator, and $\nabla^+ : C^\infty(\bdlF)\to C^\infty(\bdlE)$ be its formal adjoint. As usual, we can extend $\nabla$ and $\nabla^+$ to spaces of distributions.

We define the \emph{maximal domain} and the \emph{minimal domain} for $\nabla$ on $L^2(\bdlE)$:
\begin{equation}\label{eq:dmax_dmin}
\Dmax(\nabla)=\{f\in L^2(\bdlE): \nabla f\in L^2(\bdlF) \}, \qquad \Dmin(\nabla)= \overline{C_c^\infty(\bdlE)}^{\Dmax(\nabla)},
\end{equation}
where $\Dmax(\nabla)$ is equipped with the graph norm of $\nabla$.
By a standard mollification technique (see, e.g., \cite[Propositions 5.5 and 6.1]{CM}), one can see that
\begin{equation}\label{eq:dmincomp}
\Dmin(\nabla)=\overline{\Dmaxc(\nabla)}^{\Dmax(\nabla)},
\end{equation}
where $\Dmaxc(\nabla)$ is the set of compactly supported elements of $\Dmax(\nabla)$.
Analogous considerations apply to the formal adjoint $\nabla^+$.
Moreover, formal and Hilbert space adjoints are related as follows:
\begin{equation}\label{eq:10}
( \nabla|_{\Dmin(\nabla)})^*=\nabla^+|_{\Dmax(\nabla^+)}\qquad \text{and}\qquad ( \nabla|_{\Dmax(\nabla)})^*=\nabla^+|_{\Dmin(\nabla^+)}.
\end{equation}

We are interested in the second-order divergence-form operator $\nabla^+\nabla$ associated with $\nabla$. We define the \emph{Dirichlet domain} and the \emph{Neumann domain} for $\nabla^+\nabla$ as
\begin{equation}\label{eq:Neu}
\begin{aligned}
\DDir(\nabla^+\nabla) &= \{f\in \Dmin(\nabla): \nabla f\in\Dmax(\nabla^+) \}, \\
\DNeu(\nabla^+\nabla) &= \{f\in \Dmax(\nabla): \nabla f\in\Dmin(\nabla^+) \} .
\end{aligned}
\end{equation}
In light of \eqref{eq:10} and \cite[Theorem X.25]{RS2}, both $\nabla^+\nabla|_{\DDir(\nabla^+\nabla)}$ and $\nabla^+\nabla|_{\DNeu(\nabla^+\nabla)}$ are nonnegative self-adjoint operators on $L^2(\bdlE)$.
As $(\nabla^+)^+ = \nabla$, analogous considerations apply to the operator $\nabla\nabla^+$.

\begin{rem}
The Dirichlet domain in \eqref{eq:Neu} is the same as the domain of the Friedrichs extension of $\nabla^+ \nabla|_{C^\infty_c(\bdlE)}$ (cf.\ \cite[Theorems X.23 and X.25]{RS2}). Of course, when $\nabla^+ \nabla|_{C^\infty_c(\bdlE)}$ is essentially self-adjoint, the Dirichlet and Neumann domains are the same; however, here we are also interested in the case where essential self-adjointness may fail.
\end{rem}

The \emph{control distance} $\dist_\nabla$ associated with $\nabla$ can be defined by (cf.\ \cite[p.~175]{CM})
\begin{equation*}
\dist_\nabla(x,y) =\inf \left\{\int_0^1 P_\nabla^\ast(\gamma'(t))\,\dd t \tc \gamma\in \AC([0,1];M),\ \gamma(0)=x,\ \gamma(1)=y  \right\}
\end{equation*}
for all $x,y \in M$, where we write $\AC([0,1];M)$ for the set of absolutely continuous curves in $M$ with domain $[0,1]$, and $P_\Delta$ for the fibre seminorm on $T^\ast M$ associated with $\nabla$, namely,
\begin{equation*}
P_\nabla (\xi) = | \sigma_1(\nabla)(\xi) |_{\op}, \qquad \xi \in T^*M,
\end{equation*}
 while $\sigma_1(\nabla) \in C^\infty(\Hom(T^\ast M, \Hom(\bdlE,\bdlF)))$ is the symbol of $\nabla$, and $P^\ast_\nabla$ denotes the extended fibre norm on $TM$ dual to $P_\nabla$ (see \cite[p.~153]{CM} for details).

\begin{rem}\label{rem:distance}
As discussed in \cite[Section 8.5]{CM}, this definition of the control distance $\dist_\nabla$ includes, for appropriate choices of $\nabla$, that of Riemannian and sub-Riemannian distances on $M$. In particular, assume that the bundles $\bdlE$ and $\bdlF$ are the trivial bundles of ranks $1$ and $r$, while $\nabla$ has the form
\[
\nabla = \begin{pmatrix} X_1  \\ \vdots \\ X_r \end{pmatrix}
\]
for a system $X_1,\dots,X_r$ of linearly independent smooth real vector fields on $M$. If $r=\dim M$ and $g$ is the Riemannian metric tensor on $M$ that makes $X_1,\dots,X_r$ an orthonormal frame, then $P^\ast_\nabla(v) = \sqrt{g(v,v)}$ for all $v \in TM$, and $\dist_\nabla$ is the Riemannian distance on $M$ induced by $g$. More generally, if $r\leq\dim M$ and the vector fields $X_1,\dots,X_r$ are bracket-generating, then $\dist_\nabla$ is the Carnot--Carath\'eodory distance associated to the system of vector fields (cf.\ also \cite{Mo,VSC}). In each of these cases, the control distance $\dist_\nabla$ induces on $M$ the manifold topology (on this, see also \cite[Proposition 4.23]{CM}).
\end{rem}

We say that a family  $\{W_t\}_{t\in\RR}$ of bounded operators on $L^2(\bdlE)$ has \emph{finite propagation speed} with respect to $\dist_\nabla$ if
\begin{equation}\label{eq:fps_family}
\supp W_t f\subseteq \{x\in M \tc \dist_\nabla(x,\supp f)\leq |t| \} \qquad \forall f\in L^2(\bdlE), \ t \in \RR.
\end{equation}
We are especially interested in the case $W_t = \cos(t \sqrt{\nabla^+ \nabla})$, corresponding to the \emph{wave propagator} associated to $\nabla^+ \nabla$; in this case, when \eqref{eq:fps_family} holds, we also say that $\nabla^+\nabla$ satisfies the finite propagation speed property.

Of course, the definition of the cosine family $\{\cos(t \sqrt{\nabla^+ \nabla})\}_{t \in \RR}$ in terms of spectral calculus on $L^2(\bdlE)$ becomes meaningful only once a self-adjoint extension of $\nabla^+ \nabla$ has been chosen, and indeed the validity of the finite propagation speed property may depend on this choice. This is easily seen, e.g., by taking $\nabla = -\partial_x$ on the interval $M = (0,1)$ with Lebesgue measure, and considering $\nabla^+ \nabla = -\partial_x^2$ with periodic boundary conditions: here finite propagation speed is violated, because, roughly speaking, due to periodicity, a wave that crosses one endpoint of the interval immediately re-enters from the other endpoint, effectively travelling at infinite speed (cf.\ \cite[Section X.1, Example 1]{RS2}).

Possibly to avoid such problems, several results on finite propagation speed in the literature are either stated under some completeness assumption on the underlying (sub-)Riemannian manifold (see, e.g., \cite{Mel} or the examples in \cite[Sections 4-8]{Si3}), effectively preventing a solution with compactly supported initial datum from reaching the boundary in finite time, or proved only for a restricted time interval depending on the datum (see, e.g., \cite[Section 7]{CM}). 

In contrast, the general approach of \cite{McIMo} can be readily used to deduce finite propagation speed for any second-order divergence form operator, for appropriate choices of the self-adjoint extension, without any completeness assumptions on the manifold. (In fact, the results of \cite{McIMo} go well beyond our setup, as they do not require self-adjointness or smooth coefficients, and also apply to operators on $L^p$ for $p \neq 2$.) The examples presented in \cite[Section 5]{McIMo} only consider elliptic differential operators; however, as the next statement shows, ellipticity can be replaced by a weaker topological assumption on the control distance, which holds in greater generality, including for sub-elliptic operators (see Remark \ref{rem:distance}).

\begin{prop}\label{prop:8}
Assume that the control distance associated with $\nabla$ induces the manifold topology on $M$. Consider $\nabla^+\nabla$ with either Neumann domain or Dirichlet domain. Then $\{\cos(t\sqrt{\nabla^+\nabla})\}_{t\in\RR}$ has finite propagation speed with respect to the control distance. 
\end{prop}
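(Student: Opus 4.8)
The plan is to reduce the finite propagation speed property for $\nabla^+\nabla$ (with either Neumann or Dirichlet domain) to the abstract first-order framework of \cite{McIMo}. The key observation is that the second-order operator $\nabla^+\nabla$ factors through the first-order Dirac-type operator
\[
\Pi = \begin{pmatrix} 0 & \nabla^+ \\ \nabla & 0 \end{pmatrix}
\]
acting on $L^2(\bdlE) \oplus L^2(\bdlF)$, and that the cosine propagator $\cos(t\sqrt{\nabla^+\nabla})$ is the first component of the unitary group $e^{it\Pi}$ (or rather of $\cos(t\Pi)$) generated by a suitable self-adjoint realisation of $\Pi$. The choice of Neumann versus Dirichlet domain for $\nabla^+\nabla$ corresponds precisely to a choice of domain for $\Pi$: for the Neumann case one takes $\Pi$ with domain $\Dmax(\nabla)\oplus\Dmin(\nabla^+)$, and for the Dirichlet case $\Dmin(\nabla)\oplus\Dmax(\nabla^+)$; in each case \eqref{eq:10} and \cite[Theorem X.25]{RS2} guarantee self-adjointness, and a routine computation with the spectral theorem shows $\cos(t\sqrt{\nabla^+\nabla})$ is recovered as the $(1,1)$-entry of $\cos(t\Pi)$.

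The second ingredient is the abstract finite propagation speed result of \cite{McIMo}: for a self-adjoint operator of the form $\Pi$ satisfying the relevant off-diagonal and locality hypotheses, the wave group $\cos(t\Pi)$ propagates at unit speed with respect to the distance defined by the ``Lipschitz'' functions $\rho$ for which the commutator $[\Pi,\rho]$ (pointwise multiplication) is bounded by $1$ — equivalently, $P_\nabla(d\rho)\le 1$ almost everywhere. The core commutator estimate is essentially the statement that for such $\rho$, the operator $e^{it\Pi}$ conjugated by $e^{is\rho}$ remains uniformly bounded, which is proved via a Gronwall-type argument on $\frac{d}{dt}\|e^{-is\rho} e^{it\Pi} e^{is\rho} f\|$; this is precisely what \cite[Section 3]{McIMo} provides and I would simply invoke it. The upshot is that $\supp \cos(t\Pi) f \subseteq \{x : d_{\mathrm{Lip}}(x,\supp f)\le |t|\}$, where $d_{\mathrm{Lip}}$ is the intrinsic metric built from the seminorm $P_\nabla$ on $T^*M$.

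It then remains to identify $d_{\mathrm{Lip}}$ with the control distance $\dist_\nabla$ of the excerpt. By definition $\dist_\nabla$ is the length-minimising (Carnot--Carathéodory) distance associated to the fibre norm $P^*_\nabla$ on $TM$ dual to $P_\nabla$, and the equality of the ``Lipschitz'' (co)metric description and the length-structure description is a standard fact in sub-Riemannian geometry — see the discussion in \cite[Section 8.5]{CM} and \cite{McIMo} — provided the control distance induces the manifold topology, which is exactly the hypothesis of the Proposition. (This topological assumption also ensures that ``support'' and ``distance to support'' are unambiguous and that the metric balls are the expected sets.) Finally, the claim about $K_{F(\Delta_\nu)}$ follows from \eqref{eq:fps} and the subordination identity \eqref{eq:fps_subordination}, as already carried out in the proof of Lemma \ref{lm:13}.

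The main obstacle I anticipate is bookkeeping rather than any deep analytic difficulty: one must carefully match the boundary-condition choices (Neumann/Dirichlet domains for $\nabla^+\nabla$ as in \eqref{eq:Neu}) with self-adjoint realisations of the first-order operator $\Pi$, verifying via \eqref{eq:10} that the resulting $\Pi$ is indeed self-adjoint and that $\Pi^2$ restricted to the first component gives back the chosen extension of $\nabla^+\nabla$; and one must check that the hypotheses of the abstract theorem of \cite{McIMo} (which is stated for possibly non-self-adjoint, $L^p$-type operators with measurable coefficients) are met in the present smooth self-adjoint setting, and that its conclusion is phrased in terms of the same distance. None of these steps is conceptually hard, but care is needed because finite propagation speed genuinely fails for other self-adjoint extensions (e.g. periodic boundary conditions), so the argument must use the specific structure of the Dirichlet and Neumann domains in an essential way.
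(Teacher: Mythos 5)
Your proposal is correct and follows essentially the same route as the paper: factor $\nabla^+\nabla$ through the self-adjoint Dirac-type operator $\begin{pmatrix} 0 & \nabla^+ \\ \nabla & 0\end{pmatrix}$ with a mixed maximal/minimal domain (self-adjoint by \eqref{eq:10}), read off $\cos(t\sqrt{\nabla^+\nabla})$ from a diagonal entry of the cosine of this operator, apply \cite[Theorem 3.1]{McIMo}, and use the topological hypothesis (via \cite[Propositions 5.2 and 5.4]{CM}) to identify the commutator/Lipschitz metric with the control distance. The only cosmetic difference is that the paper obtains the Dirichlet case by swapping the roles of $\nabla$ and $\nabla^+$ and reading the second diagonal entry, rather than by changing the domain of the first-order operator as you suggest; the two are equivalent.
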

\begin{proof}
We are going to deduce the finite propagation speed property from \cite[Theorem 3.1]{McIMo}. To this purpose, we shall write, roughly speaking, the second-order operator $\nabla^+ \nabla$ as the square of a first-order differential operator.
	
More precisely, we define a ``matrix operator'' $\DD : C^\infty (\bdlE\oplus \bdlF)\to C^\infty(\bdlE\oplus \bdlF)$ by
\begin{equation*}
	\DD=\begin{pmatrix}
	0 & \nabla^+\\
	\nabla & 0
	\end{pmatrix}.
\end{equation*}
Clearly $\DD$ is formally self-adjoint, i.e.\ $\DD^+=\DD$, and moreover
\[
	\DD^2=\begin{pmatrix}
	\nabla^+ \nabla & 0\\
	0 & \nabla \nabla^+
	\end{pmatrix}.
\]
	
We now construct an appropriate self-adjoint extension of $\DD$. Let
\begin{equation}\label{eq:32}
	\Dmaxmin(\DD)=\Dmax(\nabla)\oplus\Dmin(\nabla^+).
\end{equation}
By \eqref{eq:10} we obtain 
\[\begin{split}
	(\DD|_{\Dmaxmin(\DD)})^\ast=\begin{pmatrix}
	0 & \nabla^+|_{\Dmin(\nabla^+)}\\
	\nabla_{\Dmax(\nabla)} & 0
	\end{pmatrix}^\ast
	&=\begin{pmatrix}
	0 & \nabla^+|_{\Dmin(\nabla^+)}\\
	\nabla_{\Dmax(\nabla)} & 0
	\end{pmatrix}\\
	&=\DD|_{\Dmaxmin(\DD)}.
\end{split}\]
Thus, $\DD|_{\Dmaxmin(\DD)}$ is self-adjoint. Consequently, the square operator $\DD^2|_{\Dmaxmin(\DD^2)}$ is self-adjoint as well (see \cite[Theorem X.25]{RS2}), where
\begin{equation*}
\begin{split}
	&\Dmaxmin(\DD^2) \\
	&\defeq \left\{\begin{pmatrix}
	f\\
	g
	\end{pmatrix}\in \Dmaxmin(\DD) \tc \DD\begin{pmatrix}
	f\\
	g
	\end{pmatrix}\in \Dmaxmin(\DD) \right\} \\
	&=
	\{ f \in \Dmax(\nabla) \tc \nabla f \in \Dmin(\nabla^+) \} \oplus \{ g \in \Dmin(\nabla^+) \tc \nabla^+ g \in \Dmax(\nabla) \} \\
	&=
	\DNeu(\nabla^+\nabla)\oplus \DDir(\nabla\nabla^+),
\end{split}
\end{equation*}
by \eqref{eq:32}, \eqref{eq:Neu} and the definition of $\DD$.
Hence,
\begin{equation*}
	\DD^2|_{\Dmaxmin(\DD^2)}=\begin{pmatrix}
	\nabla^+\nabla|_{\DNeu(\nabla^+\nabla)} & 0\\
	0 & \nabla\nabla^+|_{\DDir(\nabla\nabla^+)}
	\end{pmatrix},
\end{equation*} 
and consequently
\begin{equation}
	\label{eq:31}
	\cos\left(t\DD|_{\Dmaxmin(\DD)}\right)
	=\begin{pmatrix}
	\cos\left(t\sqrt{\nabla^+\nabla|_{\DNeu(\nabla^+\nabla)}}\right) & 0\\
	0 & \cos\left(t\sqrt{\nabla\nabla^+|_{\DDir(\nabla\nabla^+)}}\right)
	\end{pmatrix},
\end{equation}
since $\cos(t\cdot)$ is an even analytic function.

We want to apply \cite[Theorem 3.1]{McIMo} to the operator $\DD|_{\Dmaxmin(\DD)}$ on $L^2(\bdlE \oplus \bdlF)$, where the underlying manifold $M$ is equipped with the distance $\dist_\nabla$. As $\DD|_{\Dmaxmin(\DD)}$ is self-adjoint, it generates a $C_0$ group $(e^{it\DD|_{\Dmaxmin(\DD)}})_{t \in \RR}$ of unitary operators on $L^2(\bdlE \oplus \bdlF)$, so the first assumption of the theorem is satisfied.

We now check the second assumption of \cite[Theorem 3.1]{McIMo}. Let $\Lip(M)$ be the space of bounded real-valued $\dist_\nabla$-Lipschitz functions; in other words, $\eta\in \Lip(M)$ if and only if $\eta\in L^\infty(M;\RR)$ and
\begin{equation*}
\|\eta\|_{\Lip} \defeq \sup_{x,y \in M, \, x \neq y} \frac{| \eta(x)- \eta(y)|}{\dist_\nabla(x,y)} < \infty.
\end{equation*}
Since $\dist_\nabla$ is varietal, i.e.\ $\dist_\nabla$ induces the topology on the manifold $M$, by \cite[Propositions 5.2 and 5.4]{CM} we have the equivalent characterisation
\begin{equation}\label{eq:lipschitz}
\begin{gathered}
	\Lip(M) = \{ \eta \in L^\infty(M;\RR): \nabla^\sigma \eta \in L^\infty(\Hom(\bdlE,\bdlF)) \}, \\
		\|\eta\|_\Lip = \|\nabla^\sigma \eta\|_{L^\infty} \qquad \forall \eta \in \Lip(M),
\end{gathered}
\end{equation}
where $\nabla^\sigma$ denotes the ``symbol operator'' (see \cite[Section 2.2]{CM}), that is, the first-order differential operator that appears in the Leibniz rule for $\nabla$:
\begin{equation}\label{eq:leibniz}
	\nabla(\eta f) = (\nabla^\sigma \eta) f + \eta \nabla f,
\end{equation}
where $\eta$ is a scalar-valued function and $f$ is a section of $\bdlE$ with suitable differentiability and integrability properties
\cite[Proposition 3.7]{CM}.
By \eqref{eq:dmax_dmin}, \eqref{eq:lipschitz} and \eqref{eq:leibniz}, we deduce that
\begin{align*}
	\Lip(M)\cdot \Dmax(\nabla) &\subseteq \Dmax(\nabla), \\
	\Lip(M)\cdot \Dmaxc(\nabla) &\subseteq \Dmaxc(\nabla),
\end{align*}
whence, by \eqref{eq:dmincomp},
we also get	
\begin{equation*}
	\Lip(M)\cdot \Dmin(\nabla)\subseteq \Dmin(\nabla).
\end{equation*}
Since the control distances associated to the three operators $\nabla$, $\nabla^+$, $\DD$ are the same (see \cite[p.~181]{CM}), 
the corresponding Lipschitz space $\Lip(M)$ is the same, so similar inclusions are true for $\nabla^+$. Thus, by  \eqref{eq:32} we obtain
\begin{equation*}
	\Lip(M)\cdot \Dmaxmin(\DD)\subseteq \Dmaxmin(\DD).
\end{equation*}
In particular, for all $\eta \in \Lip(M)$ and $f \in \Dmaxmin(\DD)$, we have $\eta f \in \Dmaxmin(\DD)$; moreover, by \eqref{eq:leibniz} and \eqref{eq:lipschitz}, 
\[
\| [\eta,\DD] f \|_{L^2} = \| (\DD^\sigma \eta) f \|_{L^2} \leq \| \DD^\sigma \eta \|_{L^\infty} \| f \|_{L^2} = \| \eta \|_{\Lip} \| f \|_{L^2},
\]
and finally $[\eta,[\eta,\DD]] = [\eta,\DD^\sigma \eta] = 0$, as the scalar multiplication operator $\eta$ commutes with the matrix multiplication operator $\DD^\sigma \eta$.

Hence, we can apply \cite[Theorem 3.1]{McIMo} to $\DD$ on the domain $\Dmaxmin(\DD)$, and deduce finite propagation speed for the one-parameter group $\{e^{it\DD|_{\Dmaxmin(\DD)}}\}_{t\in\RR}$.
As $e^{it\DD}+e^{-it\DD}=2\cos(t\DD)$, by \eqref{eq:31} we obtain finite propagation speed for the cosine families $\left\{\cos\left(t\sqrt{\nabla^+\nabla|_{\DNeu(\nabla^+\nabla)}}\right)\right\}_{t\in\RR}$ and $\left\{\cos\left(t\sqrt{\nabla \nabla^+|_{\DDir(\nabla^+\nabla)}}\right)\right\}_{t\in\RR}$. This gives the desired result for $\nabla^+ \nabla$ with Neumann domain; by exchanging the roles of $\nabla$ and $\nabla^+$ in the above argument, we also obtain the result for $\nabla^+ \nabla$ with Dirichlet domain.
\end{proof}

\end{document}